        \pgfplotsset{compat=1.6}
\theoremstyle{plain}                    
\newtheorem{thm}{Theorem}[section]
\newtheorem*{hthm}{Haupt's Theorem}
\newtheorem{thma}{Theorem}
\newtheorem{propa}[thma]{Proposition}
\newtheorem{cora}[thma]{Corollary}
\newtheorem{thmaa}{Theorem}
\newtheorem{lem}[thm]{Lemma}
\newtheorem{prop}[thm]{Proposition}
\newtheorem{cor}[thm]{Corollary}
\newtheorem*{thmnn}{Theorem}
\theoremstyle{definition}
\newtheorem{defn}[thm]{Definition}
\newtheorem{ex}[thm]{Example}
\newtheorem*{qs}{Question}
\newtheorem{ques}[thm]{Question}
\newtheorem{py}[thm]{Properties}
\theoremstyle{remark}
\newtheorem{rmk}[thm]{Remark}
\numberwithin{equation}{section}
\newcommand{\C}{\mathbb{C}}
\newcommand{\Z}{\mathbb{Z}}
\newcommand{\Q}{\mathbb{Q}}
\newcommand{\cp}{\mathbb{C}\mathbb{P}^1}
\newcommand{\spz}{\mathrm{Sp}(2g,\Z)}
\lbrace\begin{array}{@{}l@{}}}%
\def\qr#1#2{%
      \raise1ex\hbox{$#1$}\Big/ \lower1ex\hbox{$#2$}%
}
\def\qrr#1#2{%
      \raise1ex\hbox{$#1$}\Big/\Big/ \lower1ex\hbox{$#2$}%
}
\def\ql#1#2{%
      \lower1ex\hbox{$#1$}\Big\backslash \raise1ex\hbox{$#2$}%
}
\lbrace\begin{array}{@{}l@{}}}%
\begin{document}
\title[Translation surfaces and periods of meromorphic differentials]{Translation surfaces and  periods of meromorphic differentials}

\author[S. Chenakkod]{Shabarish Chenakkod}
\address{\textnormal{\textbf{Shabarish Chenakkod}, current address:} Department of Mathematics, University of Michigan, Ann Arbor, USA}
\email{shabari@umich.edu}

\author[G. Faraco]{Gianluca Faraco}
\address{\textnormal{\textbf{Gianluca Faraco}, current institute:} Max Planck Institute for Mathematics, Bonn, Germany}
\email{gianlucafaraco@mpim-bonn.mpg.de}

\author[S. Gupta]{Subhojoy Gupta}
%\address{Department of Mathematics - Indian Institute of Science, Bangalore - 560012, India}
%\email{subhojoy@iisc.ac.in}

\address{Department of Mathematics, Indian Institute of Science, Bangalore 560012, India.}

\email{shabarishch@iisc.ac.in, gianlucaf@iisc.ac.in, subhojoy@iisc.ac.in}
\keywords{Translation surfaces, Haupt's Theorem, holonomy representation}
\subjclass[2010]{30F30, 32G15, 57M50}%
%\date{\today}
%\dedicatory{}

\begin{abstract}
\noindent Let $S$ be an oriented surface of genus $g$ and $n$ punctures. The periods of any meromorphic differential on $S$, with respect to a choice of complex structure, determine a representation  $\chi:\Gamma_{g,n}   \to\mathbb C$ where $\Gamma_{g,n}$ is the first homology group of $S$.  We characterize  the representations that thus arise, that is, lie in the image of the period map   $\textsf{Per}:\Omega\mathcal{M}_{g,n}\to \textsf{Hom}(\Gamma_{g,n},\Bbb C)$. This generalizes a classical result of Haupt in the holomorphic case.  Moreover, we determine the image of this period map when restricted to any stratum  of  meromorphic differentials, having  prescribed orders of zeros and poles. Our proofs are geometric, as they aim to construct a translation structure on $S$ with the prescribed  holonomy $\chi$. Along the way, we describe  a connection with the Hurwitz problem concerning the existence of branched covers with prescribed branching data.
\end{abstract}

\maketitle
\tableofcontents

\section{Introduction} \label{intro}
\noindent Let $S_{g,n}$ be the connected and oriented surface of genus $g$ and with $n$ punctures, and let  $\mathcal{M}_{g,n}$ be the moduli space of punctured Riemann surfaces homeomorphic to $S_{g,n}$.  For  $X\in \mathcal{M}_{g,n}$, define  $\Omega(X)$ to be the space of 
 holomorphic (abelian) differentials on $X$  with at most finite-order poles at the punctures, that we refer to as \textit{meromorphic differentials} on $X$. For $\Gamma_{g,n} := H_1(S_{g,n};\Bbb Z)$, the \textit{period character} (or \textit{character} for simplicity) of such an abelian differential $\omega$ is the homomorphism
\begin{equation}\label{defchar}
\chi: \Gamma_{g,n} \to \Bbb C \,\,\text{ defined as } \,\,
\gamma\longmapsto \int_\gamma \omega.\\
\end{equation}

\noindent Let $\Omega\mathcal{M}_{g,n}$ be the space of pairs $(X,\omega)$ where $X\in\mathcal{M}_{g,n}$ and $\omega\in\Omega(X)$. In this paper, the \textit{period map} is the association 
\begin{equation}\label{permap}
\textsf{Per}:\Omega\mathcal{M}_{g,n}\to \textsf{Hom}(\Gamma_{g,n},\Bbb C)
\end{equation} mapping an abelian differential $\omega$ to its character. We shall  start with the problem of determining the image of the period map. 

\subsection{Translation structures on punctured surfaces with prescribed holonomy}\label{ssec:tswph} Our first main result can be stated as follows. 

\begin{thma}\label{mainthm} Let $n\geq 1$ and let $S_{g,n}$ be a surface of genus $g$ and $n$ punctures. Then the period map $\textsf{\emph{Per}}$ in \eqref{permap} is surjective, that is, every representation $\chi\in\textsf{\emph{Hom}}(\Gamma_{g,n},\,\Bbb C)$ appears as the character of a meromorphic differential $\omega\in\Omega(X)$ for some  $X\in \mathcal{M}_{g,n}$.
\end{thma}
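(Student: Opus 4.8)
The strategy is geometric: given a character $\chi:\Gamma_{g,n}\to\mathbb{C}$, we aim to build a translation surface homeomorphic to $S_{g,n}$ whose holonomy (of the underlying flat structure, equivalently the period map of the associated abelian differential) equals $\chi$. The key observation is that meromorphic differentials correspond to translation structures where punctures are allowed to be poles — and a pole of order $k+1$ corresponds to a planar end modeled on $k$ copies of a half-plane (for $k\ge 1$) or an "infinite cylinder" type end (for $k=1$, a simple pole, which gives a half-infinite cylinder with residue equal to the residue of the pole). Since $n\ge 1$, we have at least one puncture to play with, and this is what gives us the freedom that Haupt's theorem (in the holomorphic closed case) lacks: there the image is constrained by the volume inequality $\mathrm{vol}(\chi) > 0$ and a condition ruling out lattices. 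With a pole available, no such constraints survive.

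**Main steps.**

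First I would reduce to a normal form for $\chi$ by changing the symplectic basis of $\Gamma_{g,n}$ (via the mapping class group action, which acts on $H_1$ through $\mathrm{Sp}(2g,\mathbb{Z})$ together with moves permuting the puncture-classes). Concretely, pick a standard basis $a_1,b_1,\dots,a_g,b_g$ for the genus part and $c_1,\dots,c_{n-1}$ for loops around the first $n-1$ punctures (the loop around the $n$-th puncture being dependent). Then I would split into cases according to the rank of the image of $\chi$ over $\mathbb{Q}$ inside $\mathbb{C}$ — specifically whether $\chi$ is trivial, has image a rank-one subgroup (real up to scaling), or has image of rank $\ge 2$ (so spans $\mathbb{C}$ over $\mathbb{R}$). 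The rank $\ge 2$ generic case is handled by taking a concrete flat model: for each handle, a torus-with-slit contributing periods $\chi(a_i),\chi(b_i)$; these are glued together and to a "polar" piece — e.g. a once-punctured sphere with a high-order pole — by slitting and identifying. The residues around the punctures are prescribed by the $\chi(c_j)$, and since a single high-order pole can absorb an arbitrary residue (indeed can have residue zero while contributing arbitrarily to nothing homological), one routes the "leftover" period data through the polar end. The degenerate cases (e.g. $\chi$ with image in $\mathbb{R}$, or $\chi$ trivial) need separate small constructions, typically using a pole of order $2$ (infinite cylinder) whose core curve carries a purely imaginary or zero period to break out of the real locus.

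**The main obstacle.**

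The hard part will be the bookkeeping in the degenerate cases — particularly realizing a character whose image is a lattice in $\mathbb{C}$ or is contained in a line, where the "obvious" genus-$g$ translation surface would be forced (by a Haupt-type volume or discreteness obstruction) not to exist in the closed case. Here one must genuinely exploit the pole: the plan is to use an infinite-area end (pole of order $\ge 2$) so that the notion of "volume" becomes infinite and the obstruction evaporates, but one has to check that the resulting surface still has exactly the right genus $g$, exactly $n$ punctures, and that the gluing is consistent (angles close up, the cone/pole structure is as prescribed, and the homology classes pair with $\omega$ correctly). A secondary subtlety is the case $g=0$: with no handles, all periods must come from residues at the punctures, so one needs at least the configuration of $n$ punctures on $\mathbb{P}^1$ to carry an arbitrary $(n-1)$-tuple of residues, which is straightforward (a differential of the form $\sum \frac{r_j\,dz}{z-p_j}$ plus an exact piece) but confirms that $n\ge 1$ is essential and that for $g=0$ one in fact needs the $\chi(c_j)$ to be the only data. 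Once the generic construction and these boundary cases are in place, surjectivity follows by assembling the pieces and invoking that the period of the built differential is, by construction, the prescribed $\chi$.
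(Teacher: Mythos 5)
Your overall strategy (geometrize $\chi$ as the holonomy of a translation structure, using the pole at a puncture to evade Haupt's obstructions) is the same as the paper's, and your genus-zero observation is sound — indeed $\omega=\sum_j \frac{\chi(\gamma_j)}{2\pi i}\frac{dz}{z-p_j}$ realizes any character of $\Gamma_{0,n}$ more directly than the paper's polygon-and-half-strips construction. But there is a genuine gap in your case analysis. You split cases by the rank of $\textsf{Im}(\chi)$ over $\mathbb{Q}$ and handle the ``rank $\geq 2$ generic case'' by assigning to each handle a ``torus-with-slit contributing periods $\chi(a_i),\chi(b_i)$.'' The relevant invariant here is not the rank of the image but the sign of $\Im\bigl(\overline{\chi(a_i)}\chi(b_i)\bigr)$: a flat torus $\mathbb{C}/\Lambda$ with $\omega=dz$ realizes a pair of periods over a \emph{positively oriented} symplectic basis only when this quantity is positive. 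Take $\chi(a_1)=1$, $\chi(b_1)=-i$: the image is a full lattice (rank $2$), so it lands in your generic bucket, yet the handle has negative volume and no flat torus realizes it. Moreover, your proposed normalization by the mapping class group cannot repair this, because the total volume $\sum_i\Im\bigl(\overline{\chi(a_i)}\chi(b_i)\bigr)$ is an $\mathrm{Sp}(2g,\mathbb{Z})$-invariant of $\chi_g$; if it is non-positive, no change of symplectic basis makes every handle positive-volume.

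You correctly sense that the pole must make the volume obstruction ``evaporate,'' but you never supply the construction that does this, and that construction is the heart of the paper's proof: for a handle with $\Im\bigl(\overline{\chi(\alpha)}\chi(\beta)\bigr)<0$, one takes the closure in $\cp$ of the \emph{exterior} of the period parallelogram and identifies its opposite sides according to $\chi$, producing a once-punctured torus with one branch point of angle $6\pi$ and one pole of order two carrying exactly the prescribed (negative-volume) periods; degenerate (zero-volume, collinear or vanishing) handles are handled by separate slit constructions in the plane. Without this ingredient — or some substitute producing a once-punctured torus with prescribed periods of arbitrary sign — your assembly step cannot get off the ground for any $\chi$ with $\textsf{vol}(\chi_g)\le 0$, which is precisely the class of characters Theorem \ref{mainthm} adds beyond Haupt's theorem.
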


\noindent  For closed surfaces, \emph{i.e} $n=0$, the image of the period map has originally been determined by Haupt in \cite{OH} and subsequently rediscovered by Kapovich in \cite{KM2} by using Ratner theory. It turns out that there are precisely two obstructions for a representation $\chi:\Gamma_g\to\C$ to be the character of an abelian differential, which we shall refer to as \textit{Haupt's conditions}. For a symplectic basis $\{\alpha_1,\beta_1,\dots,\alpha_g,\beta_g\}$  of $\Gamma_g$, we define the volume of $\chi$ as the quantity
\begin{equation}\label{eqvol}
 \textsf{vol}(\chi)=\displaystyle\sum_{i=1}^g \Im\big(\overline{\chi(\alpha_i)}\chi(\beta_i)\big).
\end{equation}
\noindent  The first requirement is that the  volume of $\chi$ is required to be positive with respect to some symplectic basis of $\Gamma_{g}$. Indeed, one can show that this equals the area of the surface $S$ endowed with the translation structure induced by $\omega$. The second obstruction applies when $g\geq 2$, in the case when the image of $\chi:\Gamma_g\to\C$ is a lattice $\Lambda$ in $\C$, since one can show then that the surface $S$ arises from a branched cover of the flat torus $\C/\Lambda$. We then have $\textsf{vol}(\chi)$ is an integer multiple of $\text{{Area}}(\Bbb C/\Lambda)$. Thus, we can state the Haupt's theorem for closed surfaces as follows.

\begin{hthm}[Complex-Analytic statement]\label{haupt:thm} If $g\geq 2$, then the image of $\textsf{\emph{Per}}:\Omega\mathcal{M}_g\to\textsf{\emph{Hom}}(\Gamma_g,\Bbb C)$ consists of those representations $\chi\in\text{\emph{Hom}}(\Gamma_{g},\Bbb C)$ such that
\begin{itemize}
\item $\textsf{\emph{vol}}(\chi)=\displaystyle\sum_{i=1}^g \Im\big(\overline{\chi(\alpha_i)}\chi(\beta_i)\big)>0$,
\item if $\chi(\Gamma)=\Lambda$ is a lattice in $\Bbb C$, then $\textsf{\emph{vol}}(\chi)\ge 2\,\text{\emph{Area}}(\Bbb C/\Lambda)$.\\
\end{itemize}
\end{hthm}

\begin{rmk} 
 In the case  that $g=1$ a representation $\chi:\Gamma_1\longrightarrow \Bbb C$ appears as the character of some holomorphic abelian differential if and only if its image is a lattice in $\Bbb C$ isomorphic to $\Bbb Z^2$ and $\textsf{vol}(\chi)>0$. In this case, the equality $\textsf{vol}(\chi)=\text{Area}(\Bbb C/\Lambda)$ automatically holds. 
 \end{rmk} 

\noindent 
The pair $(X,\omega)$ above determines a \textit{translation structure} on the underlying topological surface, that we already alluded to above. This comprises an atlas of charts to $\C$ that differ by translations on their overlaps (\textit{c.f.} section \ref{gts1}).  Conversely, any translation structure determines such a pair: the atlas equips the surface with a complex structure, and the differential $dz$ on $\mathbb{C}$ descends to an abelian differential $\omega$ on the resulting Riemann surface. From this point of view, Haupt's theorem provides necessary and sufficient conditions for a representation $\chi$ to be the holonomy of some translation structure on a closed surface.

\begin{hthm}[Geometric statement] A character $\chi\in\textsf{\emph{Hom}}(\Gamma_{g},\Bbb C)$ appears as the holonomy of a translation structure on $S_g$ if and only if it satisfies the Haupt's conditions.
\end{hthm}

\noindent 
In the case of a punctured surface $X$, a meromorphic differential with poles at the punctures defines a translation surface with infinite area. These are not new in the theory and they have already been studied by Boissy in his paper \cite{BC} where he calls them  \emph{translation surfaces with poles}. His definition is slightly different from our Definition \ref{tswop} since he requires a pole at every puncture (see  \cite[Convention 2.1]{BC}).  In this work, we shall make use of both the notions  (see Definition \ref{tswp}). \\

\noindent The holonomy representation alone is not sufficient to determine such a translation structure uniquely. Indeed, even for a puncture $P$ with trivial holonomy around it, the flat geometry in a neighborhood of $P$ depends on the prescribed order of the pole  (see section \ref{lgp}). However, this gives us more  freedom in our construction of translation structure with prescribed holonomy $\chi$. 
The geometric version of  Theorem \ref{mainthm} can be reformulated as follows.

\begin{thmaa}\label{main:thma2} Let $S_{g,n}$ be a punctured surface, $n\ge1$. Then any representation $\chi\in\textsf{\emph{Hom}}(\Gamma_{g,n},\,\Bbb C)$ is the holonomy of some translation structure on $S_{g,n}$. In particular, $\chi$ is the holonomy of a translation surface with poles.
\end{thmaa}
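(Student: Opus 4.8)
The plan is to build a translation structure on $S_{g,n}$ by hand, treating the existence of at least one puncture as the crucial extra degree of freedom that lets us absorb any failure of Haupt's positivity and lattice conditions. First I would reduce to the case $g \geq 1$ (the genus-zero case $S_{0,n}$ with $n \geq 1$ is essentially the planar case and can be handled directly: e.g. realize $\chi$ on a slit plane construction, or note that any $\chi \in \textsf{Hom}(\Gamma_{0,n},\C)$ assigns values to loops around punctures that sum to zero, and such a configuration is always realized by a meromorphic differential on $\cp$ with poles at the $n$ marked points). For $g \geq 1$, pick a standard generating set $\alpha_1,\beta_1,\dots,\alpha_g,\beta_g,\delta_1,\dots,\delta_n$ of $\Gamma_{g,n}$ subject to the single relation $\prod[\alpha_i,\beta_i]\prod\delta_j = 1$, which on homology just says $\sum$ of the $\delta_j$-classes is determined; so a representation $\chi$ is just an arbitrary assignment of complex numbers $A_i = \chi(\alpha_i)$, $B_i = \chi(\beta_i)$, $c_j = \chi(\delta_j)$ with $\sum_j c_j = 0$ (up to how one sets up homology vs.\ the free group).

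The core construction I would use is a surgery/plumbing argument. Start from a closed genus-$g$ translation surface, or rather from pieces whose periods we can control freely, and glue in a handle or a polygonal gadget near one puncture that carries a pole. Concretely: take $g$ copies of a basic one-handled piece (a torus with a slit, or a parallelogram with identifications) so that the $i$-th piece realizes $\chi(\alpha_i) = A_i$ and $\chi(\beta_i)=B_i$ — this is always possible because a single handle puts \emph{no} positivity constraint on its two periods once we are allowed to make the handle's contribution to the area negative and then repair it. Connect these pieces by a chain of cylinders/slits to form a genus-$g$ surface with boundary, and cap off using a planar domain with $n$ slits and one (or more) infinite ends, chosen so that the residues/periods around the punctures are exactly the $c_j$. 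The infinite end contributes a higher-order pole and, crucially, \emph{positive} area that dominates, so the total area is automatically positive and the lattice obstruction evaporates because the image of $\chi$ need not be discrete and, even when it is, the cover of $\C/\Lambda$ is ramified at the pole so no volume-quantization holds. This is where the Hurwitz-problem connection alluded to in the abstract would enter if one pushed for sharper control over the stratum, but for mere surjectivity a soft construction suffices.

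I would organize the steps as follows: (1) set up coordinates on $\textsf{Hom}(\Gamma_{g,n},\C)$ and list the generators and relation; (2) handle $g=0$ directly; (3) for $g \geq 1$, build the ``handle gadget'' realizing a prescribed pair $(A_i,B_i)$ with controllable (possibly negative) area contribution, using an explicit parallelogram-with-slit or a cross-shaped polygon; (4) build the ``puncture gadget'' — a planar piece with $n$ boundary slits and an infinite-area end — realizing the $\delta_j$-periods $c_j$ while contributing large positive area; (5) glue everything along matching slits, checking that the gluings are by translations and that the resulting holonomy is exactly $\chi$; (6) verify the combinatorics give a surface of the right genus and puncture count, and identify the infinite end(s) as higher-order poles so the object is a translation surface with poles in the sense of Definition \ref{tswp}.

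The main obstacle will be step (3)–(5): making the gluing genuinely \emph{canonical} and translation-compatible when the prescribed periods are arbitrary complex numbers, including degenerate cases where some $A_i,B_i,c_j$ vanish, are real multiples of each other, or force slits to overlap. One must choose the auxiliary parameters (slit lengths, directions, the ``size'' of the infinite end) generically to avoid self-intersections, and argue that such a generic choice always exists — a transversality-type argument, or an explicit perturbation. A secondary subtlety is ensuring the construction stays within a single connected surface and that no unwanted extra punctures or handles are introduced by the gluing pattern; keeping careful track of the Euler characteristic through each surgery handles this. Once the surface is assembled, reading off that its holonomy is $\chi$ is routine since each generator is represented by a concrete closed curve crossing a known set of gluings, each contributing its prescribed vector.
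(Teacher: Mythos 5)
Your proposal follows essentially the same route as the paper: decompose $\chi$ into its handle part and its puncture part, realize each pair $(\chi(\alpha_i),\chi(\beta_i))$ on a one-handled gadget with no positivity constraint (allowing negative or zero ``volume'' per handle), realize the puncture periods on a planar piece with slits and infinite ends, and glue everything along translation-compatible slits, with the poles absorbing both of Haupt's obstructions. The degenerate cases you flag as the main obstacle (vanishing or collinear periods, overlapping slits) are exactly what the paper resolves by the explicit case-by-case constructions of Section~\ref{htc} and the degenerate-polygon case in the proof of Proposition~\ref{propb}, so your outline is sound but defers precisely the work those lemmata carry out.
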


\noindent We shall adopt this geometric point of view for proving our Theorem \ref{mainthm}. Our strategy consists of first proving our Theorem \ref{main:thma2} for surfaces of type $(1,1)$ and $(0,n)$, see Proposition \ref{proppt} and Proposition \ref{propb} respectively. Subsequently, we shall make use of our understanding of these two particular cases for deriving the main Theorem in full generality. Recall that a topological surface of type $(g,n)$ splits as the connected sum of the closed surface $S_g$ and the $n$-punctured sphere $S_{0,n}$. As a consequence, any given representation $\chi:\Gamma_{g,n}\longrightarrow \Bbb C$ induces two representations
\begin{equation} \label{eqn:tworeps}
\chi_g:\Gamma_{g,0}\longrightarrow\Bbb C \text{ and }  \chi_n:\Gamma_{0,n}\longrightarrow\Bbb C \text{ .}
\end{equation} 
This follows because the target is abelian and any simple closed separating curve has trivial holonomy. If the surface $S_{g,n}$ is not of type $(1,1)$ or $(0,n)$, we make the use of such a splitting and two different scenarios appear depending on whether $\chi_g$ satisfies or not the Haupt's conditions. In section \ref{gos}, we provide an algorithm, in either case,  to construct a translation  structure with the prescribed holonomy.  See the preamble of Part \ref{part1} for a brief outline of this geometrizing process. The translation surface we obtain this way might correspond to an abelian differential that extends holomorphically over some punctures. However, we explain in subsection \ref{mtsgc} how to turn this to a metrically complete translation surface, that is, to a translation surface with poles.

\subsection{Translation structures on punctured surfaces with prescribed zeros and poles} 
For a closed surface of genus $g\geq 2$, the space of abelian differentials $\Omega\mathcal{M}_g$ forms a vector bundle over the moduli space $\mathcal{M}_g$ and the fiber over each Riemann surface $X$ is $\Omega(X)$, a complex vector space of dimension $g$ by Riemann-Roch. The total space $\Omega\mathcal{M}_g$ admits a natural stratification given by the strata $\mathcal{H}(k_1,\dots,k_n)$, enumerated by unordered partitions $k_1+\dots+k_n=2g-2$, where $\mathcal{H}(k_1,\dots,k_n)$ is the set of abelian differentials  having exactly $n$ zeros of order $k_1,\dots,k_n$. For results on the connectedness of such strata, see \cite{KZ}.

\begin{rmk} The fact that the sum of orders of zeros of an abelian differential on a Riemann surface of genus $g$ equals to $2g-2$, can be seen as the formula of Gauss--Bonnet for the singular Euclidean metric induced by $\omega$ over $X$. Here, recall that a zero of  order $k$ is of the form $z^kdz$, and corresponds to a singular (or branch) point of the associated translation structure with magnitude $2(k+1)\pi$. 
\end{rmk}

\noindent For any partition $\kappa$ of $2g-2$, the stratum $\mathcal{H}(\kappa)$ is a complex orbifold of dimension $2g+s-1$, where $s=|\kappa|$ is the length of the partition and hence the number of singular points. Local coordinates in a neighborhood of a generic point are provided by the periods of the  relative homology group $H_1(S,\Sigma;\Bbb Z)$ where $\Sigma=\{P_1,\dots,P_s\}$ is the set of singular points. Although  fibers of the absolute period map (called \textit{isoperiodic foliations}) have been intensively studied in the last years, see for instance \cite{CDF2}, \cite{UH}, \cite{McM}, \cite{McM2}, \cite{YF} and references therein, the image of $\textsf{Per}_{|\mathcal{H}(\kappa)}$ has been determined only recently by independent efforts of Bainbridge-Johnson-Judge-Park in \cite{BJJP} and Le Fils in \cite{LFT}. Their main result, that we now state,   can be seen a refinement of the Haupt's Theorem. 

\begin{thmnn}[Bainbridge-Johnson-Judge-Park, Le Fils]
Let $\kappa=(n_1,\dots,n_s)$ be a partition of $2g-2$. A character $\chi\in\textsf{\emph{Hom}}\big(\Gamma_g,\C\big)$ appears in the image of the mapping $\textsf{\emph{Per}}_{|\mathcal{H}(\kappa)}$ if and only if
\begin{itemize}
\item $\textsf{\emph{vol}}(\chi)>0$,
\item if $\chi(\Gamma)=\Lambda$ is a lattice in $\Bbb C$, then $\displaystyle\textsf{\emph{vol}}(\chi)\ge \big(\max\{n_1,\dots,n_s\}+1\,\big)\,\text{\emph{Area}}(\Bbb C/\Lambda)$.
\end{itemize}
\end{thmnn}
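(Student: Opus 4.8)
The plan is to argue geometrically, in the spirit of the rest of the paper: a character lies in the image of $\textsf{Per}_{\,|\,\mathcal{H}(\kappa)}$ precisely when it is the holonomy of a translation structure on $S_g$ whose cone points have cone angles $2\pi(n_1+1),\dots,2\pi(n_s+1)$. For necessity, let $(X,\omega)\in\mathcal{H}(\kappa)$ have character $\chi$. Then $\textsf{vol}(\chi)$ is the finite positive area of the flat metric $|\omega|$, so $\textsf{vol}(\chi)>0$. If moreover $\chi(\Gamma_g)=\Lambda$ is a lattice, then integrating $\omega$ from a basepoint gives a well-defined non-constant holomorphic map $f\colon X\to E:=\C/\Lambda$, hence a branched cover of degree $\deg f=\textsf{vol}(\chi)/\text{Area}(E)$, a quantity that is moreover a \emph{positive integer} since each summand $\Im\bigl(\overline{\chi(\alpha_i)}\,\chi(\beta_i)\bigr)$ of $\textsf{vol}(\chi)$ is an integral multiple of $\text{Area}(E)$. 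The ramification points of $f$ are the zeros of $\omega$; near a zero of order $n_i$ one has $\omega=z^{n_i}\,dz$ in a suitable coordinate, so its primitive $\int\omega$ is $(n_i{+}1)$-to-$1$ there and $f$ is ramified with index $n_i+1$. Since a ramification index cannot exceed the degree, $\deg f\ge n_i+1$ for each $i$, i.e. $\textsf{vol}(\chi)\ge(\max_i n_i+1)\,\text{Area}(E)$, as claimed.

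For sufficiency, suppose $\chi$ satisfies the two conditions; as $\textsf{vol}(\chi)>0$, the image of $\chi$ is not contained in a real line, and I would distinguish two cases. If $\chi(\Gamma_g)$ is \emph{not} a lattice (so it is dense in $\C$), then by the geometric form of Haupt's Theorem $\chi$ is the holonomy of \emph{some} translation structure on $S_g$; one then moves this structure into $\mathcal{H}(\kappa)$ by the standard local surgeries on translation surfaces --- collapsing saddle connections to merge zeros, and conversely breaking a zero of order $k$ into several zeros of orders summing to $k$ --- both of which alter only relative periods and therefore leave $\chi$ unchanged. Concretely, bring the surface to the minimal stratum $\mathcal{H}(2g-2)$ and then split its single zero into zeros of orders $n_1,\dots,n_s$; alternatively, in the dense case one realizes $\chi$ in $\mathcal{H}(\kappa)$ directly by a sufficiently flexible polygonal construction.

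The substantive case is $\chi(\Gamma_g)=\Lambda$ a lattice, with $d:=\textsf{vol}(\chi)/\text{Area}(\C/\Lambda)$ a positive integer satisfying $d\ge\max_i n_i+1$. I would construct a connected branched cover $f\colon X\to E=\C/\Lambda$ of degree $d$, branched over $s$ chosen points $q_1,\dots,q_s$, with exactly one ramification point over $q_i$, of index $n_i+1$, and unramified elsewhere; then $\omega:=f^{*}(dz)$ lies in $\mathcal{H}(\kappa)$, and its character has image $f_{*}H_1(X;\Z)\subseteq\Lambda$. If one further arranges that $f_{*}$ be surjective onto $H_1(E;\Z)$, this image becomes exactly $\Lambda$; and since the $\mathrm{Sp}(2g,\Z)$-orbit of such an epimorphism $\Gamma_g\to\Z^2$ is detected by a single integer invariant (the value of the symplectic form on the Poincar\'e duals of its two coordinates), one can --- by tuning the monodromy to realize the invariant of $\chi$ --- match $f_{*}$ to $\chi$ up to a change of marking of $S_g$, and thereby obtain a surface in $\mathcal{H}(\kappa)$ with character exactly $\chi$. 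Thus everything reduces to a Hurwitz-type existence statement for branched covers of the torus, as foreshadowed in the abstract.

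That statement is the following: with $\pi_1\bigl(E\setminus\{q_1,\dots,q_s\}\bigr)=\langle a,b,c_1,\dots,c_s\mid[a,b]\,c_1\cdots c_s=1\rangle$, there must exist a transitive homomorphism $\rho$ to the symmetric group $S_d$ with each $\rho(c_i)$ of cycle type ``one $(n_i+1)$-cycle and $d-n_i-1$ fixed points'', chosen so that the associated $f_{*}$ is onto on $H_1$ and realizes the prescribed symplectic invariant. What makes this feasible on the torus, rather than on the sphere, is that the two extra generators $a,b$ impose only the constraint that $\bigl(\rho(c_1)\cdots\rho(c_s)\bigr)^{-1}$ be a commutator in $S_d$; and since $\operatorname{sgn}\rho(c_i)=(-1)^{n_i}$ while $\sum_i n_i=2g-2$, this element is even, hence lies in $[S_d,S_d]$, so such $a,b$ always exist. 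The hard part --- and the crux of the theorem --- is to arrange \emph{all} of this at once (transitivity, homological surjectivity, the symplectic invariant) under the \emph{sole} hypothesis $d\ge\max_i n_i+1$, the bound that is already forced merely by the existence of the prescribed cycle types in $S_d$. Proving that this necessary bound is also sufficient requires a careful analysis of products of permutations of prescribed cycle type --- handling in particular small values of $d$, and cycle data that on its own fails to generate a transitive subgroup --- and this permutation-theoretic bookkeeping is where I expect the main difficulty to lie.
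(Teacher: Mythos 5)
This statement is not proved in the paper at all: it is quoted verbatim from Bainbridge--Johnson--Judge--Park \cite{BJJP} and Le Fils \cite{LFT}, so there is no in-paper argument to compare yours against. Judged on its own terms, your write-up is a correct and well-organized \emph{reduction}, but not a proof: the necessity direction is complete (the degree computation $\deg f=\textsf{vol}(\chi)/\text{Area}(E)$ and the bound $\deg f\ge n_i+1$ from the ramification indices are exactly right), while both branches of the sufficiency direction stop short of the actual content of the theorem.

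In the non-lattice case, the gap is the sentence ``bring the surface to the minimal stratum and then split its single zero.'' Splitting a zero is indeed a local surgery preserving absolute periods, but \emph{merging} all zeros of an arbitrary Haupt representative into a single zero while keeping the character fixed is a global statement, not a local one: it amounts to showing the isoperiodic leaf through the Haupt surface meets $\mathcal{H}(2g-2)$, which is essentially the theorem you are trying to prove in this case. (Your fallback ``a sufficiently flexible polygonal construction'' is where the real work lives and is not supplied.) A small side remark: a non-lattice subgroup of $\C$ with $\textsf{vol}(\chi)>0$ need not be dense in $\C$ --- its closure can be of the form $\R\times\Z$ --- though this does not affect the logic, since Haupt's theorem only distinguishes the lattice case.

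In the lattice case you correctly reduce to a Hurwitz-type existence problem over the torus with one nontrivial cycle of length $n_i+1$ over each branch point, and you correctly observe that the commutator relation is satisfiable for parity reasons. But you then explicitly defer the crux: producing a monodromy $\rho\colon\pi_1(E\setminus\{q_i\})\to S_d$ that is simultaneously transitive, has the prescribed cycle types, induces a surjection on $H_1$, and realizes the symplectic invariant of $\chi$, under the sole hypothesis $d\ge\max_i n_i+1$. This combinatorial existence statement (together with the claim that the $\mathrm{Sp}(2g,\Z)$-orbit of a surjection $\Gamma_g\to\Lambda$ is classified by the single integer $\textsf{vol}(\chi)/\text{Area}(\C/\Lambda)$, which you assert but do not justify) is precisely the substance of \cite{BJJP} and \cite{LFT}. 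As it stands, your argument proves necessity and correctly maps out the strategy for sufficiency, but leaves the decisive steps unproven.
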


\medskip

\noindent In the case of meromorphic abelian differentials, we shall introduce the following stratification of $\Omega \mathcal{M}_{g,n}$. Let $\kappa=(d_1,\dots,d_k)$ and $\nu=(p_1,\dots,p_n)$ be two tuples of positive integers satisfying the degree condition
\begin{equation}\label{zpec}
\sum_{i=1}^k d_i-\sum_{j=1}^n p_j=2g-2.
\end{equation}
\noindent We shall denote by $\mathcal{H}(\kappa;\nu)=\mathcal{H}(d_1,\dots,d_k;p_1,\dots,p_n)$ the space of meromorphic abelian differentials with zeros of degrees $d_1,\dots,d_k$ and poles of degrees $p_1,\dots,p_n$. This is a complex orbifold of dimension $2g+s-2$ where $s=|\kappa|+|\nu|$, see \cite{BCGGM} and \cite{BC}.  Determining the image of $\textsf{Per}_{|\mathcal{H}(\kappa;\nu)}$, the period map restricted to such a stratum, turns out a more challenging and subtle problem, and is the goal of the second part of the paper. For recent related work concerning the \textit{residual map} on such strata, see \cite{GT2}.\\

\noindent The first case we shall consider is that of trivial holonomy, that is, when the period character is the trivial representation.  In this case our second result, handled by Propositions \ref{thm:mainthm} and \ref{thm:mainthm2}, provides a complete answer.

\begin{thma}[Trivial holonomy]\label{main:thmb} Let $g\geq 0$, and let $\nu = (p_1, \ldots, p_n)$ and $\kappa = ( d_1, \ldots, d_k )$ be positive integer tuples satisfying \eqref{zpec}.  
There exists a meromorphic differential in the stratum $\mathcal{H}(\kappa; \nu)$ having trivial holonomy, if and only if 
\begin{itemize}
\item[(i)] $p_i >1$ for  each $1 \leq i \leq n$, 
 \item[(ii)] $d_j \leq \displaystyle\sum_{i=1}^{n}p_i - n - 1$ for each $1 \leq j \leq k$, and 
 \item[(iii)] if $g>0$, $k>1$ whenever $n>1$.
 \end{itemize}
 \end{thma}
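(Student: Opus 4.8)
The plan is to recast the statement as an existence problem for branched coverings. A meromorphic differential $\omega$ on $X$ has trivial holonomy if and only if all of its periods --- including the residues at the poles --- vanish, that is, if and only if $\omega$ is exact: $\omega=df$ for a meromorphic function $f\colon\overline{X}\to\cp$ on the closed Riemann surface $\overline{X}$ obtained by filling in the $n$ punctures; geometrically, the resulting translation structure is precisely a branched covering of the plane $\C$. Under this dictionary the data of the stratum become ramification data for $f$: the poles of $\omega$ are the poles of $f$, with $p_i=\operatorname{ord}_{P_i}(f)+1$; the zeros of $\omega$ are the critical points of $f$ lying over $\C$, with $d_j+1$ the corresponding ramification index; the degree of $f$ equals $N:=\sum_i(p_i-1)=\sum_i p_i-n$; and Riemann--Hurwitz for $f$ is \emph{equivalent} to the degree condition \eqref{zpec}, so that the genus of $\overline{X}$ is automatically $g$ once the branch data are prescribed. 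Thus the theorem becomes: decide when there is a connected branched cover of $\cp$, necessarily of genus $g$, with one branch point over $\infty$ of ramification profile $(p_1-1,\dots,p_n-1)$ and $k$ further branch points whose profiles each consist of a single cycle, of lengths $d_1+1,\dots,d_k+1$.

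For the necessity of the three conditions I would argue as follows. Condition (i) holds because a simple pole has nonzero residue, which is incompatible with trivial holonomy --- equivalently, $f$ must genuinely have a pole at each $P_i$, so $p_i=\operatorname{ord}_{P_i}(f)+1\ge 2$. Condition (ii) is the obvious inequality $d_j+1\le\deg f=N$. For condition (iii), suppose $k=1$; then $\deg f\ge 2$, and $f$ is branched over at most two points of $\cp$ --- the unique finite critical value and possibly $\infty$ --- while a connected branched cover of $\cp$ over at most two points is cyclic, hence totally ramified over each of its branch points. Since $f$ has poles, $\infty$ lies in its branch locus (if not, $f$ would be unramified over $\infty$ and totally ramified over two \emph{finite} points, producing at least two critical points over $\C$ and so $k\ge 2$), and being totally ramified over $\infty$ means $n=1$. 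Hence $n>1$ forces $k>1$, a fortiori when $g>0$.

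For sufficiency, assume (i)--(iii). In genus $0$, I would first realise a model with only simple zeros: a connected degree-$N$ cover $f_0\colon\cp\to\cp$ with profile $(p_1-1,\dots,p_n-1)$ over $\infty$ and $\sum_i p_i-2$ simple branch points over distinct finite values. This is the classical Hurwitz existence problem for genus-$0$ covers having one arbitrary branch point and all others simple; it is solvable --- for instance, writing $f_0=P/\prod_i(z-b_i)^{p_i-1}$ one chooses the polynomial $P$ and the points $b_i$ so that the numerator of $df_0$ acquires $\sum_i p_i-2$ distinct simple roots, and the surplus of simple branch points makes the cover connected (only $n-1$ suitably placed transpositions are needed to fuse the $n$ cycles of the monodromy over $\infty$, and $\sum_i p_i-2\ge n-1$). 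The translation surface underlying $f_0$ then carries the prescribed poles together with $\sum_i d_i=\sum_i p_i-2$ simple zeros; colliding these in groups of sizes $d_1,\dots,d_k$ --- a local surgery that keeps the surface a branched cover of $\C$, hence keeps the holonomy trivial --- yields a differential in $\mathcal H(\kappa;\nu)$, the only constraint being that no group consist of more than $N-1$ zeros, which is exactly (ii). In genus $g>0$ one necessarily has $N\ge 2$ (by \eqref{zpec} and (ii)), so I would take the genus-$0$ model above with the prescribed poles and $\sum_i p_i-2$ simple zeros and add $g$ handles one at a time by the standard slit-and-reglue surgery performed on two sheets of the cover over a small disk in $\C$ disjoint from all branch values: each such operation creates a handle together with two new simple zeros while leaving the holonomy trivial and the poles untouched. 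After $g$ steps one has a genus-$g$ translation surface with the prescribed poles and $\sum_j d_j$ simple zeros, which are then collided into groups of sizes $d_1,\dots,d_k$ exactly as before; here (iii) --- equivalently, that $k\ge 2$ whenever $n\ge 2$ --- is what ensures that the genus-$0$ base model with the given poles exists, the excluded case being precisely the $k=1$, $n\ge 2$ configuration ruled out in the necessity argument.

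The step I expect to be the main obstacle is the sufficiency direction, and within it the production of a \emph{connected} genus-$0$ model in the tight regimes --- few branch points, or some $d_j$ close to $N-1$ --- where the soft numerical count does not by itself guarantee existence and the branch cuts must be placed with care; this is the genuine Hurwitz-existence content of the theorem, and the point at which the connection with the Hurwitz problem mentioned in the introduction enters.
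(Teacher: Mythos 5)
Your necessity argument is essentially the paper's: the developing map descends to a meromorphic function and extends to a branched cover $S_g\to\cp$, and (i)--(iii) are read off from its ramification data (Section \ref{sec:necsuftriv} and Proposition \ref{singlezero} do exactly this, with (iii) obtained from a degree count rather than your cyclic-cover observation, but the content is the same).

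The sufficiency direction has a genuine gap at the collision step. You build a model with $\sum_j d_j$ \emph{simple} zeros and then assert that colliding them in groups of sizes $d_1,\dots,d_k$ is ``a local surgery that keeps the surface a branched cover of $\C$,'' subject only to the bound $d_j\le N-1$. But merging is the \emph{inverse} of splitting, and only splitting is local and unconditional: the model in Section \ref{sec:zerosplit} shows that near a zero of order $d$ one can always manufacture $d$ simple zeros, not that $d$ simple zeros scattered over the surface (including those created by your handle surgeries) can be brought together into one zero of order $d$. In the trivial-holonomy locus all absolute periods vanish, so the only moduli are relative periods, i.e.\ the critical values of $f$; making critical \emph{values} coincide does not make critical \emph{points} coincide, so no deformation argument rescues this. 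Equivalently, in monodromy terms, merging a group of $d_j$ simple branch points into a single branch point with profile $(d_j+1,1,\dots,1)$ requires the corresponding $d_j$ transpositions to multiply to a $(d_j+1)$-cycle, and arranging this simultaneously for every group, compatibly with the prescribed profile over $\infty$, transitivity, and the product-one relation, is precisely the Hurwitz existence problem for the data the theorem is about --- which your base model, chosen only to be connected with distinct simple branch values, does not control. The paper avoids this entirely by never merging: it glues the pieces $(\C,z^{p_i-2}dz)$ by sequential slit constructions so that cone angle accumulates at the slit endpoints into zeros of at least the required orders, and thereafter only ever \emph{splits} zeros (Sections \ref{sec:necsuftriv} and \ref{sec:trivholposgen}, the latter via a reduction on the tuple $\kappa$ followed by handle attachments that undo the reductions). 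Your two-sheet handle surgery is itself fine --- it does produce a handle with trivial holonomy and two new simple zeros --- but it feeds into the same unjustified merging step; and your closing paragraph misplaces the difficulty, since the connected genus-zero cover with one arbitrary profile over $\infty$ and all other branch points simple is the easy part, while the merging is where the theorem actually lives.
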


\noindent   Note that the holonomy around a pole of order one (also called a \textit{simple} pole) is proportional to the residue (see section \ref{lgp}); thus for trivial holonomy the poles must have order at least two, which is requirement (i) above. Moreover, for a translation structure with trivial holonomy, the developing map descends to a holomorphic function $S_{g,n}\longrightarrow \C$ which extends to a branched covering $S_g\longrightarrow \cp$ by mapping all the punctures (corresponding to the poles) to the point $\infty\in\cp$. The necessity of the rest of the requirements above are imposed by this branched covering. For instance, a meromorphic differential with trivial character has a single zero if and only if $(X,\omega)=(\C,\,z^d\,dz)$, see Proposition \ref{singlezero}. The proof of the Theorem above proceeds with first considering the case of the punctured sphere. In this special case, we consider the family $\mathcal{F}=\{(\C, z^{p_i-2}\,dz)\}$ of translation surfaces and the basic idea is to glue them in an inductive process using the slit construction (\textit{c.f.} subsection \ref{sec:seqslit}) followed by splitting a zero  (\textit{c.f.} subsection \ref{sec:zerosplit}). In the case of positive genus surfaces, we first construct a translation structure on a punctured sphere for an associated tuple $\kappa^\prime$, and add $g$ handles; our argument is specific to the trivial representation, though, and does not apply otherwise. \\

\noindent For non-trivial representations, our main result is Theorem \ref{thm:nthpgs}, stated here as follows:

\begin{thma}\label{main:thmc}
Let $\chi\in\textsf{\emph{Hom}}(\Gamma_{g,n}, \mathbb{C})$ be a non-trivial representation and let $\kappa = (p_1, p_2, \ldots, p_n)$ and $\nu = (d_1, d_2, \ldots d_k)$  be two tuples of positive integers satisfying \eqref{zpec},  such that $p_i \geq 2$ whenever $\chi_n (\gamma_i) =0$, and one of the following properties hold:
    \begin{itemize}
        \item[i.] the $\chi_n$ determined by $\chi$  -- see \eqref{eqn:tworeps} --  is trivial,
        \item[ii.] at least one of $p_1, p_2, \ldots p_n$ is different from 1,
        \item[iii.] $\textsf{\emph{Im}}(\chi_n)$ is not contained in the $\mathbb{Q}$-span of some $c \in \mathbb{C}$,
        \item[iv.] $\textsf{\emph{Im}}(\chi)$ is not contained in the $\mathbb{Q}$-span of some $c \in \mathbb{C}$.
    \end{itemize}
Then $\chi$ appears as the holonomy of a translation structure with poles on $S_{g,n}$ induced by a meromorphic differential in $\mathcal{H}(\kappa; \nu)$. 
\end{thma}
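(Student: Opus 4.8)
The plan is to realize $\chi$ geometrically, building from elementary pieces a translation surface with poles in the prescribed stratum, in the spirit of our proof of Theorem~\ref{main:thma2}; as always, the presence of a pole supplies a region of infinite area in which periods can be prescribed with enough freedom. The argument splits the data into a punctured-sphere part and a genus part: using the connected-sum decomposition $S_{g,n}=S_g\#S_{0,n}$, the character $\chi$ determines $\chi_g$ and $\chi_n$ as in \eqref{eqn:tworeps}, and I would first produce a meromorphic differential on $S_{0,n}$ with character $\chi_n$ and poles of the prescribed orders $p_1,\dots,p_n$, its zeros concentrated as coarsely as possible (ideally into a single zero), and then attach $g$ handles, one for each pair in a symplectic basis $\{\alpha_1,\beta_1,\dots,\alpha_g,\beta_g\}$ of $\Gamma_g$. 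Each handle is glued along a pair of parallel slits that may be placed freely inside an infinite-area region (furnished by a pole of order at least two, or else by a handle already built carrying a non-arithmetic period), and its two new homology classes are developed onto $\chi_g(\alpha_i)$ and $\chi_g(\beta_i)$ once the basis is chosen so that the slit vectors fit. Each handle raises by two the order of the zero it meets, so after the $g$ attachments the total order of the zeros equals $\sum_i p_i-2+2g=\sum_j d_j$, in accordance with \eqref{zpec}; one checks routinely that a handle attachment leaves unchanged the holonomy around every puncture.

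The substance of the proof, and the place where hypotheses (i)--(iv) are needed, is the construction on the punctured sphere. The building blocks are: the members $(\mathbb{C},z^{p_i-2}\,dz)$ of the family $\mathcal{F}$ at a puncture with $\chi_n(\gamma_i)=0$, where $p_i\ge2$ is forced and assumed; a half-infinite cylinder of circumference $|\chi_n(\gamma_i)|$ at a puncture carrying a simple pole; and, at a puncture with $p_i\ge2$ and $\chi_n(\gamma_i)\neq0$, a model surface with a single pole of order $p_i$ and the prescribed holonomy around it. These are assembled by the sequential slit construction of subsection~\ref{sec:seqslit}, the slit data being chosen to realize the remaining periods of $\chi_n$ and the slit endpoints being merged into one zero. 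If $\chi_n$ is trivial the sphere is built as in Theorem~\ref{main:thmb} and all of $\chi$ is carried by the handles above (case (i)); if some $p_i\ge2$ its infinite region gives room to pick slit directions generically (case (ii)); and if $\operatorname{Im}(\chi_n)$ --- or merely $\operatorname{Im}(\chi)$ --- is not contained in a line $\mathbb{Q}c$, one either picks generic slit directions directly or first builds one handle carrying a period off $\mathbb{Q}c$ and argues relative to it (cases (iii) and (iv)). In the complementary, rigid situation --- every pole simple and the entire holonomy arithmetic --- no generic slits exist: such a surface must be pulled back from the cylinder $(\mathbb{C}^*,c\,dz/z)$ along a map $f$, so that $\omega=c\,df/f$, and whether the coarse zero pattern is realizable becomes a Hurwitz-type existence question for branched covers with prescribed branch profile --- the link with the Hurwitz problem advertised in the abstract.

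It remains to fix the partition of zeros: a zero of order $d$ splits into zeros of orders $d'+d''=d$ by the surgery of subsection~\ref{sec:zerosplit} without altering the character, the poles, or the genus, so repeated splitting of the zeros produced above yields the prescribed partition $(d_1,\dots,d_k)$, while any auxiliary marked point is absorbed into a neighbouring zero or pole. The completion procedure of subsection~\ref{mtsgc} then upgrades the result to a genuine translation surface with poles, of genus $g$, with poles of orders $p_1,\dots,p_n$ and zeros of orders $d_1,\dots,d_k$, on which $dz$ descends to the sought meromorphic differential $\omega$ with $\textsf{Per}(X,\omega)=\chi$. \emph{The main obstacle} is the rigid regime of the previous paragraph: lacking generic slits one is committed to an explicit branched-cover construction, and the delicate point is to show that each of (ii), (iii), (iv) either provides a branch profile realizing the desired coarse zero or provides a non-arithmetic period with which one bootstraps back to the flexible regime; a secondary point requiring separate care is the low-complexity cases, in particular $n=1$ (where $\chi_n$ is automatically trivial), $n=2$, and small $g$.
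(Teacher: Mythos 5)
Your skeleton --- realize $\chi_n$ on the punctured sphere with the prescribed pole orders and a single zero, attach $g$ handles by slit or parallelogram surgeries whose periods are $\chi(\alpha_i),\chi(\beta_i)$, then split the resulting zero into the partition $(d_1,\dots,d_k)$ --- is indeed the paper's strategy, and your degree count $\sum_i p_i-2+2g=\sum_j d_j$ is correct. The gap is in the mechanism you invoke for fitting the handles in the hard cases. When every $p_i=1$ (which is where hypotheses (iii) and (iv) are actually needed), there is no pole of order at least two and hence no embedded Euclidean plane or quadrant; the only non-compact regions of the sphere construction are half-infinite cylinders of \emph{finite} circumference $|\chi_n(\gamma_i)|$, and a handle whose periods are large simply does not fit inside such a cylinder. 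Your proposed escape --- that ``a handle already built carrying a non-arithmetic period'' furnishes ``an infinite-area region'' in which subsequent slits may be placed freely --- is false: a handle surgery is supported in a bounded region and creates no new infinite-area locus. The mechanism the argument actually requires, and which your sketch only gestures at with ``once the basis is chosen so that the slit vectors fit,'' is the action of the mapping class group on $\Gamma_{g,n}$: one changes the symplectic basis so that all handle periods are non-zero (Lemmata \ref{lem:allhandholnonzero} and \ref{lem:handholnonzero}), and in the collinear situation one runs the Euclidean algorithm by Dehn twists (Lemmata \ref{lem:irrsmallhol} and \ref{lem:handholirr}, which use precisely the irrationality guaranteed by (iii)/(iv)) to make the handle periods arbitrarily small so the slits fit inside a cylindrical end; for non-collinear periods one instead passes to a different fundamental domain of the cylinder and twists until the translates of the parallelogram by the circumference are disjoint. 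Without some version of this basis-change argument, cases (iii) and (iv) do not go through.

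A secondary confusion: under hypotheses (i)--(iv) the ``rigid regime'' you flag as the main obstacle (all simple poles and $\textsf{Im}(\chi)$ contained in $\mathbb{Q}c$) never occurs --- it is exactly the excluded case, handled separately as Theorem \ref{main:thme} via branched covers of the cylinder --- so no Hurwitz-type existence argument enters the proof of this theorem at all. The genuine difficulty is the geometric fitting of handles into cylindrical ends described above, not a branch-profile realizability question.
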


\noindent  This provides sufficient conditions for the realizability of a large class of non-trivial representations $\chi$. Define a representation $\chi \in \textsf{{Hom}}(\Gamma_{g,n}, \mathbb{C})$ to be \textit{rational} if it is non-trivial and its image is contained in the $\Q$-span of some complex number $c\in\C^*$. Then the only case not covered by Theorem \ref{main:thmc} is that of rational representations where all the poles are required to be simple. This shall be  handled by Theorem \ref{main:thme} that we shall discuss soon.

\noindent Once again, in the proof of Theorem \ref{main:thmc}, the case of punctured spheres is the first step: in section \ref{sec:nontrivholsphere} we prove it in this case, according to the items $(ii)-(iv)$. Note that when $g=0$, $\chi=\chi_n$ and item $(i)$ is subsumed by Theorem \ref{main:thmb}. In general, the cases corresponding to the items $(i)$ and $(ii)$ are easier, since the geometry around a pole of order at least two comprises Euclidean plane(s) glued together and therefore there is enough room to ``add handles" of the desired holonomy, by surgeries introduced  in subsection \ref{sec:seqslit}. The items $(iii)$ and $(iv)$ in the case when $g>0$ and  all the poles are simple, are the most difficult to treat, since the handles then need to ``fit" inside the corresponding cylindrical ends. For this, we find a suitable symplectic basis of $\Gamma_g<\Gamma_{g,n}$; this  motivates our Lemmata in subsection \ref{sec:mcgaction}.  Note that it suffices for our problem to find such a suitable basis of $\Gamma_{g,n}$; a change of basis is effected by an element of the mapping class group of the surface $S_{g,n}$, so by pulling back the translation structure we obtain one with the desired holonomy  (\textit{c.f.} the discussion at the beginning of section 11.1). \\

\noindent In the remaining case of rational representations and all simple poles,  we can first reduce to the case of an \textit{integral} representation, that is, we can assume without loss of generality that $\textsf{Im}(\chi)=\Z$. We can also assume that the holonomy around each puncture is non-trivial; a puncture is said to be \textit{positive} if the holonomy around it is  translation by a positive integer, and \textit{negative} otherwise.  Our result for this final case provides a succinct necessary and sufficient criterion for realizing such a representation:

\begin{thma}[Integral holonomy] \label{main:thme}  Suppose  $\chi:\Gamma_{g,n} \to \mathbb{Z}$ is a non-trivial surjective representation. Let the holonomies around the positive punctures be given by the integer-tuple $\lambda \in \mathbb{Z}_+^k$ and the holonomies around the negative punctures be given by $-\mu$ where $\mu \in \mathbb{Z}_+^l$.  
Then there exists a translation structure on $S_{g,n}$ with holonomy $\chi$, with simple poles at the punctures and a set of $r$ zeros with prescribed orders $(d_1,d_2,\ldots, d_r)$ that satisfies the degree condition 
\begin{equation} \label{degcond}
\displaystyle\sum\limits_{i=1}^r d_i = 2g-2 + n
\end{equation}
 if and only if 
   \begin{equation}\label{gcomb0} 
    \sum\limits_{i=1}^k \lambda_i=\sum\limits_{j=1}^l \mu_j> \max\{d_1,d_2,\ldots, d_r\}.
\end{equation}
 %   \item There is a branched covering $f:S_{g} \longrightarrow \Bbb S^2$  with two special branch values $p_+$ and $p_-$ such that
 %   \begin{itemize}
%        \item $f^{-1}(p_+)$ is a set of $k$ points with local degrees given by $\lambda$, 
  %      \item $f^{-1}(p_-)$ is a set of $l$ points with local degrees given by $\mu$, where $k+l = n$, and 
   %     \item apart from these, there are exactly $r$ other branch points on $S_g$ with local degrees $(d_1+1,\ldots, d_r+1)$.
  %  \end{itemize} 
\end{thma}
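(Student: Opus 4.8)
The plan is to realize the representation $\chi:\Gamma_{g,n}\to\Z$ by building a translation surface out of explicit Euclidean pieces — infinite half-planes and cylinders — and gluing them along slits. Since all poles are required to be simple, the end near a negative (resp. positive) puncture with holonomy $\mp\mu_j$ (resp. $+\lambda_i$) must be a half-infinite cylinder of circumference $\mu_j$ (resp. $\lambda_i$); the total "flux" entering from the positive ends must equal the total flux leaving from the negative ends, which forces $\sum\lambda_i=\sum\mu_j$. This explains the equality in \eqref{gcomb0}. For the inequality, recall from the discussion preceding Proposition \ref{singlezero} that when the holonomy is integral and all poles simple, the developing map is (after descending) a branched cover of $\C/\Z=\C^*$ with prescribed ramification, and a zero of order $d$ is a branch point of local degree $d+1$; the Riemann–Hurwitz/Hurwitz-existence bookkeeping, together with the constraint that each cylindrical end has circumference equal to the corresponding holonomy, gives $\max\{d_i\} < \sum\lambda_i = \sum\mu_j$. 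So I would first establish necessity by this covering-space argument, treating the developing map as a map to the cylinder $C=\C/\Z$ of circumference $m:=\sum\lambda_i$ (i.e., the quotient further by the sublattice corresponding to $m\Z$ is not needed — rather the surface maps to $\C/\Z$, and the cyclic covers near the ends have degrees $\lambda_i,\mu_j$), and reading off that a single zero of order $\ge m$ cannot occur because the relevant polynomial/ramification count fails.

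For sufficiency — the substantial direction — I would give an explicit construction. Start from the cylinder $\C/\Z$ and its canonical differential $dz$; this is a genus-zero translation surface with two simple poles (the two ends) of residues $+1$ and $-1$. The idea is to assemble the desired surface inductively by the two surgeries already developed in the paper: the sequential slit construction of subsection \ref{sec:seqslit}, which lets us connect several such cylinders (or cylinders of larger circumference, obtained by taking cyclic covers) along horizontal slits, thereby creating zeros of controlled order; and the zero-splitting move of subsection \ref{sec:zerosplit}, which breaks one zero of order $d$ into several zeros whose orders sum to $d$. Concretely: take half-infinite cylinders of circumferences $\lambda_1,\dots,\lambda_k$ (the positive ends) and $\mu_1,\dots,\mu_l$ (the negative ends); since $\sum\lambda_i=\sum\mu_j=:m$, I can first glue all the positive half-cylinders to a common circle of length $m$ and all the negative ones likewise, so the "core" is a compact translation surface with boundary, and the condition $m>\max d_i$ is exactly what guarantees there is enough room along a circle of length $m$ to perform the slit identifications that (a) produce the zeros of the prescribed orders $d_1,\dots,d_r$ and (b) add the $g$ handles needed to reach genus $g$. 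Handles are added by the surgery of subsection \ref{sec:seqslit} as in the proof of Theorem \ref{main:thmc}; each handle contributes $+2$ to $\sum d_i$, matching \eqref{degcond} since the bare cylinder-gluing already accounts for the $n$ from the $n$ simple poles minus $2$ from Euler characteristic.

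The bookkeeping to get exactly the partition $(d_1,\dots,d_r)$ is where I would lean on zero-splitting: it is cleanest to first build a surface with a single zero of order $2g-2+n$ (subject only to $m > 2g-2+n$? — no: $m$ only needs to exceed $\max d_i$, so one cannot always make a single zero), so more carefully, I would build the zeros in groups, realizing at each slit a zero of order at most $m-1$ and using \eqref{gcomb0} to ensure each individual $d_i\le m-1$ can be realized, then splitting as needed. The main obstacle I anticipate is precisely this combinatorial/geometric matching: showing that the inequality $\max\{d_i\} < m$ is not merely necessary but \emph{sufficient} to carry out all the slits and handle-additions simultaneously on a circle of circumference $m$ without the slits overlapping or forcing an unwanted identification of the two boundary circles — i.e., that the "room" counted by $m$ is enough for every required surgery at once. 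I expect this to require a careful inductive ordering of the surgeries (add handles first on a surface with one large zero, then split that zero, or vice versa) together with a limiting/perturbation argument to keep the pieces in general position, analogous to but more delicate than the arguments for Theorems \ref{main:thmb} and \ref{main:thmc}. The equality half of \eqref{gcomb0} and condition \eqref{degcond} should drop out as consistency conditions with essentially no extra work.
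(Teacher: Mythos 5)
Your necessity argument is essentially the paper's: the developing map descends to a degree-$d$ branched cover of the cylinder $\C/\Z$ (extended to $S_g\to\Bbb S^2$), a zero of order $d_i$ is a branch point of local degree $d_i+1$, and local degrees cannot exceed the degree $d=\sum\lambda_i=\sum\mu_j$. That part is fine and matches Proposition \ref{prop:bc} and Corollary \ref{cor:pgen}.

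The sufficiency direction, however, has a genuine gap that you yourself flag but do not close. You correctly observe that one cannot in general build a single zero of order $2g-2+n$ (since \eqref{gcomb0} only bounds $\max d_i$, not the sum), and then say the required ``combinatorial/geometric matching'' will need ``a careful inductive ordering of the surgeries'' — but that ordering is precisely the content of the proof, and your sketch of gluing all positive half-cylinders to one circle of length $m$ and all negative ones to another does not supply it: with that single core there is still no mechanism for distributing a total zero order of $2g-2+n$ (which may far exceed $m$) among zeros each of order less than $m$ while simultaneously creating $g$ handles with the right holonomy. The paper's resolution is an induction on the number $r$ of zeros in which the surface is \emph{decomposed}, not assembled around one core: one writes $d_1'=\sum_{i\ge2}d_i$, builds one translation surface carrying the single zero of order $d_1$ (genus $g-h$, poles $\lambda_1,\dots,\lambda_k$ plus one auxiliary pole of residue $-\sum_i\lambda_i$) and another carrying the remaining $r-1$ zeros (genus $h$, poles $-\mu_1,\dots,-\mu_l$ plus an auxiliary pole of residue $+\sum_j\mu_j$), with $h$ chosen by a parity analysis so that each piece satisfies the degree condition for its own genus; the two auxiliary cylindrical ends have equal circumference $\sum_i\lambda_i$ and are truncated and glued. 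When the parities do not match, one lowers $d_1$, $d_2$, $\lambda_k$, $\mu_l$ by one and restores them by splicing in a unit cylinder, with sub-cases when $\lambda_k=1$ or $\mu_l=1$. Separately, the single-zero base case for positive genus requires its own reduction on the residue tuple (the ``$g$-combinatorially admissible'' condition) and handle-attachment along vertical rays into the cylindrical ends, and the genus-zero base case is not constructed from scratch at all but imported from Gendron--Tahar \cite[Theorem 1.2(ii)]{GT}. None of these ingredients appears in your proposal, so as written the ``if'' direction is not proved.
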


\smallskip 

\noindent Our proof of Theorem \ref{main:thme} uses the recent work of \cite{GT}, who proved the above theorem for the case when $g=0$.\\

\noindent As a corollary of Theorems \ref{main:thmb}, \ref{main:thmc} and \ref{main:thme}, in Appendix \ref{pmdpp} we prove a refinement of Theorem \ref{mainthm} where one prescribes, in addition, the orders of the poles at the punctures. For surfaces with at least $3$ punctures, this can be stated as follows.

\begin{cora}\label{cor:mdpp}   Let $S_{g,n}$ be a surface of genus $g$ and $n\geq 3$ punctures. Let $p_1,p_2,\ldots p_n$ be positive integers assigned to each puncture,  and let $\mathcal{I} = \{ i \ \vert\ p_i = 1\}$.  Then a representation  $\chi:\Gamma_{g,n} \to \Bbb C$  is  the character of a meromorphic differential  $\omega$ on some  $X\in \mathcal{M}_{g,n}$  with a pole of order $p_i$ at the $i$-th puncture for $1\leq i\leq n$,  if and only if the prescribed holonomy around any simple pole is non-trivial, that is, $\chi(\gamma_i)  \neq 0$ for any $i\in \mathcal{I}$, where $\gamma_i$ denotes a loop around the $i$-th puncture. 
\end{cora}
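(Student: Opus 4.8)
The plan is to assemble the corollary from the three main theorems already proved, organizing the argument according to whether the prescribed holonomy $\chi$ is trivial, non-trivial but rational, or non-trivial and non-rational. First I would dispense with necessity: if $p_i = 1$ for some $i \in \mathcal{I}$, then near that puncture the differential has a simple pole, and (as recalled in the paragraph after Theorem \ref{main:thmb}) the holonomy around a simple pole is proportional to the residue, which is nonzero; hence $\chi(\gamma_i) \neq 0$. This is immediate. The bulk of the work is sufficiency: assuming $\chi(\gamma_i) \neq 0$ for every $i \in \mathcal{I}$, I must produce a meromorphic differential on some $X \in \mathcal{M}_{g,n}$ realizing $\chi$ with a pole of order exactly $p_i$ at the $i$-th puncture.

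For the case $\chi$ trivial, I would invoke Theorem \ref{main:thmb}: since trivial holonomy forces $\mathcal{I} = \emptyset$ (all $p_i \geq 2$, which is hypothesis (i) there), condition (i) of Theorem \ref{main:thmb} is automatic; I then need to choose the tuple $\kappa = (d_1, \ldots, d_k)$ of zero-orders, constrained only by the degree condition \eqref{zpec}, so as to satisfy conditions (ii) and (iii). With $n \geq 3$ punctures all of order $\geq 2$, the quantity $\sum p_i - n - 1 \geq n - 1 \geq 2$ is comfortably large, and one can spread the total zero-degree $2g-2+\sum p_i$ over enough small-order zeros to meet (ii); condition (iii) ($k > 1$ when $g > 0$) is likewise arranged by not using a single zero. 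For the case $\chi$ non-trivial and non-rational, or non-trivial with $\mathrm{Im}(\chi_n)$ not contained in a $\mathbb{Q}$-line, or with some $p_i \neq 1$ — these are precisely items (ii), (iii), (iv) of Theorem \ref{main:thmc} — I would apply Theorem \ref{main:thmc} directly, again choosing an admissible zero-tuple $\kappa$ satisfying \eqref{zpec} and the side condition "$p_i \geq 2$ whenever $\chi_n(\gamma_i) = 0$", which holds because $\chi(\gamma_i) \neq 0$ for $i \in \mathcal{I}$ means any puncture with trivial holonomy has $p_i \geq 2$. (Note item (i) of Theorem \ref{main:thmc}, $\chi_n$ trivial, also falls here and is handled the same way once a zero-tuple is chosen.)

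The remaining, and most delicate, case is $\chi$ non-trivial and rational with all poles at punctures in $\mathcal{I}$ being simple — more precisely, the case where none of items (i)–(iv) of Theorem \ref{main:thmc} applies, forcing $\chi$ rational and every $p_i = 1$. After rescaling I may assume $\chi$ is integral with image $\mathbb{Z}$, and the hypothesis $\chi(\gamma_i) \neq 0$ says every puncture is positive or negative in the sense of Theorem \ref{main:thme}. I would then apply Theorem \ref{main:thme}: it produces a translation structure with simple poles at all $n$ punctures and zeros of prescribed orders $(d_1, \ldots, d_r)$ satisfying \eqref{degcond} provided \eqref{gcomb0} holds, namely $\sum \lambda_i = \sum \mu_j > \max_j d_j$. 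Here $\sum \lambda_i = \sum \mu_j$ is forced: it is the total positive holonomy, which must equal the total negative holonomy since $\chi$ is a homomorphism and the product of all puncture-loops is trivial in $\Gamma_{g,n}$ modulo the image of $\Gamma_g$ — more carefully, $\sum_i \chi(\gamma_i) = 0$ because the $\gamma_i$ together bound. The one genuine constraint is then the strict inequality $\sum \lambda_i > \max_j d_j$: I must choose the zero-orders $d_j$ (summing to $2g - 2 + n$ by \eqref{degcond}, which is the specialization of \eqref{zpec} when all $p_i = 1$) small enough that the largest is below the common value $\sum \lambda_i \geq n/... $ Actually $\sum\lambda_i$ can be as small as the number of positive punctures, so if $g$ is large relative to $n$ one cannot always take all $d_j = 1$; but one is free to use many zeros, $r$ as large as $2g - 2 + n$, each of order $1$, and then $\max d_j = 1 < \sum \lambda_i$ holds as soon as there is at least one positive puncture, which there is since $\chi$ is non-trivial. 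So this case goes through, and the anticipated obstacle — reconciling the degree condition with the inequality \eqref{gcomb0} — dissolves once one exploits the freedom to use unit-order zeros. A final bookkeeping step is to check the three cases are exhaustive and that in each the prescribed pole orders $p_i$ are exactly realized (not merely bounded), which is built into the statements of Theorems \ref{main:thmb}, \ref{main:thmc}, \ref{main:thme} since each prescribes the stratum $\mathcal{H}(\kappa;\nu)$ with $\nu = (p_1, \ldots, p_n)$.
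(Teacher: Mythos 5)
Your overall route coincides with the paper's own proof (Appendix, Theorem \ref{thm:mdpp}): necessity comes from the nonzero residue at a simple pole, and sufficiency is obtained by splitting into the trivial case, the cases covered by Theorem \ref{main:thmc} (some $p_i\geq 2$, or $\chi$ not rational), and the remaining rational case with all poles simple, in each instance choosing the all-ones zero tuple and invoking Theorems \ref{main:thmb}, \ref{main:thmc} and \ref{main:thme} respectively. So the decomposition and the choice of strata are exactly those of the paper.

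There is, however, one genuine flaw, and it sits precisely where the hypothesis $n\geq 3$ has to enter. In the integral case you justify \eqref{gcomb0} by asserting that $\max_j d_j = 1 < \sum_i \lambda_i$ ``holds as soon as there is at least one positive puncture, which there is since $\chi$ is non-trivial.'' That implication is false: a single positive puncture of holonomy $1$ gives $\sum_i\lambda_i = 1$, not $>1$. The correct argument is: since every $p_i=1$, the hypothesis forces $\chi(\gamma_i)\neq 0$ at all $n$ punctures; these holonomies sum to zero, so with $n\geq 3$ at least two punctures lie on the same side, whence $\sum_i\lambda_i=\sum_j\mu_j\geq 2>1=\max_j d_j$. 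This is not a pedantic repair: for $n=2$, $g\geq 1$ and $\chi(\gamma)=\pm 1$ the inequality \eqref{gcomb0} fails for every partition of $2g$, and the corollary's statement is false there (this is exactly the excluded family $\C\cdot\{\chi \text{ integral}\mid\chi(\gamma)=1\}$ in Theorem \ref{thm:mdpp}), so any proof of this step that never uses $n\geq 3$ cannot be correct as written. Incidentally, your intermediate remark that ``if $g$ is large relative to $n$ one cannot always take all $d_j=1$'' is also mistaken — one may always take $r=2g-2+n$ zeros of order $1$, as you in fact then do — but that slip is self-corrected and harmless. With the displayed inequality justified via $n\geq 3$ as above, the proposal becomes a complete proof, essentially identical to the paper's.
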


\noindent A more complete statement that also handles the cases $n=1$ and $2$ is provided in Appendix \ref{pmdpp}.

\subsection{Connection with the Hurwitz Existence Problem}\label{chep} The integral holonomy case is related to the longstanding problem known as \emph{Hurwitz Existence Problem}, that we have repeatedly run into during the course of this work.  Let us give a brief overview of such a problem; the reader can consult \cite{PePe} for a summary of known progress. Let $f:S\longrightarrow \Sigma$ be a branched cover of degree $\deg(f)=d\ge 2$. Let $n$ be the number of branch-points (or branch values) in $\Sigma$. There are $n$ partitions of $d$ given by sets of integers $B_i=\big\{d_{ij} \big\}_{1 \leq j \leq m_i}$ for $1 \leq i \leq n$ that record the local degrees of $f$ at the preimages of the $n$ branch-points. Let us call the collection $\mathcal{B}=\{B_1,\dots,B_n\}$. Note that $\sum_{ B_i} d_{ij} = d \quad \text{for any} \quad 1 \leq i \leq n$. In addition, for each $i$ there exists some $j$ such that $d_{ij} \neq 1$, since by definition, one of the points in the preimage of any branch-point is ramified. Let $\widetilde{n} = m_1 + \cdots + m_n$. Then the Riemann-Hurwitz theorem states that 
\begin{equation}\label{rhcon}
\chi(S) - \widetilde{n} = d \cdot \big(\chi(\Sigma)-n\big).
\end{equation}
\noindent The branched covering $f$ yields a tuple $\mathcal{D}(f)=\big(S,\Sigma,d,n,\mathcal{B}\big)$ which we call its \emph{branch datum}. Conversely, two topological surfaces $S$ and $\Sigma$, a positive integer $d\ge2$ and $n$ partitions $\mathcal{B}=\{B_1,\dots,B_n\}$ of $d$ yield an \emph{abstract branch datum}, namely a string $\mathcal{D}=\big(S,\Sigma,d,n,\mathcal{B}\big)$ as above if equation \eqref{rhcon} holds. An abstract branch datum $\mathcal{D}$ is said to be \emph{realizable} if there exists a branched covering $f:S\longrightarrow \Sigma$ such that $\mathcal{D}=\mathcal{D}(f)$.  The long-standing Hurwitz problem asks which abstract datum are realizable:
\begin{qs}
When does there exist a branched covering $f:S\longrightarrow \Sigma$ such that $\mathcal{D}=\mathcal{D}(f)$? 
\end{qs}
\noindent  In the previous subsection we have already seen a glimpse of the relationship between our problem and the Hurwitz Existence Problem. Indeed, for a translation surface with trivial holonomy, the developing map yields a branched covering $f:S_g\longrightarrow \cp\cong\Bbb S^2$ which yields in turn a realizable branch datum $\mathcal{D}(f)$ by construction. On the other hand, any branched covering map $f:S_g\longrightarrow \Bbb S^2$ restricts to a mapping $\overline{f}:S_{g,k}\longrightarrow \C$, where $k=|f^{-1}(\infty)|$, and the standard Euclidean structure $(\C,\,dz)$ pulls back via $\overline{f}$ to a translation structure on $S_{g,k}$ with poles at the punctures and trivial holonomy. \\

\noindent Theorem \ref{main:thmb} then implies that certain special classes of abstract branch datum is realizable, proving certain cases of the Hurwitz Existence Problem:

\begin{cora}[Corollary \ref{consthmb2}] \label{main:cord}
Let $\mathcal{D}=\big(S_g, \Bbb S^2,  d, n,  \mathcal{B}\big)$ be an abstract branch datum 
where $\mathcal{B} = \{ B_1,B_2,\ldots B_n\}$
such that 
\begin{itemize}
    \item \eqref{rhcon} holds, that is, $\widetilde{n}= 2-2g+ d \cdot (n-2)$,  and 
    \item $B_i\ni d_{ij} = 1$ whenever $i \neq 1$ and $j \neq 1$.
\end{itemize}
Then $\mathcal{D}$ is realizable.
\end{cora}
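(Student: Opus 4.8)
The plan is to derive the realizability of $\mathcal D$ from Theorem~\ref{main:thmb}, using the correspondence recalled in \S\ref{chep} between translation surfaces with trivial holonomy and branched covers of $\cp$. The first step is to split the branch datum: I would declare $B_1$ to be the branch datum over the point $\infty\in\cp$, and $B_2,\dots,B_n$ the branch data over $n-1$ (as yet unspecified) points of $\C$. Accordingly, writing $B_1=\{d_{11},\dots,d_{1m_1}\}$, set the pole tuple $\nu=(d_{11}+1,\dots,d_{1m_1}+1)$; since the parts of a partition are positive, every entry of $\nu$ is $\ge 2$. For $i\ge 2$ the hypothesis forces $B_i=\{d_{i1},1,\dots,1\}$ with $d_{i1}\ge 2$ (this being the unique ramified index, because every abstract branch datum has a ramified index over each branch value), and I set the zero tuple $\kappa=(d_{21}-1,\dots,d_{n1}-1)$, a tuple of $n-1$ positive integers. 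A direct computation using only Riemann--Hurwitz \eqref{rhcon} shows that this pair satisfies the degree condition \eqref{zpec}, i.e. $\sum_{j}(d_{j+1,1}-1)-\sum_{l}(d_{1l}+1)=2g-2$.

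The second step is to verify the three hypotheses of Theorem~\ref{main:thmb} for $(\kappa;\nu)$. Hypothesis (i) is immediate since each $p_l=d_{1l}+1\ge 2$. For (ii): one has $\sum_l p_l=d+m_1$ with $d=\sum_l d_{1l}$, so $\sum_l p_l-m_1-1=d-1$, while each zero order $d_{j+1,1}-1$ is at most $d-1$ because $d_{i1}$ is a part of a partition of $d$; hence (ii) holds. Hypothesis (iii) is only relevant when $g>0$, and in that case \eqref{rhcon} forces $n\ge 3$: if $n\le 2$ then $\widetilde n=2-2g+d(n-2)\le 2-2g\le 0$, contradicting $\widetilde n\ge 2$. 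Thus there are $n-1\ge 2$ zeros, so (iii) holds. Theorem~\ref{main:thmb} therefore produces a meromorphic differential $\omega\in\mathcal H(\kappa;\nu)$ on some $X\in\mathcal M_{g,m_1}$ with trivial holonomy.

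The third step is to read off the branched cover. Trivial holonomy makes the developing map of $(X,\omega)$ a single-valued holomorphic function on $S_{g,m_1}$, extending to $f\colon S_g\to\cp$ that sends all punctures to $\infty$. Near a pole of order $p$ — which has vanishing residue, the holonomy around it being trivial — one has $\omega=z^{-p}\,dz$ in suitable coordinates, so $f$ has local degree $p-1$ there; hence over $\infty$ the local degrees are $p_1-1,\dots,p_{m_1}-1$, that is $d_{11},\dots,d_{1m_1}$, realizing $B_1$, and $\deg f=\sum_l(p_l-1)=\sum_l d_{1l}=d$. Near a zero of order $k$ one has $\omega=z^k\,dz$, so $f$ has local degree $k+1$ at that point and is unramified at every other preimage of its image. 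Writing $z_1,\dots,z_{n-1}$ for the zeros of $\omega$, the fiber of $f$ over $f(z_j)\in\C$ then has local degrees $d_{j+1,1}$ together with a number of $1$'s, realizing $B_{j+1}$, while over every point of $\C$ other than the $f(z_j)$ the map $f$ is unramified. Consequently, provided $f(z_1),\dots,f(z_{n-1})$ are pairwise distinct, $f\colon S_g\to\mathbb{S}^2$ is a branched cover with branch datum exactly $\{B_1,\dots,B_n\}$, i.e. $\mathcal D$ is realizable.

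The point requiring genuine care — and the main obstacle — is the distinctness of $f(z_1),\dots,f(z_{n-1})$: a priori two zeros could have the same image, and then $f$ would realize a different datum, with one fewer branch value. I would resolve this by a genericity argument inside the trivial-holonomy locus of $\mathcal H(\kappa;\nu)$. This locus is cut out by the vanishing of the finitely many absolute periods, which are holomorphic, and along it the positions of the $f(z_j)$ can still be moved relative to one another (the relative period of an arc joining $z_j$ to $z_\ell$ can be prescribed independently of the absolute periods); hence the locus is not contained in any of the proper analytic subsets $\{f(z_j)=f(z_\ell)\}$, and one may choose $\omega\in\mathcal H(\kappa;\nu)$ with trivial holonomy and pairwise distinct images $f(z_j)$. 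Alternatively, distinctness can be arranged directly in the slit-and-zero-splitting construction underlying Theorem~\ref{main:thmb}, by choosing the gluing parameters generically. With either resolution, $f$ realizes $\mathcal D$, completing the argument.
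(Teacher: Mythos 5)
Your proposal is correct and follows essentially the same route as the paper's proof of Corollary \ref{consthmb2}: the identical dictionary ($\nu=(d_{11}+1,\dots,d_{1m_1}+1)$ for the poles over $\infty$, $\kappa=(d_{21}-1,\dots,d_{n1}-1)$ for the zeros), the same verification of the hypotheses of Theorem \ref{main:thmb} via Riemann--Hurwitz, and the same reading-off of the branched cover from the extended developing map of a trivial-holonomy differential. The one place you diverge is the distinctness of the branch values $f(z_1),\dots,f(z_{n-1})$: the paper settles this by pointing to the positioning of the zeros built into the construction (Section \ref{sec:zeropos}), which is exactly your ``alternative'' resolution, whereas your primary argument perturbs within the trivial-holonomy locus using period coordinates on the stratum (relative periods between zeros can be varied while the absolute periods stay zero); this is a valid and slightly more abstract substitute, at the cost of invoking period coordinates for strata of meromorphic differentials rather than the explicit construction. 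A minor bookkeeping difference: you dispose of the possibility that some $B_i$, $i\ge 2$, is the trivial partition (and of degenerate data with $d=1$ or $n=1$) by appealing to the ramification requirement in the definition of an abstract branch datum, while the paper instead handles these cases explicitly by discarding fully trivial partitions and re-inserting them as unbranched points; if one reads the hypotheses of the corollary as allowing $d_{i1}=1$, you would need that one-line reduction as well.
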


\medskip

\noindent  The question of realizing a non-trivial integral representation $\chi$ as the holonomy of some translation structure with all simple poles can also be shown to be equivalent to the problem of determining whether certain abstract branch data are realizable or not:

\begin{propa} \label{main:thmf}  Suppose  $\chi:\Gamma_{g,n} \to \mathbb{Z}$ is a non-trivial surjective representation. Let the holonomies around the positive punctures be given by the integer-tuple $\lambda \in \mathbb{Z}_+^k$ and the holonomies around the negative punctures be given by $-\mu$ where $\mu \in \mathbb{Z}_+^l$.  
Then the following are equivalent:
\begin{enumerate}
    \item There is a translation structure on $S_{g,n}$ with holonomy $\chi$, with simple poles at the punctures and a set of $r$ zeros with prescribed orders $(d_1,d_2,\ldots, d_r)$ that satisfies the degree condition \eqref{degcond}. 
     %   \item There is a branched covering $f:S_{g} \longrightarrow \Bbb S^2$  with two special branch values $p_+$ and $p_-$ such that
 %   \begin{itemize}
%        \item $f^{-1}(p_+)$ is a set of $k$ points with local degrees given by $\lambda$, 
  %      \item $f^{-1}(p_-)$ is a set of $l$ points with local degrees given by $\mu$, where $k+l = n$, and 
   %     \item apart from these, there are exactly $r$ other branch points on $S_g$ with local degrees $(d_1+1,\ldots, d_r+1)$.
  %  \end{itemize} 
    \item There exists realizable branching data $\big(S_g, \Bbb S^2, d, m, \mathcal{B}\big)$ for some $m\geq 1$ where  $d = \sum\limits_{i=1}^k \lambda_i$ and  $\mathcal{B}$  is a collection of $m$  partitions of $d$
    satisfying the following:
    \begin{itemize}
        \item $\lambda$ is part of the collection unless $\lambda = (1,1,\ldots, 1)$, 
        \item $\mu$ is part of the collection unless $\mu = (1,1,\ldots, 1)$,  and
        \item in all other partitions, the only integers that are different from $1$ are exactly $\{d_1+1,\ldots, d_r +1\}$.
    \end{itemize}
    \end{enumerate}
\end{propa}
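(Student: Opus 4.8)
The plan is to exploit the correspondence, special to integral holonomy, between translation structures with holonomy valued in $\Z$ and branched covers of $\cp$, obtained by exponentiating the developing map. Since $\chi$ is surjective onto $\Z$, the map $z\mapsto\exp(2\pi i z)$ descends to an isomorphism $\C/\Z\xrightarrow{\sim}\C^*=\cp\setminus\{0,\infty\}$ which carries the standard translation structure $(\C,dz)$ to one on the bi-infinite cylinder $\C^*$ having a simple pole at each of $0$ and $\infty$; with the standard orientations, a counterclockwise loop around $0$ has holonomy $+1$ and one around $\infty$ has holonomy $-1$. All the local identifications below are read off in the normal forms recalled in section \ref{lgp}.

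For the implication $(1)\Rightarrow(2)$: given a translation structure realizing $\chi$ on $S_{g,n}$ with simple poles and zeros of orders $(d_1,\dots,d_r)$, the composite $F$ of the developing map with $z\mapsto\exp(2\pi i z)$ is invariant under the deck group (which acts by translations in $\Z$) and hence descends to a holomorphic map $F\colon S_{g,n}\to\C^*$. In a uniformizing coordinate $q$ at a positive puncture with holonomy $+\lambda_i$ one computes $F=q^{\lambda_i}$, while $F=q^{-\mu_j}$ at a negative puncture with holonomy $-\mu_j$; at a zero of order $d_j$ the map $F$ takes a finite nonzero value and is locally $(d_j+1)$-to-one. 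Therefore $F$ extends to a branched cover $\bar F\colon S_g\to\cp$ of degree $d:=\sum_i\lambda_i$ (equal to $\sum_j\mu_j$, since $\sum_i[\gamma_i]=0$ in $\Gamma_{g,n}$ forces this), whose partition over $0$ is $\lambda$ (no branching if $\lambda=(1,\dots,1)$), whose partition over $\infty$ is $\mu$, and over whose remaining branch values the parts exceeding $1$ are collectively $d_1+1,\dots,d_r+1$. Thus $\bar F$ realizes a branch datum of the shape demanded in $(2)$.

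For $(2)\Rightarrow(1)$: from a branched cover $\bar F\colon S_g\to\cp$ realizing such a datum, restrict to $F\colon S_g\setminus\bar F^{-1}(\{0,\infty\})\to\C^*$ and pull back the translation structure on $\C^*$. Removing the $n=k+l$ preimages of $\{0,\infty\}$ creates the punctures: a preimage of $0$ of local degree $\lambda_i$ becomes a half-infinite cylindrical end of circumference $\lambda_i$, i.e. a simple pole with holonomy $+\lambda_i$, and a preimage of $\infty$ of local degree $\mu_j$ becomes a simple pole with holonomy $-\mu_j$ (the sign flips because the two ends of $\C^*$ carry opposite orientations). A ramification point of local degree $d_j+1$ lying over a point of $\C^*$ becomes a cone point of angle $2\pi(d_j+1)$, hence a zero of order $d_j$, and by the hypothesis on the partitions these exhaust the zeros $d_1,\dots,d_r$ while all other preimages are smooth. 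One thus obtains a translation structure on $S_{g,n}$ with simple poles at the punctures, zeros of the prescribed orders, and holonomy some homomorphism $\chi'\colon\Gamma_{g,n}\to\Z$ that agrees with $\chi$ on every loop around a puncture.

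The crux is to upgrade $\chi'$ to $\chi$. If $g=0$ the representation is determined by its peripheral values, so $\chi'=\chi$ and nothing more is needed. For $g\ge1$ two points must be settled. First, the realizing cover must be chosen so that $\chi'$ is surjective: this is automatic when $\gcd(\lambda_1,\dots,\lambda_k,\mu_1,\dots,\mu_l)=1$, and otherwise, since $g\ge1$, one has room to arrange it by spreading the cone points over distinct branch values so that the monodromy of $\bar F$ becomes large enough --- this is where the flexibility built into $(2)$, that only the aggregate of non-$1$ parts is prescribed, is used. Second, once $\chi'$ is surjective with the same peripheral values as $\chi$, the two differ by the action of a mapping class of $S_{g,n}$ fixing the punctures, a change-of-coordinates statement (see subsection \ref{sec:mcgaction}); pulling back the translation structure just constructed by such a mapping class then yields one with holonomy exactly $\chi$, the same simple poles and the same zero orders. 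I expect the surjectivity step to be the genuine obstacle, as it is the point where the combinatorics of branched covers, rather than soft arguments, has to be invoked.
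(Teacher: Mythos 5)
Your proposal follows essentially the same route as the paper's proof (Proposition \ref{prop:bc} together with Corollary \ref{cor:cond3}). For $(1)\Rightarrow(2)$ you descend the developing map to the once-punctured cylinder and extend over the ends to a branched cover of $\Bbb S^2$, reading off the partitions from local degrees; this is exactly the paper's argument, with your $\exp(2\pi i z)$ playing the role of the quotient $\C/\langle z\mapsto z+1\rangle$. For $(2)\Rightarrow(1)$ you pull back the cylinder's translation structure and then match handle holonomies via the mapping class group action; the paper does the same, invoking its Lemma \ref{lem:red2} to normalize both $\chi$ and the pulled-back $\chi'$ so that every handle generator has period $1$, after which agreement on peripheral values forces $\chi'=\chi$.

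The one place you diverge is the surjectivity of $\chi'$. You correctly flag it as the delicate point, but your proposed remedy --- ``spreading the cone points over distinct branch values so that the monodromy of $\bar F$ becomes large enough'' --- is not an argument: the image of $\chi'$ is the image of $f_*\colon H_1(S_{g,n};\Z)\to H_1(\mathcal{C};\Z)\cong\Z$, and the size of the monodromy group does not control this subgroup (one can write down covers realizing admissible branch data for which this image is a proper subgroup of $\Z$, so the choice of realizing cover genuinely matters). The paper does not resolve this by cover combinatorics at all: it simply notes that $\chi'$ may be assumed surjective ``by a rescaling, if necessary'' and then applies Lemma \ref{lem:red2}. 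So your instinct that this is the only non-soft step is right, but the mechanism you sketch for it would not close the gap; the intended mechanism is the normalization of handle generators via the transitive $\spz$-action on primitive vectors, which is already the mapping-class-group step you cite, not an additional combinatorial argument about branched covers.
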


\noindent In Section \ref{sec:comb}, this statement is a consequence of Proposition \ref{prop:bc} and its Corollary \ref{cor:cond3}.  The proof starts with the observation that a translation surface with simple poles at the punctures and integral holonomy, is always the pull-back of the translation structure on the Euclidean cylinder $(\C/\Z,\, z^{-1}dz)$  by some holomorphic map, that extends to a  branched covering $S_g\longrightarrow \Bbb S^2$. Note that the collection $\mathcal{B}$ of the branch datum naturally attached to this latter map contains two special partitions $\lambda$ and $\mu$ comprising the local degrees around the points that map to the two punctures at either end of the cylinder. \\

\noindent  The following corollary is then immediate from Theorem \ref{main:thme} and Proposition \ref{main:thmf}, and can be thought of as solving special cases of the Hurwitz existence problem:

\begin{cora}\label{main:corf}
Given integer tuples $\lambda\in \mathbb{Z}^k_+$, $\mu \in \mathbb{Z}^l_+$ such that $d = \sum_{i=1}^k \lambda_i = \sum_{j=1}^l \mu_j$ and positive integers $\{d_1,d_2,\ldots, d_r\}$ satisfying $d > \max\{d_1,d_2,\ldots, d_r\}$, there exists realizable branching data $\big(S_g, \Bbb S^2, d, m, \mathcal{B}\big)$  where   $\mathcal{B}$  is some  collection of $m$ partitions of $d$  satisfying the properties listed  in (2) of Proposition \ref{main:thmf}. 
\end{cora}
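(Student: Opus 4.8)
The plan is to simply assemble the two ingredients that are already in hand. First I would observe that the data $\lambda\in\mathbb Z_+^k$, $\mu\in\mathbb Z_+^l$ with common sum $d=\sum_i\lambda_i=\sum_j\mu_j$, together with positive integers $d_1,\dots,d_r$ satisfying $d>\max\{d_1,\dots,d_r\}$, are exactly the hypotheses appearing in Theorem \ref{main:thme}, provided we also arrange the degree condition \eqref{degcond}, namely $\sum_{i=1}^r d_i = 2g-2+n$. So the first step is to choose the ambient topological surface: set $n=k+l$ (the number of punctures, one for each prescribed holonomy $\lambda_i$ or $-\mu_j$) and then choose the genus $g$ so that $2g-2+n = \sum_{i=1}^r d_i$; one must check this is possible, i.e. that $\sum_i d_i - n$ is even and at least $-2$. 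If parity fails or the quantity is too small, one increases $r$ by appending zeros of order, say, $1$ (adjusting by $\pm 1$ or $\pm 2$ at a time) to the tuple $(d_1,\dots,d_r)$, which changes neither the inequality $d>\max\{d_i\}$ (as long as we only append $1$'s, which is fine since $d\ge 2$) nor the structure of the conclusion; alternatively one can allow $g$ to be chosen freely as the statement only asserts existence of \emph{some} $S_g$. This bookkeeping is the one place requiring a small case check, but it is entirely elementary.

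Second, with the surface $S_{g,n}$ so chosen, I would let $\chi:\Gamma_{g,n}\to\mathbb Z$ be the surjective homomorphism whose values on the loops $\gamma_i$ around the punctures are the prescribed integers: $\lambda_i$ around the $k$ positive punctures and $-\mu_j$ around the $l$ negative ones. Surjectivity of $\chi$ can be arranged on the remaining handle generators (and is automatic once $\gcd$ of all the $\lambda_i,\mu_j$ is $1$; if it is not, one divides through by the gcd, which rescales $d$ and all $\lambda_i,\mu_j$ but not the truth of the statement, or one simply sets one handle generator to map to $1$). The condition $\sum_i\lambda_i=\sum_j\mu_j$ guarantees $\chi$ is well-defined up to the relation $\sum_i\gamma_i = 0$ in $\Gamma_{0,n}$-part, i.e. it is consistent with the sum-of-residues-type constraint. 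Now Theorem \ref{main:thme} applies verbatim: since \eqref{gcomb0} is precisely our hypothesis $\sum_i\lambda_i=\sum_j\mu_j>\max\{d_1,\dots,d_r\}$, it yields a translation structure on $S_{g,n}$ with holonomy $\chi$, simple poles at all punctures, and zeros of the prescribed orders $d_1,\dots,d_r$.

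Third and finally, I would feed this translation structure into Proposition \ref{main:thmf}: statement (1) of that proposition is exactly what Theorem \ref{main:thme} just produced, so statement (2) holds, giving realizable branching data $\big(S_g,\mathbb S^2,d,m,\mathcal B\big)$ for some $m\ge 1$, with $d=\sum_i\lambda_i$ and with $\mathcal B$ a collection of $m$ partitions of $d$ having the three listed properties ($\lambda$ present unless it is all $1$'s, $\mu$ present unless it is all $1$'s, and all other partitions having non-unit parts exactly $\{d_1+1,\dots,d_r+1\}$). That is the assertion of Corollary \ref{main:corf}, so we are done.

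The main obstacle, such as it is, is purely organizational rather than mathematical: one must be careful that the topological surface $S_{g,n}$ and the integer tuple $(d_1,\dots,d_r)$ one plugs into Theorem \ref{main:thme} genuinely satisfy the degree condition \eqref{degcond} and that the homomorphism $\chi$ with the prescribed boundary values exists and is surjective — i.e. the quantifier ``there exists $g$, there exists $m$, there exists $\mathcal B$'' in the statement gives us exactly the freedom we need, and I would make sure to exploit it rather than fight it. No genuinely new argument is required; the corollary is the formal conjunction of Theorem \ref{main:thme} and Proposition \ref{main:thmf}.
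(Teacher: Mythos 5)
Your overall route is exactly the paper's: Corollary \ref{main:corf} is obtained by feeding the translation surface produced by Theorem \ref{main:thme} into the equivalence of Proposition \ref{main:thmf} (in the paper this is the one-line deduction from Corollary \ref{cor:cond3} and Theorem \ref{main:thme} at the end of Section \ref{sec:comb}). The second and third steps of your plan are correct and are all the paper does.

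The one place your write-up goes wrong is the ``bookkeeping'' for the degree condition \eqref{degcond}: neither of your two proposed escapes works. Appending extra zeros of order $1$ to $(d_1,\dots,d_r)$ changes the statement you are trying to prove, because the third bullet of (2) in Proposition \ref{main:thmf} requires the non-unit entries of the remaining partitions to be \emph{exactly} $\{d_1+1,\dots,d_r+1\}$; each appended $1$ forces an extra entry equal to $2$ in $\mathcal{B}$, so you end up realizing different branching data. And $g$ is not a free parameter: for any collection $\mathcal{B}$ of the prescribed shape, the Riemann--Hurwitz relation \eqref{rhcon} forces $2g = \sum_i d_i - (k+l) + 2$, independently of $m$ and of how the entries $d_i+1$ are distributed among the partitions (a short computation with $\widetilde{n}$). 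The correct resolution is that the degree condition is not something to be arranged but an implicit hypothesis of the corollary: it is precisely the condition for an \emph{abstract} branch datum of the stated shape to exist at all. So one sets $n=k+l$, takes $g=(\sum_i d_i - n + 2)/2$, and applies Theorem \ref{main:thme} directly; if this $g$ fails to be a nonnegative integer there is no branch datum of that shape to realize. A similar caveat applies to your surjectivity fix: dividing through by the gcd rescales $d$ and the partitions and hence again changes the branching data; for $g\ge 1$ surjectivity is free by sending a handle generator to $1$, which is how the paper arranges it via Lemma \ref{lem:red2}.
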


\subsection{Additional remarks and further questions}
Away from the zeros and poles, a translation structure is a special case of a \textit{(complex) affine structure} on a surface, which comprises an atlas of charts to $\C$ that differ by {affine} maps (of the form $z\mapsto az+b$)  on their overlaps. This in turn is a special case of a \textit{(complex) projective structure},  comprising charts to $\cp$ differing by M\"{o}bius transformations. Note that if one includes the zeros and poles, a translation structure  can in fact be considered as a \textit{branched} affine or projective structure, since the developing map around a pole or a zero of the corresponding abelian differential  is of the form $z \mapsto z^d$, and the puncture can thus be filled in as a branch point for the charts.

\noindent  In \cite{GKM}, Gallo-Kapovich-Marden characterized the holonomy representations of marked projective structures on a \textit{closed} surface of genus $g\geq 2$.  In particular, they showed that any non-elementary representation of the fundamental group of the surface to  \text{PSL}$(2,\C)$ appears as the holonomy of some projective structure on a closed surface,  if one allows one branch-point of degree two. The holonomy representations of branched {affine} structures have been studied in \cite{Ghaz}; note that such a representation has image in the affine group $\text{Aff}(\C)$ and is necessarily elementary.  However, that paper is not concerned about the number and order of branch-points.  Thus, it remains to address the following question and determine analogues of the results of this paper:

\begin{ques}
When is a representation $\rho:\pi_1 S \to \text{Aff}(\C)$ the holonomy of a branched affine structure,  when the number and order of branch-points are prescribed? 
\end{ques} 

\noindent   It turns out that for any such branched structure the Schwarzian derivative of the developing map has a pole of order two at each branch-point, and thus can also be thought of as a  \textit{meromorphic projective structure}, whose holonomy representations have been recently studied in \cite{Gup}, \cite{FarGup}. \\

\noindent This paper also does not address the problem of understanding a ``holonomy fiber" beyond whether it is empty or not. In particular, one can ask:

\begin{ques}
For a fixed character $\chi:\Gamma_{g,n} \to \C$, what is the structure of the set of meromorphic differentials in $\Omega\mathcal{M}_{g,n}$ that have periods given by $\chi$? For example, is it connected? 
\end{ques}

\noindent  For \textit{marked} projective structures on a closed surface, any holonomy fiber, if non-empty, is necessarily discrete, and has been studied in \cite{Baba} and \cite{BabGup}. In the case of translation structures on a closed surface, these fibers could have positive dimension, and define the isoperiodic foliations mentioned earlier in this Introduction. It would be interesting to study their analogues for translation structures on punctured surfaces.\\

\noindent Finally, our work seems to have common ground with that of Gendron-Tahar  in \cite{GT}; indeed, we use a result from their work in subsection \ref{pfthme}. For surfaces of positive genera, the work is concerned with the \textit{residue map} recording the residue of the pole at each puncture, and not the entire period map. On the other hand, they are interested in the image of each \textit{connected component} of a strata, and one can ask the same question for the period map.

\subsection{Organization of the paper} We begin with an introductory part comprising section \ref{gts} where we define translation structures on punctured surfaces and their geometry, and section \ref{surg} where we define some topological surgeries used throughout the present work.  Part \ref{part1} is entirely devoted to prove Theorem \ref{mainthm}, via its geometric reformulation Theorem \ref{main:thma2}. More precisely, after some general comments about geometrizing representations in section \ref{prelgeom}, we shall prove Theorem \ref{main:thma2} for the punctured torus and $n$-punctured spheres in sections \ref{htc} and \ref{fsps} respectively. We finally prove Theorem \ref{main:thma2} in full generality in section \ref{gos}.  Part \ref{part2}, the second part of this paper is devoted to prove the other main results. More precisely, in sections \ref{sec:necsuftriv} and \ref{sec:trivholposgen} we shall provide a proof of Theorem \ref{main:thmb}, and in sections \ref{sec:nontrivholsphere} and \ref{sec:gennontrivhol} we prove Theorem \ref{main:thmc}, for $n$-punctured spheres and positive genus surfaces respectively for both theorems. Corollary \ref{main:cord} is proved at the end of section  \ref{sec:trivholposgen}. Finally, in section \ref{sec:comb} we shall prove  Theorem \ref{main:thme}, and discuss the connection with branched covers, leading to Proposition \ref{main:thmf} and its Corollary \ref{main:corf}. In Section \ref{sec:nontrivholsphere} we make use of a technical lemma whose proof is deferred to Appendix \ref{app:irrmultproof}. In Appendix \ref{pmdpp} we prove a refinement of Theorem \ref{mainthm} that implies Corollary \ref{cor:mdpp}. 

\subsection{Acknowledgments}  SG is grateful for the support of the Department of Science and Technology (DST) MATRICS Grant no. MT/2017/000706. GF would like to thank Ursula Hamenst\"adt for the helpful discussion and comments on this work. Most of this work was done while GF was affiliated with the IISc Bangalore. Despite the author could not fully enjoy his period there because of the ongoing pandemic, he is grateful to the Department of Mathematics and everyone who helped him. SC is grateful to Kishore Vaigyanik Protsahan Yojana (KVPY) for the fellowship and contingency grant. The authors are grateful to Quentin Gendron and Guillaume Tahar for their comments on an earlier version of this paper, and pointing out their results in \cite{GT}.

\section{Geometry of translation surfaces}\label{gts}

\subsection{Translation surfaces and other definitions}\label{gts1} In this chapter we provide the definitions of the main objects involved. Let us start by recalling what a translation structure is on a surface, and its geometric and complex-analytic definitions.

\begin{defn}[Translation structures on punctured surfaces]\label{tswop}
A translation structure on a surface $S_{g,n}$ is the datum of a complex structure on $S_{g,n}$, which results in a Riemann surface $X$, together with a holomorphic (abelian) differential $\omega$ on $X$.
 Any such holomorphic differential $\omega$ defines a flat metric with isolated singularities corresponding to zeros of $\omega$. In a neighborhood of a point $P$ which is not a zero for $\omega$, a local coordinate is defined as
\[ z(Q)=\int_P^Q \omega 
\] in which  $\omega=dz$, and the coordinates of two overlapping neighborhoods differ by a translation $z\mapsto z+c$ some $c\in\Bbb C$, namely a translation. Around a zero of order $k\ge1$, say $P$, the differential $\omega$ locally can be written as $\omega=z^kdz$ with respect to some coordinate $z$ and so the surface around $P$ is locally a simple branched $k+1$ covering over $\Bbb E^2$. Therefore, a translation structure can be geometrically seen as a branched $(\Bbb C, \Bbb E^2)$-structure, \emph{i.e.} the datum of a maximal atlas where the local charts in the Euclidean plane $\Bbb E^2$ have the form $z\longmapsto z^k$, $k\ge1$, and transitions maps are restrictions of translations in $\Bbb C$.  Throughout, a surface equipped with a translation structure will be called a \textit{translation surface}. 
\end{defn}

\noindent By the analytic continuation property, any local chart of a translation surface extends to a local immersion $\textsf{dev}:\widetilde{S_{g,n}}\longrightarrow \Bbb E^2$, called \emph{developing map}, where $\widetilde{S_{g,n}}$ is the universal cover of $S_{g,n}$. The developing map is equivariant with respect to a representation $\chi: \Gamma_{g,n} \longrightarrow \Bbb C$ called the \emph{holonomy} of the translation structure. (Recall that $\Gamma_{g,n} = H_1(S_{g,n};\,\Bbb Z)$ is the first homology group of the surface.) The following lemma establishes the twofold nature of a representation:

\begin{lem}[Twofold nature of a representation]\label{tnr}
A representation $\chi:H_1(S_{g,n};\,\Bbb Z)\longrightarrow \Bbb C$ is the period of some abelian differential $\omega\in\Omega(X)$ with respect to some complex structure $X$ on $S_{g,n}$ if and only if it is the holonomy of the translation structure on $S_{g,n}$ determined by $\omega$.
\end{lem}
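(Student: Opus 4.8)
The plan is to observe that both notions in the statement---being a period of $\omega$ and being the holonomy of the translation structure attached to $\omega$---are read off from one and the same primitive of $\omega$ on the universal cover, so the lemma reduces to a single line of computation. I would therefore not treat the two implications separately, but establish an equality of characters from which both follow.

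First I would fix a complex structure $X$ on $S_{g,n}$ and $\omega\in\Omega(X)$, and consider the translation structure it determines as in Definition \ref{tswop}. Let $p\colon\widetilde{S_{g,n}}\to S_{g,n}$ be the universal covering and set $\widetilde\omega=p^*\omega$, a holomorphic one-form on $\widetilde{S_{g,n}}$. Since $\omega$ is holomorphic it is closed, so $\widetilde\omega$ is closed; as $\widetilde{S_{g,n}}$ is simply connected, $\widetilde\omega$ is exact, and we may choose a primitive $F\colon\widetilde{S_{g,n}}\to\Bbb C$ with $dF=\widetilde\omega$, unique up to an additive constant. In a chart around a point that is not a zero of $\omega$, $F$ agrees up to a constant with the coordinate $z(Q)=\int_P^Q\omega$ of Definition \ref{tswop}, and around a zero of order $k$ it has the local form $w\mapsto w^{k+1}/(k+1)$; hence $F$ is precisely the developing map $\textsf{dev}$ of the translation structure determined by $\omega$.

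Next, for a deck transformation $\gamma\in\pi_1(S_{g,n})$ we have $p\circ\gamma=p$, hence $\gamma^*\widetilde\omega=\widetilde\omega$, so $d(F\circ\gamma-F)=0$ and $F\circ\gamma-F$ is a constant, which by definition is the holonomy $\chi(\gamma)$ of the translation structure. Additivity in $\gamma$ is immediate, and since the target $\Bbb C$ is abelian this homomorphism descends to $\Gamma_{g,n}=H_1(S_{g,n};\Bbb Z)$. Now take any $\gamma$ and a lift $\widetilde\gamma$ of a loop based at $P$ representing its class, running from $\widetilde P$ to $\gamma\cdot\widetilde P$; then
\[
\chi(\gamma)=F(\gamma\cdot\widetilde P)-F(\widetilde P)=\int_{\widetilde\gamma}dF=\int_{\widetilde\gamma}\widetilde\omega=\int_\gamma\omega ,
\]
which is exactly the period character of $\omega$ evaluated on $[\gamma]$. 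This single identity proves both implications: if $\chi$ is the period character of $\omega$ then it is the holonomy of the associated translation structure, and conversely.

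The statement is essentially a dictionary entry, so I do not expect a genuine obstacle; the only points that need a word of care are that the primitive $F$ is globally defined on $\widetilde{S_{g,n}}$ despite the zeros of $\omega$ (which is fine because $\omega$ is holomorphic on all of $X$ and the universal cover is simply connected, so no monodromy is introduced), and the fact that "closedness of $\omega$ lets the period character descend to $H_1$" is compatible with "abelianness of $\Bbb C$ lets the holonomy descend to $H_1$"---a compatibility the displayed computation makes transparent.
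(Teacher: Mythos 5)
Your argument is correct and is exactly the standard identification that the paper leaves implicit: the developing map is a primitive of the pulled-back form on the universal cover, so the holonomy constant $F(\gamma\cdot\widetilde P)-F(\widetilde P)$ and the period $\int_\gamma\omega$ are literally the same number. The paper states Lemma \ref{tnr} without proof, treating it as immediate from Definition \ref{tswop} and the definition of $\textsf{dev}$, so your write-up simply supplies the (correct) details of that dictionary.
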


\noindent Let $X$ be a complex structure on $S_{g,n}$ and let $\omega\in\Omega(X)$ be a meromorphic differential as introduced in section \ref{intro}. Note that some punctures are allowed to be removable singularities for $\omega$. We introduce the following more stringent definition where the punctures are required to be poles:

\begin{defn}[Translation Surfaces with poles]\label{tswp}
Let $\omega$ be a meromorphic differential on a compact Riemann $\overline{X}$. We define a \emph{translation surface with poles} to be the translation structure induced by $\omega$ on $X = \overline{X}\setminus\Sigma$, where $\Sigma$ is the set of poles of $\omega$. 
\end{defn}

%\begin{ex}
%Let $\overline{X}$ be a closed Riemann Surface and $\varpi\in\Omega(\overline{X})$ a holomorphic abelian differential. $\varpi$ restricts to an abelian differential $\omega\in\Omega(X)$, where $X=\overline{X}\setminus\{ n \text{ points}\}$ for some fixed $n\ge0$. The abelian differential $\omega$ determines a translation structure on the puncture surface $S_{g,n}$ which is not a Translation surface with poles because $\omega$ extends holomorphically over all the puncture by construction. \\
%\end{ex}

\subsection{Local geometry around a puncture}\label{lgp} Let $(X,\omega)$ be a translation structure on a punctured surface $S_{g,n}$. In this section we provide a geometric description of the local geometry around a puncture. Given any local coordinate, say $(U,z)$, around a puncture $P$ the abelian differential $\omega$ can be written as $\omega=f(z)dz$ and we consider the Laurent series expansion of the function $f(z) $ -- see \eqref{eq:wform}. We discuss the possible cases separately.
\SetLabelAlign{center}{\null\hfill\textbf{#1}\hfill\null}
\begin{itemize}[leftmargin=1.75em, labelwidth=1.5em, align=center, itemsep=\parskip]
\item[\bf 1.] In the case the Laurent series has no negative terms, the puncture is a removable singularity for $\omega$ and the function $f$ extends holomorphically on it. We may further distinguish two possible subcases.\\
\begin{itemize}[leftmargin=2.25em, labelwidth=1.75em, align=center, itemsep=\parskip]
\item[\bf 1.1.] The leading coefficient $a_0\neq0$. In this case, the point $P$ is not a zero for the abelian differential $\omega$ and, therefore, any local chart $U\setminus\{P\}\longrightarrow \Bbb E^2$ extends to a local homeomorphism $U\longrightarrow \Bbb E^2$. The geometry around the puncture is not complete.\\
\item[\bf 1.2.] The leading coefficient $a_0=0$. The point $P$ is a zero of order $k$ for $\omega$, where $k\in\Bbb Z^+$ is smallest index such that $a_k\neq0$. In this case, any local chart $U\setminus\{P\}\longrightarrow \Bbb E^2$ extends to a simple branched $k+1$ covering $U\longrightarrow \Bbb E^2$. The geometry around the puncture is not complete.\\
\end{itemize}
In both cases, the puncture can be filled by gluing a judicious neighborhood of the vertex of a $2\pi(k+1)$ Euclidean cone - note that $k$ corresponds to the index of the least coefficient $a_i$ in the Laurent series different to zero. More precisely, such a neighborhood can be taken as the image of the local extended chart $U\longrightarrow \Bbb E^2$.\\
\item[\bf 2.] Suppose the function $f$ has a pole of order one at $P$. The neighborhood of $P$ is then an infinite cylinder of order one. Possibly after a suitable change of coordinates, the pole in local coordinate is given by $\omega=\frac1zdz$. Writing $z=e^\zeta$, the abelian differential with respect to $\zeta$ is given by $\omega=d\zeta$ and $\zeta$ describes an infinite cylinder on which the geometry is metrically complete. The contour integral along a curve enclosing $P$, and no other punctures, yields the residue of $f$ at $P$ which is a non-zero complex number. We may notice that, the residue is proportional to the holonomy of the curve enclosing $P$. Indeed given a curve $\gamma$ enclosing the pole, the residue theorem implies that
\[ \gamma\longmapsto\int_\gamma\omega=2\pi i\, \text{Res}\big(f,\,P\big).
\]
\item[\bf 3.] We finally consider the flat geometry around poles of order at least two. Once again, we need to distinguish two subcases depending on whether the residue of the pole is null or not.\\
\begin{itemize}[leftmargin=2.25em, labelwidth=1.75em, align=center, itemsep=\parskip]
\item[\bf 3.1.] Let us consider first the case of $P$ is a pole of order $k+2\ge2$ and null residue. Let $U$ be an open neighborhood of $P$ and choose a local coordinate such that $\omega=\frac{dz}{z^{k+2}}$ around the puncture. Notice that $U$ is biholomorphic to the punctured disk $\Bbb D^*$. By applying the change of coordinate $\zeta=\frac1z$, the differential $\omega$ has a zero of order $k$ and the local chart $(U,\,\zeta) \longrightarrow \Bbb E^2$ is $k+1-$fold covering over the puncture disk. Equivalently, the coordinate neighborhood $(U,\,\zeta)$ is biholomorphic to a neighborhood of the vertex of the Euclidean cone of angle $2(k+1)\pi$ to which the conical singularity has been removed. We cannot the deduce the geometry around the pole from this model because the change of coordinate $\zeta=\frac1z$ is not a translation and so the geometry has been altered. However it gives a glimpse of what the geometry should be. The mapping $z\mapsto\frac1z$ is an inversion and hence the geometry around a pole is that of an Euclidean cone of angle $2(k+1)\pi$ to which a compact neighborhood of the conical singularity has been removed.\\
\item[\bf 3.2.] We finally consider the case of $P$ is a pole of order $k+2\ge2$ and non-zero residue $R$. Before going to describe the geometry around the pole $P$, we begin by introducing the following model of translation surface. Let $(\Bbb C,\,dw)$ be the complex plane equipped with the holomorphic $1$-form $dw$. The exponential mapping $w\longmapsto\exp{\big(\frac{k+1}{R}w\big)}$ yields an infinite-sheet covering onto a cylinder $C$.\\
\noindent By setting $\zeta=\exp{\big(\frac{k+1}{R}w\big)}$ we obtain
\begin{equation}
dw=\frac{R}{(k+1)\,\zeta}\,d\zeta.
\end{equation}
\noindent The cylinder $C$ is topologically a twice-punctured sphere endowed with a complex structure and an abelian differential that makes it a translation surface with two simple poles and no singularities because $dw$ has no zeros. The geometry around both poles is the one described above. \\
\noindent Let $Q$ be any point in $C$ and consider a straight line, say $l$, starting from $Q$ towards one of the ends of the cylinder. Such a line develops along an infinite ray, say $r$, on the complex plane starting from a fixed developed image of $Q$. Cutting $C$ along $l$ and then gluing a copy of $\Bbb C\setminus r$ by using the developing map, we obtain a surface still homeomorphic to a cylinder but equipped with a new translation structure $(C,\eta)$. Let us consider $C$ as a $2-$punctured sphere $\Bbb S^2\setminus\{P_1,\,P_2\}$. In a neighborhood of the uncut end the geometry remained unchanged and the puncture, say $P_1$, is a simple pole with residue $\frac{R}{k+1}$ for $\eta$. The other end, instead, is no longer cylindrical as the geometry is changed because we have glued a whole copy of $\Bbb E^2$! Such a surgery has also introduced a singular point of angle $4\pi$, that means that $\eta$ has a zero of order one. As the number of zeros (counted with multiplicity) minus the number of poles (counted with multiplicity) has to be equal to $-2$, we immediately deduce that $P_2$ is a pole of order $-2$ for $\eta$ and the residue theorem for Riemann surfaces implies that the residue around $P_2$ is not zero and equal to $\frac{R}{k+1}$. Let $V$ be a neighborhood of $P_2$, by choosing a judicious coordinate $z$ around $P_2$, the form $\eta$ can be written as 
\begin{equation} \eta=\Big(\,\frac{1}{z^2}+\frac{R}{(k+1)z}\,\Big)dz
\end{equation}
To determine the geometry around a pole of order greater than one with non-zero residue, let $U$ be an open neighborhood of $P$ and choose a local coordinate such that, possibly after re-scaling and rotations, $\omega$ is given by
\begin{equation}\Big(\,\frac{\,k+1\,}{z^{k+2}}+\frac{R}{z}\,\Big)dz
\end{equation} around the puncture. The form $\omega$ is the pullback of the form $\eta$ by the mapping $z\longmapsto z^{k+1}$. Therefore, the geometry around $P$ is that one of a cylinder with holonomy $R$ to which $k$ copies of the Euclidean plane $\Bbb E^2$ have been glued.\\
\end{itemize}
\noindent We can notice that, in both cases, the flat geometry around poles of order greater than one is metrically complete.\\
\end{itemize}

\noindent One consequence of the discussion in this section can be summarized as follows:

\begin{lem}\label{metlem}
Let $X$ be a punctured Riemann surface and let $\omega\in\Omega(X)$ be a meromorphic abelian differential. The geometry around a puncture $P$ is metrically complete if and only if $\omega$ does not extend holomorphically over any puncture.
\end{lem}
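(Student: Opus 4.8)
The plan is to reduce the metric completeness of the flat surface $(X,|\omega|)$ to the local behaviour of $\omega$ at each puncture, which has already been catalogued in section \ref{lgp}; we read the statement as asserting completeness of the full surface $X=\overline{X}\setminus\Sigma$, where $\overline{X}$ is the compact Riemann surface and $\Sigma=\{P_1,\dots,P_n\}$ is the finite set of punctures. First I would fix pairwise disjoint coordinate disks $D_i\ni P_i$ on which $\omega=f_i(z)\,dz$, and observe that the ``core'' $K:=\overline{X}\setminus\bigcup_i \mathring{D_i}$ is a compact surface-with-boundary carrying the flat metric $|\omega|$ together with finitely many cone points of finite cone angle, so that closed $|\omega|$-balls lying inside $K$ are compact. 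Hence the only place completeness can fail is along the ends $D_i\setminus\{P_i\}$, and the question becomes whether each $P_i$ lies at finite or at infinite $|\omega|$-distance.

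For the ``if'' direction, suppose $\omega$ has a genuine pole at every puncture. I would invoke the local models of cases \textbf{2}, \textbf{3.1} and \textbf{3.2} of section \ref{lgp}: near $P_i$ one has $|f_i(z)|\asymp |z|^{-m_i}$ for an integer $m_i\ge 1$, so the $|\omega|$-distance from $\{|z|=\varepsilon_i/2\}$ to $\{|z|=r\}$ is comparable to $\int_r^{\varepsilon_i/2}t^{-m_i}\,dt$, which tends to $+\infty$ as $r\to0$. Thus for every $R>0$ there is $\delta_i>0$ with $\{x\in D_i:\ d(x,\partial D_i)\le R\}\subset\{\delta_i\le|z|<\varepsilon_i\}$, so every closed $|\omega|$-ball in $X$ is a closed subset of the compact set $\overline{X}\setminus\bigcup_i\{|z|<\delta_i\}$ and hence compact; a Cauchy sequence in $X$ is bounded, lies in such a ball, and therefore converges, so $X$ is complete.

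For the converse I would argue by contraposition: if $\omega$ extends holomorphically over some puncture $P_i$, then by cases \textbf{1.1} and \textbf{1.2} of section \ref{lgp} a punctured neighbourhood of $P_i$ is isometric to a flat punctured disk (if $P_i$ is not a zero) or to a punctured Euclidean cone of angle $2(k+1)\pi$ (if $P_i$ is a zero of order $k\ge1$), so $|f_i(z)|\asymp|z|^{k}$ with $k\ge0$ and $\int_0^{\varepsilon_i}t^{k}\,dt<\infty$; hence $P_i$ lies at finite $|\omega|$-distance, a radial sequence $x_m\to P_i$ is Cauchy in $X$, and its only candidate limit $P_i$ does not belong to $X$, so $(X,|\omega|)$ is not complete. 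I expect the main (and relatively minor) obstacle to be the bookkeeping in the first direction — confirming that completeness cannot fail in the interior and that the cone points, including those produced by the surgery in case \textbf{3.2}, cause no trouble — but this follows at once from the compactness of $K$ together with the fact that a flat cone of finite angle is itself a complete metric space; the substantive input is precisely the explicit picture near each puncture already furnished by section \ref{lgp}.
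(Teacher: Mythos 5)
Your proof is correct and follows essentially the same route as the paper, which states the lemma as a direct consequence of the case-by-case local analysis in Section \ref{lgp}: poles of order $\ge 1$ put the puncture at infinite $|\omega|$-distance, while removable singularities and zeros put it at finite distance. The only thing you add beyond the paper's (implicit) argument is the explicit compact-core bookkeeping showing completeness cannot fail away from the ends, which is a worthwhile but routine supplement.
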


\section{Surgeries on translation surfaces} \label{surg}
\noindent We introduce in this section some surgeries we shall use in the sequel, which consists of  cutting translation surfaces along several geodesic segments and gluing them back along those segments in order to get new translation structures. Different gluings will provide different translation structures.

\subsection{Slit constructions}\label{sec:seqslit} We describe a procedure to glue two or more translation surfaces together. Consider $n$ translation surfaces $(X_1,\omega_1), \ldots, (X_{n},\omega_n)$ and oriented geodesic line segments $l_i \subset (X_i,\omega_i)$ with each $l_i$ having the same developed image $c \in \mathbb{C}\setminus\{0\}$. View the segments as vectors and label the base of these vectors as $P_i$ and the tip as $Q_i$. Making slits in the surfaces along these segments, we obtain two sides for each slit. We label the left side $l_i^+$ and the right side $l_i^-$ as in Figure \ref{fig:seqslit} below. Identifying $l_i^-$ with $l_{i+1}^+$, with indices being considered modulo $n$, we obtain a translation surface $(X,\omega)$. The genus of this latter is the sum of the genera of all the $(X_i,\omega_i)$.  We shall also use a modification of this procedure where we have multiple slits $l_1, \ldots, l_n$ on a single surface $(X_0,\omega_0)$. In this case, the effect of the slit construction is to add $n-1$ handles so that the genus of the resulting surface, say $(X,\omega)$, is $n-1$ higher than the genus of $(X_0,\omega_0)$.  In the surface $(X,\omega)$, all points $P_i$ get identified to the same point $P$, and all points $Q_i$ get identified to the same point $Q$. If none of the $P_i$'s are singular points in the respective translation surfaces $(X_i,\omega_i)$, then the point $P$ is a singular point in the translation surface $(X,\omega)$ with angle $2n\pi$. More generally, if the points $P_i$ are (possibly) singular points with angle $2\pi\,m_i$ in the singular flat metric of $(X_i,\omega_i)$, then the point $P$ is a singular point with magnitude $2\pi\big(m_1+\cdots+m_n\big)$. In the same fashion we can determine the singularity at the point $Q$.

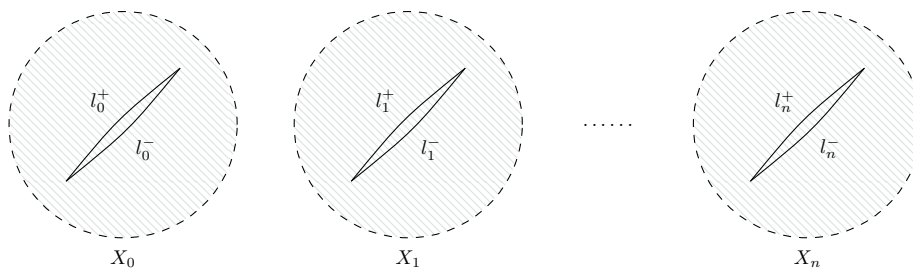
\begin{figure}[!h]
\centering
\begin{tikzpicture}[scale=0.75, every node/.style={scale=0.75}]
\definecolor{pallido}{RGB}{221,227,227}
%First circle
\draw [dashed, black, pattern=north west lines, pattern color=pallido] (0,0) circle (2);
\draw (-1,-1) .. controls (0.1, -0.1) .. (1,1);
\draw (-1,-1) .. controls (-0.1, 0.1) .. (1,1);
\node[above left] at (-0.1, 0.1) {$l_0^+$};
\node[below right] at (0.1, -0.1) {$l_0^-$};
\node[below] at (0,-2.1) {$X_0$};
%Second circle
\draw [dashed, black, pattern=north west lines, pattern color=pallido] (5,0) circle (2);
\draw (4,-1) .. controls (5.1, -0.1) .. (6,1);
\draw (4,-1) .. controls (4.9, 0.1) .. (6,1);
\node[above left] at (4.9, 0.1) {$l_1^+$};
\node[below right] at (5.1, -0.1) {$l_1^-$}; 
\node at (8.5,0) {$\ldots \ldots$};
\node[below] at (5,-2.1) {$X_1$};
%Third circle
\draw [dashed, black, pattern=north west lines, pattern color=pallido] (12,0) circle (2);
\draw (11,-1) .. controls (12.1, -0.1) .. (13,1);
\draw (11,-1) .. controls (11.9, 0.1) .. (13,1);
\node[above left] at (11.9, 0.1) {$l_n^+$};
\node[below right] at (12.1, -0.1) {$l_n^-$}; 
\node[below] at (12,-2.1) {$X_n$};
\end{tikzpicture}
\caption{Sequential slit construction}
\label{fig:seqslit}
\end{figure}

\begin{rmk} Each slit $\overline{P_i\,Q_i}$ turns into a saddle connection joining $P$ and $Q$ in the resulting surface, so we have $n$ saddle connections between $P$ and $Q$ arising as a result of this process. If the angle at $P$ is $2\pi\,(d+1)$ we can locally find $d+1$ rays starting from $P$ that have the same developed image as one of the $n$ saddle connections joining $P$ and $Q$. Out of these $d+1$ rays, $n$ rays give us a saddle connection to $Q$.
\end{rmk}

\begin{rmk}\label{slitonepunct} The slit construction just described also applies when one, possibly both, of the extremal points is a puncture, namely a pole for the abelian differential. In this case, the segment $l$ may develop on an infinite ray on the complex plane. Suppose $(X_1,\omega_1)$ and $(X_2,\omega_2)$ are two translation surfaces, each with at least one pole. Let  $l_i \subset X_i$ for $i=1,2$ be an embedded straight-line ray that starts from a point $P_i$ and ends in a pole. Furthermore, assume that $l_1$ and $l_2$  develop onto infinite rays on $\C$ that are parallel. Similarly to the above, we can define a translation surface $(X,\omega)$ as follows: slit each ray $l_i$ and denote the resulting sides by $l_i^+$ and $l_i^-$; then identify $l_1^+$ with $l_2^-$ and $l_1^-$ and $l_2^+$ by a translation. The starting points of the rays, being identified, define a branch point on $(X,\omega)$ with magnitude that is the sum of the corresponding angles on $(X_1,\omega_1)$ and $(X_2,\omega_2)$, and the other endpoints (at infinity) are identified to a higher order pole with order given by the sum of the individual orders.
\end{rmk}

\subsection{Sequential slit construction with handle construction} \label{sec:seqslithandle} The sequential slit construction just described extends to a \emph{sequential slit construction with handle construction} by introducing one or more handle in the construction process. We shall see later on in section \ref{htc} procedures for adding a handle with prescribed holonomy, i.e.\ with a basis of $\Gamma_{1,1}$ with prescribed periods.  Here, we pick one set of parallel oriented sides of the parallelogram (which could possibly be degenerate). We label the side that has the surface on its left (before identification) as $l^+$ and the other side as $l^-$. We then identify $l_n^-$ with $l^+$ and $l^-$ with $l_0^+$ in the sequential slit construction with the other identifications of $l_i^-$ and $l_i^+$ remaining the same. The other pair of parallel sides of the parallelogram are then identified.

\begin{figure}[!h] \label{fig:seqslithandle}
\centering
\begin{tikzpicture}[scale=0.65, every node/.style={scale=0.75}]
\definecolor{pallido}{RGB}{221,227,227}
%\draw [help lines] (-4,-6) grid (16,6);
%\draw [thin, dashed] (0,0) circle (35mm);
\draw[black, pattern=north west lines, pattern color=pallido] (-3,1) -- (-5, -1) -- (-9,-1) -- (-7, 1) -- (-3,1);
\node[above left] at (-8,0) {$l^-$};
\node[below right] at (-4,0) {$l^+$};
\draw[dashed, black, pattern=north west lines, pattern color=pallido] (0,0) circle (2);
\draw (-1,-1) .. controls (0.1, -0.1) .. (1,1);
\draw (-1,-1) .. controls (-0.1, 0.1) .. (1,1);
\node[above left] at (-0.1, 0.1) {$l_0^+$};
\node[below right] at (0.1, -0.1) {$l_0^-$}; 
\draw [dashed, black, pattern=north west lines, pattern color=pallido] (5,0) circle (2);
\draw (4,-1) .. controls (5.1, -0.1) .. (6,1);
\draw (4,-1) .. controls (4.9, 0.1) .. (6,1);
\node[above left] at (4.9, 0.1) {$l_1^+$};
\node[below right] at (5.1, -0.1) {$l_1^-$}; 
\node at (8.5,0) {$\ldots \ldots$};
\draw [dashed, black, pattern=north west lines, pattern color=pallido] (12,0) circle (2);
\draw (11,-1) .. controls (12.1, -0.1) .. (13,1);
\draw (11,-1) .. controls (11.9, 0.1) .. (13,1);
\node[above left] at (11.9, 0.1) {$l_n^+$};
\node[below right] at (12.1, -0.1) {$l_n^-$}; 
\end{tikzpicture}
\caption{Slit construction with one handle.}
\end{figure}
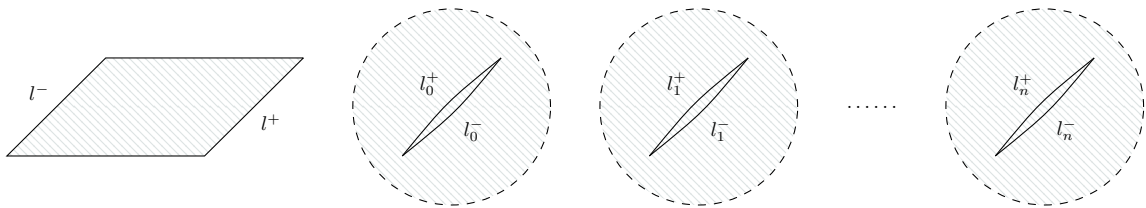

\begin{figure}[!h] \label{fig:seqslithandle}
\centering
\begin{tikzpicture}[scale=0.65, every node/.style={scale=0.75}]
\definecolor{pallido}{RGB}{221,227,227}
\begin{scope}[shift = {(0,-6)}]
\draw[dashed, black, pattern=north west lines, pattern color=pallido] (-6,0) circle (3);
\fill[white] (-4,1) -- (-6,-1) -- (-8,-1) -- (-6, 1) -- (-4,1);
\draw (-4,1) -- (-6,-1) -- (-8,-1) -- (-6, 1) -- (-4,1);
\node[above left] at (-7,0) {$l^+$};
\node[below right] at (-5,0) {$l^-$};
\draw[dashed, black, pattern=north west lines, pattern color=pallido] (0,0) circle (2);
\draw (-1,-1) .. controls (0.1, -0.1) .. (1,1);
\draw (-1,-1) .. controls (-0.1, 0.1) .. (1,1);
\node[above left] at (-0.1, 0.1) {$l_0^+$};
\node[below right] at (0.1, -0.1) {$l_0^-$}; 
\draw [dashed, black, pattern=north west lines, pattern color=pallido] (5,0) circle (2);
\draw (4,-1) .. controls (5.1, -0.1) .. (6,1);
\draw (4,-1) .. controls (4.9, 0.1) .. (6,1);
\node[above left] at (4.9, 0.1) {$l_1^+$};
\node[below right] at (5.1, -0.1) {$l_1^-$}; 
\node at (8.5,0) {$\ldots \ldots$};
\draw [dashed, black, pattern=north west lines, pattern color=pallido] (12,0) circle (2);
\draw (11,-1) .. controls (12.1, -0.1) .. (13,1);
\draw (11,-1) .. controls (11.9, 0.1) .. (13,1);
\node[above left] at (11.9, 0.1) {$l_n^+$};
\node[below right] at (12.1, -0.1) {$l_n^-$}; 
\end{scope}
\end{tikzpicture}
\caption{Slit construction with handles. The closure of the exterior of any parallelogram in $\Bbb C$ is still a parallelogram in the Riemann sphere $\cp$. Such a parallelogram is admissible in our construction. }
\end{figure}
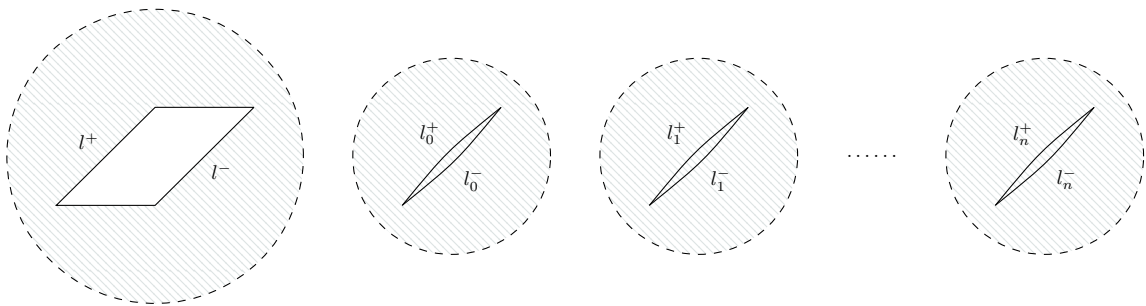

\subsection{Splitting a zero} \label{sec:zerosplit} The surgery we are going to introduce is a procedure of splitting a zero of an abelian differential. In the context of branched projective structures, such a surgery is commonly known as \emph{movement of branched points} and it has been originally introduced by Tan in \cite[Chapter 6]{TA} for showing the existence of a complex one-dimensional continuous family of deformations of a given structure. For translation surfaces, the same kind of surgery has been subsequently introduced by Kontsevich-Zorich as \emph{breaking a zero}, see \cite[Section 4.2]{KZ}. We shall adopt this latter point of view. What makes this surgery important for us is its fundamental property of preserving the topology of the underlying surface even if the overall geometry is changed. In particular, the new translation surface we obtain once the surgery is performed has the same holonomy of the original one.\\

\noindent Splitting a zero is a procedure that takes place at the $R$-neighbourhood of some zero of order $d$ of the differential on which it looks like the pull-back of the form $dz$ via a branched covering. The differential is then modified by a surgery inside this neighbourhood. Once this surgery is performed we obtain a new translation structure with two zeros of order $d_1$ and $d_2$ in place of the zero of order $d$, with $d_1+d_2 = d$. Furthermore, the translation structure remains unchanged outside the $R$-neighbourhood of the zero of order $d$ that we have considered earlier. We first view the $R$-neighbourhood of a zero of order $d$ as $d+1$ upper half discs and $d+1$ lower half discs, each one of radius $R$, having the diameters identified in a specified way. We label the left half of the diameter of the $i^{th}$ upper half disc as $ul_i$ and the right half as $ur_i$. For lower half discs, the corresponding labels are $ll_i$ and $lr_i$. We now identify $ul_i$ with $ll_i$ and $lr_i$ with $ur_{i+1}$ with the indices being considered modulo $d+1$. An illustration of the labelling and identification for a zero of order 4 is given in Figure \ref{fig:zerolocmodel}. All the centres of the half discs are identified and this point is the zero that we are looking at.

\begin{figure}[h]
    \centering
    \begin{tikzpicture}[scale=0.70, every node/.style={scale=0.8}]
    \definecolor{pallido}{RGB}{221,227,227}
    \foreach \x [evaluate=\x as \coord using 4 + 5*\x] in {0, 1, ..., 4} %Draw upper and lower semicircles
    {
    \draw [pattern=north west lines, pattern color=pallido] (\coord,0) arc [start angle = 0,end angle = 180,radius = 2];
    \draw [pattern=north west lines, pattern color=pallido] (\coord,-1) arc [start angle = 0,end angle = -180,radius = 2];
    }
    \foreach \x [evaluate=\x as \leftend using 5*\x] [evaluate=\x as \rightend using 2 + 5*\x] in {0, 1, ..., 4} %Draw  black lines
    {
    \draw [thick] (\leftend, 0) -- (\rightend, 0);
    \draw [thick] (\leftend, -1) -- (\rightend, -1);
    }
    \foreach \x [evaluate=\x as \leftlabel using 1 + 5*\x] [evaluate=\x as \rightlabel using 3 + 5*\x] in {0, 1, ..., 4} %Labels
    {
    \node [below] at (\leftlabel, 0) {$ul_{\x}$};
    \node [below] at (\rightlabel, 0) {$ur_{\x}$};
    \node [above] at (\leftlabel, -1) {$ll_{\x}$};
    \node [above] at (\rightlabel, -1) {$lr_{\x}$};
    }
    
    \foreach \botindex / \topindex / \colr [evaluate=\botindex as \botleftend using 5*\botindex + 2] [evaluate=\botindex as \botrightend using 5*\botindex + 4] [evaluate=\topindex as \topleftend using 5*\topindex + 2] [evaluate=\topindex as \toprightend using 5*\topindex + 4] in {0/1/blue, 1/2/pink, 2/3/orange, 3/4/green, 4/0/violet} %Draw coloured lines
    {
    \draw [thick, \colr] (\topleftend, 0) -- (\toprightend, 0);
    \draw [thick, \colr] (\botleftend, -1) -- (\botrightend, -1);
    \fill (\topleftend, 0) circle (1.5pt);
    \fill (\botleftend, -1) circle (1.5pt);
    }
    \end{tikzpicture}
    \caption{An $R$ neigbourhood of a zero of order 4}
    \label{fig:zerolocmodel}
\end{figure}
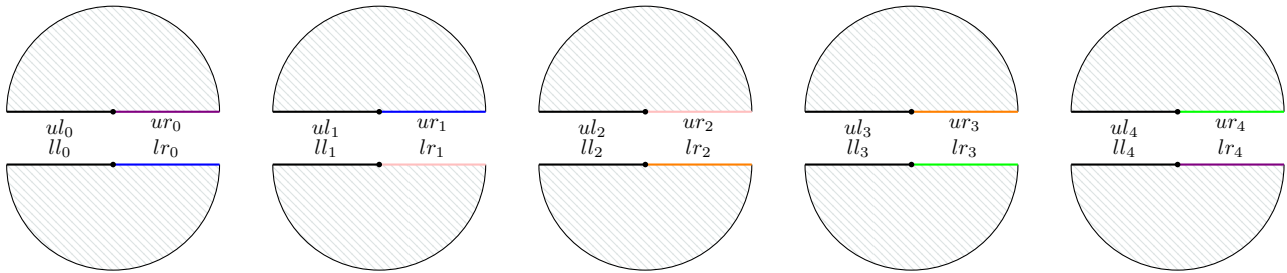

\noindent Now, to split this zero into zeros of order $d_1$ and $d_2$, we modify the labelling on the upper half disc indexed by $0$, the lower half disc indexed by $d_1$, and all upper and lower half discs with index more than $d_1$ accordingly. The modified labelling is shown below in Figure \ref{fig:splitlocmodel} for the case of splitting the zero of order 4 into two zeros of order 2. 

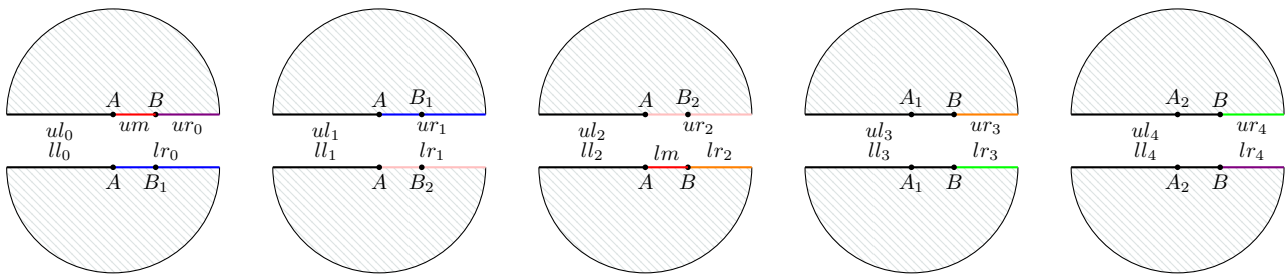
\begin{figure}[h]
    \centering
    \begin{tikzpicture}[scale=0.7, every node/.style={scale=0.8}]
    \definecolor{pallido}{RGB}{221,227,227}
    \foreach \x [evaluate=\x as \coord using 4 + 5*\x] in {0, 1, ..., 4} %Draw upper and lower semicircles
    {
    \draw [pattern=north west lines, pattern color=pallido] (\coord,0) arc [start angle = 0,end angle = 180,radius = 2];
    \draw [pattern=north west lines, pattern color=pallido] (\coord,-1) arc [start angle = 0,end angle = -180,radius = 2];
    }
    \foreach \x [evaluate=\x as \leftend using 5*\x] [evaluate=\x as \rightend using 2 + 5*\x] in {0, 1, 2} %Draw black lines part 1
    {
    \draw [thick] (\leftend, 0) -- (\rightend, 0);
    \draw [thick] (\leftend, -1) -- (\rightend, -1);
    \node[above] at (\rightend, 0) {$A$};
    \node[below] at (\rightend, -1) {$A$};
    }
    \foreach \x / \y [evaluate=\x as \leftend using 5*\x] [evaluate=\x as \rightend using 2.8 + 5*\x] [evaluate=\x as \splpt using 2 + 5*\x] in {3/1, 4/2} %Draw black lines part 2
    {
    \draw [thick] (\leftend, 0) -- (\rightend, 0);
    \draw [thick] (\leftend, -1) -- (\rightend, -1);
    \fill (\splpt, 0) circle (1.5pt);
    \fill (\splpt, -1) circle (1.5pt);
    \node[above] at (\splpt, 0) {$A_{\y}$};
    \node[below] at (\splpt, -1) {$A_{\y}$};
    }
    
    \foreach \botindex / \topindex / \colr [evaluate=\botindex as \botleftend using 5*\botindex + 2] [evaluate=\botindex as \botrightend using 5*\botindex + 4] [evaluate=\topindex as \topleftend using 5*\topindex + 2] [evaluate=\topindex as \toprightend using 5*\topindex + 4] [evaluate=\topindex as \topsplpt using 5*\topindex + 2.8] [evaluate=\botindex as \botsplpt using 5*\botindex + 2.8] in {0/1/blue, 1/2/pink} %Draw coloured lines part 1
    {
    \draw [thick, \colr] (\topleftend, 0) -- (\toprightend, 0);
    \draw [thick, \colr] (\botleftend, -1) -- (\botrightend, -1);
    \fill (\topleftend, 0) circle (1.5pt);
    \fill (\botleftend, -1) circle (1.5pt);
    \fill (\topsplpt, 0) circle (1.5pt);
    \fill (\botsplpt, -1) circle (1.5pt);
    \node[above] at (\topsplpt, 0) {$B_{\topindex}$};
    \node[below] at (\botsplpt, -1) {$B_{\topindex}$};
    }
    
    \foreach \botindex / \topindex / \colr [evaluate=\botindex as \botleftend using 5*\botindex + 2.8] [evaluate=\botindex as \botrightend using 5*\botindex + 4] [evaluate=\topindex as \topleftend using 5*\topindex + 2.8] [evaluate=\topindex as \toprightend using 5*\topindex + 4] in {2/3/orange, 3/4/green, 4/0/violet} %Draw coloured lines part 2
    {
    \draw [thick, \colr] (\topleftend, 0) -- (\toprightend, 0);
    \draw [thick, \colr] (\botleftend, -1) -- (\botrightend, -1);
    \fill (\topleftend, 0) circle (1.5pt);
    \fill (\botleftend, -1) circle (1.5pt);
    \node[above] at (\topleftend, 0) {$B$};
    \node[below] at (\botleftend, -1) {$B$};
    }
    
    \foreach \x [evaluate=\x as \leftlabel using 1 + 5*\x] in {0, 1, 2} %Labels part 1
    {
    \node [below] at (\leftlabel, 0) {$ul_{\x}$};
    \node [above] at (\leftlabel, -1) {$ll_{\x}$};
    }
    
    \foreach \x [evaluate=\x as \leftlabel using 1.4 + 5*\x] in {3,4} %Labels part 2
    {
    \node [below] at (\leftlabel, 0) {$ul_{\x}$};
    \node [above] at (\leftlabel, -1) {$ll_{\x}$};
    }
    
    \foreach \botindex / \topindex [evaluate=\botindex as \botlabel using 3+ 5*\botindex] [evaluate=\topindex as \toplabel using 3+ 5*\topindex] in {0/1, 1/2} %Labels part 3
    {
    \node [below] at (\toplabel, 0) {$ur_{\topindex}$};
    \node [above] at (\botlabel, -1) {$lr_{\botindex}$};
    }
    
    \foreach \botindex / \topindex [evaluate=\botindex as \botlabel using 3.4+ 5*\botindex] [evaluate=\topindex as \toplabel using 3.4+ 5*\topindex] in {2/3, 3/4, 4/0} %Labels part 3
    {
    \node [below] at (\toplabel, 0) {$ur_{\topindex}$};
    \node [above] at (\botlabel, -1) {$lr_{\botindex}$};
    }
    
    %Others
    \draw [thick, red] (2,0) -- (2.8,0);
    \fill (2,0) circle (1.5pt);
    \draw [thick, red] (12,-1) -- (12.8,-1);
    \fill (12,-1) circle (1.5pt);
    \node [below] at (2.4, 0) {$um$};
    \node [above] at (12.4, -1) {$lm$};
    \end{tikzpicture}
    \caption{Modified labellings to split a zero of order 4 into two zeros of order 2}
    \label{fig:splitlocmodel}
\end{figure}

\noindent We now identify $ul_i$ with $ll_i$ and $lr_i$ with $ur_{i+1}$ as before with the added identification of $um$ with $lm$. This identification gives two singular points $A$ and $B$, $A$ is a zero of the differential of order $d_1$ and $B$ is a zero of order $d_2$. We also get a geodesic line segment joining $A$ and $B$.  Given $c \in \mathbb{C}\setminus\{0\}$ with length less than $2R$, we can perform the surgery in such a way that the line segment joining $A$ and $B$ is $c$. However, we shall work with only $c$ which have length less than $R$. It is also clear that the modification of the translation structure is only local. This procedure can be repeated multiple times to obtain zeros of orders $d_1, \dots, d_n$ from a single zero of order $d_1 + \cdots + d_n$. One way to do this would be to first split the zero into two zeros of order $d_1$ and $d_2 + \cdots + d_n$, and then split the latter to get a zero of order $d_2$ and so on. This is the procedure that we will use. We may further observe some additional features of the construction that we shall use later. In figure, \ref{fig:splitlocmodel}, there are two geodesic line segments $AB_1$ and $AB_2$ that have the same developed image as the segment $AB$. The same goes for $B$, there are two geodesic line segments $BA_1$ and $BA_2$ that have the same developed image as $BA$. In general, when we split a zero of order $d_1+d_2$ into zeros of order $d_1$ and $d_2$, we obtain $d_1$ geodesic line segments from the zero of order $d_1$ and $d_2$ geodesic line segments from the zero of order $d_2$ that have the same developed image as the saddle connection between the two zeros.\\ 

\noindent We shall  strongly rely on this procedure of splitting a zero in part \ref{part2} of this paper.

\part{Translation structures with prescribed holonomy}\label{part1}

\noindent In the first part of the paper, we shall realize any representation $\chi:\Gamma_{g,n}\longrightarrow \C$ as the holonomy of some translation structure on $S_{g,n}$, that is, we shall prove Theorem \ref{main:thma2}. The crucial point here is that we shall not care about to prescribe any data of $\omega$, namely the orders of zeros and orders of poles; we shall consider this more delicate problem in the second part of the paper.\\

\noindent We briefly describe the geometrizing process we use to prove Theorem \ref{main:thma2} and which will be fully developed later in section \ref{gos}. As mentioned in the Introduction, the given representation $\chi:\Gamma_{g,n}\to\C$ yields two further representations $\chi_g$ and $\chi_n$. By  Proposition \ref{propb},  the latter representation always appears as the holonomy of a translation structure on $S_{0,n}$, so we can focus the attention on $\chi_g$. If the representation satisfies Haupt's conditions, then it appears as the holonomy of some translation structure on $S_g$ by a direct application of the Haupt's Theorem. The translation structures on $S_g$ and $S_{0,n}$ can then be glued together along a geodesic slit and the resulting surface, homeomorphic to $S_{g,n}$, carries a translation structure having $\chi$ as the holonomy.  In the case $\chi_g$  does not satisfy Haupt's conditions, there are two cases:  it either has positive volume but its image is a lattice $\Lambda$  in $\C$, or  its volume is non-positive. In the first case, we shall glue the Euclidean torus $\C/\Lambda$ with the translation structure on $S_{0,n}$ having holonomy $\chi_n$. The resulting surface, homeomorphic to $S_{1,n}$, carries a translation structure and any branch-point can be used for attaching the remaining $g-1$ handles in the way we described in subsection \ref{trihan}.

\noindent The most interesting cases are given by representations with non-positive volume. Let $\{\alpha_1,\beta_1,\dots,\alpha_g,\beta_g\}$ be a symplectic basis of $\Gamma_g$, we can reorder the pairs $\{\alpha_i,\beta_i\}$ such that the first $k$ handles have negative volume, the following $h$ handles have null-volume and the (possibly) remaining $\overline{g}=g-(k+h)$ handles have positive volume. The reader can notice that, whenever $\chi_g$ has non-positive volume, then $k+h\ge1$. We shall apply Proposition \ref{proppt} to any representation $\langle\alpha_i,\beta_i\rangle\hookrightarrow \Gamma_g\to\C$, for any $i=1,\dots,k+h$, in order to have $k+h$ handles each one equipped with a translation structure having one (possibly two) zeros and one pole of order two. All these structures, properly glued together define a translation structure on a surface of type $S_{k+h,1}$. To this latter surface, we finally glue in a translation structure on $S_{0,n}$ with holonomy $\chi_n$ and the remaining $\overline{g}$ handles with positive volume. The final surface, homeomorphic to $S_{g,n}$ carries a translation structure having holonomy $\chi$. 

\section{Some preliminaries} \label{prelgeom}
\noindent Let $\gamma$ be a simple closed separating curve in $S_{g,n}$ that bounds a subsurface homeomorphic to $S_{g,1}$. There is a natural embedding $S_{g,1}\hookrightarrow S_{g,n}$ and hence an injective homomorphism $\imath:\Gamma_{g,1}\to \Gamma_{g,n}$. The mapping $\imath$ post-composed with any given representation $\chi:\Gamma_{g,n}\to\Bbb C$ yields a representation $\chi_g:\Gamma_{g,1}\to\Bbb C$. Since any representation satisfies the property of mapping simple closed separating curves to the identity element in $\Bbb C$, the equation $\chi_g(\gamma)=0$ holds and $\chi_g$ boils down to a representation $\chi_g:\Gamma_g\to\Bbb C$, where $\Gamma_g=H_1(S_g;\Bbb Z)$. We will see later the role that this latter representation will play in our proof of Theorem \ref{mainthm}. We focus on some considerations. As noticed in the introduction, given a closed surface $S_g$ and a representation $\chi_g:\Gamma_g\to\Bbb C$, there are two topological obstructions for $\chi_g$ to be the character of an abelian differential. The first obstruction is given by the volume of $\chi_g$ defined as the quantity
\begin{equation}
    \textsf{vol}(\chi_g)=\displaystyle\sum_{i=1}^g \Im\big(\overline{\chi(\alpha_i)}\chi(\beta_i)\big), 
\end{equation} 
 where $\{\alpha_1,\beta_1,\dots,\alpha_g,\beta_g\}$ is any standard symplectic basis of $\Gamma_g$. The second obstruction applies only to surfaces of genus at least $2$. If we assume the image of $\chi_g:\Gamma_g\to\C$ to be a lattice $\Lambda$ in $\C$, then we need to require that $\textsf{vol}(\chi_g)\ge 2\text{Area}(\C/\Lambda)$. This is equivalent to require that the mapping $f:S_g\to \C/\Lambda$, induced by the homomorphism $f_*:H_1(S_g;\Z)\to H_1(\C/\Lambda;\Bbb Z)$, has degree at least two. In \cite[Proposition 2.7]{CDF2}, Calsamiglia-Deroin-Francaviglia provided the following characterisation.

\begin{prop}\label{cdfprop}
Assume $\chi_g:\Gamma_g\to\C$ is a homomorphism such that $\textsf{\emph{vol}}(\chi_g)>0$ and $\chi(\Gamma_g)=\Lambda$ is a lattice in $\C$. The mapping $f:S_g\to \C/\Lambda$ factors by a collapse of $g-1$ handles iff $\textsf{\emph{vol}}(\chi_g)=\text{\emph{Area}}(\C/\Lambda)$ iff the second obstruction fails to be satisfied.
\end{prop}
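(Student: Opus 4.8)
The plan is to realize $\chi_g$ by an explicit map $f$, compute $\deg(f)$ in terms of $\textsf{vol}(\chi_g)$, and then translate the assertion about collapsing handles into a classical fact about degree-one maps of surfaces. \textbf{Step 1: producing $f$.} Since $\C/\Lambda$ is aspherical with $\pi_1(\C/\Lambda)=\Lambda$ abelian, homotopy classes of maps $S_g\to\C/\Lambda$ are in natural bijection with $\textsf{Hom}(\pi_1 S_g,\Lambda)=\textsf{Hom}(\Gamma_g,\Lambda)$. I would take $f\colon S_g\to\C/\Lambda$ to be (a representative of) the class corresponding to $\chi_g\colon\Gamma_g\to\Lambda\hookrightarrow\C$, so that $f$ is determined up to homotopy and $f_*=\chi_g$ on $H_1$. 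This is the map in the statement.

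\textbf{Step 2: $\deg(f)=\textsf{vol}(\chi_g)/\text{Area}(\C/\Lambda)$.} Fix a positively oriented $\Z$-basis $u,v$ of $\Lambda$, so $\text{Area}(\C/\Lambda)=\Im(\bar u v)>0$, with dual basis $u^\ast,v^\ast\in H^1(\C/\Lambda;\Z)$; then the normalized area class is $[\omega_\Lambda]=\text{Area}(\C/\Lambda)\,(u^\ast\smile v^\ast)$. Pulling back and pairing with the fundamental class gives $\deg(f)\,\text{Area}(\C/\Lambda)=\text{Area}(\C/\Lambda)\,\langle f^\ast u^\ast\smile f^\ast v^\ast,[S_g]\rangle$. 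Writing $\chi_g(\alpha_i)=a_iu+b_iv$ and $\chi_g(\beta_i)=c_iu+d_iv$ for a symplectic basis $\{\alpha_i,\beta_i\}$ of $\Gamma_g$, the surface cup-product formula yields $\langle f^\ast u^\ast\smile f^\ast v^\ast,[S_g]\rangle=\sum_i(a_id_i-b_ic_i)$, while a direct expansion of $\Im(\overline{\chi_g(\alpha_i)}\chi_g(\beta_i))$ gives $\textsf{vol}(\chi_g)=\big(\sum_i(a_id_i-b_ic_i)\big)\Im(\bar u v)$. Comparing, $\textsf{vol}(\chi_g)=\deg(f)\,\text{Area}(\C/\Lambda)$; in particular $\deg(f)\ge 1$ since the volume is positive by hypothesis.

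\textbf{Step 3: the three equivalences.} From Step 2, $\textsf{vol}(\chi_g)=\text{Area}(\C/\Lambda)$ is equivalent to $\deg(f)=1$, and, since $\deg(f)\ge 1$ always holds here, this is in turn equivalent to $\textsf{vol}(\chi_g)<2\,\text{Area}(\C/\Lambda)$, i.e.\ to the failure of the second Haupt condition; this already gives the last ``iff''. It then remains to see that $f$ factors by a collapse of $g-1$ handles precisely when $\deg(f)=1$. One direction is immediate: a composition $S_g\to S_1\xrightarrow{\sim}\C/\Lambda$ whose first map collapses $g-1$ handles and whose second map is an affine homeomorphism has degree $1$. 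For the converse, if $\deg(f)=1$ then by Edmonds' theorem (every degree-one map of closed oriented surfaces is homotopic to a pinch map) $f$ is homotopic to a pinch onto the torus $\C/\Lambda$, which collapses exactly $g-1$ handles. Alternatively, and more concretely, $\deg(f)=1$ forces the Poincar\'e duals of $f^\ast u^\ast$ and $f^\ast v^\ast$ to be a primitive pair of homology classes with intersection number $1$; completing them to a symplectic basis whose remaining $2g-2$ classes lie in $\ker\chi_g$ and realizing that basis by embedded curves exhibits $g-1$ handles on which $f$ is nullhomotopic, which one may then collapse.

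\textbf{Main obstacle.} The computational heart is Step 2: arranging orientation and normalization conventions so that $\textsf{vol}(\chi_g)$ equals \emph{exactly} $\deg(f)\,\text{Area}(\C/\Lambda)$, rather than merely up to sign, and verifying the cup-product identity in a symplectic basis. Once this is in place everything else is formal, the only genuinely non-elementary input being the classification of degree-one surface maps invoked (or reproven by the homological argument) for the converse in Step 3.
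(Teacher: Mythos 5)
Your proof is correct. Note that the paper does not actually prove this proposition: it is quoted verbatim from Calsamiglia--Deroin--Francaviglia \cite[Proposition 2.7]{CDF2}, so there is no internal argument to compare against; what you have written is a self-contained proof of the cited result. The structure is the standard one: the identity $\textsf{vol}(\chi_g)=\deg(f)\cdot\text{Area}(\C/\Lambda)$ (your Step 2, whose cup-product and $\Im(\bar u v)$ computations I checked and which do match, including signs, once $u,v$ is positively oriented), the observation that $\deg(f)$ is a positive integer so that $\textsf{vol}(\chi_g)=\text{Area}(\C/\Lambda)$, $\deg(f)=1$, and $\textsf{vol}(\chi_g)<2\,\text{Area}(\C/\Lambda)$ are all equivalent, and the identification of degree-one maps with pinches. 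Two small points worth making explicit if you were to write this up: first, in the homological version of Step 3, the reason the remaining $2g-2$ symplectic basis vectors lie in $\ker\chi_g$ is that $\langle f^\ast u^\ast,\gamma\rangle$ equals the intersection number of $\gamma$ with the Poincar\'e dual of $f^\ast u^\ast$, so vanishing of both intersection numbers forces $\chi_g(\gamma)=0$; second, the final collapse uses that on each once-holed torus carrying a kernel pair the map $f$ induces the trivial homomorphism to $\Lambda$ and hence, by asphericity of $\C/\Lambda$, is nullhomotopic, after which $f$ is homotopic to a map factoring through the quotient. Both of these are routine, and either the Edmonds route or the homological route closes the argument.
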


\noindent Suppose a character $\chi:\Gamma_{g,n}\to\Bbb C$ induces a representation $\chi_g$ such that $\textsf{vol}(\chi_g)>0$, it is an easy matter to verify that $\chi$ arises as the holonomy of a translation structure on $S_{g,n}$ $-$ see Lemmata \ref{hte}, \ref{hnte} and \ref{nshc} below. However, we shall also deal with representations such that $\textsf{vol}(\chi_g)\le 0$ and to which Haupt's theorem does not apply. Any such a representation falls in one of the following categories:
\begin{enumerate}
\item[1.] $\textsf{vol}(\chi_g)=0$ and $\Im\big(\overline{\chi(\alpha_i)}\chi(\beta_i)\big)=0$ for every $i=1,\dots,g$;
\item[2.] $\textsf{vol}(\chi_g)=0$ and there exists $0<h<g$ such that $\textsf{vol}(\chi_h)>0$ and $\textsf{vol}(\chi_{g-h})=-\textsf{vol}(\chi_h)$;
\item[3.] $\textsf{vol}(\chi_g)<0$ and $\Im\big(\overline{\chi(\alpha_i)}\chi(\beta_i)\big)<0$ for every $i=1,\dots,g$; and finally
\item[4.]  $\textsf{vol}(\chi_g)<0$ and there exists $0<h<g$ such that $\textsf{vol}(\chi_h)\ge 0$ and $\textsf{vol}(\chi_{g-h})<-\textsf{vol}(\chi_h)$.\\
\end{enumerate}

\noindent In the second part of the present work, see part \ref{part2}, we shall consider the $\spz-$action on $\Gamma_{g,n}$ for finding a base such that all the handles have non-zero volume. In this part, however, we prefer to do not rely on such an action and exploit completely the flexibility of these structures.
%we shall make some considerations about the conditions of the Haupt's Theorem and we provide some surgeries in order to define a geometry on handles with non-positive volume.\\

\section{How to geometrize handles}\label{htc}

\noindent In order to prove Theorem \ref{mainthm} for punctured surfaces, we have to deal with representations $\chi$ such that $\textsf{vol}(\chi_g)\le0$. For this purpose, we need to know how to manage handles with non-positive volume. Let $\chi:\langle\alpha,\beta\rangle\cong\Bbb F_2\rightarrow \C$ be a representation with volume $\textsf{vol}(\chi)$. Let start considering representation with null volume.

\subsection{Handle with zero volume} We begin by noticing that the condition $\Im\big(\overline{\chi(\alpha)}\chi(\beta)\big)=0$ implies exactly one case of the following trichotomy.
\begin{enumerate}
\item[i.] $\chi(\alpha)$ and $\chi(\beta)$ are both zero,
\item[ii.] $\chi(\alpha)$ or $\chi(\beta)$ is zero (but not both),
\item[iii.] $\chi(\alpha)$ and $\chi(\beta)$ are (real) collinear: They translate along the same line in $\C$ but possibly in opposite directions.
\end{enumerate}

\noindent We now introduce three different constructions for handles with zero volume and prescribed holonomy according to the cases listed above. Notice that the second case can be subsumed in the third one by applying a suitable change of basis.

\subsubsection{Construction 1: trivial handles.}\label{trhan} Let $(X,\omega)$ be any translation surface. Assume $\omega$ has at least one zero, say $P$, on the underlying surface $S_{g,n}$. As the magnitude of $P$ is $2h\pi$, for some $h\ge2$, we can find two geodesic paths $\tau_1,\tau_2:[0,1]\longrightarrow S$ such that
\begin{itemize}
\item[1.] $\tau_1(0)=\tau_2(0)=P$,
\item[2.] $\tau_1$ and $\tau_2$ do not share any points other than $P$,
\item[3.] they are injectively developed and overlap once developed; in particular they have the same length with respect the singular Euclidean metric.
\end{itemize}
\noindent The segments $\tau_1,\tau_2$ are called \emph{geodesic twin paths}. These segments can be taken sufficiently short so that the both lie in some simply connected chart containing the singular point $P$. Let $Q_i$ be extremal point of $\tau_i$ different from $P$. Define $\delta_i$ as a proper sub-arc of $\tau_i$ starting from $Q_i$ of some length $l$. For $i=1,2$, cut along $\delta_i$ to get a surface with a piecewise geodesic boundary $\delta_i^1\cup\delta_i^2$ and two corner angles. Then glue $\delta_1^i$ with $\delta_2^i$, as shown in the figure below, producing an additional handle and two additional singular points both of magnitude $4\pi$ with trivial holonomy.  In the very special case that $(X,\omega)=(\Bbb C, z\,dz)$ we obtain the following result.

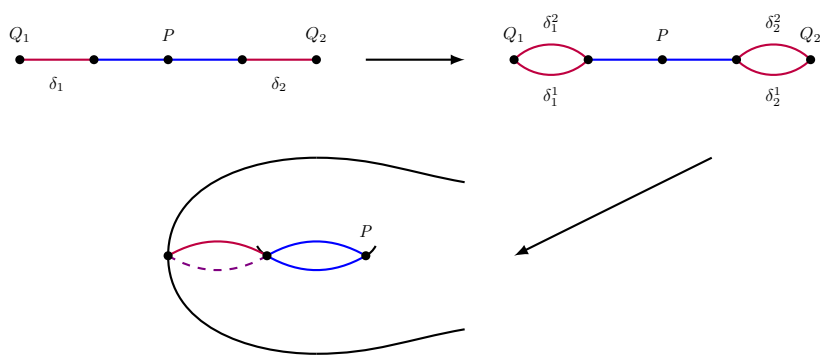
\begin{figure}[!h]
\centering
\begin{tikzpicture}[thick,scale=0.65, every node/.style={scale=0.65}]
%\draw [help lines] (1,-10) grid (17,8);
\draw [thick, blue] (2.5,5) to (4,5);
\draw [thick, blue] (5.5,5) to (4,5);
\draw [thick, purple] (2.5,5) to (1,5);
\draw [thick, purple] (5.5,5) to (7,5);
\draw [thick] plot [mark=*, smooth] coordinates {(4,5)};
\draw [thick] plot [mark=*, smooth] coordinates {(1,5)};
\draw [thick] plot [mark=*, smooth] coordinates {(7,5)};
\draw [thick] plot [mark=*, smooth] coordinates {(2.5,5)};
\draw [thick] plot [mark=*, smooth] coordinates {(5.5,5)};
\node at (4,5.5) {\text{\emph{P}}};
\node at (1,5.5) {\text{\emph{$Q_1$}}};
\node at (7,5.5) {\text{\emph{$Q_2$}}};
\node at (1.75,4.5) {\text{\emph{$\delta_1$}}};
\node at (6.25,4.5) {\text{\emph{$\delta_2$}}};
\draw[thick, -latex] (8,5) to (10,5);
\draw [thick, blue] (12.5,5) to (14,5);
\draw [thick, blue] (15.5,5) to (14,5);
\draw[thick, purple] (12.5,5) to[out=135, in=45] (11,5);
\draw[thick, purple] (12.5,5) to[out=225, in=315] (11,5);
\draw[thick, purple] (17,5) to[out=135, in=45] (15.5,5);
\draw[thick, purple] (17,5) to[out=225, in=315] (15.5,5);
\draw [thick] plot [mark=*, smooth] coordinates {(14,5)};
\draw [thick] plot [mark=*, smooth] coordinates {(12.5,5)};
\draw [thick] plot [mark=*, smooth] coordinates {(11,5)};
\draw [thick] plot [mark=*, smooth] coordinates {(15.5,5)};
\draw [thick] plot [mark=*, smooth] coordinates {(17,5)};
\node at (14,5.5) {\text{\emph{P}}};
\node at (11,5.5) {\text{\emph{$Q_1$}}};
\node at (17,5.5) {\text{\emph{$Q_2$}}};
\node at (11.75,4.25) {\text{\emph{$\delta_1^1$}}};
\node at (16.25,4.25) {\text{\emph{$\delta_2^1$}}};
\node at (11.75,5.75) {\text{\emph{$\delta_1^2$}}};
\node at (16.25,5.75) {\text{\emph{$\delta_2^2$}}};
\draw[thick, -latex] (15,3) to (11,1);
\draw[ thick] (10,2.5) to[out=170, in=0] (7,3);
\draw[ thick] (7,3) to[out=180, in=90] (4,1);
\draw[ thick] (4,1) to[out=270, in=180] (7,-1);
\draw[ thick] (7,-1) to[out=0, in=190] (10,-0.5);
\draw [ thick, blue] (6,1) to [out=330, in=210] (8,1);
\draw [thick] (5.8,1.2) to [out=300, in=150] (6,1);
\draw [thick] (8,1) to [out=30, in=240] (8.2,1.2);
\draw [ thick, blue] (6,1) to [out=30, in=150] (8,1);
\draw [ thick, purple] (6,1) to [out=150, in=30] (4,1);
\draw [thick, dashed, violet] (6,1) to [out=210, in=330] (4,1);
\draw[thick] plot [mark=*, smooth] coordinates {(6,1)};
\draw[thick] plot [mark=*, smooth] coordinates {(8,1)};
\draw[thick] plot [mark=*, smooth] coordinates {(4,1)};
\node at (8,1.5) {\text{\emph{P}}};
\end{tikzpicture}
\caption[Creating an handle with trivial holonomy]{Creating an handle with trivial holonomy}
\end{figure}

\begin{lem}\label{trihan}
Let $\chi:\Bbb Z^2\longrightarrow \Bbb C$ be the trivial representation. Then $\chi$ appears as the holonomy of a translation structure on a punctured torus.
\end{lem}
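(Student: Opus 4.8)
The plan is to realize the trivial representation $\chi:\Z^2\to\C$ as the holonomy of a translation structure on $S_{1,1}$ by carrying out the ``Construction 1'' surgery described above in the special case $(X,\omega)=(\C, z\,dz)$. Note that $(\C, z\,dz)$ is the Euclidean plane with a single branch point $P$ of cone angle $4\pi$ at the origin (the zero of order one), and it carries the trivial holonomy since $\C$ is simply connected. Adding a handle by a cut-and-paste surgery supported in a simply connected chart will not change the holonomy, so the resulting surface — which will be homeomorphic to a punctured torus — will still have trivial holonomy. The only thing to check is that the surgery indeed produces a surface of genus one with one puncture and that it can be performed on this particular model.

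First I would pick the branch point $P$ (the origin in $(\C, z\,dz)$), which has cone angle $4\pi$; since $4\pi = 2\cdot 2\pi$, there are two directions at $P$ developing to any given direction in $\C$, so one can find geodesic twin paths $\tau_1, \tau_2$ emanating from $P$ that develop to the same segment but are disjoint away from $P$, lying inside a simply connected chart around $P$. Next I would take proper subarcs $\delta_1\subset\tau_1$, $\delta_2\subset\tau_2$ near the far endpoints $Q_1, Q_2$, slit the surface along each $\delta_i$ to obtain two boundary arcs $\delta_i^1, \delta_i^2$ with corners, and then reglue $\delta_1^1$ with $\delta_2^1$ and $\delta_1^2$ with $\delta_2^2$ by a translation (the developed images agree, so this is an isometric identification). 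A standard Euler-characteristic / genus count — slitting along two arcs and regluing them crosswise adds a handle — shows the underlying surface becomes a once-punctured torus, where the puncture is the original ``infinite end'' of $\C$. The two new identified corner points become cone points of angle $4\pi$ (i.e.\ zeros of order one) with trivial holonomy, as claimed in the general construction, so one may alternatively record this as an element of a specific stratum, though for the present statement only the topological type and the triviality of the holonomy matter.

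Finally, since the surgery is supported in a simply connected region and is built entirely out of isometric (translation) identifications, the developing map of the new structure is still $\C$-valued and descends from the old one; hence the holonomy representation of $\pi_1$ of the once-punctured torus (and therefore of $\Gamma_{1,1}=H_1$) is trivial. This exhibits $\chi$ as the holonomy of a translation structure on $S_{1,1}$, completing the proof.

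The main obstacle I anticipate is purely bookkeeping: verifying carefully that the cut-and-paste genuinely yields genus $1$ with exactly one puncture (rather than, say, two punctures or higher genus), and that the twin paths can be chosen short enough to stay inside a single chart so that no global identifications are forced — but this is routine given the $4\pi$ cone angle at $P$ and the fact that in $(\C, z\,dz)$ everything in a bounded region is explicitly developable.
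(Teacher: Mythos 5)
Your proposal is correct and follows essentially the same route as the paper: the paper's proof of this lemma is precisely the ``trivial handle'' surgery of Construction 1 applied to the special case $(X,\omega)=(\Bbb C, z\,dz)$, using the $4\pi$ cone point to find geodesic twin paths, slitting along subarcs, and regluing to add a handle with trivial holonomy. Nothing further is needed.
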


\subsubsection{Construction 2: Elementary handles.}\label{elhan} Based on the purposes of the present work, we shall introduce a way for building up handles with elementary holonomy on the Euclidean plane $\Bbb E^2$. There are two possible cases according to the items $2$ and $3$ in the list above.\\

\noindent Let $l_1$ and $l_2$ be two geodesic segments in the complex plane such that: they have the same length and are parallel, \emph{i.e.} there is a complex number $a\in\Bbb C^*$ such that $l_2=l_1+a$. Slit $\Bbb E^2$ along these lines. The segment $l_1$ splits in two segments with the same end points: $l_1^+$ on the right and $l_1^-$ on the left, where left and right are taken with respect to some fixed orientation. In the same fashion, the segment $l_2$ splits two segments, $l_2^+$ on the right and $l_2^-$ on the left. Identify the segments $l_1^+$ with $l_2^-$ and the segments $l_1^-$ with $l_2^+$ by using the translation defined by $a$. Topologically we obtain a torus with one point removed, geometrically we obtain a translation surface $(X,\omega)$ with two singularities of order one on $S_{1,1}$ and elementary holonomy. In particular, we can easily find a basis $\{\alpha,\beta\}$ such that $\alpha $ has holonomy $a\in\Bbb C$ whereas $\beta$ has trivial holonomy. Let us briefly explain how to find such a basis. We can simply define $\beta$ to be a simple closed curve enclosing the line $l_1$, but not $l_2$. The curve $\beta$ remains simple and closed on the surface we obtain once the identification above is done, but it is no longer contractible and has trivial holonomy. Let us move on defining the curve $\alpha$. Let $\zeta$ be one of the extremal point of $l_1$ and consider the geodesic segment joining $\zeta$ with $\zeta+a$. Once the slits above are identified via the translation $z\mapsto z+a$, this segment closes up to a simple closed curve $\alpha$ with holonomy $a$ as desired. We notice that, this surgery takes place in a bounded region of the Euclidean plane. Let us make this point a bit more precise. Let $\zeta\in\Bbb E^2$ be one of the extremal points of $l_1$. Then there exists $M\in\Bbb R$ such that $B_M(\zeta)$ contains both the segments $l_1$ and $l_2$. Perform the surgery, and let $\overline{\zeta}\in(X,\omega)$ be the point we obtain by matching $\zeta$ to $\zeta+a$. The open region $X\setminus \overline{B_M(\overline{\zeta})}$ is isometric to the unbounded region $\Bbb E^2\setminus \overline{B_M(\zeta)}$ and the mapping realizing the isometry is the developing map for $(X,\omega)$. Equivalently, given a neighborhood $U\subset X$ of the puncture disjoint from $B_M(\zeta)$, there is compact set $K\subset \Bbb E^2$, homeomorphic to a closed disc, and a mapping $f:U\longrightarrow \Bbb E^2\setminus K$ such that $f$ is an isometry. This means that, although the topology is changed and hence the global geometry, this latter is remained unchanged outside a bounded region on which the surgery took place. The following lemma holds.

\begin{lem}\label{lelhan}
Let $\chi:\Bbb Z^2\longrightarrow \Bbb C$ be a representation such that $\chi(\alpha)=a\in\Bbb C^*$ and $\chi(\beta)=0$. Then $\chi$ is elementary and arises as the holonomy of a translation structure on a punctured torus.
\end{lem}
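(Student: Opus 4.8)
The first assertion is essentially tautological: since $\Bbb Z^2$ is abelian, the image of $\chi$ is an abelian subgroup of the translation group of $\Bbb C$, and as a subgroup of $\mathrm{Aff}(\Bbb C)\subset\pslc$ it fixes the point $\infty\in\cp$; hence it is elementary. So the substance of the lemma is the geometric realization, and the plan is to carry out the slit surgery on $(\Bbb E^2,dz)$ sketched just above and then verify three things: the quotient is a once-punctured torus, a suitable pair of curves is a basis of its first homology, and those curves carry the prescribed periods.

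In detail, I would fix a geodesic segment $l_1\subset\Bbb E^2$ and set $l_2=l_1+a$; since $a\neq 0$ one may choose $l_1$ so that $l_1$ and $l_2$ are disjoint. Slitting along both segments produces sides $l_i^+$ and $l_i^-$; re-gluing by the rule $l_1^+\sim l_2^-$, $l_1^-\sim l_2^+$ via the translation $z\mapsto z+a$, and noting that the gluing maps are restrictions of translations, the structure $(\Bbb E^2,dz)$ descends to a translation surface $(X,\omega)$ on the quotient. A direct inspection of the cut-open identification picture — or an appeal to the slit construction of subsection \ref{sec:seqslit} applied with two slits on a single surface, which adds exactly one handle — shows that $X$ is homeomorphic to $S_{1,1}$, the puncture corresponding to the point $\infty$ of $\cp\supset\Bbb E^2$, and that the slit endpoints are identified into two singular points of $\omega$.

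For the periods, let $\zeta$ be an endpoint of $l_1$, so that $\zeta+a$ is the corresponding endpoint of $l_2$; the gluing identifies these two points, and the straight segment from $\zeta$ to $\zeta+a$ therefore closes up to a simple closed curve $\alpha$ on $X$ with developed image the vector $a$, so $\chi(\alpha)=a$. Taking $\beta$ to be a small simple loop of $\Bbb E^2$ encircling $l_1$ but not $l_2$, it bounds a disk before the surgery, so its holonomy is trivial, and it descends to a simple closed curve on $X$ meeting $\alpha$ exactly once; hence $\{\alpha,\beta\}$ is a basis of $\Gamma_{1,1}\cong\Bbb Z^2$ on which the holonomy of $(X,\omega)$ takes the values $a$ and $0$. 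Pulling back $(X,\omega)$ by the homeomorphism of $S_{1,1}$ that carries the abstract basis of $\Bbb Z^2$ to $\{\alpha,\beta\}$ then yields a translation structure with holonomy exactly $\chi$. One should also record, as in the discussion preceding the lemma, that the surgery is supported in a bounded region: for $M$ large enough that $B_M(\zeta)$ contains $l_1\cup l_2$, the complement $X\setminus\overline{B_M(\overline{\zeta})}$ is isometric, via the developing map, to the complement of a closed disk in $\Bbb E^2$, so the puncture is a pole of order two and the structure is complete near it.

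The period computation and the locality statement are routine. The step I expect to be the main obstacle is the topological bookkeeping: confirming that the quotient is genuinely a genus-one surface with a single puncture — not one of higher genus, nor one with extra punctures — and that $\alpha$ and $\beta$ generate $\Gamma_{1,1}$ with intersection number one. This is cleanest to settle by exhibiting the cut-open polygon with its edge identifications explicitly and reading off the Euler characteristic and the relevant curves directly from it.
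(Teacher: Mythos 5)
Your proposal is correct and follows essentially the same construction as the paper: slitting $\Bbb E^2$ along $l_1$ and $l_2=l_1+a$, regluing $l_1^{\pm}$ with $l_2^{\mp}$ by $z\mapsto z+a$, taking $\beta$ to be a loop encircling $l_1$ only (trivial holonomy) and $\alpha$ the closed-up segment from $\zeta$ to $\zeta+a$ (holonomy $a$), with the surgery supported in a bounded region so the puncture is a pole of order two. The only additions you make beyond the paper's argument — the justification that $\chi$ is elementary and the explicit disjointness condition on $l_1,l_2$ — are routine and correct.
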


\noindent Let $a,b\in\Bbb C$ two complex number such that $b=\lambda a$ with $\lambda\in\Bbb R^*$, that is they determine the same direction. Let $l\in\Bbb E^2$ be a segment oriented along this direction of length $|a|+|b|$ and let $P_1$ and $P_2$ be the extremal points of $l$. By slitting $l$, as we have done above, we obtain two segments $l^+$ and $l^-$ both having $P_1,P_2$ as extremal points. Let $Q_1$ be the point on $l^+$ at distance $|a|$ from $P_1$ and let $Q_2$ be the point on $l^-$ at distance $|b|$ from $P_1$. Identify the segments $\overline{P_1\,Q_1}$ and $\overline{Q_2\,P_2}$ by using the translation $z\mapsto z+b$. Similarly, identify the segments $\overline{P_1\,Q_2}$ and $\overline{Q_1\,P_2}$ by using the translation $z\mapsto z+a$. The resulting surface is a torus with one point removed and geometrically a translation surface with elementary holonomy. In particular, there is a basis $\{\alpha,\beta\}$ such that $\alpha$ has holonomy $a\in\Bbb C$ whereas $\beta$ has holonomy $b$. Once again, this surgery takes place in a bounded region of the Euclidean plane in the sense described above. The following lemma holds.

\begin{lem}\label{lelhann}
Let $\chi:\Bbb Z^2\longrightarrow \Bbb C$ be a representation such that $\chi(\alpha)=a\in\Bbb C^*$ and $\chi(\beta)=\lambda a$ with $\lambda\in\Bbb R^*$. Then $\chi$ is elementary and arises as the holonomy of a translation structure on a punctured torus.\\
\end{lem}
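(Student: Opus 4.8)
The plan is to make precise the explicit cut-and-paste construction described in the paragraph immediately preceding the statement, and then to verify that it yields a translation structure on $S_{1,1}$ whose holonomy is exactly $\chi$. Elementarity of $\chi$ requires no construction: since $\chi(\alpha)=a\neq0$ and $\chi(\beta)=\lambda a$ with $\lambda\in\R^*$, the image $\langle a,\lambda a\rangle$ lies inside the real line $\R\cdot a\subset\C$, a one-parameter group of translations fixing the point at infinity, so $\chi$ is elementary.

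For the realisation I would first treat the case $\lambda>0$; after rescaling I may assume $a>0$ is real, so $b:=\lambda a>0$ as well. Set $P_1=0$, $P_2=a+b$, and $l=[\,0,\,a+b\,]\subset\Bbb E^2$. Slitting $\Bbb E^2$ along $l$ produces the right side $l^+$ and the left side $l^-$; mark $Q_1=a$ on $l^+$ and $Q_2=b$ on $l^-$. Then $\overline{P_1Q_1}$ and $\overline{Q_2P_2}$ are parallel of common length $a$, so $z\mapsto z+b$ identifies them (sending $P_1\mapsto Q_2$ and $Q_1\mapsto P_2$); and $\overline{P_1Q_2}$, $\overline{Q_1P_2}$ are parallel of common length $b$, so $z\mapsto z+a$ identifies them (sending $P_1\mapsto Q_1$ and $Q_2\mapsto P_2$). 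Every transition map of the resulting atlas is a translation, so this defines a translation surface $(X,\omega)$.

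Next I would verify the topological and metric data. Tracing the identifications shows that $P_1,Q_1,Q_2,P_2$ are all glued to a single point, around which the total cone angle is $2\pi+\pi+\pi+2\pi=6\pi$ ($2\pi$ from each slit endpoint $P_1,P_2$, and $\pi$ from each interior marked point $Q_1,Q_2$), so $\omega$ has a single zero of order $2$; an Euler-characteristic count — one vertex, two edges, and one face which is a punctured disk of Euler characteristic $0$ — gives $\chi(X)=-1$, and as the construction is orientation-preserving with a single puncture (coming from the unbounded end of $\Bbb E^2$), the underlying surface is $S_{1,1}$, with a pole of order two at the puncture by Gauss--Bonnet. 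For the basis: the geodesic arc effecting the $z\mapsto z+a$ identification closes up to a simple closed curve $\alpha$ with $\int_\alpha\omega=a$, and likewise the $z\mapsto z+b$ identification gives a simple closed curve $\beta$ with $\int_\beta\omega=\lambda a$; a routine intersection-number computation in the gluing polygon shows $\alpha\cdot\beta=\pm1$, hence $\{\alpha,\beta\}$ is a basis of $H_1(S_{1,1};\Z)\cong\Z^2$, so the holonomy of $(X,\omega)$ is precisely $\chi$. The case $\lambda<0$ I would handle by mirroring the construction (measuring $Q_2$ from $P_2$ and gluing by $z\mapsto z-b$), or by reducing to $\lambda>0$ via the basis change replacing $\beta$ by $\beta\alpha^{m}$ with $m$ a positive integer chosen so that $m+\lambda>0$, an automorphism of $\Z^2$. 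As in the discussion before Lemma \ref{lelhan}, the whole surgery is supported in a ball $B_M(P_1)\subset\Bbb E^2$, a fact used later when inserting such handles into larger surfaces.

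I expect the only genuine difficulty to be the combinatorial bookkeeping: checking that the four slit-segments carry the matching lengths and orientations the construction needs, so that the two prescribed translations glue the boundary of the slit plane consistently (including the sign of $\lambda$), and checking that the identified vertices assemble into a single finite-angle cone point, so that $(X,\omega)$ is a genuine singular-flat surface. Granting that, the topological type, the existence of the basis, and the values of the periods all follow immediately from the construction.
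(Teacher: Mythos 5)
Your construction is exactly the slit-and-reglue surgery the paper itself uses in the paragraph preceding the lemma, and your verifications (single cone point of angle $6\pi$, Euler characteristic $-1$, periods $a$ and $\lambda a$ along a symplectic basis) are all correct. You are in fact slightly more careful than the paper: you explicitly handle the sign issue when $\lambda<0$ (where the paper's identification maps $z\mapsto z+a$, $z\mapsto z+b$ need the adjustment you describe), so the argument is complete.
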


\subsection{Handles with negative volume} In this section we show how to deal handles with negative volume. Let $\chi:\Bbb Z^2\cong\langle\alpha,\beta\rangle\to \Bbb C$ be a representation such that \textsf{vol}$(\chi)<0$ - notice that this implies  $\chi(\alpha),\chi(\beta)\in\Bbb C^*$. The vectors $\chi(\alpha)$ and $\chi(\beta)$ determine a non-degenerate parallelogram $\mathcal{P}$ on the complex plane; that is a fundamental parallelogram for the action of the discrete group $\chi(\Bbb Z^2)$. Let $\mathcal{E}$ denote the closure of the exterior of $\mathcal{P}$ in $\cp$. Topologically, the region $\mathcal{E}$ is a parallelogram. Identifying the opposite sides of $\mathcal{E}$ according to the holonomy $\chi$ we obtain a torus endowed with a translation structure with one branch point of magnitude $6\pi$ and one pole of two and having holonomy $\chi$. In particular, this structure has negative volume. While we need $\mathcal{P}$ to be non degenerate to have negative volume, the construction goes through even when $\mathcal{P}$ is degenerate, and this is the construction used in Lemma \ref{lelhann}. The following lemma is the consequence of this discussion. \\

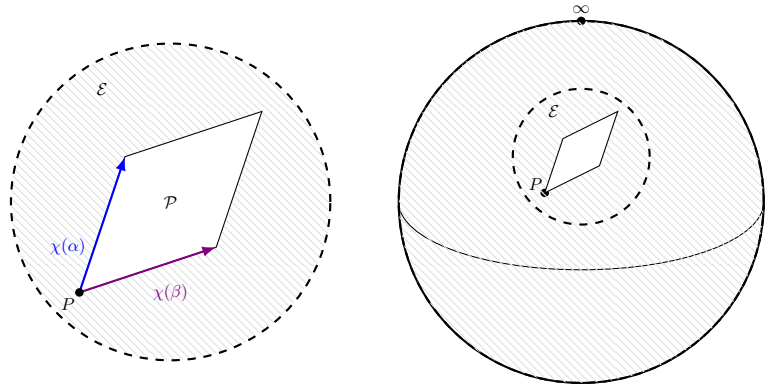
\begin{figure}[!h]
\centering
\begin{tikzpicture}[thick,scale=0.6, every node/.style={scale=0.65}]
\definecolor{pallido}{RGB}{221,227,227}
%\draw [help lines] (-4,-6) grid (16,6);
\draw [thin, dashed] (0,0) circle (35mm);
\draw [dashed, black, pattern=north west lines, pattern color=pallido] (0,0) circle (35mm);
%\draw [dashed, black] (0,0) circle (35mm);
\fill [white, thin, draw=black] (-2,-2) to (1,-1) to (2,2) to (-1,1) to (-2,-2);
\draw[thick, -latex, violet] (-2,-2) to (1,-1);
\draw[thick, -latex, blue] (-2,-2) to (-1,1);
\draw [black] plot [mark=*, smooth] coordinates {(-2,-2)};
\node at (-2.25,-2.25) {$P$};
\node at (-2.25,-1) {\textcolor{blue}{$\chi(\alpha)$}};
\node at (0,-2) {\textcolor{violet}{$\chi(\beta)$}};
\node at (-1.5,2.5) {$\mathcal{E}$};
\node at (0,0) {$\mathcal{P}$};
%\draw [dashed] (3+2,-3+3) arc (180:0:4 and 1.5);
\draw [thin] (3+2,-3+3) arc (180:360:4 and 1.5);
\draw [thick] (3+6,-3+3) circle (40mm);
\draw plot [mark=*, smooth] coordinates {(9,4)};
\draw plot [mark=*, smooth] coordinates {(8.2,0.2)};
\draw [dashed, black, pattern=north west lines, pattern color=pallido] (3+6,-3+3) circle (40mm);
\draw [dashed, black] (3+6,1) circle (15mm);
\node at (3+6, 4.25) {$\infty$};
\node at (8, 0.4) {$P$};
%\fill [white, thin, draw=black] (8.4,0.4) to (9,2.2) to (10.8,2.8) to (10.2,1) to (8.4,0.4);
%\fill [white, thin, draw=black] (8.4,0.4) to (8.8,1.6) to (10,2.2) to (9.6,1) to (8.4,0.4);
\fill [white, thin, draw=black] (8.2,0.2) to (8.6,1.4) to (9.8,2) to (9.4,0.8) to (8.2,0.2);
\node at (8.4,2) {$\mathcal{E}$};
\end{tikzpicture}
\caption[]{Handle with negative volume. This can be seen as a complex structure on the torus and abelian differential with one zero of order two and one pole of order two.}
\end{figure}

\begin{lem}\label{nvpt}
Let $\chi:\Bbb Z^2\to \Bbb C$ be a representation such that $\textsf{\emph{vol}}(\chi)<0$. Then $\chi$ appears as the period of a meromorphic  differential on the torus with one pole of order two. Equivalently, $\chi$ appears as the holonomy of a translation structure on the once-punctured torus with a  single branch point of order two.\\
\end{lem}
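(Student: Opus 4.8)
The plan is to make precise the construction sketched in the paragraph preceding the statement. Fix a basis $\{\alpha,\beta\}$ of $\Z^2$ and write $a=\chi(\alpha)$, $b=\chi(\beta)$. The hypothesis $\textsf{vol}(\chi)=\Im(\overline a\,b)<0$ says that the ordered pair $(a,b)$ is a negatively oriented basis of $\C\cong\R^2$; in particular $a$ and $b$ are nonzero and $\R$-linearly independent. The negative orientation is precisely why the honest flat torus $\C/\langle a,b\rangle$ cannot be used — for any positively oriented symplectic basis of its first homology the holonomy has strictly positive volume — and one is forced to pass to an exterior region in $\cp$.

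First I would set up the polygon. Let $\mathcal{P}\subset\C$ be the closed parallelogram with vertices $0,a,a+b,b$, and let $\mathcal{E}=\overline{\cp\setminus\mathcal{P}}$. One checks that $\mathcal{E}$ is a closed topological disk, containing $\infty$ in its interior, whose boundary is the union of the four sides of $\mathcal{P}$. I would then glue the side $\overline{0\,a}$ to $\overline{b\,(a+b)}$ by the translation $z\mapsto z+b$ and the side $\overline{0\,b}$ to $\overline{a\,(a+b)}$ by $z\mapsto z+a$. Reading off the boundary word of $\mathcal{E}$ gives a commutator, so the quotient is a closed orientable surface of genus one, on which the interior point $\infty$ survives as a marked point; thus the underlying surface is $S_{1,1}$.

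Next I would read off the metric data. The gluing maps are restrictions of translations of $\C$, hence Euclidean isometries, so the form $dz$ (equivalently the flat metric $|dz|$) on $\mathcal{E}$ descends to a translation structure on $S_{1,1}$, singular only at the images of the four vertices and at $\infty$. The four vertices are identified to a single point $P$; the angle of $\mathcal{E}$ at each vertex $v$ is $2\pi$ minus the interior angle of $\mathcal{P}$ at $v$, and since those interior angles sum to $2\pi$, the cone angle at $P$ equals $4\cdot 2\pi-2\pi=6\pi$, so $P$ is a branch point of order two, i.e.\ a zero of order two of the associated abelian differential. Near $\infty$ there is no identification, and in the coordinate $w=1/z$ one has $dz=-w^{-2}\,dw$, a pole of order two with zero residue; hence $\infty$ descends to a pole of order two. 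The degree condition is satisfied: $2-2=0=2g-2$ with $g=1$.

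Finally I would check the holonomy. The developing map is the inclusion $\mathcal{E}\hookrightarrow\C$, continued via the gluing translations, so a loop in $S_{1,1}$ crossing the first identified pair of sides once develops to a segment realizing $z\mapsto z+b$, while one crossing the second identified pair once realizes $z\mapsto z+a$; these two loops form a basis of $H_1(S_{1,1};\Z)$ on which the holonomy takes the values $b$ and $a$, and after the evident change of basis this holonomy is exactly $\chi$. The one point requiring genuine care is the purely topological bookkeeping — that $\mathcal{E}$ really is a disk bounded by the four sides so that the standard polygon identification produces a torus and nothing worse, the corner-angle count at $P$, and the absence of any identification near $\infty$; everything else is the standard dictionary between side-identified polygons and translation surfaces already used in Section \ref{surg}.
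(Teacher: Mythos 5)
Your proposal is correct and is essentially the paper's own argument: the paper realizes $\chi$ by identifying the opposite sides of the closure $\mathcal{E}$ of the exterior of the parallelogram $\mathcal{P}$ in $\cp$, producing a once-punctured torus with a single cone point of angle $6\pi$ (a zero of order two) and a pole of order two at $\infty$. You have simply made explicit the topological bookkeeping, the angle count $4\cdot 2\pi-2\pi=6\pi$, and the holonomy verification that the paper leaves implicit.
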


\subsection{Handles with positive volume} This latter case is very well-known. Let $\chi:\Bbb Z^2\cong\langle\alpha,\beta\rangle\to \Bbb C$ be a representation such that \textsf{vol}$(\chi)>0$. As above,  $\chi(\alpha),\chi(\beta)\in\Bbb C^*$. The vectors $\chi(\alpha)$ and $\chi(\beta)$ determine a non-degenerate parallelogram $\mathcal{P}$ on the complex plane; that is a fundamental parallelogram for the action of the discrete group $\chi(\Bbb Z^2)$. Identifying the opposite sides of $\mathcal{P}$ according to the holonomy $\chi$ we obtain a torus endowed with a translation structure and no branch points. By deleting any point of this latter structure we obtain a translation structure on the one-puncture torus having holonomy $\chi$. The following lemma holds.

\begin{lem}\label{pvpt}
Let $\chi:\Bbb Z^2\to \Bbb C$ be a representation such that $\textsf{\emph{vol}}(\chi)>0$. Then $\chi$ appears as the holonomy of a translation structure on a  torus. Equivalently, $\chi$ appears as the period of a holomorphic abelian differential $\omega$ on the once-punctured torus and the puncture is a removable singularity for $\omega$. 
\end{lem}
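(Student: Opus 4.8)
The plan is to realize $\chi$ by the classical parallelogram-tiling construction already sketched in the paragraph preceding the statement; the only point needing genuine care is that the sign hypothesis $\textsf{vol}(\chi)>0$ is exactly what forces the resulting flat torus to carry the orientation of $S_{1,1}$ together with the prescribed marking.

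First I would record the elementary linear-algebra fact that $\textsf{vol}(\chi)=\Im\big(\overline{\chi(\alpha)}\chi(\beta)\big)>0$ forces $\chi(\alpha)$ and $\chi(\beta)$ to be nonzero and $\mathbb{R}$-linearly independent, and in fact that the ordered pair $\big(\chi(\alpha),\chi(\beta)\big)$ is a positively oriented basis of $\mathbb{C}\cong\mathbb{R}^2$, since $\Im(\overline{v}w)=\det[v\,|\,w]$. Consequently $\Lambda:=\mathbb{Z}\chi(\alpha)\oplus\mathbb{Z}\chi(\beta)$ is a lattice in $\mathbb{C}$. Next I would form the non-degenerate parallelogram $\mathcal{P}$ spanned by $\chi(\alpha)$ and $\chi(\beta)$, note that it is a fundamental domain for $\Lambda$, and glue its opposite sides by the translations $z\mapsto z+\chi(\alpha)$ and $z\mapsto z+\chi(\beta)$. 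The result is the flat torus $T=\mathbb{C}/\Lambda$; its Euclidean charts overlap by translations, and since the four corners of $\mathcal{P}$ all identify to a single point whose total cone angle is the sum of the interior angles of a parallelogram, namely $2\pi$, there are no branch points. Thus $T$ carries a genuine translation structure in the sense of Definition~\ref{tswop}, with $dz$ descending to a nowhere-vanishing holomorphic abelian differential $\omega$.

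It remains to match $(T,\omega)$ with the marked surface $S_{1,1}$. Let $\{\alpha,\beta\}$ be the given basis of $\Gamma_{1,1}\cong\mathbb{Z}^2$, realized by simple closed curves on $S_{1,1}$ meeting once. Because $\big(\chi(\alpha),\chi(\beta)\big)$ is positively oriented there is an orientation-preserving $\mathbb{R}$-linear isomorphism $L\colon\mathbb{R}^2\to\mathbb{C}$ carrying the standard basis to this pair, hence an orientation-preserving diffeomorphism $\mathbb{R}^2/\mathbb{Z}^2\to T$; transporting the standard marking through $L$ and deleting one point of $T$ (any point works, as $\omega$ has no zeros) produces a translation structure on $S_{1,1}$ whose developing map is $\chi$-equivariant, so $\chi$ is its holonomy. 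Since $\omega$ is holomorphic everywhere on $T$, the puncture is a removable singularity, and the equivalence of the two statements in the lemma is then Lemma~\ref{tnr}. I do not expect any real obstacle here: the one subtlety is the orientation bookkeeping just noted — had $\textsf{vol}(\chi)$ been negative the identical recipe would yield a torus with the wrong orientation — which is precisely why strict positivity, and not merely non-vanishing, of the volume is assumed.
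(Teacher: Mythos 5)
Your proof is correct and follows essentially the same route as the paper: identify the opposite sides of the non-degenerate fundamental parallelogram spanned by $\chi(\alpha)$ and $\chi(\beta)$ to obtain the flat torus $\mathbb{C}/\Lambda$ with no branch points, then delete a point so the puncture is a removable singularity. Your added orientation bookkeeping (that $\textsf{vol}(\chi)>0$ makes the ordered pair $\big(\chi(\alpha),\chi(\beta)\big)$ positively oriented) is a correct and welcome elaboration of a point the paper leaves implicit.
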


\noindent If we identify one pair of opposite sides of $\mathcal{P}$ with the the sides of a slit made in $\mathbb{C}$ as shown in Figure \ref{fig:pvpt2}, we obtain the following lemma.

\begin{figure}[!h]
\centering
\begin{tikzpicture}[thick,scale=0.6, every node/.style={scale=0.65}]
\definecolor{pallido}{RGB}{221,227,227}

\fill [pattern=north west lines, pattern color=pallido, thin, draw=black] (-2,-2) to (1,-1) to (2,2) to (-1,1) to (-2,-2);
\draw[thick, -latex, violet] (-2,-2) to (1,-1);
\draw[thick, -latex, blue] (-2,-2) to (-1,1);
\draw [black] plot [mark=*, smooth] coordinates {(-2,-2)};
\node at (-2.25,-1) {\textcolor{blue}{$\chi(\beta)$}};
\node at (0,-2) {\textcolor{violet}{$\chi(\alpha)$}};
\node at (0,0) {$\mathcal{P}$};
%\draw [dashed] (3+2,-3+3) arc (180:0:4 and 1.5);
\draw [thin] (3+2,-3+3) arc (180:360:4 and 1.5);
\draw [thick] (3+6,-3+3) circle (40mm);
\draw plot [mark=*, smooth] coordinates {(9,4)};
\draw [dashed, black, pattern=north west lines, pattern color=pallido] (3+6,-3+3) circle (40mm);
\node at (3+6, 4.25) {$\infty$};
%\fill [white, thin, draw=black] (8.4,0.4) to (9,2.2) to (10.8,2.8) to (10.2,1) to (8.4,0.4);
%\fill [white, thin, draw=black] (8.4,0.4) to (8.8,1.6) to (10,2.2) to (9.6,1) to (8.4,0.4);

\draw[thick] (8.2, 0.2) .. controls (8.9, 0.4) .. (9.4, 0.8);
\draw[thick] (8.2, 0.2) .. controls (8.7, 0.6) .. (9.4, 0.8);
%\fill [white, thin, draw=black] (8.2,0.2) to (8.6,1.4) to (9.8,2) to (9.4,0.8) to (8.2,0.2);
\end{tikzpicture}
\caption[]{Handle with positive volume. The slit in $\mathbb{CP}^1$ is made across $\chi(\beta)$ and the two opposite sides of the paralleogram along $\chi(\beta)$ are identified to the two sides of the slit in the usual way. This results in a complex structure on the torus and abelian differential with one zero of order two and one pole of order two.}
 \label{fig:pvpt2}
\end{figure}
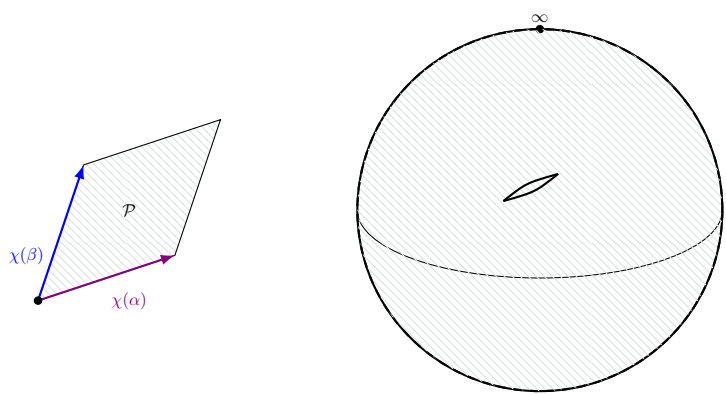

\begin{lem}\label{pvpt2}
Let $\chi:\Bbb Z^2\to \Bbb C$ be a representation such that $\textsf{\emph{vol}}(\chi)>0$. Then $\chi$ appears as the period character of a meromorphic differential on the torus with one pole of order two. Equivalently, $\chi$ appears as the holonomy of a translation structure on the once-punctured torus with one single branch point of order two.
\end{lem}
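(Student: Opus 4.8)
The idea is to turn the cut-and-paste sketched in Figure \ref{fig:pvpt2} into a precise construction and then read off its invariants. Write $a=\chi(\alpha)$ and $b=\chi(\beta)$; the hypothesis $\textsf{vol}(\chi)=\Im(\overline{a}b)>0$ guarantees that $a$ and $b$ are $\R$-linearly independent, so the parallelogram $\mathcal{P}$ with sides $a$ and $b$ is non-degenerate, exactly as in Lemma \ref{pvpt}. Fix a straight segment $l=\overline{P_1P_2}\subset\C\subset\cp$ with $P_2-P_1=b$ and $\infty\notin l$, and slit $\cp$ along $l$; the result is a disc whose boundary consists of the two sides $l^+,l^-$ of the slit, and it still contains the point $\infty$ in its interior. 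I would then build $(X,\omega)$ by gluing the $4$-gon $\mathcal{P}$ to this $2$-gon: identify the pair of sides of $\mathcal{P}$ parallel to $a$ by the translation $z\mapsto z+b$, and identify the pair of sides parallel to $b$ with $l^+$ and $l^-$ respectively, in each case by the unique translation carrying one segment onto the other. Since every identification map is a translation, the forms $dz$ on $\mathcal{P}$ and on $\cp$ patch to a single abelian differential $\omega$ on the resulting Riemann surface $X$, and away from the images of the polygon vertices this is an honest translation structure; this is a slit construction in the spirit of subsection \ref{sec:seqslit}.

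Next I would verify the topological type of the surface and its singular data by a CW-complex bookkeeping. The three edge-identifications leave $E=3$ edges and $F=2$ faces, while all four corners of $\mathcal{P}$ together with the two slit-endpoints $P_1,P_2$ collapse to a single vertex $v$, so $V=1$ and $\chi(X)=V-E+F=0$: $X$ is a torus. The cone angle at $v$ is the sum of the four interior angles of $\mathcal{P}$, which is $2\pi$, plus the angle contributed by each slit endpoint, which is a full $2\pi$ since each $P_i$ is a smooth point of $\cp$ along which we have cut; thus the cone angle is $6\pi$, i.e.\ $\omega$ has a zero of order $2$ at $v$, a branch point of order $2$ for the translation structure. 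The only remaining distinguished point is $[\infty]$: in the coordinate $w=1/z$ near $\infty$ one has $\omega=dz=-w^{-2}\,dw$, a pole of order $2$ with zero residue. Hence $S_{1,1}:=X\setminus\{[\infty]\}$ is a once-punctured torus carrying the induced translation structure with a single branch point of order $2$, consistently with the degree condition $2-2=0=2g-2$ in \eqref{zpec}.

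Finally I would identify the holonomy. Take $\overline{\alpha}$ to be the loop at $v$ given by the image of the side of $\mathcal{P}$ from $0$ to $a$ and $\overline{\beta}$ the loop given by the image of the side from $0$ to $b$; by construction $\int_{\overline{\alpha}}\omega=a=\chi(\alpha)$ and $\int_{\overline{\beta}}\omega=b=\chi(\beta)$. It remains to check that $\{\overline{\alpha},\overline{\beta}\}$ is a basis of $H_1(S_{1,1};\Z)\cong\Z^2$, which I would do by reading the boundary word of the $2$-cell $\mathcal{P}$ and abelianising: the resulting relation identifies the homology class of the slit-side with that of $\overline{\beta}$, so the two loops generate. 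Then the holonomy of the translation structure, expressed in the basis $\{\overline{\alpha},\overline{\beta}\}$, is precisely $\chi$, as required. The one point that genuinely needs to be spelled out is this last homological claim — that $\overline{\alpha}$ and $\overline{\beta}$ form a basis rather than merely a pair of primitive classes — while the rest is routine cutting and pasting, and I do not expect any real obstacle.
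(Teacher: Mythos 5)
Your construction is exactly the paper's: slit $\cp$ along a segment of holonomy $\chi(\beta)$, glue the $\chi(\beta)$-sides of the fundamental parallelogram $\mathcal{P}$ to the two sides of the slit, and identify the remaining pair of sides by the translation $z\mapsto z+\chi(\beta)$, which is precisely the content of Figure \ref{fig:pvpt2}. Your additional bookkeeping (Euler characteristic, the $6\pi$ cone angle giving the zero of order two, the order-two pole with zero residue at $\infty$, and the verification via the boundary word that $\overline{\alpha},\overline{\beta}$ form a basis of $H_1(S_{1,1};\Z)$ with periods $\chi(\alpha),\chi(\beta)$) is correct and simply fills in details the paper leaves implicit.
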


\noindent The series of Lemmata \ref{trihan}, \ref{lelhan}, \ref{lelhann}, \ref{nvpt} and \ref{pvpt} all together imply the following proposition, that is our main Theorem \ref{mainthm} in the particular case of the once-punctured torus.

\begin{prop}\label{proppt}
Any representation $\chi:\Bbb Z^2\to \Bbb C$ appears as the holonomy of a translation structure on the once-punctured torus. Equivalently, $\chi$ appears as the period character of an abelian differential on $S_{1,1}$.\\
\end{prop}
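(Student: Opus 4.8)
The plan is to prove Proposition \ref{proppt} by a straightforward case analysis on the representation $\chi : \mathbb{Z}^2 \to \mathbb{C}$, applying the appropriate construction from the preceding lemmata in each case. Fix a basis $\{\alpha,\beta\}$ of $\mathbb{Z}^2 \cong \Gamma_{1,1}$, so that $\chi$ is determined by the two complex numbers $a = \chi(\alpha)$ and $b = \chi(\beta)$, and recall that $\textsf{vol}(\chi) = \Im(\overline{a}\,b)$. The first observation is that $\textsf{vol}$ depends on the choice of symplectic basis only up to sign-preserving changes (a change of basis in $\mathrm{SL}(2,\mathbb{Z})$ preserves $\textsf{vol}$, since it preserves the algebraic intersection form), so the trichotomy $\textsf{vol}(\chi) > 0$, $= 0$, or $< 0$ is intrinsic to $\chi$.

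First I would dispose of the two extreme cases. If $\textsf{vol}(\chi) > 0$, then Lemma \ref{pvpt} (or Lemma \ref{pvpt2}) directly produces a translation structure on $S_{1,1}$ with holonomy $\chi$, realized by gluing the non-degenerate fundamental parallelogram of the lattice generated by $a,b$. If $\textsf{vol}(\chi) < 0$, then Lemma \ref{nvpt} applies: the exterior $\mathcal{E}$ of the fundamental parallelogram in $\cp$, with opposite sides identified according to $\chi$, gives a translation structure on the once-punctured torus with a single branch point of order two and a pole of order two, with holonomy $\chi$.

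The remaining case is $\textsf{vol}(\chi) = \Im(\overline a b) = 0$, which splits along the trichotomy recorded in subsection \ref{htc}: (i) $a = b = 0$; (ii) exactly one of $a, b$ is zero; (iii) both $a, b$ are nonzero and $\mathbb{R}$-collinear, i.e.\ $b = \lambda a$ for some $\lambda \in \mathbb{R}^*$. In case (i), $\chi$ is the trivial representation, and Lemma \ref{trihan} realizes it as the holonomy of a translation structure on the punctured torus (built as a trivial handle attached to a branch point of $(\mathbb{C}, z\,dz)$). In case (ii), after a change of basis in $\mathrm{SL}(2,\mathbb{Z})$ we may assume $\chi(\beta) = 0$ and $\chi(\alpha) = a \in \mathbb{C}^*$, and Lemma \ref{lelhan} applies. (One must note here that swapping or relabeling $\alpha$ and $\beta$ is effected by an element of the mapping class group of $S_{1,1}$, so pulling back a translation structure with one holonomy yields one with the relabeled holonomy — this is the only place a small remark is needed.) In case (iii), Lemma \ref{lelhann} directly realizes $\chi$ with $\chi(\alpha) = a$, $\chi(\beta) = \lambda a$. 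Since cases (i)--(iii) exhaust $\textsf{vol}(\chi) = 0$ and the three volume signs exhaust all representations, this completes the proof.

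There is no genuine obstacle here: the proposition is essentially a bookkeeping assembly of the five lemmata, and the only points requiring care are (a) checking that the volume trichotomy is basis-independent, so that the case division is well-posed, and (b) the observation that reordering the generators of $\Gamma_{1,1}$ is realized by a homeomorphism of $S_{1,1}$, so that Lemmata \ref{lelhan} and \ref{nvpt} cover their cases up to this relabeling. I would state both of these explicitly but briefly. The final sentence of the proposition — that $\chi$ is the period character of an abelian differential on $S_{1,1}$ — is then immediate from the geometric statement via Lemma \ref{tnr}.
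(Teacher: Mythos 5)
Your proposal is correct and is essentially identical to the paper's argument: the paper proves the proposition precisely by observing that Lemmata \ref{trihan}, \ref{lelhan}, \ref{lelhann}, \ref{nvpt} and \ref{pvpt} exhaust the cases $\textsf{vol}(\chi)>0$, $<0$, and the three sub-cases of $\textsf{vol}(\chi)=0$. Your two added remarks (basis-independence of the sign of the volume, and realizing a relabeling of generators by a mapping class) are sensible housekeeping that the paper leaves implicit, but they do not constitute a different route.
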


\noindent We are now masters of punctured torii, however we are not yet masters of punctured spheres. Let us move onto geometrize them.

\section{How to geometrize punctured spheres}\label{fsps} 
\noindent In this section we are going to prove Theorem \ref{mainthm} for punctured surfaces having genus $g=0$. 
%Recall that in the present paper surfaces are assumed to be of hyperbolic type, that means the Euler characteristic is negative. For the punctured sphere this assumption is equivalent to the requirement that $n\ge3$. 

\begin{prop}\label{propb}
Any character $\chi_n:\Gamma_{0,n}\longrightarrow \Bbb C$  arises as the holonomy of some translation structure on $S_{0,n}$.
\end{prop}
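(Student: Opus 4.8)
I would realize $\chi_n$ by an explicit meromorphic differential on $\cp$. Recall that $\Gamma_{0,n}\cong\Z^{n-1}$ is generated by the loops $\gamma_1,\dots,\gamma_n$ around the $n$ punctures subject to the single relation $\gamma_1+\cdots+\gamma_n=0$; hence prescribing $\chi_n$ amounts to choosing complex numbers $c_i:=\chi_n(\gamma_i)$ with $\sum_{i=1}^n c_i=0$. Pick $n$ distinct points $a_1,\dots,a_n\in\C\subset\cp$, set $r_i:=c_i/2\pi i$, and consider
\[
\omega \;=\; \frac{dz}{(z-a_1)^2}\;+\;\sum_{i=1}^{n}\frac{r_i\,dz}{z-a_i}.
\]
This is a nonzero meromorphic differential on $\cp$ whose poles lie only among $a_1,\dots,a_n$ (with order at most two), and it is regular at $\infty$: the first summand is already $O(z^{-2})$ there, while $\sum_i r_i=0$ forces $\sum_i r_i(z-a_i)^{-1}=O(z^{-2})$. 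Its residue at $a_i$ is $r_i$ (the double-pole term contributes residue $0$ at $a_1$), so on the complex structure $X:=\cp\setminus\{a_1,\dots,a_n\}$ on $S_{0,n}$ we get $\omega\in\Omega(X)$, and by Lemma \ref{tnr} the induced translation structure on $S_{0,n}$ has holonomy $\gamma_i\mapsto\int_{\gamma_i}\omega=2\pi i\, r_i=c_i$, as required.

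\textbf{Loose ends.} The extra $(z-a_1)^{-2}$ summand is inserted only so that $\omega\not\equiv 0$ in the degenerate case $\chi_n\equiv 0$; for nontrivial $\chi_n$ it may be dropped. The only point needing care is the homological bookkeeping: one must fix the identification of $X$ with $S_{0,n}$ so that the positively oriented loop around $a_i$ is sent to the chosen generator $\gamma_i$, which is possible because any $n$-punctured sphere is homeomorphic to $S_{0,n}$ by a homeomorphism realizing any prescribed matching of the puncture-loops compatible with the relation $\sum\gamma_i=0$. Note that $\omega$ will in general have additional zeros (of total degree $n-2$ on $\cp$) and its pole orders are not controlled; this is harmless, since Proposition \ref{propb} only asks for \emph{some} translation structure, the finer control being the object of Part \ref{part2}.

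\textbf{A geometric alternative.} More in keeping with the methods of this paper, one can argue geometrically by induction on $n$ via the slit construction of Subsection \ref{sec:seqslit}, in the form of Remark \ref{slitonepunct}. After reordering, assume $c_1,\dots,c_m\neq 0$ and $c_{m+1}=\cdots=c_n=0$; if $\chi_n$ is trivial take $(\C,dz)$ with $n-1$ further points deleted, and otherwise $m\ge 2$. For $m=2$ the flat cylinder $(\C/\langle c_1\rangle,\,dz)$ is a twice-punctured sphere with holonomies $c_1$ and $-c_1=c_2$ around its two ends. For the inductive step, given a translation structure on $S_{0,m-1}$ with holonomies $c_1,\dots,c_{m-2},\,c_{m-1}+c_m$, slit along a straight ray running into the puncture of holonomy $c_{m-1}+c_m$ and along a parallel ray running into one end of a copy of $(\C/\langle c_m\rangle,\,dz)$, and glue as in Remark \ref{slitonepunct}: the two punctures merge into a single pole whose residue is the sum of the two residues, which — choosing the cylinder end of residue $-c_m/2\pi i$ — has holonomy $c_{m-1}$, while the untouched cylinder end becomes a new puncture of holonomy $c_m$. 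This produces $S_{0,m}$ with the required holonomies (a ray to a pole of any order exists by the local pictures of Section \ref{lgp}, so the step is always legitimate). Finally delete $n-m$ regular points to introduce the remaining punctures with trivial holonomy.

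\textbf{Main obstacle.} Honestly there is no serious obstacle here: in the analytic approach the only subtlety is the homological identification above, and in the geometric approach it is tracking the signs of the residues through each slit-gluing so that the merged puncture ends up with exactly the prescribed holonomy, together with the bookkeeping that the resulting surface has genus $0$ and the correct number of punctures.
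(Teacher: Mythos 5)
Your analytic argument is correct and complete, but it takes a genuinely different route from the paper. You realize $\chi_n$ by writing down an explicit rational $1$-form on $\cp$ with prescribed residues $c_i/2\pi i$ at $n$ chosen points, the relation $\sum_i c_i=0$ being exactly the condition for regularity at $\infty$, and then invoke Lemma \ref{tnr}; the only point requiring care is the homological matching of the puncture loops with the chosen generators, which you dispose of correctly. The paper instead argues purely geometrically: it cyclically orders the nonzero periods $\chi_n(\gamma_i)$ by argument, assembles them into a (possibly degenerate) convex polygon, attaches an infinite half-strip to each edge and glues the boundary rays, treating the degenerate collinear case separately via a parallelogram containing an embedded annulus. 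What that construction buys is explicit control of the resulting flat geometry --- one or two branch points of known cone angle, cylindrical ends with prescribed residues, and embedded half-strips and half-planes that are reused throughout Sections \ref{gos}, \ref{sec:nontrivholsphere} and \ref{sec:gennontrivhol} as the scaffolding onto which handles and higher-order poles are later glued. Your partial-fractions differential proves the proposition with far less work, but yields no such normal form and so could not serve as a building block for those refinements.

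One caveat about your geometric alternative: in the inductive step the merged holonomy $c_{m-1}+c_m$ can vanish even when $c_{m-1},c_m\neq 0$ (e.g.\ the tuple $(1,-1,1,-1)$), in which case the corresponding puncture of the base surface may be a removable singularity rather than a pole; there is then no infinite ray running into it, and Remark \ref{slitonepunct} does not apply as stated. This is easily repaired --- for $m\ge 3$ one can always reorder so that $c_{m-1}+c_m\neq 0$, or first convert the puncture into a double pole by the completion surgery of Subsection \ref{mtsgc} --- but as written that step is not always legitimate. Since your primary (analytic) proof is self-contained, this does not affect the correctness of the proposal as a whole.
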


\begin{rmk}\label{psrmk}
Proposition \ref{propb} trivially holds when $n\le2$. The representation space of $\Gamma_{0,1}\cong\{1\}$ consists of the trivial representation only which arises as the holonomy of $(\Bbb C,\, dz)$. The representation space of $\Gamma_{0,2}$ is isomorphic to $\Bbb C$: Any non-trivial representation $\chi(1)=w\in\Bbb C^*$ arises as the holonomy of the translation structure on $\Bbb C/\langle z \to z + w\rangle$ induced by $dz$, whereas $\chi(1)=0$ appears as the holonomy of $(\Bbb C\setminus\{0\},\,dz)$. 
\end{rmk}

\noindent We assume there are $n\ge3$ punctures, say $\{P_1,\dots,P_n\}$. Let $\gamma_j$ be a simple closed curve enclosing $P_j$ and no other punctures. The curves $\gamma_1,\dots,\gamma_n$ satisfy the condition $\gamma_1\cdots\gamma_{n-1}\gamma_n^{-1}=1$. Let $\chi:\Gamma_{0,n}\longrightarrow \Bbb C$ be a representation and let $\chi(\gamma_j)=z_j$. The trivial representation appears as the holonomy of the translation structure on $\Bbb C\setminus\{P_1,\dots,P_{n-1}\}\cong\Bbb{CP}^1\setminus\{P_1,\dots,P_{n-1},\infty\}$ induced by $dz$. Therefore we may assume $\chi$ to be different from the trivial representation. We also assume $\chi(\gamma_j)=z_j\neq0$ for all $i=1,\dots,n$. Indeed, in the case some of the punctures have trivial holonomy, say $n-k$, we may regard $\chi$ as the a representation $\Gamma_{0,k}\to\Bbb C$ and therefore it is sufficient to determine a translation structure on the $k-$punctured sphere having the desired holonomy on which we eventually remove $n-k$ points. Therefore we may assume $z_j\neq0$ for all the $j$ without loss of generality.

\begin{proof}[Proof of Proposition \ref{propb}] 
\noindent Let $\chi:\Gamma_{0,n}\longrightarrow \Bbb C$ be a representation such that $\chi(\gamma_i)\neq0$ for every $i=1,\dots,n$. By using polar coordinates, the $z_j$ are of the form $r_je^{i\theta_j}$, where $(r_j,\theta_j)\neq(0,0)$. Up to rename all the generators, we may assume that $e^{i\theta_j}$'s are cyclically ordered on the unit circle $\Bbb S^1\subset \Bbb C$. Let $\zeta_1$ be any point in $\Bbb E^2\cong\Bbb C$ and consider the polygonal chain
\begin{equation}
\zeta_1\mapsto \chi(\gamma_1)+\zeta_1=\zeta_2\mapsto\chi(\gamma_2)+\zeta_2=\zeta_3\mapsto\cdots\mapsto \chi(\gamma_{n-1})+\zeta_{n-1}=\zeta_n\mapsto\chi(\gamma_n)+\zeta_n= \zeta_1
\end{equation}

\noindent Since the $e^{i\theta_j}$ are cyclically ordered and $\chi(\gamma_1)+\cdots+\chi(\gamma_n)=0$, such a polygon bounds a possibly degenerate convex $n$-polygon $\mathcal{P}$ on the complex plane and the sum of the inner angles is $(n-2)\pi$. 

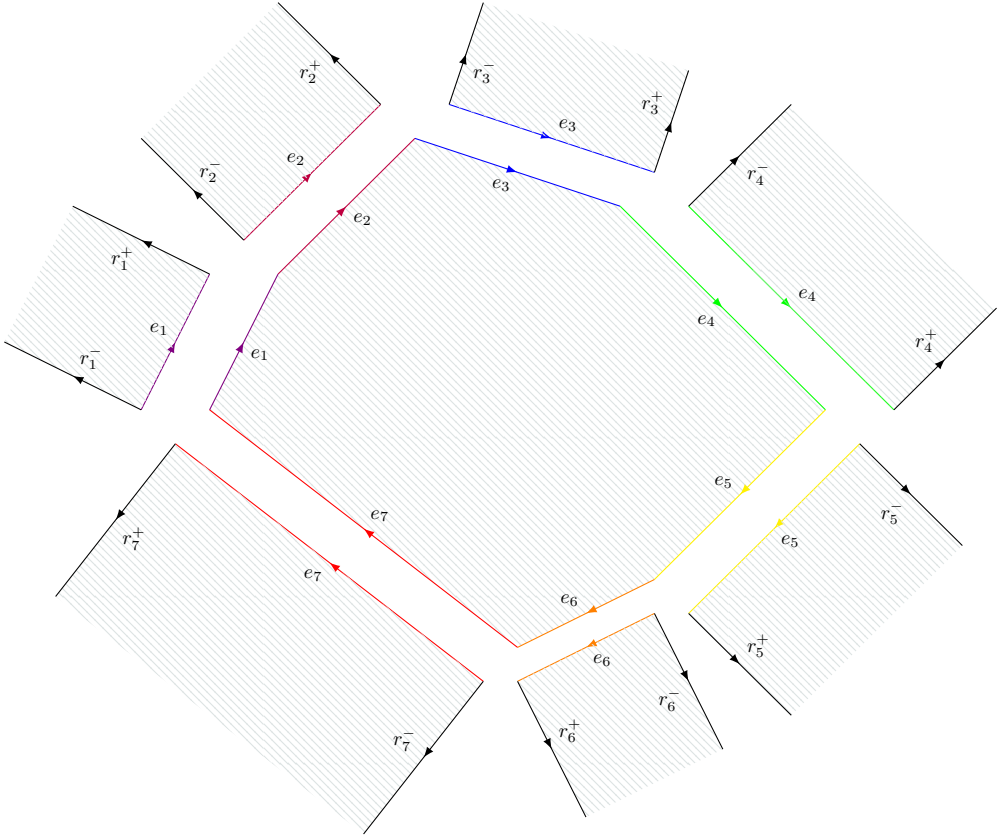
\begin{figure}[h!] \label{fig:nontrivholsphere1}
    \centering
    \begin{tikzpicture}[scale=0.9, every node/.style={scale=0.75}]
    \definecolor{pallido}{RGB}{221,227,227} 
    \path[pattern=north west lines, pattern color=pallido] (0,0) -- ++(1,2) -- ++ (2,2) -- ++ (3,-1) -- ++ (3, -3) -- ++(-2.5, -2.5) -- ++(-2, -1) -- ++ (-4.5, 3.5); %The convex polygon
    \coordinate (A) at (0,0);
    \coordinate (B) at ($(A) + (1,2)$);
    \coordinate (C) at ($(B)+(2,2)$);
    \coordinate (D) at ($(C) + (3,-1)$);
    \coordinate (E) at ($(D) + (3, -3)$);
    \coordinate (F) at ($(E) + (-2.5, -2.5)$);
    \coordinate (G) at ($(F) + (-2,-1)$);
    \foreach \stpt / \endpt / \colr in {(A)/(B)/violet, (B)/(C)/purple, (C)/(D)/blue, (D)/(E)/green, (E)/(F)/yellow, (F)/(G)/orange, (G)/(A)/red}
    {
        \draw[\colr, decoration={markings, mark=at position 0.5 with {\arrow{latex}}}, postaction={decorate}] \stpt -- \endpt;
    }
    
    %The strips
    \draw[violet, decoration={markings, mark=at position 0.5 with {\arrow{latex}}}, postaction={decorate}] ($(A)+(-1,0)$) -- ($(B)+(-1,0)$);
    \path[pattern=north west lines, pattern color=pallido] ($(A)+(-1,0)+(-2,1)$) -- ($(A)+(-1,0)$) -- ($(B)+(-1,0)$) --  ($(B)+(-1,0)+(-2,1)$);
    \draw[decoration={markings, mark=at position 0.5 with {\arrow{latex}}}, postaction={decorate}] ($(A)+(-1,0)$) -- +(-2,1);
    \draw[decoration={markings, mark=at position 0.5 with {\arrow{latex}}}, postaction={decorate}] ($(B)+(-1,0)$) -- +(-2,1);
    \draw[purple, decoration={markings, mark=at position 0.5 with {\arrow{latex}}}, postaction={decorate}] ($(B)+(-0.5,0.5)$) -- ($(C)+(-0.5,0.5)$);
    \path[pattern=north west lines, pattern color=pallido] ($(B)+(-0.5,0.5) + (-1.5,1.5)$) -- ($(B)+(-0.5,0.5)$) -- ($(C)+(-0.5,0.5)$) --  ($(C)+(-0.5,0.5)+(-1.5,1.5)$);
    \draw[decoration={markings, mark=at position 0.5 with {\arrow{latex}}}, postaction={decorate}] ($(C)+(-0.5,0.5)$) -- +(-1.5,1.5);
    \draw[decoration={markings, mark=at position 0.5 with {\arrow{latex}}}, postaction={decorate}] ($(B)+(-0.5,0.5)$) -- +(-1.5,1.5);
    \draw[blue, decoration={markings, mark=at position 0.5 with {\arrow{latex}}}, postaction={decorate}] ($(C)+(0.5,0.5)$) -- ($(D)+(0.5,0.5)$);
    \path[pattern=north west lines, pattern color=pallido] ($(C)+(0.5,0.5)+(0.5,1.5)$) -- ($(C)+(0.5,0.5)$) -- ($(D)+(0.5,0.5)$) --  ($(D)+(0.5,0.5)+(0.5,1.5)$);
    \draw[decoration={markings, mark=at position 0.5 with {\arrow{latex}}}, postaction={decorate}] ($(C)+(0.5,0.5)$) -- +(0.5,1.5);
    \draw[decoration={markings, mark=at position 0.5 with {\arrow{latex}}}, postaction={decorate}] ($(D)+(0.5,0.5)$) -- +(0.5,1.5);
    \draw[green, decoration={markings, mark=at position 0.5 with {\arrow{latex}}}, postaction={decorate}] ($(D)+(1,0)$) -- ($(E)+(1,0)$);
    \path[pattern=north west lines, pattern color=pallido] ($(D)+(1,0)+(1.5,1.5)$) -- ($(D)+(1,0)$) -- ($(E)+(1,0)$) --  ($(E)+(1,0)+(1.5,1.5)$);
    \draw[decoration={markings, mark=at position 0.5 with {\arrow{latex}}}, postaction={decorate}] ($(E)+(1,0)$) -- +(1.5,1.5);
    \draw[decoration={markings, mark=at position 0.5 with {\arrow{latex}}}, postaction={decorate}] ($(D)+(1,0)$) -- +(1.5,1.5);
    \draw[yellow, decoration={markings, mark=at position 0.5 with {\arrow{latex}}}, postaction={decorate}] ($(E)+(0.5,-0.5)$) -- ($(F)+(0.5,-0.5)$);
    \path[pattern=north west lines, pattern color=pallido] ($(E)+(0.5,-0.5)+(1.5,-1.5)$) -- ($(E)+(0.5,-0.5)$) -- ($(F)+(0.5,-0.5)$) --  ($(F)+(0.5,-0.5)+(1.5,-1.5)$);
    \draw[decoration={markings, mark=at position 0.5 with {\arrow{latex}}}, postaction={decorate}] ($(E)+(0.5,-0.5)$) -- +(1.5,-1.5);
    \draw[decoration={markings, mark=at position 0.5 with {\arrow{latex}}}, postaction={decorate}] ($(F)+(0.5,-0.5)$) -- +(1.5,-1.5);
    \draw[orange, decoration={markings, mark=at position 0.5 with {\arrow{latex}}}, postaction={decorate}] ($(F)+(0,-0.5)$) -- ($(G)+(0,-0.5)$);
    \path[pattern=north west lines, pattern color=pallido] ($(F)+(0,-0.5)+(1,-2)$) -- ($(F)+(0,-0.5)$) -- ($(G)+(0,-0.5)$) --  ($(G)+(0,-0.5)+(1,-2)$);
    \draw[decoration={markings, mark=at position 0.5 with {\arrow{latex}}}, postaction={decorate}] ($(F)+(0,-0.5)$) -- +(1,-2);
    \draw[decoration={markings, mark=at position 0.5 with {\arrow{latex}}}, postaction={decorate}] ($(G)+(0,-0.5)$) -- +(1,-2);
    \draw[red, decoration={markings, mark=at position 0.5 with {\arrow{latex}}}, postaction={decorate}] ($(G)+(-0.5,-0.5)$) -- ($(A)+(-0.5,-0.5)$);
    \path[pattern=north west lines, pattern color=pallido] ($(A)+(-0.5,-0.5)+(-1.75,-2.25)$) -- ($(A)+(-0.5,-0.5)$) -- ($(G)+(-0.5,-0.5)$) --  ($(G)+(-0.5,-0.5)+(-1.75,-2.25)$);
    \draw[decoration={markings, mark=at position 0.5 with {\arrow{latex}}}, postaction={decorate}] ($(A)+(-0.5,-0.5)$) -- +(-1.75,-2.25);
    \draw[decoration={markings, mark=at position 0.5 with {\arrow{latex}}}, postaction={decorate}] ($(G)+(-0.5,-0.5)$) -- +(-1.75,-2.25);
    
    %Labels
    \node[below right] at ($(A)!0.5!(B)$) {$e_1$};
    \node[above left] at ($(A)!0.5!(B)+(-1,0)$) {$e_1$};
    \node[below right] at ($(B)!0.5!(C)$) {$e_2$};
    \node[above left] at ($(B)!0.5!(C)+(-0.5,0.5)$) {$e_2$};
    \node[below left] at ($(C)!0.5!(D)$) {$e_3$};
    \node[above right] at ($(C)!0.5!(D)+(0.5,0.5)$) {$e_3$};
    \node[below left] at ($(D)!0.5!(E)$) {$e_4$};
    \node[above right] at ($(D)!0.5!(E)+(1,0)$) {$e_4$};
    \node[above left] at ($(E)!0.5!(F)$) {$e_5$};
    \node[below right] at ($(E)!0.5!(F)+(0.5,-0.5)$) {$e_5$};
    \node[above left] at ($(F)!0.5!(G)$) {$e_6$};
    \node[below right] at ($(F)!0.5!(G)+(0,-0.5)$) {$e_6$};
    \node[above right] at ($(A)!0.5!(G)$) {$e_7$};
    \node[below left] at ($(A)!0.5!(G)+(-0.5,-0.5)$) {$e_7$};
    \node[above right] at ($(A)+(-1,0)+ 0.5*(-2,1)$) {$r_1^-$};
    \node[below left] at ($(B)+(-1,0)+ 0.5*(-2,1)$) {$r_1^+$};
    \node[below left] at ($(C)+(-0.5,0.5)+ 0.5*(-1.5,1.5)$){$r_2^+$};
    \node[above right] at ($(B)+(-0.5,0.5)+ 0.5*(-1.5,1.5)$){$r_2^-$};
    \node[below right] at ($(C)+(0.5,0.5)+ 0.5*(0.5,1.5)$){$r_3^-$};
    \node[above left] at ($(D)+(0.5,0.5)+ 0.5*(0.5,1.5)$){$r_3^+$};
    \node[above left] at ($(E)+(1,0)+ 0.5*(1.5,1.5)$){$r_4^+$};
    \node[below right] at ($(D)+(1,0)+ 0.5*(1.5,1.5)$){$r_4^-$};
    \node[below left] at ($(E)+(0.5,-0.5)+ 0.5*(1.5,-1.5)$){$r_5^-$};
    \node[above right] at ($(F)+(0.5,-0.5)+ 0.5*(1.5,-1.5)$){$r_5^+$};
    \node[below left] at ($(F)+(0,-0.5)+ 0.5*(1,-2)$){$r_6^-$};
    \node[above right] at ($(G)+(0,-0.5)+ 0.5*(1,-2)$){$r_6^+$};
    \node[above left] at ($(G)+(-0.5,-0.5)+ 0.5*(-1.75,-2.25)$){$r_7^-$};
    \node[below right] at ($(A)+(-0.5,-0.5)+ 0.5*(-1.75,-2.25)$){$r_7^+$};
    \end{tikzpicture}
    \caption[]{A non-degenerate $7-$polygon with the corresponding half-strips.} 
\end{figure}

\noindent Let begin with assuming the polygon is non-degenerate. Then it is embedded in the complex plane. For each side $e_j$ of $\mathcal{P}$ consider the infinite half-strip $\mathcal{S}_j$ having base $e_j$ and such that $\mathcal{S}_j\,\cap\,\mathcal{P}=e_j$. Notice that each half-strip $\mathcal{S}_j$ is bounded by $e_j$ and two infinite rays $r_j^+$ and $r_j^-$. If two sides $e_j$ and $e_{j+1}$ are aligned, the rays $r_j^+$ and $r_{j+1}^-$ may overlap. We define the translation structure $(X,\omega)$ as the quotient of $\mathcal{P}\,\cup\,\bigcup_j \mathcal{S}_j$ obtained by\\
\begin{quote}
\begin{enumerate}
\item identifying all the vertices of $\mathcal{P}$, and\\
\item the identification of $r_j^+$ with $r_j^-$ for each $j=1,\dots,n$.\\
\end{enumerate}
\end{quote}
\noindent The resulting surface is homeomorphic to $S_{0,n}$. From the construction, the holonomy around the $i-$th puncture is $\chi(\gamma_i)$, as desired. Moreover, this translation surface has exactly one branch point of magnitude $(2n-2)\pi$.  Let us finally consider the degenerate case. In this case there exist $0<k-1<n$ and $\theta\in\Bbb S^1$ such that $\theta_i=\theta$ for $1\le i\le k-1$ and $\theta_i=-\theta$ for $k\le i\le n$. The polygon above degenerates to a segment in the complex plane joining $\zeta_1$ with $\chi(\gamma_{k-1}\cdots\gamma_1)\zeta_1=\zeta_{k}$. Let $\zeta'_{k+1}$ be another point in the complex plane, different from $\zeta_k$ and define $\zeta'_{i+1}=\chi(\gamma_{i-1}\cdots\gamma_k)\zeta'_{k+1}$ for $k+1 \leq i \leq n+1$. Since $\zeta_k\neq\zeta'_{k+1}$, the four points $\zeta_1,\,\zeta_{k},\,\zeta'_{k+1},\,\zeta'_{n+2}$ bounds an embedded parallelogram $\mathcal{P}$ in the complex plane. Notice that the segment $\overline{\zeta_{k}\,\zeta'_{k+1}}=\chi(\gamma_{k-1}\cdots\gamma_1)\overline{\zeta_{1}\,\zeta'_{n+2}}$. Divide the segment $\overline{\zeta_{1}\,\zeta_{k}}$ in sub-segments determined by the collection of points $\{\zeta_i\,|\, 1\le i\le k\}$. In the same fashion, divide the segment $\overline{\zeta'_{n+2}\,\zeta'_{k+1}}$ in sub-segments determined by the collection of points $\{\zeta'_i\,|\, k+1\le i\le n+2\}$.

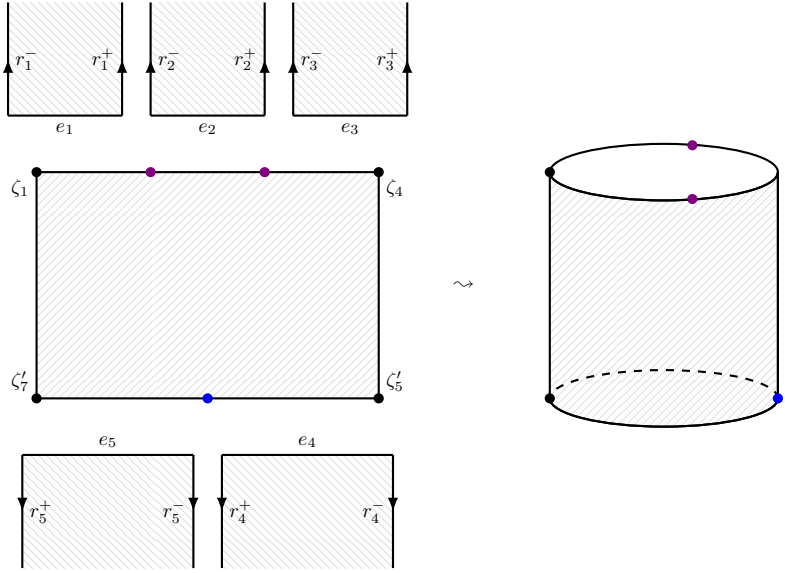
\begin{figure}[!h]
\centering
\begin{tikzpicture}[thick,scale=0.75, every node/.style={scale=0.75}]
\definecolor{pallido}{RGB}{221,227,227}
%\draw [help lines] (-8,-6) grid (8,6);
\draw [black, pattern=north east lines, pattern color=pallido] (-7,-2) -- (-1,-2) -- (-1,2) -- (-7,2) -- (-7,-2);
\draw [black, pattern=north east lines, pattern color=pallido] (2,2) arc (180:360:2 and 0.5) -- (6,-2)  -- (6,-2) arc (360:180:2 and 0.5) -- (2,-2);
\draw plot [mark=*, smooth] coordinates {(-7,-2)};
\draw plot [mark=*, smooth] coordinates {(-1,-2)};
\draw plot [mark=*, smooth] coordinates {(-1,2)};
\draw plot [mark=*, smooth] coordinates {(-7,2)};
\node at (0.5,0) {$\leadsto$};
\node[below left] at (-7,2) {$\zeta_1$};
\node[above left] at (-7,-2) {$\zeta'_7$};
\node[below right] at (-1,2) {$\zeta_4$};
\node[above right] at (-1,-2) {$\zeta'_5$};
\draw  (2,2) arc (180:360:2 and 0.5);
\draw  (6,2) arc (0:180:2 and 0.5);
\draw [thick, black] (2,2)-- (2,-2);
\draw [thick, black] (6,2)-- (6,-2);
\draw  (2,-2) arc (180:360:2 and 0.5);
\draw  [dashed] (6,-2) arc (0:180:2 and 0.5);
\draw plot [mark=*, smooth] coordinates {(2,-2)};
\draw plot [mark=*, smooth] coordinates {(2,2)};
\draw [violet] plot [mark=*, smooth] coordinates {(-5,2)};
\draw [violet] plot [mark=*, smooth] coordinates {(-3,2)};
\draw [blue] plot [mark=*, smooth] coordinates {(-4,-2)};
\draw [violet] plot [mark=*, smooth] coordinates {(4.5,2.475)};
\draw [violet] plot [mark=*, smooth] coordinates {(4.5,1.525)};
\draw [blue] plot [mark=*, smooth] coordinates {(6,-2)};

\begin{scope}[shift = {(-0.5, 0.5)}]
\path[pattern=north west lines, pattern color=pallido] (-7,4.5) -- (-7,2.5) -- (-5,2.5) -- (-5,4.5);
\draw[decoration={markings, mark=at position 0.5 with {\arrow{latex}}}, postaction={decorate}] (-7,2.5) -- (-7,4.5);
\draw[decoration={markings, mark=at position 0.5 with {\arrow{latex}}}, postaction={decorate}] (-5,2.5) -- (-5,4.5);
\draw (-7,2.5) -- (-5,2.5);
\node[right] at (-7, 3.5) {$r_1^-$};
\node[left] at (-5, 3.5) {$r_1^+$};
\node[below] at (-6,2.5) {$e_1$};
\end{scope}

\begin{scope}[shift = {(2, 0.5)}]
\path[pattern=north west lines, pattern color=pallido] (-7,4.5) -- (-7,2.5) -- (-5,2.5) -- (-5,4.5);
\draw[decoration={markings, mark=at position 0.5 with {\arrow{latex}}}, postaction={decorate}] (-7,2.5) -- (-7,4.5);
\draw[decoration={markings, mark=at position 0.5 with {\arrow{latex}}}, postaction={decorate}] (-5,2.5) -- (-5,4.5);
\draw (-7,2.5) -- (-5,2.5);
\node[right] at (-7, 3.5) {$r_2^-$};
\node[left] at (-5, 3.5) {$r_2^+$};
\node[below] at (-6,2.5) {$e_2$};
\end{scope}

\begin{scope}[shift = {(4.5 , 0.5)}]
\path[pattern=north west lines, pattern color=pallido] (-7,4.5) -- (-7,2.5) -- (-5,2.5) -- (-5,4.5);
\draw[decoration={markings, mark=at position 0.5 with {\arrow{latex}}}, postaction={decorate}] (-7,2.5) -- (-7,4.5);
\draw[decoration={markings, mark=at position 0.5 with {\arrow{latex}}}, postaction={decorate}] (-5,2.5) -- (-5,4.5);
\draw (-7,2.5) -- (-5,2.5);
\node[right] at (-7, 3.5) {$r_3^-$};
\node[left] at (-5, 3.5) {$r_3^+$};
\node[below] at (-6,2.5) {$e_3$};
\end{scope}

\begin{scope}[shift = {(-0.25, 0)}]
\path[pattern=north west lines, pattern color=pallido] (-7,-5) -- (-7,-3) -- (-4,-3) -- (-4,-5);
\draw[decoration={markings, mark=at position 0.5 with {\arrow{latex}}}, postaction={decorate}] (-7,-3) -- (-7,-5);
\draw[decoration={markings, mark=at position 0.5 with {\arrow{latex}}}, postaction={decorate}] (-4,-3) -- (-4,-5);
\draw (-7,-3) -- (-4,-3);
\node[right] at (-7, -4) {$r_5^+$};
\node[left] at (-4, -4) {$r_5^-$};
\node[above] at (-5.5,-3) {$e_5$};
\end{scope}

\begin{scope}[shift = {(3.25, 0)}]
\path[pattern=north west lines, pattern color=pallido] (-7,-5) -- (-7,-3) -- (-4,-3) -- (-4,-5);
\draw[decoration={markings, mark=at position 0.5 with {\arrow{latex}}}, postaction={decorate}] (-7,-3) -- (-7,-5);
\draw[decoration={markings, mark=at position 0.5 with {\arrow{latex}}}, postaction={decorate}] (-4,-3) -- (-4,-5);
\draw (-7,-3) -- (-4,-3);
\node[right] at (-7, -4) {$r_4^+$};
\node[left] at (-4, -4) {$r_4^-$};
\node[above] at (-5.5,-3) {$e_4$};
\end{scope}

\end{tikzpicture}
\caption[]{In the degenerate case we consider $\mathcal{P}$ as a parallelogram. By choosing $\zeta_{k+1}$ properly $P$ can be taken a rectangle.}
\end{figure} 

\noindent We now proceed as above. For each side $e_j$ of $\overline{\zeta_{1}\,\zeta_{k}}$ consider the infinite half-strip $\mathcal{S}_j$ having base $e_j$ and such that $\mathcal{S}_j\,\cap\,\mathcal{P}=e_j$. Notice that each half-strip $\mathcal{S}_j$ is bounded by $e_j$ and two infinite rays $r_j^+$ and $r_j^-$. Since the sides are aligned, the rays $r_j^+$ and $r_{j+1}^-$ overlap for each $1\le j\le k$. In the same fashion, for each side $e_{k+j}$ of $\overline{\zeta'_{n+2}\,\zeta_{k+1}}$ consider the infinite half-strip $\mathcal{S}_{k+j}$ having base $e_{k+j}$ and such that $\mathcal{S}_{k+j}\,\cap\,\mathcal{P}=e_{k+j}$. We define the translation structure $(X,\omega)$ as the quotient of $\mathcal{P}\,\cup\,\bigcup_j \mathcal{S}_j$ obtained by
\begin{enumerate}
\item identifying all the points $\zeta_i$ where $i=1,\dots,k$,
\item identifying all the points $\zeta'_i$ where $i=k+1,\dots,n+2$,
\item the identification of $r_j^+$ with $r_j^-$ for each $j=1,\dots,n$.
\end{enumerate}
\noindent The reader may notice that $(X,\omega)$ contains an open embedded annulus obtained from the identification $\overline{\zeta_{1}\,\zeta'_{n+2}}$ with $\overline{\zeta_{k}\,\zeta'_{k+1}}$ by the mapping $\chi(\gamma_{k-1}\cdots\gamma_1)$, see the picture above. The resulting surface is homeomorphic to $S_{0,n}$. From the construction, the holonomy around the $i-$th puncture is $\chi(\gamma_i)$, as desired. Moreover, this translation surface has exactly two branch points of magnitude $2(k-1)\pi$ and $2(n-k)\pi$.\qedhere
\end{proof}

\noindent Now that we are masters of handles and punctured spheres we can move on proving our main theorem \ref{mainthm}.\\

\section{Geometrizing open surfaces}\label{gos}

\noindent In this section we shall prove that each representation $\chi:\Gamma_{g,n}\longrightarrow \Bbb C$ arises as the holonomy representation of some translation structure on $S_{g,n}$ and indeed our Theorem \ref{mainthm} as a consequence of Lemma \ref{tnr}. In the previous chapter, we have seen that any representation $\chi:\Gamma_{g,n}\to \Bbb C$ determines a representation $\chi_g:\Gamma_{g,0}\to\Bbb C$. Since a punctured surface of finite type $(g,n)$ splits as the connected sum of the closed surface of genus $g$ with the $n$-punctured sphere, that is $S_{g,n}=S_{g,0}\,\sharp\, S_{0,n}$, each given representation $\chi$ also yields a representation $\chi_n: \Gamma_{0,n}\longrightarrow \Bbb C$. It is easy to check that $\chi$ is completely determined by the pair $(\chi_g,\,\chi_n)$. We introduce the following definition.

\begin{defn}
Let $\chi:\Gamma_{g,n}\to \Bbb C$ be a representation, where $g\ge1$. Then $\chi$ is said to be of \emph{trivial-volume type} if $\textsf{vol}(\chi_g)=0$, otherwise it is said to be of \emph{non trivial-volume type} representation. In the same fashion, the representation $\chi$ is said to be of \emph{trivial-ends type} if $\chi_n$ is the trivial representation, otherwise, it is said to be of \emph{non trivial-ends type}. 
\end{defn}

\noindent In what follows we shall treat the representation separately according to their properties. More precisely, we shall treat the representation according to the following partition of the representation space.

\begin{equation}
\textsf{Hom}(\Gamma_{g,n},\Bbb C)=
\left\lbrace 
\begin{array}{ccc}
\chi:\Gamma_{g,n}\to \Bbb C\\
\text{positive volume}\\
\textsf{vol}(\chi_g)>0\\
\text{}\\
(\text{Section }\ref{essvolpos})\\
\end{array} 
\right\rbrace \cup
\left\lbrace 
\begin{array}{ccc}
\chi:\Gamma_{g,n}\to \Bbb C\\
\text{non-positive volume}\\
\textsf{vol}(\chi_g)\le 0\\
\text{}\\
(\text{Section }\ref{essvolneg})\\
\end{array} 
\right\rbrace
\end{equation}
\text{}\\
\text{}\\
\noindent  Before proceeding, recall that  $\{\alpha_1,\beta_1,\dots,\alpha_g,\beta_g,\gamma_1,\dots,\gamma_n\}$ is a symplectic basis for $\Gamma_{g,n}=H_1(S_{g,n};\,\Bbb Z)$. 

\begin{rmk}
The case of once punctured surfaces is special in the sense that every representation is of trivial-ends type. Indeed, if $\gamma$ is the curve around the puncture in $S_{g,1}$, then $\chi(\gamma)=0$ necessarily.\\
\end{rmk}

\subsection{Representations with positive volume}\label{essvolpos} In this section we shall prove Theorem \ref{mainthm} for all the representations in the following set of the representation space:

\begin{equation}
\left\lbrace 
\begin{array}{ccc}
\chi:\Gamma_{g,n}\to \Bbb C\\
\text{positive volume type}\\
\end{array} 
\right\rbrace.
\end{equation}

\noindent Let $\chi:\Gamma_{g,n}\longrightarrow \Bbb C$ be a positive volume type, then three possible cases may appear according to the table below.

\begin{equation}
\begin{array}{rr}
\chi \text{ with }\textsf{vol}(\chi_g)>0
\end{array}\rightarrow
\begin{cases}
\chi_g \text{ satisfies the second Haupt condition }\quad\quad\quad\quad
\begin{cases}
\chi_n \text{ is trivial}\\
\text{See Lemma \ref{hte}.}
\text{}\\
\text{}\\
\chi_n \text{ is not trivial}\\
\text{See Lemma \ref{hnte}.}
\end{cases}\\
\text{}\\
\chi_g \text{ does not satisfy the second Haupt condition. } \quad \text{ See Lemma \ref{nshc}.}\\
\end{cases}
\end{equation}
\text{}\\

\begin{lem}\label{hte}
Let $\chi:\Gamma_{g,n}\longrightarrow \Bbb C$ be a representation of positive volume type and trivial-ends type such that $\chi_g$ satisfies the Haupt's conditions. Then $\chi$ appears as the holonomy of some translation structure on $S_{g,n}$.
\end{lem}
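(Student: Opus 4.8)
The plan is to realize $\chi$ by gluing, along a single geodesic slit, a closed translation surface carrying $\chi_g$ to a translation structure on $S_{0,n}$ carrying the trivial representation $\chi_n$. For the closed part: since $\chi_g:\Gamma_g\to\C$ satisfies Haupt's conditions (and in particular $\textsf{vol}(\chi_g)>0$), Haupt's Theorem supplies a translation structure $(X_g,\omega_g)$ on the closed surface $S_g$ with holonomy $\chi_g$. When $g=1$ the hypothesis that $\chi_g$ has positive volume already forces $\chi_g(\Gamma_1)$ to be a lattice isomorphic to $\Z^2$, so one instead takes $(X_g,\omega_g)$ to be the flat torus produced by Lemma~\ref{pvpt}.

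For the open part, recall (from the discussion preceding the proof of Proposition~\ref{propb}) that the trivial representation of $\Gamma_{0,n}$ is the holonomy of the translation structure $(\C\setminus\{P_1,\dots,P_{n-1}\},\,dz)$, whose underlying Riemann surface $\cp\setminus\{P_1,\dots,P_{n-1},\infty\}$ is homeomorphic to $S_{0,n}$. I would then apply the slit construction of subsection~\ref{sec:seqslit} to these two surfaces: choose a short embedded geodesic segment $l_1\subset(X_g,\omega_g)$ lying in a simply connected chart disjoint from the zeros of $\omega_g$, with developed image some $c\in\C\setminus\{0\}$, and a parallel segment $l_2$ with the same developed image inside a Euclidean disc of $(\C\setminus\{P_1,\dots,P_{n-1}\},dz)$ that is disjoint from the punctures; slit along $l_1$ and $l_2$ and reglue the two slits crosswise. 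By the properties of the construction applied to two distinct surfaces, the genus of the resulting $(X,\omega)$ is $g+0=g$, no puncture is created, and the only new singularities are two branch points at the identified endpoints of the slits; hence $X$ is homeomorphic to $S_{g,n}$.

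It remains to check that the holonomy of $(X,\omega)$ is $\chi$. The simple closed curve separating the genus-$g$ piece from the $n$-punctured sphere piece has trivial holonomy, and, as in the setup of Section~\ref{gos}, $\chi$ is determined by the induced pair $(\chi_g,\chi_n)$. Since the slit lies in the interior of each piece, a symplectic basis $\alpha_1,\beta_1,\dots,\alpha_g,\beta_g$ of $\Gamma_g$ can be represented by loops inside $(X_g,\omega_g)$ that are untouched by the surgery, so their holonomy in $(X,\omega)$ is $\chi_g$; similarly the loops $\gamma_1,\dots,\gamma_n$ around the punctures can be represented in $(\C\setminus\{P_1,\dots,P_{n-1}\},dz)$ away from the slit and hence have trivial holonomy, matching $\chi_n$. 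Therefore $(X,\omega)$ has holonomy $\chi$, and by Lemma~\ref{tnr} the associated abelian differential has period character $\chi$.

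The only point requiring genuine care is this last one: one must be sure the slit can be located so as not to disturb chosen representatives of a basis of $\Gamma_{g,n}$, and that the gluing genuinely yields $S_{g,n}$, i.e.\ exactly genus $g$ and exactly $n$ punctures — which is precisely why the slit is performed between two distinct surfaces (a slit joining two segments on a single surface would instead create an extra handle). No deeper obstacle arises; the case $g=1$ is the sole mild exception, covered by the torus constructions of Section~\ref{htc}.
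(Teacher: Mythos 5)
Your argument is correct, but it takes a noticeably longer route than the paper's. The paper's proof of Lemma \ref{hte} is essentially a one-liner: since $\chi$ is of trivial-ends type, $\chi(\gamma_i)=0$ for all $i$, so $\chi$ is completely determined by $\chi_g$ and may be identified with an element of $\textsf{Hom}(\Gamma_g,\C)$; Haupt's Theorem then produces a translation structure on the \emph{closed} surface $S_g$ with holonomy $\chi_g$, and one simply deletes $n$ points to obtain a translation structure on $S_{g,n}=S_g\setminus\{P_1,\dots,P_n\}$ with holonomy $\chi$ (the punctures being removable singularities, to be made metrically complete later as in subsection \ref{mtsgc}). You instead realize the trivial $\chi_n$ on $S_{0,n}$ by the punctured plane and perform a slit gluing with the closed Haupt surface; this is exactly the mechanism the paper reserves for Lemma \ref{hnte}, where $\chi_n$ is non-trivial and the gluing is genuinely needed. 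Your version is sound — the slit between two distinct surfaces yields the connected sum $S_g\,\sharp\,S_{0,n}\cong S_{g,n}$, creates only two cone points of angle $4\pi$, and leaves representatives of a symplectic basis and of the peripheral loops untouched, so the holonomy is indeed $(\chi_g,\chi_n)=\chi$ — and your treatment of the $g=1$ case via Lemma \ref{pvpt} is consistent with the paper's remark on the torus. What the extra work buys you is nothing for this particular lemma, but it is the template that generalizes to the non-trivial-ends case; the paper's shortcut buys brevity at the cost of producing a structure that is not metrically complete at the punctures, which it repairs only in subsection \ref{mtsgc}.
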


\begin{proof}
Let $\{\alpha_1,\beta_1,\dots,\alpha_g,\beta_g,\gamma_1,\dots,\gamma_n\}$ be a standard set of generators for $\Gamma_{g,n}$ as usual. As $\chi$ is assumed to be of trivial-ends type, the following holds $\chi(\gamma_i)=0$ for any $\chi$. As a consequence, there is an identification between the spaces $\textsf{Hom}(\Gamma_{g,n},\Bbb C)$ and $\textsf{Hom}(\Gamma_{g},\Bbb C)$. In other words, every representation $\chi$ can be seen as an element of both spaces. Since $\chi_g$ satisfies the Haupt's conditions, it arises as the holonomy of a translation structure on $S_{g,0}$ and hence as the holonomy of a translation structure on $S_{g,n}=S_{g}\setminus\{P_1,\dots,P_n\}$.
\end{proof}

\begin{lem}\label{hnte}
Let $\chi:\Gamma_{g,n}\longrightarrow \Bbb C$ be a representation of positive volume type and non trivial-ends type such that such that $\chi_g$ satisfies the Haupt's conditions. Then $\chi$ appears as the holonomy of some translation structure on $S_{g,n}$.
\end{lem}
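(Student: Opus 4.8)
The plan is to reduce this to the trivial-ends case already handled in Lemma \ref{hte} by absorbing the nontrivial ends-representation $\chi_n$ into a translation structure on the punctured sphere, and then gluing along a slit. Concretely, since $\chi_g$ satisfies Haupt's conditions, it arises as the holonomy of a translation structure $(X_1,\omega_1)$ on the \emph{closed} surface $S_g$; removing a point gives a translation structure on $S_{g,1}$ with holonomy $\chi_g$. Separately, Proposition \ref{propb} yields a translation structure $(X_2,\omega_2)$ on the $n$-punctured sphere $S_{0,n}$ realizing $\chi_n$; as observed in the proof of that proposition, this structure carries at least one branch point (of magnitude $(2n-2)\pi$ or a pair of smaller ones in the degenerate case), and more to the point it contains honest Euclidean disks away from the punctures. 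The idea is then to form the connected sum $S_{g,n} = S_g \,\sharp\, S_{0,n}$ geometrically by the slit construction of subsection \ref{sec:seqslit}: pick a short embedded geodesic segment $l_1 \subset (X_1,\omega_1)$ and a parallel segment $l_2 \subset (X_2,\omega_2)$ of the same developed image $c \in \mathbb{C}\setminus\{0\}$, slit both surfaces along these segments, and identify $l_1^{+}$ with $l_2^{-}$ and $l_1^{-}$ with $l_2^{+}$ by the translation.

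The first step is to justify that such matching segments exist. On the closed-surface side this is immediate once $(X_1,\omega_1)$ is nonempty: any small geodesic segment in a flat chart works, and we are free to choose its direction and (small enough) length, so we can prescribe its developed image to be any sufficiently short $c$. On the punctured-sphere side, the structure produced in Proposition \ref{propb} contains half-strips and a polygon $\mathcal{P}$, hence open Euclidean regions in which we can draw a geodesic segment of the same direction and length as $c$; alternatively one may draw $l_2$ inside a neighborhood of the branch point. Since we get to choose $c$ after seeing both surfaces, the compatibility is easy to arrange. The second step is the homological bookkeeping: the slit construction along a single segment joining the two pieces performs a connected sum, so the resulting surface is homeomorphic to $S_g \,\sharp\, S_{0,n} = S_{g,n}$, and under the splitting $H_1(S_{g,n};\mathbb{Z}) \cong H_1(S_g;\mathbb{Z}) \oplus H_1(S_{0,n};\mathbb{Z})$ the holonomy of the glued structure restricts to $\chi_g$ on the genus part and to $\chi_n$ on the sphere part; since $\chi$ is determined by the pair $(\chi_g,\chi_n)$ (as recalled before the definition of trivial-volume/trivial-ends type), the glued structure has holonomy exactly $\chi$. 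The curves $\gamma_i$ around the punctures and the basis $\{\alpha_j,\beta_j\}$ of $\Gamma_g$ can be pushed off the slit, so their holonomies are unchanged by the surgery.

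The only genuinely delicate point — and the one I would expect to be the main obstacle — is making sure the slit surgery does not spoil the topological type: one must check that the slit on the punctured-sphere side is chosen in the \emph{same} connected region and that gluing a single pair of sides adds exactly the handles/genus accounted for in subsection \ref{sec:seqslit} (i.e.\ that we are performing a connected sum and not accidentally adding a handle or disconnecting). This is handled by taking $l_2$ embedded, disjoint from all the $\gamma_i$ and from the identifications defining $(X_2,\omega_2)$, so that after the surgery a simple closed curve encircling the slit separates off the genus-$g$ part; the genus-additivity statement of the slit construction (the genus of the result is the sum of the genera of the pieces, here $g + 0 = g$) then gives the claim, and the puncture count is visibly $n$ since no punctures are created or destroyed. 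Everything else is routine, so once the segments are chosen the proof is just assembling Lemma-style pieces already proved.
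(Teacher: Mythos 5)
Your proposal is correct and follows exactly the paper's route: the paper's own proof of Lemma \ref{hnte} likewise realizes $\chi_g$ on the closed surface via Haupt's theorem, realizes $\chi_n$ on $S_{0,n}$ via Proposition \ref{propb}, and glues the two structures along a slit to obtain a translation structure on $S_{g,n}$ with holonomy $\chi$. Your write-up merely makes explicit the choices of matching segments and the homological bookkeeping that the paper leaves implicit.
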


\begin{proof}
The representation $\chi$ determines the pair $(\chi_g,\chi_n)$. The representation $\chi_g$ appears as the holonomy of a translation structure on $S_{g,0}$ because it satisfies both Haupt's conditions. The representation $\chi_n$, instead, appears as the holonomy of a translation structure on an $n$-punctured sphere according to Proposition \ref{propb}. These structures can be glued together and the resulting surface, homeomorphic to $S_{g,n}$, carries a translation structure with the desired holonomy.\qedhere
\end{proof}

\noindent Let $\chi:\Gamma_{g,n}\longrightarrow \Bbb C$ be a positive volume type representation and assume $\chi_g$ does not satisfy the second Haupt's condition. We recall that the second Haupt's condition applies only to a special class of representations, namely those whose image is a lattice in $\Bbb C$. The subset of representations for which the second Haupt's condition fails to be satisfied are completely characterized by Proposition \ref{cdfprop} above.

\begin{lem}\label{nshc}
Let $\chi:\Gamma_{g,n}\longrightarrow \Bbb C$ be a representation such that 
\begin{itemize}
    \item[1.] $\textsf{\emph{vol}}(\chi_g)>0$,
    \item[2.] $\text{\emph{Im}}(\chi_{g})=\Lambda$ is a lattice in $\Bbb C$,
    \item[3.] $\textsf{\emph{vol}}(\chi_g)=\text{\emph{Area}}\big(\Bbb C/\Lambda)$.
    \end{itemize}
Then $\chi$ appears as the holonomy of a translation structure on $S_{g,n}$.
\end{lem}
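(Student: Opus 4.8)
The plan is to use that, although $\chi_g$ fails the second Haupt condition, hypothesis (3) forces $\chi_g$ to be concentrated --- up to a symplectic change of basis --- in a single handle. Indeed, by Proposition~\ref{cdfprop} the equality $\textsf{vol}(\chi_g)=\text{Area}(\C/\Lambda)$ means precisely that the map $f\colon S_g\to\C/\Lambda$ factors through a collapse of $g-1$ handles; unwinding this, there is a symplectic basis $\{\alpha_1,\beta_1,\dots,\alpha_g,\beta_g\}$ of $\Gamma_g$ for which $\chi_g(\alpha_i)=\chi_g(\beta_i)=0$ whenever $i\ge 2$, so that the surviving handle $\langle\alpha_1,\beta_1\rangle$ carries all of $\textsf{vol}(\chi_g)$, which is positive. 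Adjoining loops $\gamma_1,\dots,\gamma_n$ around the punctures yields a basis of $\Gamma_{g,n}$; since $\chi$ is completely determined by the pair $(\chi_g,\chi_n)$ and since a change of basis can be absorbed by pulling back along a mapping class of $S_{g,n}$, it suffices to produce a translation structure whose holonomy equals $\chi_g(\alpha_1),\chi_g(\beta_1)$ on $\alpha_1,\beta_1$, equals $0$ on every other $\alpha_i,\beta_i$, and equals $\chi_n(\gamma_j)$ on each $\gamma_j$.

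I would assemble such a structure from three pieces. First, take the Euclidean torus $T=\C/\Lambda$ with the translation structure induced by $dz$; since the handle $\langle\alpha_1,\beta_1\rangle$ has positive volume, Lemma~\ref{pvpt} shows that $T$ realizes $\chi_g|_{\langle\alpha_1,\beta_1\rangle}$. Second, by Proposition~\ref{propb}, realize $\chi_n$ as the holonomy of a translation structure $(Y,\omega_Y)$ on the $n$-punctured sphere $S_{0,n}$. Third, glue $T$ to $(Y,\omega_Y)$ using the slit construction of subsection~\ref{sec:seqslit} with two surfaces: pick a short embedded geodesic segment in $T$ and one in $Y$, the latter disjoint from the singularities of $\omega_Y$, with the same developed image in $\C^*$, slit along both and re-identify the sides crosswise. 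Genera add under this operation and no punctures are created, so the result $(W,\omega_W)$ is a translation surface homeomorphic to $S_{1,n}$; its holonomy is unchanged --- still $\chi_g(\alpha_1),\chi_g(\beta_1)$ on the surviving handle and $\chi_n(\gamma_j)$ around the punctures --- and the two endpoints of the slit become cone points of angle $4\pi$, so $(W,\omega_W)$ carries at least one branch point.

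It then remains to restore the genus by adding back $g-1$ handles of trivial holonomy, and this is exactly Construction~1 of subsection~\ref{trhan} (of which Lemma~\ref{trihan} is the special case built on $(\C,z\,dz)$): attaching a trivial handle at a cone point of angle at least $4\pi$ raises the genus by one, contributes trivial holonomy along the new handle, and again produces cone points of angle $4\pi$ that feed the next attachment. Iterating this $g-1$ times --- a vacuous step when $g=1$ --- produces a translation surface homeomorphic to $S_{g,n}$ whose holonomy agrees with $\chi_g$ on every $\alpha_i,\beta_i$ and with $\chi_n$ on every $\gamma_j$, hence equals $\chi$ by the reduction of the first paragraph.

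I expect the only genuinely delicate point to be the bookkeeping, which is of the same flavour as manipulations already carried out in Part~\ref{part1}: one must verify that the geometric curves produced by the slit construction and by the successive handle attachments can be identified with the standard symplectic basis of $\Gamma_{g,n}$ --- this is where the mapping class group is invoked --- and that each of the local surgeries (the slit and the trivial-handle attachments) leaves the periods along $\alpha_1,\beta_1$ and along the $\gamma_j$ untouched, which holds because each surgery is supported inside a disk disjoint from a chosen set of representatives of those classes. No individual step should be a serious obstacle; the statement is essentially the combination of Propositions~\ref{cdfprop} and~\ref{propb} with Lemmata~\ref{pvpt} and~\ref{trihan}.
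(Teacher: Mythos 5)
Your proposal is correct and follows essentially the same route as the paper: invoke Proposition \ref{cdfprop} to reduce $\chi_g$ to a single positive-volume handle plus $g-1$ trivial ones, realize that handle on the flat torus $\C/\Lambda$ and $\chi_n$ on $S_{0,n}$ via Proposition \ref{propb}, glue by a slit to get $S_{1,n}$ with a branch point, and attach $g-1$ trivial handles there as in subsection \ref{trhan}. The only cosmetic difference is that the paper attaches all $g-1$ trivial handles at once along twin paths subdivided into $2g-2$ segments rather than iterating the single-handle construction, which changes nothing of substance.
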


\begin{proof}
Let $f:S_g\to \C/\Lambda$ be the mapping induced by $f_*:\Gamma_g\to H^1(\C/\Lambda,\Bbb Z)$, where $\Lambda=\text{Im}\big(\chi_g\big)$. Since $\chi_g$ does not satisfy the second Haupt's condition, $f$ has degree one and factors through a collapse of $g-1$ handle by \cite[Proposition 2.7]{CDF2}. Hence, $\chi_g$ determines two further representations: a non-trivial $\chi_{1}:\Gamma_{1}\to\C$ which appears as the holonomy of a flat torus and the trivial representation $\chi_{g-1}:\Gamma_{g-1}\to \{0\}<\C$. Let us now consider the representation $\chi_{n}:\Gamma_{0,n}\to\Bbb C$. Proposition \ref{propb} applies and thence $\chi_n$ arises as the holonomy of a translation structure on $S_{0,n}$ where all the punctures have the prescribed holonomy. This latter structure can be glued with the flat torus determined by $\chi_{1,o}$ and the resulting surface, homeomorphic to $S_{1,n}$, carries a translation structure. Such a structure has at least one singularity, say $P$, and we can use it for attaching $g-1$ handles with trivial holonomy. Indeed, pick two embedded and geodesic twin paths starting from $P$. Divide each path in $2g-2$ sub-segments with the same length and labelled from $1$ to $2g-2$ in ascending order. By cutting the segments labelled by even integers and the gluing those with the same label we obtain $g-1$ handles with trivial holonomy. The final surface, homeomorphic to $S_{g,n}$, carries a translation structure with the desired holonomy.
\end{proof}

\subsection{Representations with non-positive volume}\label{essvolneg} In this section we focus on representations of non-positive volume. Let $\chi:\Gamma_{g,n}\longrightarrow \Bbb C$ be a representation such that $\textsf{vol}(\chi_g)\le0$ and let $\{\alpha_1,\beta_1,\dots,\alpha_g,\beta_g,\gamma_1,\dots,\gamma_n\}$ be a basis of $\Gamma_{g,n}$. As the volume of $\chi_g$ is assumed to be non positive, the representation $\chi$ falls in one the four cases listed above. Let us define $k$ be the number of handles with negative volume and let $h$ be the number of handles with zero volume. Set $\overline{g}=g-(k+h)$. Notice that $0\le k,\,h\le g$ and, as the volume is non-positive, the sum $k+h\ge1$. In particular, $k=0$ if and only if $h=g$. Indeed, if $h<g$ and $k=0$ then $\chi$ would have positive volume. We already know the representation $\chi$ splits in two representations, $\chi_g$ and $\chi_n$. In turns, the representation $\chi_g$ determines other three representations $\chi_k$, $\chi_h$ and $\chi_{\overline{g}}$. We shall now describe a way for constructing translation surfaces with non-positive volume $S_{g,n}$ by proceeding in steps by finding a translation structure for each of these representations which will be subsequently glued together. In particular, we describe a precise order of gluing.

\begin{rmk}\label{idenrepspaces}
Recall that the space $\textsf{Hom}(\Gamma_{\bullet,1},\Bbb C)$ naturally identifies with $\textsf{Hom}(\Gamma_{\bullet},\Bbb C)$ because the holonomy of any curve $\gamma$ enclosing the puncture has necessarily trivial holonomy. In what follows, we make the use of the representation $\chi_k$ to define some translation structure on $S_{k,1}$ having as the holonomy $\chi_{k,1}$, the representation naturally associated to $\chi_k$. In the same fashion, we use the representation $\chi_h$ to define a translation structure on $S_{h,1}$. 
\end{rmk}

\subsubsection{Step 1: Geometrizing $\chi_k$.}\label{s0} Assume there are $k>0$ handles with negative volume. Up to rename all the handles, we may assume without loss of generality that $\Im\big(\overline{\chi(\alpha_i)}\chi(\beta_i)\big)<0$ for every $i=1,\dots,k$. For this case we can simply extend the argumentation used to prove Lemma \ref{nvpt} above. Let $P_1$ be any point in the complex plane. The four points $P_1,\,\chi(\alpha_1)P_1,\,\chi(\beta_1)P_1,\,\chi(\alpha_1\beta_1)P_1$ bound a non-degenerate parallelogram $\mathcal{P}_1$, that is a fundamental domain for the action of $\langle\chi(\alpha_1),\,\chi(\beta_1)\rangle$. Remove the interior of $\mathcal{P}_1$. We can find a vertex of $\mathcal{P}_1$, say $P_2$, such that the points $P_2,\,\chi(\alpha_2)P_2,\,\chi(\beta_2)P_2,\,\chi(\alpha_2\beta_2)P_2$ bound a non-degenerate parallelogram $\mathcal{P}_2$. Remove the interior of $\mathcal{P}_2$. We proceed in the same fashion: For any $i=2,\dots,k$ there is a vertex $P_i$ of the parallelogram $\mathcal{P}_{i-1}$ such that $P_i,\,\chi(\alpha_i)P_i,\,\chi(\beta_i)P_i,\,\chi(\alpha_i\beta_i)P_i$ determine a non-degenerate parallelogram $\mathcal{P}_i$. Remove the interior of $\mathcal{P}_i$. We get a chain of parallelograms each one with the interior removed. Identifying the sides according to $\chi_k$ we obtain a surface homeomorphic to $S_{k,1}$ endowed with a translation structure $(X,\omega)$ with one branch point of magnitude $(4k+2)\pi$, one pole of order two and having holonomy $\chi_{k,1}$. The representations $\chi_k$ and $\chi_{k,1}$ are identified in the sense of Remark \ref{idenrepspaces}. Notice that, if $k=g$ and $n=1$ we are done and we do not need proceed any further. 
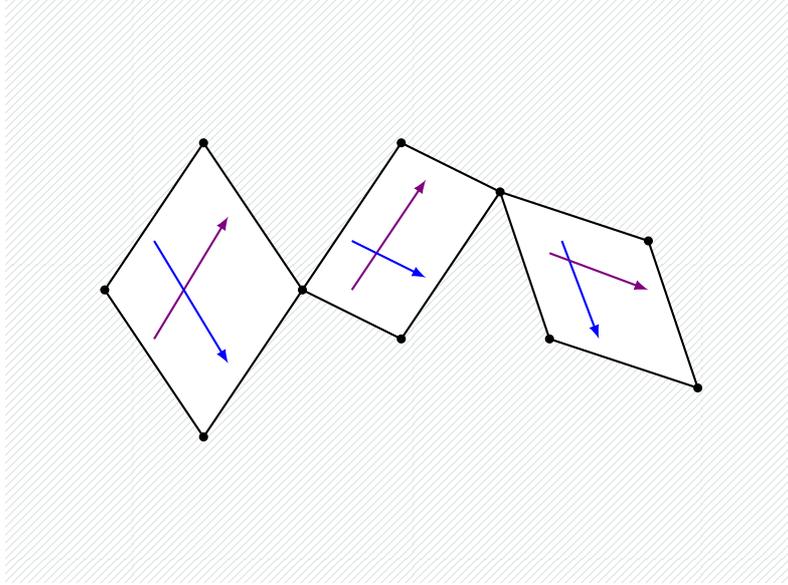
\begin{figure}[!h]
\centering
\begin{tikzpicture}[thick,scale=0.65, every node/.style={scale=0.5}]
\definecolor{pallido}{RGB}{221,227,227}
%\draw [help lines] (-8,-6) grid (8,6);
\draw [white, pattern=north east lines, pattern color=pallido] (-8,-6) -- (8,-6) -- (8,6) -- (-8,6);
\fill [white, thick, draw=black] (-6,0)-- (-4,-3)-- (-2,0)-- (-4,3)-- (-6,0);
\fill [white, thick, draw=black] (-2,0)-- (0,3)-- (2,2)-- (0,-1)-- (-2,0);
\fill [white, thick, draw=black] (2,2)-- (3,-1)-- (6,-2)-- (5,1)-- (2,2);
\draw plot [mark=*, smooth] coordinates {(-6,0)};
\draw plot [mark=*, smooth] coordinates {(-2,0)};
\draw plot [mark=*, smooth] coordinates {(-4,3)};
\draw plot [mark=*, smooth] coordinates {(-4,-3)};
\draw plot [mark=*, smooth] coordinates {(0,3)};
\draw plot [mark=*, smooth] coordinates {(2,2)};
\draw plot [mark=*, smooth] coordinates {(0,-1)};
\draw plot [mark=*, smooth] coordinates {(5,1)};
\draw plot [mark=*, smooth] coordinates {(6,-2)};
\draw plot [mark=*, smooth] coordinates {(3,-1)};
\draw[thick, -latex, violet] (-5,-1) to (-3.5,1.5);
\draw[thick, -latex, blue] (-5,1) to (-3.5,-1.5);
\draw[thick, -latex, violet] (-1,0) to (0.5,2.25);
\draw[thick, -latex, blue] (-1,1) to (0.5,0.25);
\draw[thick, latex-, blue] (4,-1) to (3.25,1);
\draw[thick, -latex, violet] (3,0.75) to (5,0);
\end{tikzpicture}
\caption[]{Chain of three parallelograms in the complex plane representing the situation when $k=3$.}
\end{figure}
\noindent Since each parallelogram is compact, their union lies in a bounded compact region, say $K_o$, of $\Bbb E^2$ whose complement is homeomorphic to a disk in the Riemann sphere. Therefore, there exists an open set $U_o$ and an isometry mapping $f_o:U_o\longrightarrow \Bbb E^2\setminus K_o$. We shall use this mapping for gluing $(X,\omega)$ to the translation structures arising from the other representations.

\subsubsection{Step 2: Geometrizing $\chi_h$.}\label{s1} Assume there are $h>0$ handles with zero volume and consider the representation $\chi_h$. We begin with by assuming $\chi_h$ to be different from the trivial representation; we shall treat this case separately later on. There are $1\le l\le h$ pairs $\{\alpha_i,\beta_i\}$ such that at least one between $\chi(\alpha_i)$ or $\chi(\beta_i)$ is not zero. We assume these pairs are labelled by $\{1,\dots,l\}$ without loss of generality. For any $i=1,\dots,l$, the inclusion mapping $\jmath_i:\langle\alpha_i,\beta_i\rangle\longrightarrow H_1(S_{g,n};\,\Bbb Z)$ yields a representation $\langle\alpha_i,\beta_i\rangle\longrightarrow\Bbb C$ by post-composition with the representation $\chi$. Lemmata \ref{lelhan} and \ref{lelhann} apply and so each one of these representations arises as the holonomy of a translation structure $(X_i,\omega_i)$ on a punctured torus. Let $U_i$ be an open neighborhood of the puncture on the translation structure $(X_i,\omega_i)$. For any $i=1,\dots,l$, there is a compact set $K_i\subset\Bbb E^2$, homeomorphic to a disk, and an isometry $f_i: U_i\longrightarrow \Bbb E^2\setminus K_i$. Since $l$ is finite and the set $K_i$ is compact for any $i$, there exists a compact region $K$, homeomorphic to a disc, containing all the $K_i$'s just defined. Let $V=\Bbb E^2\setminus K$. The preimage of $V$ via the mapping $f_i$ is an open neighborhood, say $V_i$, of the puncture on $(X_i,\omega_i)$ isometric to $V$ and properly contained in $U_i$. All the $V_i$'s are isometric and they can be identified. Let $P$ be any point in $V\subset \Bbb E^2$ and let $\overline{r}$ be an infinite ray starting from $P$ entirely contained in $V$. The mappings $f_i$, where $i=1,\dots,l$, can be used to define an infinite ray $r_i$ starting from $P_i=f^{-1}_i(P)\in V_i$ for every $i=1,\dots,l$. Slit all the structures $(X_i,\omega_i)$ along these rays and re-glue them accordingly by matching the side $r_i^+$ with $r_{i+1 \text{ mod }l}^-$. The surface we obtain from this surgery is homeomorphic to $S_{l,1}$ and equipped with a translation structure $(W,\eta)$. Once identified, the points $P_i$ form a singular point of magnitude $2\pi(l+1)$ and hence the differential $\eta$ has a zero of order $l$. We further notice that the infinity $\infty$ appears as a pole with trivial holonomy. If $h=l$ we are done, otherwise we have to glue the remaining $h-l$ handles with trivial holonomy. Since $(W,\eta)$ has at least one singular point, we can use it for attaching the handles with trivial holonomy in the way we have explained in subsection \ref{trhan}. The final surface, homeomorphic to $S_{h,1}$, is equipped with a translation structure with the desire holonomy $\chi_{h,1}$. The representations $\chi_h$ and $\chi_{h,1}$ are identified in the sense of Remark \ref{idenrepspaces}. We define this latter structure, with a small abuse of notation, as $(W,\eta)$.\\

\noindent In the case $\chi_h$ is the trivial representation, we need to proceed in a slightly different way. Let $(X_1,\omega_1)$ and $(X_2,\omega_2)$ be two copies of $(\Bbb C, dz)$. Slit both along an infinite ray $\overline{r}$ starting from some point $P$, and then re-glue by matching the side $\overline{r}_1^+$ with $\overline{r}_2^-$ and the side $\overline{r}_1^-$ with $\overline{r}_2^+$. The resulting surface is topologically a plane equipped with a translation structure with one branch point of magnitude $4\pi$ at $P$ and one pole of order three, namely $(\C, zdz)$. We then use the branch point for attaching $h$ handles with trivial holonomy as explained in subsection \ref{trhan}. Again, the final surface, homeomorphic to $S_{h,1}$, is equipped with a translation structure $(W,\eta)$ having the desire holonomy $\chi_h$.

\subsubsection{Step 3: The first gluing.}\label{s3} We now glue the structures obtained in the previous steps. If one of $k$ or $h$ is zero, this step can be skipped. Recall, however, that they cannot be both zero. The structure $(W,\eta)$ we have obtained in the previous step, in both cases, has a finite collection $\mathcal{C}$ of infinite rays pointing toward the puncture and that develop on $\overline{r}\subset \Bbb E^2$. Let $(X,\omega)$ be the final structure obtained in step \ref{s0}. The mapping $f_o$ can be used to define an infinite ray $r_o$ on $(X,\omega)$ pointing toward the puncture. If $\overline{r}$ does not intersect the collection of parallelograms $\mathcal{P}_1,\dots, \mathcal{P}_k$, then we can simply define $r_o\subset (X,\omega)$ as the preimage of $\overline{r}$. In particular, by slitting $(X,\omega)$ along $r_o$ and slitting $(W,\eta)$ along anyone of the rays in the collection $\mathcal{C}$, these structure can be glue to define a translation structure on a surface homeomorphic to $S_{k+h,1}$. The holonomy of the resulting structure depends only on $\chi_k$ and $\chi_h$.\\
\noindent Suppose $\overline{r}$ intersects the collection of parallelograms $\mathcal{P}_1,\dots, \mathcal{P}_k$. Let $K'$ be a compact region homeomorphic to a disc and containing both $K_o$ and $K$, the compact regions defined in the previous sections. Then define the open set $V'$ as $\Bbb E^2\setminus K'$. As $K\subseteq K'$, then $V'\subset V$. Since the ray $\overline{r}$ leaves any compact set, there is an infinite ray $\overline{r}'\subset \overline{r}\,\cap\,V'$. Since $K_o\subset K'$, the ray $\overline{r}'$ does no longer intersect the parallelograms $\mathcal{P}_1,\dots, \mathcal{P}_k$ and the preimage via the mapping $f_o$ defines an infinite ray $r_o$ on $(X,\omega)$. Similarly, the preimage of $\overline{r}'$ via any mapping $f_i$, defines an infinite sub-ray $r_i'\subset r_i$. Now the structures $(X,\omega)$ and $(W,\eta)$ can be glued by slitting these rays. As above, the resulting surface is homeomorphic to $S_{k+h,1}$ and carries a translation structures having holonomy uniquely determined by $\chi_k$ and $\chi_h$. In both cases, we shall denote the final structures as $(Y,\xi)$. 

\subsubsection{Step 4: Gluing the $n$-punctured sphere.}\label{s2} Let us now consider the representation $\chi_n:\Gamma_{0,n}\longrightarrow \Bbb C$ determined by $\chi$. If $n=1$, the representation is trivial and there is nothing interesting to say. Therefore assume $n\ge2$. Proposition \ref{propb} and Remark \ref{psrmk} say that $\chi_n$ arises as the holonomy of a translation structure on the $n$-punctured sphere. This latter structure can be glued to $(Y,\,\xi)$ in the following way.\\
\noindent If $\chi_n$ is trivial. It appears as the holonomy of the punctured plane $\Bbb C\,\setminus\,\{P_1,\dots,P_{n-1}\}\cong\mathbb{CP}^1\setminus\{P_1,\dots,P_{n-1},\infty\}$ equiped with the translation structure induced by the differential $dz$. Let $\jmath:\Bbb C\longrightarrow \Bbb E^2$ be the usual identification. Once again, consider the open set $V\subset \Bbb E^2$ already introduced in \ref{s1}. Recall that it contains the geodesic ray $\overline{r}$ based at $P$. There is no loss of generality in assuming that $\jmath\big(\{P_1,\dots,P_{n-1}\}\big)\subset \Bbb E^2\setminus V$. The preimage of the ray $\overline{r}$ via the mapping $\jmath$ is a geodesic line, say $r_\star$. Slit $\mathbb{CP}^1\setminus\{P_1,\dots,P_{n-1},\infty\}$ along the ray $r_\star$. Consider the translation surface $(Y,\,\xi)$ just defined in \ref{s3} and slit it along any infinite ray of the collection $\mathcal{C}$. Then glue the translation surfaces $\big(\mathbb{CP}^1\setminus\{P_1,\dots,P_{n-1},\infty\},\, dz\big)$ and $(Y,\,\xi)$ as usual by using a slit construction. The resulting surface is homeomorphic to $S_{k+h,n}$ and equipped with a translation structure $(Z,\tau)$.\\
\noindent Suppose $\chi_n$ is not trivial. Proposition \ref{propb} says that it appears as the holonomy of a translation structure on $S_{0,n}$ with at least one cylindrical ends. Let $U_o$ be a neighborhood of a puncture with non trivial holonomy and let $Q\in U_o$ be any point. Let $\rho$ be an infinite geodesic ray from $Q$ pointing towards the puncture. The developed image of $\rho$ is a geodesic ray $\overline{\rho}\subset \Bbb E^2$ leaving from the developed image of $Q$. Let $P\in V\subset \Bbb E^2$ be the point introduced in the second step, see \ref{s1}. Let $\overline{r}_\star$ be a geodesic ray starting from $P$ such that: $\overline{r}_\star$ is disjoint from $\overline{\rho}$ and, it is entirely contained in $V\subset \Bbb E^2$.  We think $(\Bbb E^2,\overline{r}_\star)$ as a plane with a marked infinite ray. Slit $U_o$ along $\rho$. In the same fashion, slit the marked plane $(\Bbb E^2,\overline{r}_\star)$ along $\overline{\rho}$. Then reglue $(\Bbb E^2, \overline{r}_\star)$ and $U_o$ along the slits as usual. We obtain a surface still homeomorphic to $S_{0,n}$ but equipped with a new translation structure, say $(Z',\tau')$. Geometrically, this new translation structure contains a whole copy of the marked plane $(\Bbb E^2, \overline{r}_\star)$ which we use for gluing $(Y,\,\xi)$, the translation structure obtained in the previous step, see \ref{s3}. Let $f_i:U_i\longrightarrow \Bbb E^2\setminus K_i$ be anyone of the isometries introduced in \ref{s1} and let $r_\star$ be the preimage of $\overline{r}_\star$ via $f_i$. Since $V\subset \Bbb E^2\setminus K_i$ for any $i$, the $r_\star$ is a infinite ray in $(X_i,\omega_i)$ and therefore an infinite ray pointing towards the (unique) puncture in $(Y,\,\xi)$. Slit $(Y,\,\xi)$ along $r_\star$ and slit $(Z',\tau')$ along $\overline{r}_\star$. Then re-glue as usual. The resulting surface is homeomorphic to $S_{k+h,n}$ and equipped with a singular Euclidean stucture $(Z,\tau)$. In both of cases, the Euclidean structure $(Z,\tau)$ has holonomy depending only on $\chi_k,\,\chi_h$ and $\chi_n$.

\subsubsection{Step 5: Gluing positive handles.} In this final step, we shall glue the remaining $\overline{g}$ handles, if any, with positive volume. The representation $\chi_{\overline{g}}$ has positive volume, Haupt's Theorem applies, and therefore appears as the holonomy of a translation structure on $S_{\overline{g}}$. This latter structure can be glued to $(Z,\tau)$, the structure obtained in \ref{s2}. The resulting surface is homeomorphic to $S_{g,n}$ and carries a singular translation structure with holonomy $\chi$ as desired. 

\subsection{Making translation surfaces metrically complete}\label{mtsgc} The translation surfaces we have constructed in the previous section may very well be not metrically complete. As we have noticed in section \ref{lgp}, this is due to the presence of removable singularities. Let us now described a way to make these structures metrically complete. Let $(X,\omega)$ be a translation structure on a punctured surface $S_{g,n}$. Assume this structure is not metrically complete. As a consequence of Lemma \ref{metlem}, there is a puncture $P$ of $X$ such that in any local coordinate $z$, the abelian differential $\omega$ can be written in the form
\begin{equation}\label{eq:wform}  \omega=f(z)dz, \quad \text{ where } f(z)=\sum_{i=n}^\infty a_i(z-z_o)^i  \text{ for some }n\ge0.
\end{equation}

\noindent We basically distinguish two cases according to the cases $1.1$ and $1.2$ in section \ref{lgp}.
\SetLabelAlign{center}{\null\hfill\textbf{#1}\hfill\null}
\begin{itemize}[leftmargin=1.75em, labelwidth=1.5em, align=center, itemsep=\parskip]
\item[\bf 1] The coefficient $a_0\neq0$. Then any local chart $\varphi:U\setminus\{P\}\longrightarrow \Bbb E^2$ extends to a local homeomorphism $U\longrightarrow \Bbb E^2$. Let $\varepsilon>0$ be a real number small enough that $B_\varepsilon(P)\subset U$. Let $D_\varepsilon=\varphi\big(B_\varepsilon(P)\big)\subset \Bbb E^2\cong \Bbb C$. Let $l$ be a geodesic segment in $B_\varepsilon(P)$ and let $\overline{l}$ the image of $l$ via $\varphi$. Slit $(X,\omega)$ along $l$ and slit $(\Bbb C,\,dz)$ along $\overline{l}$. Then reglue by matching $l^+$ with $\overline{l^-}$ and $l^-$ with $\overline{l^+}$. The final surface is homeomorphic to $S_{g,n}$. The puncture $P$ now appears as a pole of order $2$ and the geometry around it is metrically complete.

\item[\bf 2.] The coefficient $a_0=0$. The point $P$ is a zero of order $k$ for $\omega$, where $k\in\Bbb Z^+$ is smallest index such that $a_k\neq0$. Any local chart $U\setminus\{P\}\longrightarrow \Bbb E^2$ extends to a simple branched $k+1$ covering $U\longrightarrow \Bbb E^2$. Let $\varepsilon>0$ be a real number small enough that $B_\varepsilon(P)\subset U$. The open ball $B_\varepsilon(P)$ is isometric to the $\varepsilon-$neighborhood $D_\varepsilon$ of the vertex of the Euclidean cone of angle $2\pi(k+1)$. Let $g$ be the mapping realizing the isometry. As a translation surface, this singular Euclidean cone is determined by the pair $(\Bbb C,\, z^kdz)$. Let $l$ be a geodesic segment in $B_\varepsilon(P)$ and let $\overline{l}$ the image of $l$ via $g$. Slit $(X,\omega)$ along $l$ and slit $(\Bbb C,\, z^kdz)$ along $\overline{l}$. Then glue by matching $l^+$ with $\overline{l^-}$ and $l^-$ with $\overline{l^+}$. The final surface is homeomorphic to $S_{g,n}$. The puncture $P$ now appears as a pole of order $k+2$ and the geometry around it is metrically complete.
\end{itemize}

\noindent A recursive argument produce a translation surface with poles which is metrically complete. This concludes the proof of Theorem \ref{main:thma2} and thus of Theorem \ref{mainthm}.

\part{Translation structures in a  prescribed stratum}\label{part2}
\noindent The space $\Omega\mathcal{M}_{g,n}$ is naturally stratified by sub-spaces $\mathcal{H}(d_1,\dots,d_m;\,p_1,\dots,p_n)$ of those translation surfaces which have $m$ zeros of order exactly $d_1,\dots,d_m$ and $n$ poles of order exactly $p_1,\dots,p_n$. We may notice that these integers are subject to the well-known relation \eqref{zpec}
\[\sum_{i=1}^m d_i\,-\,\sum_{j=1}^n p_j=2g-2.
\]

\noindent In this second part we shall consider the more delicate problem of determining the image of the period map when restricted to any strata $\mathcal{H}(d_1,\dots,d_m,\,p_1,\dots,p_n)\subset \Omega\mathcal{M}_{g,n}$, and prove the rest of the results stated in the Introduction. Note that since we shall only consider translation structures with poles, our surfaces will automatically be metrically complete.

\section{Trivial holonomy I: Punctured Spheres}\label{sec:necsuftriv}

\noindent We begin by deriving necessary conditions for the number and orders of zeros for a meromorphic differential in $\Omega \mathcal{M}_{g,n}$ with trivial holonomy. Then, in this section,  we shall prove Theorem \ref{main:thmb} for the case when $g=0$ (see Proposition \label{thm:mainthm}). 

\subsection{Necessary Conditions}\label{necondtri} Let $(X,\omega)$ be a translation surface with trivial holonomy on $S_{g,n}$ and consider the developing map $\textsf{dev}:\widetilde{S}_{g,n}\longrightarrow \Bbb C$ for $(X,\omega)$. Since the holonomy is assumed to be trivial, such a mapping descends to a well-defined map $\overline{\textsf{dev}}: S_{g,n} \to \mathbb{C}$. By fixing an arbitrary point, say $Q_o$, on $S_{g,n}$, this map can be written explicitly as
\begin{equation}
    P \mapsto \int_{Q_o}^{P}\omega
\end{equation}

\noindent By viewing $\mathbb{C} \subset \cp$, where $\cp$ is the Riemann sphere, we can extend $\overline{\textsf{dev}}$ holomorphically over the punctures by mapping the punctures to $\infty \in \cp$, thus giving us a holomorphic branched covering $S_g\longrightarrow \cp$ of the sphere which we again denote by $\overline{\textsf{dev}}$. At points of $S_g$ where $\omega$ has a pole of order $k$, the multiplicity of the map $\overline{\textsf{dev}}$ is $k-1$. Thus, $\infty$ is a singular value of $\overline{\textsf{dev}}$ except when all poles of $\omega$ have order $2$. Note that all poles have to be of order greater than $1$ since the residue at each pole has to be zero for the holonomy to be trivial. \\

\noindent If $\{ p_1, \ldots, p_n \}$ are the order of the poles, the degree of the mapping $\overline{\textsf{dev}}$ can be computed in terms of $p_1,\dots,p_n$; \emph{i.e.}
\begin{equation} \label{eqdeg}
\deg\big(\,\overline{\textsf{dev}}\,\big)=\sum_{i=1}^{n}(p_i - 1)=d.
\end{equation}
\noindent We now assume the differential $\omega$ to have a single zero. Its order has to be 
\[
2g-2+\sum_{i=1}^np_i
\] as a consequence of equation \eqref{zpec}. Since $\omega$ has a single zero by assumption, the mapping $\overline{\textsf{dev}}$ has only one singular value other than (possibly) $\infty$. Let $\{ r_1, \dots, r_m \}$ be the partition of $d=\deg\big(\,\overline{\textsf{dev}}\,\big)$ determined by the multiplicities of the preimages of this singular value. Since $\omega$ has a single zero, exactly one of these $r_j$'s is greater than $1$ and all the others are exactly $1$. Assuming $r_1 > 1$, the order of the zero at this point is therefore $r_1 - 1$. This implies that 
\begin{equation}\label{mideq}
    \sum_{j=1}^m (r_j-1) = \sum_{j=1}^m r_j - m = 2g-2+\sum_{i=1}^np_i
\end{equation}
On the other hand the family $\{r_1,\dots,r_m\}$ is a partition of $d$ and therefore
\begin{equation}\label{mideq2}
    \sum_{j=1}^m r_j=\sum_{i=1}^{n}(p_i - 1) = \sum_{i=1}^{n}p_i - n.
\end{equation}
\noindent By replacing this latter in the equation \ref{mideq} we obtain
\begin{equation}
    \sum_{i=1}^{n}p_i - n - m = 2g-2+\sum_{i=1}^np_i 
\end{equation}
\begin{equation}\label{nmg}
    n+m = 2 - 2g .
\end{equation}
Since $n$ and $m$ are at least one, equation \eqref{nmg} holds only when $g=0$, and in this case we have $n=m=1$ and $g=0$. Thus $\omega$ is a holomorphic differential on the whole complex plane. The point $\infty$ appears as a pole of order $p$ at the puncture, and a single zero of order $p-2$. With respect to the global coordinate $z$ the abelian differential $\omega$ can be written as $z^{(p-2)}dz$ on $\mathbb{C}$. We have thus proved the following proposition.

\begin{prop}\label{singlezero}
Let $(X,\omega)$ be a translation surface with trivial holonomy on $S_{g,n}$. Then $\omega$ has a single zero if and only if $g=0$ and $n=1$.\\
\end{prop}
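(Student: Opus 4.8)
The plan is to exploit the developing map, exactly as in the discussion preceding the statement. Since $(X,\omega)$ has trivial holonomy, the developing map descends to a holomorphic function $\overline{\textsf{dev}}:S_{g,n}\to\mathbb{C}$, namely $P\mapsto\int_{Q_o}^P\omega$ for a fixed basepoint $Q_o$. Trivial holonomy forces every residue to vanish, so by the local classification of section \ref{lgp} each puncture is a pole of order $p_i\ge 2$ with zero residue; there $\overline{\textsf{dev}}$ has a genuine pole, and sending all punctures to $\infty\in\cp$ yields a holomorphic branched cover $\overline{\textsf{dev}}:S_g\to\cp$ whose local degree at the $i$-th filled puncture is $p_i-1$. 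Counting the fiber over $\infty$ then gives $d:=\deg(\overline{\textsf{dev}})=\sum_{i=1}^n(p_i-1)$.

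For the \textbf{only if} direction, assume $\omega$ has a single zero. First I would use the degree condition \eqref{zpec} to see that this zero has order $2g-2+\sum_i p_i$, and observe that its image is the only critical value of $\overline{\textsf{dev}}$ besides (possibly) $\infty$. Writing $\{r_1,\dots,r_m\}$ for the local degrees of $\overline{\textsf{dev}}$ over that value, exactly one of them, say $r_1$, exceeds $1$, and $r_1-1$ is the order of the zero. Two elementary counts finish it: on one hand $\sum_j(r_j-1)=r_1-1=2g-2+\sum_i p_i$; on the other $\sum_j r_j=d=\sum_i p_i-n$, so $\sum_j(r_j-1)=\sum_i p_i-n-m$. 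Equating, $n+m=2-2g$, and since $n,m\ge 1$ this forces $g=0$ and $n=m=1$. In that case $X=\mathbb{C}$, the pole at $\infty$ has some order $p$, and $\omega=z^{p-2}\,dz$ in the global coordinate $z$.

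For the \textbf{if} direction it suffices to produce an example on $S_{0,1}$: the pair $(\mathbb{C},z^{d}\,dz)$ with $d\ge 1$ has trivial holonomy (the form is exact), exactly one zero, genus $0$, and its single puncture at $\infty$ is a pole of order $d+2$.

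I expect the only delicate point to be the local bookkeeping: verifying that a pole of order $p_i$ with zero residue contributes local degree exactly $p_i-1$ to $\overline{\textsf{dev}}$, and that after removing the punctures $\overline{\textsf{dev}}$ is unramified away from the unique zero of $\omega$. Both are read off from the normal form $\omega=dz/z^{p_i}$ established in section \ref{lgp}; granting them, the remainder is pure fiber-counting together with the degree condition \eqref{zpec}, and there is no further obstacle.
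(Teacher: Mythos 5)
Your argument is correct and is essentially identical to the paper's own proof: both pass to the extended developing map $\overline{\textsf{dev}}:S_g\to\cp$, compute $\deg(\overline{\textsf{dev}})=\sum_i(p_i-1)$, and compare the two counts $\sum_j(r_j-1)=2g-2+\sum_i p_i$ and $\sum_j r_j=\sum_i p_i-n$ to force $n+m=2-2g$, hence $g=0$, $n=m=1$. Your explicit treatment of the converse via $(\mathbb{C},z^{d}\,dz)$ is a welcome addition that the paper leaves implicit.
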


\noindent We now allow $\omega$ to have multiple zeros, and obtain an upper bound for the order of the zeros of $\omega$. If we assume the mapping $\overline{\textsf{dev}}$ to have $k$ singular values other than (possibly) $\infty$, we get $k$ partitions of $\deg\big(\,\overline{\textsf{dev}}\,\big)$, say $\{ r_1^1, \dots, r_{n_1}^1 \}, \dots , \{ r_1^k, \dots, r_{n_k}^k \}$, given by the preimages of these $k$ branch values. The zeros of $\omega$ are precisely those points where $r_j^i > 1$, and the order of the zero at such points is $r_j^i - 1$. We can notice that each one of the $r_j^i\,$'s cannot exceed $\deg\big(\,\overline{\textsf{dev}}\,\big)$ and therefore deduce that the order of each zero of $\omega$ is at most equal to 
\[\sum_{i=1}^{n}p_i - n - 1
\] by using equation \eqref{mideq2}. We shall prove that these necessary conditions turn out to be also sufficient. The rest of this chapter is devoted to prove the following proposition, which is a re-statement of Theorem \ref{main:thmb} for punctured spheres.

\begin{prop} \label{thm:mainthm}
Given $n$ integers $\{ p_1, \ldots, p_n\}$ with each $p_i \geq 2$ and $k$ positive integers $\{ d_1, \ldots, d_k \}$ in non-increasing order, and a non-negative integer $g$ satisfying the following conditions, 
\begin{enumerate}
    \item $k>1$ whenever $n>1$, \label{cond2mainthm}\\
    \item $d_j \leq \displaystyle\sum_{i=1}^{n}p_i - n - 1$ for $1 \leq j \leq k$, and \label{cond3mainthm}
    \item $\displaystyle\sum_{j=1}^k d_j = \displaystyle\sum_{i=1}^{n}p_i -2$ \label{cond4mainthm}\\
\end{enumerate}
there exists a holomorphic abelian differential on $S_{0,n}$ which extends to a meromorphic differential with poles of orders $p_1,p_2,\ldots, p_n$ at the $n$ punctures, zeros of orders $d_1, d_2,\ldots, d_k$ and has trivial holonomy.
\end{prop}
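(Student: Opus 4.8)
The plan is to induct on the number $n$ of punctures, using the surfaces $(\C, z^{p-2}\,dz)$ as building blocks and assembling them with the slit construction of \S\ref{sec:seqslit} and the zero-splitting surgery of \S\ref{sec:zerosplit}, both of which preserve the genus and the (triviality of the) holonomy. The surface $(\C, z^{p-2}\,dz)$ lives on $S_{0,1}$, has trivial holonomy (the form is exact), a single pole of order $p$ at $\infty$, and a single zero of order $p-2$ at the origin (no zero if $p=2$). This already settles $n=1$: for $n=1$ the hypotheses say exactly that $(d_1,\dots,d_k)$ is an arbitrary partition of $p_1-2$, and any such partition is obtained from $(\C, z^{p_1-2}\,dz)$ by repeatedly splitting its zero.

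Suppose now $n\ge 2$ and the proposition holds for $n-1$ punctures. Given $(p_1,\dots,p_n)$ and a non-increasing $(d_1,\dots,d_k)$ satisfying the hypotheses, relabel so that $p_n$ is a smallest pole order and ``peel it off'': by induction, realize a suitable partition $(e_1,\dots,e_m)$ for the poles $(p_1,\dots,p_{n-1})$ as a translation surface $(X',\omega')$ on $S_{0,n-1}$ with trivial holonomy, and then glue in the block $(\C, z^{p_n-2}\,dz)$ along a geodesic slit whose copy on the block starts at the order-$(p_n-2)$ zero (or at a regular point, if instead we want to create a fresh zero). The slit construction keeps the genus equal to $0$ and the holonomy trivial, and adds one pole of order $p_n$ while leaving the other pole orders untouched (the slit being a compact segment); if the two endpoints of the slit on $X'$ have cone angles $2\pi(a+1)$ and $2\pi(b+1)$, then the zeros of the new surface are those of $(X',\omega')$ with these two replaced by zeros of orders $a+p_n-1$ and $b+1$, where $a$ or $b$ may be $0$ (a regular point), in which case the slit simply creates a new zero of order $p_n-1$, resp.\ of order $1$. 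A final round of zero-splitting then corrects this to exactly $(d_1,\dots,d_k)$; the controllable saddle connection produced by the splitting surgery is also what lets us realize the slit on $X'$ with the prescribed holonomy vector joining the required points.

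Everything therefore reduces to a combinatorial claim: one can choose $(e_1,\dots,e_m)$ and integers $a,b\ge 0$ so that $(e_1,\dots,e_m)$ again satisfies the three hypotheses for the poles $(p_1,\dots,p_{n-1})$ --- in particular $\sum e_i=\sum_{i=1}^n p_i-2-p_n$, each $e_i\le \sum_{i=1}^n p_i-n-1-(p_n-1)$, and $m\ge 2$ whenever $n-1\ge 2$ (this last being automatic, since $p_i\ge 2$ forces $\sum e_i\ge 2$ once $n-1\ge 2$) --- and so that the operation above, followed by splittings, outputs $(d_1,\dots,d_k)$. Concretely: if $d_1\ge p_n-1$ one sets $a=d_1-(p_n-1)$, grows one further part back by $1$ (or takes $b=0$ to produce an order-$1$ zero), and forms the other $e_i$ by keeping or subdividing the remaining $d_j$; if instead $d_1<p_n-1$ --- a situation which, together with conditions (2) and (3), forces all poles to be large and all $d_j$ equal to $1$ --- one takes $a=0$ and splits the resulting order-$(p_n-1)$ zero into the required number of ones.

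The main obstacle is precisely this bookkeeping. Deleting a pole tightens the bound in condition (2) by $p_n-1$, so one must check that the ``excess'' carried by the largest target zeros is always exactly absorbable by the single zero that the slit inflates by $p_n-1$ (using the freedom to pre-split parts and to peel off the \emph{smallest} pole). The delicate regime is when no two of the $d_j$ may be merged without violating condition (2) --- e.g.\ realizing $(2,2,2)$ over four double poles, where every coarsening of the partition is forbidden --- so the reduction must genuinely create and then split zeros rather than merely coarsen the partition; negotiating this via the interplay of the slit construction and zero-splitting is the heart of the proof.
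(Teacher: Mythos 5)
Your building blocks (the surfaces $(\C, z^{p-2}\,dz)$, the sequential slit construction of \S\ref{sec:seqslit}, and zero-splitting from \S\ref{sec:zerosplit}) are exactly the right ones, and your organizational scheme --- downward induction on $n$, peeling off one pole at a time --- is a genuinely different route from the paper's, which instead accumulates poles in batches chosen so that each batch completes exactly one target zero $d_j$ while maintaining the invariant $d_s\le \widetilde d_s\le d_s+\cdots+d_k$ on a single ``remainder'' zero. However, your proof has a genuine gap: the combinatorial claim that closes the induction is never established, and the recipe you sketch for it is wrong in at least one case. You assert that $d_1<p_n-1$ ``forces all poles to be large and all $d_j$ equal to $1$,'' and accordingly take $a=0$. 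Take $n=2$, $p_1=p_2=10$, $(d_1,d_2,d_3)=(6,6,6)$: all hypotheses hold, $d_1=6<9=p_n-1$, yet no $d_j$ equals $1$. With $a=0$ the glued-in plane creates a zero of order $p_n-1=9$, and since splitting can only refine partitions, $9$ would have to be a sum of a sub-multiset of $\{6,6,6\}$, which it is not. (The correct choice here is $a=3$, $b=5$, i.e.\ realize $(3,5)$ on one plane first, so the gluing yields $(12,6)$ and then $12$ splits as $6+6$.) In general your step requires choosing a sub-multiset $G_1$ of the $d_j$'s summing to $a+p_n-1$ and a disjoint $G_2$ summing to $b+1$, with $a,b$ and every leftover $d_j$ obeying the tightened order bound $\sum_{i<n}p_i-n$ and with the resulting $(e_i)$ having at least two parts when $n-1\ge 2$; this matching problem is the actual content of the proposition and you have deferred it rather than solved it.

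There is a second, geometric, gap: each inductive step needs an embedded, injectively developed geodesic segment on $(X',\omega')$ joining the two \emph{specific} designated points (two prescribed zeros, or a zero and a regular point) with a prescribed developed image, so that the slit can be matched with the one on the new plane. You invoke ``the controllable saddle connection produced by the splitting surgery,'' but when the two designated points were not created by a single split of a common parent zero, the existence of such a segment is not automatic. The paper spends real effort on exactly this (the positioning scheme of \S\ref{sec:zeropos} and the counting of rays emanating from a zero with a common developed image); your write-up would need an analogous mechanism carried through the induction.
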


\noindent In what follows, we shall henceforth refer to condition (2) as the \emph{order condition}, and condition (3) as the \emph{degree condition}.

\subsection{Idea of the proof and motivating example}\label{ssth} Our strategy would be to repeatedly use the sequential slit construction as described in section \ref{sec:seqslit} and split the zeros that arise as described in section \ref{sec:zerosplit} to control the order of the zeros. Consider  the $n$ translation surfaces $(X_i,\omega_i)$ for $1\leq i\leq n$, where $X_i=\C$ and the differential $\omega_i$ has a pole of order $p_i\ge2$ at the infinity. We shall consider some (possibly all) of these translation surfaces and we glue them, by performing a slit construction, to define a sequence of translation surfaces $(Y_j,\eta_j)$. Here, each $(Y_j,\eta_j)$ is a  sphere with a meromorphic differential,  having poles of order $p_1, \ldots, p_t$ for some $1\leq t \leq n$ and zeros of order $d_1, \ldots, d_{s-1}, \widetilde{d}_s$ for some $1\leq s\leq k$ where $\widetilde{d}_s \geq d_s$ and $\widetilde{d_s} \leq d_s+ d_{s+1} + \cdots+d_k$. Since the degree of any meromorphic differential on the sphere is $-2$, we have
\begin{equation}\label{eq:87} 
    \sum_{l=1}^{s} d_l + (\widetilde{d}_s - d_s) = \sum_{i=1}^{t}p_i -2.
\end{equation}
\noindent The sequence $(Y_j,\eta_j)$ is constructed such that, as $j$ increases, $s$ and $t$ increase and the final differential has $t=n$. We then do a final splitting of zeros to get the required differential.
\noindent At each step, to go from  $(Y_j,\eta_j)$ to  $(Y_{j+1},\eta_{j+1})$, we often need to consider an intermediate translation surface $(Z_j, \xi_j)$ obtained by splitting a zero in $(Y_j, \eta_j)$. To this, we glue some of the remaining translation surfaces $(X_i,\omega_i)$ to get $(Y_{j+1},\eta_{j+1})$ and the process continues till all the $(X_i,\omega_i)$'s are exhausted. Let us give an example for motivating our strategy.

\begin{ex}
Let $n=4$, $p_1 = p_3 = p_4 = 3$ and $p_2 = 5$. Let $k =5$, and $d_1 = 4$, $d_2=d_3=3$ and $d_4 = d_5 =1$. We see that this data satisfies the conditions of Proposition \ref{thm:mainthm}. To start with, we construct $(Y_1, \eta_1)$ using a sequential slit construction involving $(\C, z^{p_1-2}dz), (W, \tau)$ and $(\C, z^{p_3-2}dz)$ where $(W, \tau)$ is the surface obtained by splitting the zero of order 3 in $(\C, z^{p_2-2}dz)$ into two zeroes of order 1 and 2. This is shown in Figure \ref{fig:exy1eta1}. The resulting $(Y_1, \eta_1)$ has poles of order 3, 3 and 5 and two zeroes of order 4 and 5 respectively. We then split the zero of order 5 into two zeroes of order 2 and 3 and call the resulting surface $(Z_1,\xi_1)$. In $(Z_1,\xi_1)$, we make a slit across the two zeroes of order 2 and 3 and glue it to $(\C, z^{p_4-2}dz)$ as shown in Figure \ref{fig:exy2eta2} to get $(Y_2, \eta_2)$ that has poles of order 3, 5, 3 and 3 and zeroes of order 4, 3 and 5. Finally, we split the zero of order 5 into three zeroes of order 3, 1 and 1 and complete the construction.

 \begin{figure}[!h]
\centering
\begin{tikzpicture}[scale=0.75, every node/.style={scale=0.75}]
\definecolor{pallido}{RGB}{221,227,227}
%First circle
\draw [dashed, black, pattern=north west lines, pattern color=pallido] (0,0) circle (2.2);
\draw (-1,-1) .. controls (0.1, -0.1) .. (1,1);
\draw (-1,-1) .. controls (-0.1, 0.1) .. (1,1);
\node[above left] at (-0.1, 0.1) {$l_1^+$};
\node[below right] at (0.1, -0.1) {$l_1^-$};
\node[below ] at (-1,-1) {$P_1$};
\node[above ] at (1,1) {$Q_1$};
\fill (-1,-1) circle (1pt);
\fill (1,1) circle (1pt);
\node[below] at (0,-2.3) {$(\C,z^{p_1-2}dz)$};
%Second circle
\draw [dashed, black, pattern=north west lines, pattern color=pallido] (5,0) circle (2.2);
\draw (4,-1) .. controls (5.1, -0.1) .. (6,1);
\draw (4,-1) .. controls (4.9, 0.1) .. (6,1);
\node[above left] at (4.9, 0.1) {$l_2^+$};
\node[below right] at (5.1, -0.1) {$l_2^-$}; 
\node[below ] at (4,-1) {$P_2$};
\node[above ] at (6,1) {$Q_2$};
\fill (4,-1) circle (1pt);
\fill (6,1) circle (1pt);
\node[below] at (5,-2.3) {$(W, \tau)$};
%Third circle
\draw [dashed, black, pattern=north west lines, pattern color=pallido] (10,0) circle (2.2);
\draw (9,-1) .. controls (10.1, -0.1) .. (11,1);
\draw (9,-1) .. controls (9.9, 0.1) .. (11,1);
\node[above left] at (9.9, 0.1) {$l_3^+$};
\node[below right] at (10.1, -0.1) {$l_3^-$}; 
\node[below ] at (9,-1) {$P_3$};
\node[above ] at (11,1) {$Q_3$};
\fill (9,-1) circle (1pt);
\fill (11,1) circle (1pt);
\node[below] at (10,-2.3) {$(\C,z^{p_3-2}dz)$};
\end{tikzpicture}
\caption{Construction of $(Y_1, \eta_1)$ for the motivating example. In the leftmost surface, $P_1$ is the singular point. In the rightmost surface, $Q_3$ is the singular point. In $(W, \tau)$, $P_2$ is a zero of order 1 and $Q_2$ is a zero of order 2.} \label{fig:exy1eta1}
\end{figure}
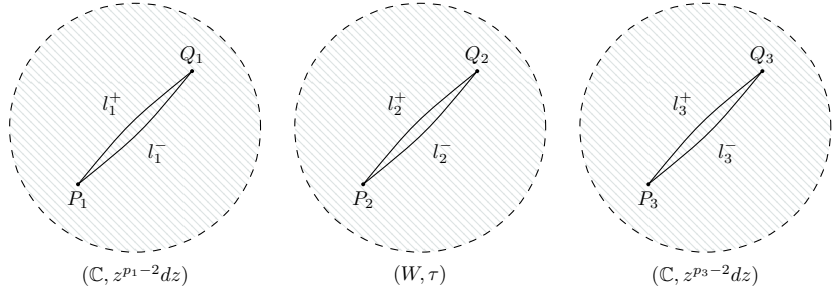

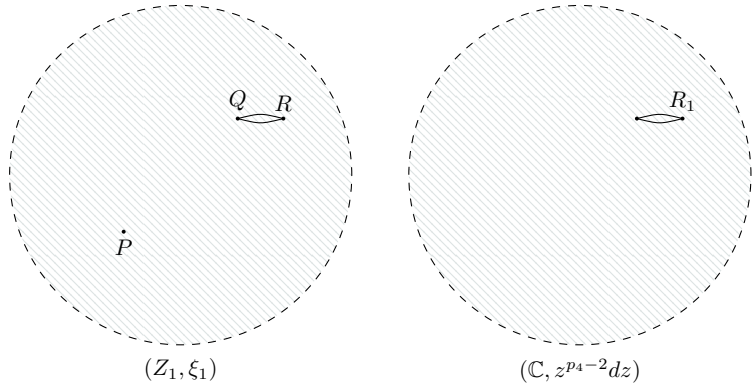
\begin{figure}[!h]
\centering
\begin{tikzpicture}[scale=0.75, every node/.style={scale=0.85}]
\definecolor{pallido}{RGB}{221,227,227}
%First circle
\draw [dashed, black, pattern=north west lines, pattern color=pallido] (0,0) circle (3);
\draw (1,1) .. controls (1.4, 1.1) .. (1.8,1);
\draw (1,1) .. controls (1.4, 0.9) .. (1.8,1);
%\node[above left] at (-0.1, 0.1) {$l_1^+$};
%\node[below right] at (0.1, -0.1) {$l_1^-$};
\node[below] at (-1,-1) {$P$};
\node[above] at (1,1) {$Q$};
\node[above] at (1.8,1) {$R$};
\fill (-1,-1) circle (1pt);
\fill (1,1) circle (1pt);
\fill (1.8,1) circle (1pt);
\node[below] at (0,-3.1) {$(Z_1,\xi_1)$};
%Second circle
\draw [dashed, black, pattern=north west lines, pattern color=pallido] (7,0) circle (3);
\draw (8,1) .. controls (8.4, 1.1) .. (8.8,1);
\draw (8,1) .. controls (8.4, 0.9) .. (8.8,1);
%\node[above left] at (4.9, 0.1) {$l_2^+$};
%\node[below right] at (5.1, -0.1) {$l_2^-$}; 
%\node[below ] at (4,-1) {$P_2$};
%\node[above ] at (6,1) {$Q_2$};
\node[above] at (8.8,1) {$R_1$};
%\fill (4,-1) circle (1pt);
%\fill (6,1) circle (1pt);
\node[below] at (7,-3.1) {$(\mathbb{C},z^{p_4-2}dz)$};
\fill (8,1) circle (1pt);
\fill (8.8,1) circle (1pt);
\end{tikzpicture}
\caption{Construction of $(Y_2, \eta_2)$ for the motivating example. On the left, we have $(Z_1, \eta_1)$ where $P$ is the zero of order 4 and $Q$ and $R$ are the zeroes of order 2 and 3 obtained from splitting the zero of order 5 in $(Y_1, \eta_1)$. In the surface on the right, the right end of the slit, $R_1$ is the zero of order 1 in $(\mathbb{C},z^{p_4-2}dz)$.} \label{fig:exy2eta2}
\end{figure}
\end{ex}

\noindent The proof that follows is simply a generalisation of this procedure for arbitrary $p_i$'s and $d_j$'s subject to the conditions above.

\subsection{Proof of Proposition \ref{thm:mainthm}} According to our strategy, we divide the proof into steps, each one corresponding to a subsection.
\subsubsection{Construction of $(Y_1,\eta_1)$}\label{sec:y1eta1} Let $m$ be the smallest positive integer such that 
\begin{equation}\label{ineq1}
    \sum_{i=1}^m(p_i -1) \geq d_1 + 1.
\end{equation} 
%In our motivating example, $m=2$ since $d_1 = 4$ and \eqref{ineq1} holds for $p_1 =3$ and $p_2 =5$. 
Notice the existence of such an $m$ is guaranteed by the condition (2) of Proposition \ref{thm:mainthm}. If $m=1$, then define $(Y_1,\eta_1)$ as $(\C, z^{p_1-2}dz)$. The differential $\eta_1$ extends to a meromorphic differential on the Riemann sphere with one zero of order $\widetilde{d_1} = p_1 - 2 \geq d_1$ and one pole of order $p_1$. Clearly, 
\begin{equation}
    \widetilde{d_1} = p_1 - 2 \leq \sum_{j=1}^k d_j.
\end{equation} Thus, $(Y_1,\eta_1)$ is indeed of the desired form (as described in the strategy above). In the case $m>1$, we define $\widetilde{p}_m$ to be the integer for which the following equality holds, 
\begin{equation} \label{eq:orderd1}
    \sum_{i=1}^{m-1}(p_i-1) + (\widetilde{p}_m-1) = d_1 + 1.
\end{equation}

%In the motivating example $\widetilde{p}_m = \widetilde{p}_2 = 4$.

\noindent  Note that the minimality of $m$, and the inequality \eqref{ineq1} above implies that $2 \leq \widetilde{p}_m \leq p_m$. When $m>1$ the construction of $(Y_1,\eta_1)$ depends on the number $m + p_m - \widetilde{p}_m$. We shall distinguish two cases.\\

\paragraph{\textbf{Case 1 - $m + p_m - \widetilde{p}_m \geq d_2 + 1$}}\label{case1} This is the easiest case to deal with.  For every $i=1,\dots,m-1$, we consider the translation surfaces $(X_i,\omega_i)$ as $(\C, z^{p_i-2}dz)$. Consider the translation surface $(X_m,\omega_m)=(\C,z^{p_m-2}dz)$. Notice that the differential $\omega_m$ has a zero of order $p_m-2$. Let $(W,\tau)$ the translation surface obtained from $(X_m,\omega_m)$ by splitting the zero into two zeros of orders $\widetilde{p}_m-2$ and $p_m -\widetilde{p}_m$ at points $A$ and $B$ respectively. When $\widetilde{p}_m = 2$, or $\widetilde{p}_m= p_{m}$ there is no splitting involved and we have one zero and one marked point as $A$ and $B$ (or vice versa). For the sequential slit construction, we have to specify a geodesic line segment $l_i$ in each surface $(X_i,\omega_i)$, such that all of them have the same developed image $c \in \mathbb{C}-\{0\}$. We take $l_m$ to be the saddle connection joining $A$ to $B$. In the notation of section \ref{sec:seqslit}, $P_m = A$ and $Q_m = B$. Thus, $P_m$ is a singular point with angle $2\pi(\widetilde{p}_m-1)$ and $Q_m$ is a singular point with angle $2\pi(p_m - \widetilde{p}_m+1)$. Here, a singular point with angle $2\pi$ is just a regular point. For $1\leq i \leq m-1$, we take $l_i$ to be a geodesic line segment starting from the zero in $(X_i, \omega_i)$ to some point on the surface such that the line segment has the same developed image in $\mathbb{C}$ as $l_m$. Thus, for $1\leq i \leq m-1$, $P_i$ is a singular point with angle $2\pi(p_i-1)$ and $Q_i$ is a regular point. We glue all the surfaces $(X_i,\omega_i)$, for $i=1,\dots,m-1$, to $(W,\tau)$ by slit construction along the segments $l_i,$ just defined. We label the resulting surface as $(Y_1,\eta_1)$ and we show this is of the required form. The point $P$, which is the point obtained from the identification of all the $P_i$, is a singular point with magnitude
\begin{equation}
    2\pi\bigg(\sum_{i=1}^{m-1}(p_i-1) + (\widetilde{p}_m-1)\bigg) = 2\pi(d_1+1)
\end{equation}
\noindent Therefore, the differential $\eta_1$ has a zero of order $d_1$ at the point $P$. The point $Q$ is also singular and has magnitude equal to
 $ 2\pi(d+1) = 2\pi( m + p_m - \widetilde{p}_m) \geq 2\pi(d_2+1)$, where the equality defines $d$. In particular, $\eta_1$ has a zero of order $d \geq d_2$ at $Q$. We can finally deduce $(Y_1,\eta_1)$ is of the desired form because 
 \[d_1 + d = \sum_{i=1}^m p_i - 2 \leq \sum_{j=1}^k d_j \quad \text{ implies } \quad d \leq \sum_{j=2}^k d_j.
 \]
 \medskip
 %This means we can split this zero of order $d$ into zeros of order $d_2, \ldots, d_{r-1}, \Tilde{d_r}$, with $\Tilde{d_r}\leq d_r$  and $d = \sum_{j=2}^{r-1}d_j + \Tilde{d_r}$, as done previously. We label the resulting surface $Y_1$. $Y_1$ has zeros of order $d_1, d_2, \ldots, d_{r-1}, \Tilde{d_r}$ and poles of order $p_1, \ldots, p_m$.
 
\paragraph{\textbf{Case 2 - $m + p_m - \widetilde{p}_m < d_2 + 1$}} In this case, if we were to carry out the construction as in case 1, then we would have obtained a differential with two zeros of order $d_1$ and $d$ respectively, with $d<d_2$ and poles of order respectively $p_1, \ldots, p_m$. %In the case of our motivating example, we would obtain a differential with zeros of order $4$ and $2$ and poles of order 3 and 5 ($p_1$ and $p_2$ resp.). 
This is not of the form desired of $(Y_1,\eta_1)$. So, we describe a construction that will involve more surfaces $(X_{m+1},\omega_{m+1}), \ldots, (X_{m'},\omega_{m'})$. Let $m'$ be the smallest integer such that the following inequality holds,
\begin{equation} \label{eq:defm'}
    m + p_m - \widetilde{p}_m + \sum_{i=m+1}^{m'}p_i \geq d_2 + 1
\end{equation}
%In the motivating example, $m'=3$. 
Clearly $m' \geq m+1$ since $m + p_m - \widetilde{p}_m < d_2 + 1$. Using equation \eqref{eq:orderd1}, we see that inequality \eqref{eq:defm'} is the same as,
\begin{equation}\label{condmprime}
     \sum_{i=1}^{m-1}(p_i-1) + (\widetilde{p}_m-1) + m + p_m - \widetilde{p}_m + \sum_{i=m+1}^{m'}p_i \geq d_1 +1 + d_2 + 1 \quad
     \text{ or, }\quad \sum_{i=1}^{m'} p_i \geq d_1 + d_2 + 2
\end{equation}
In this form, it is easy to see that such an $m'$ exists because of the degree condition of the main theorem. We claim that $m'\leq d_2+1$. To see this, we first examine the case when $m'=m+1$. In this case, $m + p_m - \widetilde{p}_m < d_2 + 1$ implies $m' = m+1 \leq d_2 +1$. When $m' > m+1$, we have, 
\begin{equation}\label{eq:814}
     m + (m'-1-m) < m + p_m - \widetilde{p}_m + \sum_{i=m+1}^{m'-1}p_i < d_2 + 1
\end{equation}
\noindent Here, the second inequality comes from the minimality of $m'$ and the first inequality is a consequence of the fact that each $p_i > 1$ since that implies  $\sum\limits_{i=m+1}^{m'-1}p_i > m'-m$. Equation \eqref{eq:814} implies  $m' < d_2 + 2$, or, $m' \leq d_2 + 1$. Since $d_2 \leq d_1$, we also have $m' \leq d_1 +1$. We now define $m''\geq 1$ to be the smallest integer such that 
\begin{equation}\label{eq:815}
m' + \sum_{i=1}^{m''}(p_i-2) \geq d_1 + 1.
\end{equation} 
%In the motivating example, $m''=2$.
\noindent  In general, such an $m'' \leq m'$ exists because 
\[m' + \sum_{i=1}^{m'}(p_i-2) = \sum_{i=1}^{m'}p_i - m' \geq d_1+d_2+2-m' \geq d_1 + 1,
\] where the inequality in the middle follows from \eqref{condmprime} and the last inequality follows because $m' \leq d_2 + 1$. As before, we define $p'_{m''}$ as the integer for which the following equality holds,
\begin{equation}
    m' + \sum_{i=1}^{m''}(p_i-2) + (p'_{m''}-p_{m''}) = d_1 + 1
\end{equation}
%In the motivating example, $p'_{m''} = p'_2 = 3$.
\noindent  We note that $p_{m''} \geq p'_{m''} \geq 2$, where the first inequality comes from \eqref{eq:815}, and  the second inequality comes from the minimality of $m''$. In particular, $p'_{m''} = 2$ only when $m''=1$ and $m' = d_1 + 1$.\\ %To get the second inequality when $m''=1$, we need to use the fact $m'\leq d_1 + 1$. 

\noindent We are now in the right position to specify the surfaces $(X_1,\omega_1), \ldots, (X_{m'},\omega_{m'})$ which we shall glue via sequential slit construction to get $(Y_1,\eta_1)$. For $i \neq m''$, we take ($X_i,\omega_i)$ to be $(\C, z^{p_i-2}dz)$ as above. In this second case, $(W,\tau)$ will be the translation surface obtained by splitting the zero of order $p_{m''}-2$ in $(X_{m''},\omega_{m''})$ into two zeros of orders $p'_{m''}-2$ and $p_m - p'_{m''}$ at points $A$ and $B$ respectively. When $p'_{m''} = 2$, or $p'_{m''} = p_{m''}$, as before, there is no splitting involved and we have one zero and one marked point as $A$ and $B$ (or vice versa).\\
\noindent Recall that for the sequential slit construction, we have to specify a geodesic line segment $l_i$ for each translation surface $(X_i,\omega_i)$. We take $l_{m''}$ to be the saddle connection joining $A$ to $B$. By using the notation of section \ref{sec:seqslit}, we denote $P_{m''} = A$ and $Q_{m''} = B$. Thus, $P_{m''}$ is a singular point with angle $2\pi(p'_{m''}-1)$ and $Q_{m''}$ is a singular point with angle $2\pi(p_{m''} - p'_{m''}+1)$.  For $1\leq i \leq m''-1$, we take $l_i$ to be a geodesic line segment starting from the zero to some other point on the surface such that the line segment has the same developed image in $\mathbb{C}$ as $l_{m''}$. Thus, for $1\leq i \leq m''-1$, $P_i$ is a singular point with magnitude $2\pi(p_i-1)$ and $Q_i$ is a regular point. For $m''+1 \leq i \leq m'$, $l_i$ is chosen such that $P_i$ is a regular point and $Q_i$ is a singular point with magnitude $2\pi(p_i-1)$. This means that $P$, which is obtained from the identification of all the $P_i$, is a singular point with magnitude
\begin{equation} \label{eq:angleP}
    \sum_{i=1}^{m''-1}2\pi(p_i-1) + 2\pi(p'_{m''}-1) + \sum_{i=m''+1}^{m'}2\pi = 2\pi \bigg( m' + \sum_{i=1}^{m''}(p_i-2) + (p'_{m''}-p_{m''}) \bigg) = 2\pi( d_1 + 1).
\end{equation}
In the same fashion, the point $Q$, obtained from the identification of all the $Q_i$, is a singular point with magnitude
\begin{equation} \label{eq:angleQ}
    \sum_{i=1}^{m''-1}2\pi + 2\pi(p_{m''} - p'_{m''}+1) + \sum_{i=m''+1}^{m'}2\pi(p_i-1) =: 2\pi(d+1)
\end{equation}
The equations \eqref{eq:angleP} and \eqref{eq:angleQ} together yield
\begin{equation}
    d_1 + 1 + d + 1 = \sum_{i=1}^{m'}p_i \geq d_1 + d_2 + 2
\end{equation}
This means that the resulting abelian differential has a zero of order $d \geq d_2$ at $Q$. We also have,
\begin{equation}
    d_1 + 1 + d + 1 = \sum_{i=1}^{m'}p_i \leq \sum_{j=1}^k d_j + 2
\end{equation}
which implies that $d \leq d_2+\cdots +d_k$. Thus, we can label this translation surface as $(Y_1,\eta_1)$. %In Figure \ref{fig:case2y1eta1}, we see this process for the construction of $(Y_1, \eta_1)$ for our guiding example. 

%We split this zero into zeros of order $d_2, \ldots, d_{r-1}, \Tilde{d_r}$, with $\Tilde{d_r}\leq d_r$ as in case 1. We label the resulting surface $Y_1$. $Y_1$ has zeros of order $d_1, d_2, \ldots, d_{r-1}, \Tilde{d_r}$ and poles of order $p_1, \ldots, p_{m'}$.
\subsubsection{Getting $(Z_{j},\xi_j)$ from $(Y_j,\eta_j)$ and $(Y_{j+1},\eta_{j+1})$ from $(Z_j,\xi_j)$} \label{subsec:yj+1fromyj}

\noindent This is an intermediate step that we shall undertake only when $t<n$ in the surface $(Y_j,\eta_j)$. Recall that $t\le n$ is the number of poles on $(Y_j, \eta_j)$. If $t=n$, then we directly move to the next step. \\ %In the case of our motivating example, $t=3$ for $(Y_1, \eta_1)$ so we proceed with this step. \\

\noindent When $(Y_j,\eta_j)$ has less than $n$ poles, suppose it has $s$ zeros of orders $d_1, \ldots, d_{s-1}, \widetilde{d}_s$, where $s < k$ and $\widetilde{d}_s < d_s+\cdots+d_k.$ Let define $r$ to be the largest integer in $s\le r\le k-1$ such that $\widetilde{d}_s \geq d_s+\cdots+d_r.$ Clearly, the following inequality holds $ \widetilde{d}_s < d_s+\cdots+d_{r+1}.$ % In our motivating example, $\widetilde{d}_s = \widetilde{d}_2 =5$ and $5 < 3 + 3 = d_2 + d_3$. Thus, in this case, $r$ happens to be $s$ itself, that is, $r=s=2$. 
Let us now define $t'$ to be the smallest index in $\{1,\dots,n-t\}$ for which 
\begin{gather}
    \widetilde{d}_s+ \sum_{i=t+1}^{t+t'} p_i \geq \sum_{j=s}^{r+1} d_j \label{eq:deft'} \\
    \Longleftrightarrow \quad \sum_{j=1}^{s-1}d_j + \widetilde{d}_s + \sum_{i=t+1}^{t+t'} p_i \geq \sum_{j=1}^{r+1} d_j \\
    \Longleftrightarrow \quad \sum_{i=1}^{t+t'} p_i - 2 \geq \sum_{j=1}^{r+1} d_j
\end{gather}
where the last implication uses \eqref{eq:87}.
In the last form, it is easy to see that such a $t'$ has to exist by the degree condition. Since each $p_i >1$, we have that in \eqref{eq:deft'} the term $ \sum\limits_{i=t+1}^{t+t'} p_i > t'$, and it easily follows that $t' \leq d_{r+1}$. Since $d_i$'s are non increasing, we also have $t' \leq d_r$. \\ %Returning to the motivating example, we see that $t' = 1$ and $1 \leq 3 = d_{r+1}$. \\

\noindent We now have all that is necessary for constructing $(Z_j,\xi_j)$ from $(Y_j,\eta_j)$, namely,  split the zero of order $\widetilde{d}_s$ in $(Y_j,\eta_j)$ into zeros of order $d_s, \ldots, d_{r-1}, d_r - t'$ and $\widetilde{d}_s -
(d_s+\cdots+d_r-t')$, and call the resulting surface $(Z_j,\xi_j)$. %Thus, for our motivating example, we split the zero of order 5 in $(Y_1, \eta_1)$ into two zeroes of order $2$ and order $3$.

\begin{rmk}
When $d_r = t'$ we interpret the ``zero of order $d_r - t'$" as a marked regular point on the surface. 
\end{rmk}

\noindent We now do a sequential slit construction involving $(Z_j,\xi_j)$ and $(X_{t+1},\omega_{t+1}), \ldots, (X_{t+t'},\omega_{t+t'})$, where we take $(X_i,\omega_i)=(\C,z^{p_i-2}dz)$. The slit in $(Z_j,\xi_j)$ is along the saddle connection joining the zero of order $d_r - t'$ and the zero of order $\widetilde{d}_s -
(d_s+\cdots+d_r-t')$. We label the former point $P_0$ and the latter point $Q_0$. In the surfaces $(X_i,\omega_i)$, the slit is made along the geodesic line segment joining the zero of order $p_i-2$, labelled $Q_i$ and some marked point $P_i$, such that the slit has the same developed image as the slit in $(Z_j,\xi_j)$. The resulting singularity $P$ is a zero of order $d_r$. The angle at $Q$ has magnitude 
\begin{equation}
    2\pi\bigg(\widetilde{d}_s - \sum_{j=s}^r d_j + t' + 1 + \sum_{i=t+1}^{t+t'}(p_i -1)\bigg) = 2\pi\bigg(1 + \widetilde{d}_s + \sum_{i=t+1}^{t+t'}p_i - \sum_{j=s}^r d_j \bigg) \geq 2\pi(1+d_{r+1})
\end{equation}
where the last inequality comes from \eqref{eq:deft'}. We now have the surface $(Y_{j+1},\eta_{j+1})$ with poles of order $p_1, \ldots, p_{t+t'}$ and zeros of order $d_1, \ldots, d_r, \widetilde{d}_{r+1}$. We iterate this step until we obtain a surface with $n$ poles, in which case, we move on to the next step.

%In \textcolor{red}{FIGURE}, we see how $(Y_2, \eta_2)$ is obtained for the motivating example. $(Y_2, \eta_2)$ has poles of order $p_1$ through $p_4$ zeroes of order 4, 3 and 5 ($d_1, d_2$ and $\widetilde{d}_3$ resp.). Since for $(Y_2, \eta_2)$, we have "used up" all the poles, that is, $t=n$ we can now move to the final step. 

% INCORRECT - \textbf{Remark - } While we have allowed the possibility of $t'$ being equal to $d_r$, it is possible to show that this possibility cannot occur. Using the fact that $\sum_{i=t+1}^{t+t'}(p_i -1) \geq t'$, we see that the angle at $Q$ is greater than $2\pi(t' + 1)$, whereas if $d_r = t'$, the angle at $P$ is $2\pi(t'+1)$. This contradicts the assumption $d_r \geq d_{r+1}$.
%Note that in the construction of $Y_1$, which can be viewed as a meromorphic differential on the sphere with poles of order $p_1, \ldots, p_t$ for some $1 \leq t \leq n$ and zeros of order $d_1, \ldots, d_{s-1}, \Tilde{d_s}$ with $\Tilde{d_s} \leq d_s$ for some $1\leq s\leq k$, we always have $s\geq 2$, and when $s=2$, $\Tilde{d_2} = d_2$. Thus, $\Tilde{d_s} < d_s$ is possible only when $s\geq 3$. In addition, whenever $\Tilde{d_s}<d_s$, the zero of order $\Tilde{d_s}$ was one of the zeros obtained by splitting a zero of some higher order into zeros of order $d_2, \ldots, d_{s-1}, \Tilde{d_s}$.

\subsubsection{Final step} Suppose we reach this step with $(Y_1,\eta_1)$. This happens when $(Y_1,\eta_1)$ has $n$ poles $p_1, \ldots, p_n$. Then, 
\[d_1 + \widetilde{d}_2 = \sum_{i=1}^n p_i - 2 \quad\text{ which means that }\quad \widetilde{d}_2 = \sum_{j=2}^k d_j.
\] We can then split this zero of order $\widetilde{d}_2$ into zeros of order $d_2, \ldots, d_k$ using the procedure described in section \ref{sec:zerosplit}. The resulting surface is the surface that we had set out to construct. More generally, if $(Y_j,\eta_j)$ has $n$ poles $p_1, \ldots, p_n$, then we have 
\[\sum_{l=1}^{s} d_l + (\widetilde{d}_s - d_s) = \sum_{i=1}^{n}p_i -2.
\] This tells us that $\widetilde{d}_s= d_s+\cdots+d_k$. In this case, we split the zero of order $\widetilde{d}_s$ into zeros of order $d_s, \ldots, d_k$ and this completes the construction. %In the motivating example, we split the zero of order 5 into zeroes of order 3, 1 and 1.

\subsubsection{About the positioning of the zeros} \label{sec:zeropos}
For the purposes of this section, the locations of the zeros (in the sense of their image under the developing map) does not matter. However, they become important in section \ref{sec:trivholposgen} and Corollary \ref{consthmb2}. Let $d_1, \ldots d_k$ be the zeroes of a differential as obtained in this section. We desire that, for every $1 < i < k$, the distance between the zero of order $d_i$ and $d_j$, for $j>i$ be non-zero, and smaller than the distance between the zero of order $d_i$ and $d_{i-1}$. This also ensures that no two zeros have the same developed image. In addition, if the developed image of the $n^{th}$ zero is assumed to be $z_n \in \mathbb{C}$, we can ensure that $\vert z_n \vert > \vert z_{n-1} \vert$. \\
\noindent It is sufficient to enforce that the distance between the zeros of order $d_j$ and $d_{j+1}$ should be smaller than a third of the distance between the distance between the zeros of order $d_{j-1}$ and $d_j$. This is easy to do when splitting the zero of order.  $\widetilde{d}_2$ in $(Y_1,\eta_1)$ to obtain $(Z_1,\xi_1)$. If the saddle connection between the two zeros (the line segment $PQ$) has length $L$, then the points in $(Y_1,\eta_1)$ at a distance of less than $\frac{2L}{3}$ from $Q$ give us a neighbourhood of $Q$ of the form required in section \ref{sec:zerosplit}. We can then ensure that the first split of the zero at $Q$ gives us two zeros separated by less than $\frac{L}{3}$. Since we have freedom to choose both the length and the direction of the saddle connection separating these two zeros, we can even make sure that this saddle connection is along a direction different from $PQ$. For the next split, we look at a $\frac{2L}{9}$ neighbourhood of the zero and so on. An illustrative example is given in Figure \ref{fig:zeropos}. Such splitting automatically ensures that a slit made for the construction of $(Y_2,\eta_2)$ has length of the form $\frac{L}{3^m}$. We see that points at a distance less than $\frac{2L}{3^{m+1}}$ from the last zero of $(Y_2,\eta_2)$ still give us a neighbourhood of the form specified in section \ref{sec:zerosplit}, and we can continue splitting zeros separated by lengths $\frac{L}{3^{m+1}}, \frac{L}{3^{m+2}}, \ldots$.\\

\begin{figure}
    \centering
    \begin{tikzpicture}[scale=0.6, every node/.style={scale=0.8}]
    \definecolor{pallido}{RGB}{221,227,227}
    \draw [thin, dashed, pattern=north west lines, pattern color=pallido] (0,0) circle (5);
    \fill (210:2) circle (1pt);
    \node[below left] at (210:2) {$P$};
    \fill (30:2) circle (1pt);
    \node[above left] at (30:2) {$Q$};
    \draw [dashed] (30:2) circle (2.4);
    \draw [->] (30:2) -- +(2.4,0);
    \node[below] at ($(30:2)+(1,0)$) {$< \frac{2}{3}\vert PQ \vert$};
    \draw [thin, dashed, pattern=north west lines, pattern color=pallido] ($(30:2)+ (10,0)$) circle (4);
    \draw [thin, dashed, pattern=north west lines, pattern color=pallido] ($(30:2)+ (12,0)$) circle (1.2);
    \fill ($(30:2)+ (10,0)$) circle (1pt);
    \fill ($(30:2)+ (12,0)$) circle (1pt);
    \node[below] at ($(30:2)+ (10,0)$) {$Q_1$};
    \node[below] at ($(30:2)+ (12,0)$) {$Q_2$};
    \draw[->] (30:4.4) parabola[bend pos=0.5] bend +(0,1) ($(30:2)+(10,0)+(150:4)$);
    \node at (6,4) {after splitting};
    \draw[->] ($(30:2)+ (12,0)+(30:1.2)$) parabola[bend pos=0.5] bend +(0,1) ($(30:2)+ (16,0)+(30:1.2)$);
    \foreach \x in {1,2,..., 5}
    {
    \fill ($(30:2)+ (17,0) + 0.2*(\x,1)$) circle (1pt);
    }
    %\node[below right] at ($(30:2)+ (17,0)$) {$\ldots \ldots$};
    \end{tikzpicture}
    \caption{Specifying the location of the zeros. Here, the zero at $Q$ is split into two zeros at $Q_1$ and $Q_2$. The zero at $Q_2$ is split further and the resulting zeros are in the neighbourhood shown.}
    \label{fig:zeropos}
\end{figure}
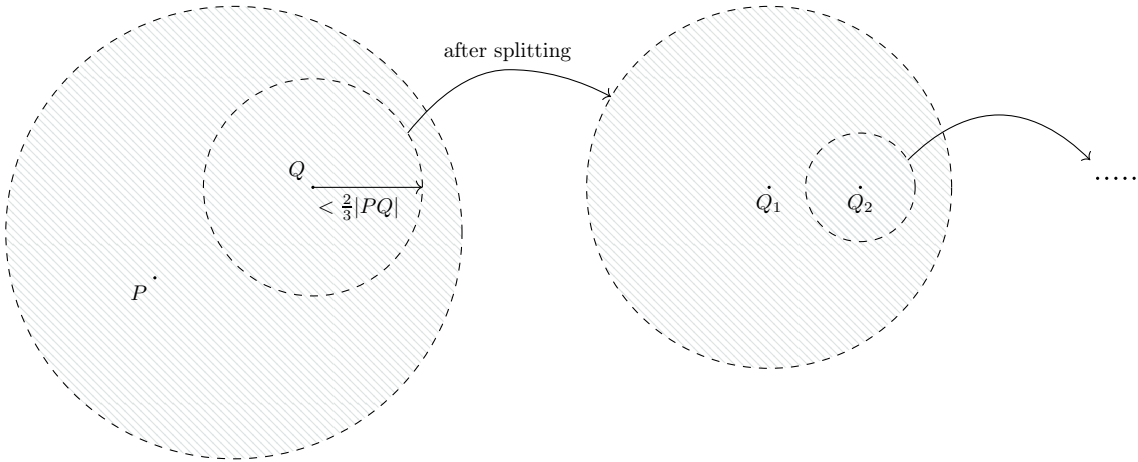

\section{Trivial holonomy II: Positive genus surfaces}\label{sec:trivholposgen}

\noindent  This section is devoted to prove Theorem \ref{main:thmb} for surfaces of positive genus, which is handled by the following statement.

\begin{prop} \label{thm:mainthm2}
Given $n$ integers $\{ p_1, \ldots, p_n\}$ with each $p_i \geq 2$ and $k\ge2$ positive integers $\{ d_1, \ldots, d_k \}$ in non-increasing order, and a non-negative integer $g$ satisfying the following conditions, 
\begin{enumerate}
    \item $d_j \leq \displaystyle\sum_{i=1}^{n}p_i - n - 1$ for $1 \leq j \leq k$, and \label{cond3mainthm}
    \item $\displaystyle\sum_{j=1}^k d_j = \displaystyle\sum_{i=1}^{n}p_i + 2g -2$ \label{cond4mainthm}\\
\end{enumerate}
there exists a meromorphic abelian differential on $S_{g,n}$  with poles of order $p_1,p_2,\ldots, p_n$ at the $n$ punctures, zeros of orders $d_1,d_2,\ldots, d_k$, and trivial holonomy.
\end{prop}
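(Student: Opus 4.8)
The plan is to reduce Proposition \ref{thm:mainthm2} to the genus-zero case already proved in Proposition \ref{thm:mainthm}. Concretely, I would look for an auxiliary tuple $\kappa'=(d'_1,\dots,d'_{k'})$ of positive integers satisfying the hypotheses of Proposition \ref{thm:mainthm} --- that is, $\sum_j d'_j=\sum_i p_i-2$, each $d'_j\le \sum_i p_i-n-1$, and $k'\ge 2$ whenever $n>1$ --- together with a prescribed sequence of surgeries that turns a genus-zero translation surface with poles $p_1,\dots,p_n$ and zeros $\kappa'$ into a genus-$g$ translation surface with the \emph{same} poles $p_1,\dots,p_n$ and zeros exactly $(d_1,\dots,d_k)$, keeping the holonomy trivial at every stage. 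Since all of the surgeries below are supported in small charts around zeros, the poles at the punctures are never touched, so the output lands in the prescribed stratum. (When $g=0$ there is nothing to prove beyond Proposition \ref{thm:mainthm}, as $k\ge 2$.)

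Two moves are available. The first is \emph{handle attachment}: starting from a translation surface with trivial holonomy and a zero $P$ of order $e\ge 1$, the surgery of Subsection \ref{trhan}, performed inside a small simply connected chart around $P$, adds one handle, keeps the holonomy trivial and the poles unchanged, and creates two additional simple zeros near $P$ joined by short saddle connections to $P$ and to each other. Depending on what the target tuple needs, this can be used in three ways: leaving the two new simple zeros as they are; collapsing them onto each other to adjoin a single zero of order $2$ (possible when $\sum_i p_i-n-1\ge 2$); or collapsing them onto $P$ to raise its order from $e$ to $e+2$ (possible when $e+2\le\sum_i p_i-n-1$, which the necessity argument of Subsection \ref{sec:necsuftriv} shows is exactly the constraint that a trivial-holonomy differential with these poles must respect). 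In each guise the genus goes up by exactly one. The second move is \emph{splitting a zero} of order $e$ into zeros of orders $e_1,e_2$ with $e_1+e_2=e$, as in Subsection \ref{sec:zerosplit}; this preserves genus and holonomy. Using the positioning bookkeeping of Subsection \ref{sec:zeropos}, all of these can be performed with pairwise distinct developed images for the zeros, so the required saddle connections and neighbourhoods exist at every step. I would also note that every intermediate surface, having trivial holonomy, is the pullback of $(\C,dz)$ under a branched covering $S_h\to\cp$; this is the observation that makes Corollary \ref{consthmb2} an immediate reading of the construction.

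The crux is the purely combinatorial statement that, under conditions $(1)$ and $(2)$ and $k\ge 2$, such a $\kappa'$ and a legal sequence of $g$ handle attachments (possibly interleaved with splittings) realizing $(d_1,\dots,d_k)$ exist. The degree bookkeeping is automatic: each handle attachment adds $2$ to the sum of zero orders, matching $\sum_j d_j-(\sum_i p_i-2)=2g$. The natural first attempt is $\kappa'=(d_1,\dots,d_k)$ with a total of $2g$ subtracted from its entries in blocks of $2$, keeping every entry $\ge 1$; the $g$ handle attachments (in ``raise by $2$'' mode) then add these blocks back, and every intermediate zero order stays $\le \max_j d_j\le \sum_i p_i-n-1$, so the surgeries are always admissible. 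When the $d_j$ are too small or too few for this --- for instance when $\sum_i p_i-n-1$ is $1$ or $2$, forcing almost all prescribed zeros to be simple or double --- one instead routes part of the handle mass through the ``adjoin a double zero'' or ``adjoin two simple zeros'' modes together with subsequent splittings, while merging some of the $d_j$ into the parts of $\kappa'$; the hypothesis $k\ge 2$, and, when $n>1$, the inequality $\sum_i p_i\ge 2n$, provide exactly enough room to keep $k'\ge 2$ in the $n>1$ case. I expect this case analysis --- the boundary cases with $\sum_i p_i-n-1$ small, the parity constraint coming from handles adding mass in units of $2$, and the $n=1$ versus $n\ge 2$ dichotomy for $k'$ --- to be the main technical obstacle; the geometric moves themselves are routine given Subsections \ref{trhan}, \ref{sec:zerosplit}, and \ref{sec:zeropos}, and Proposition \ref{thm:mainthm} supplies the base of the construction.
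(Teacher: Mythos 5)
Your high-level strategy (build the genus-zero surface via Proposition \ref{thm:mainthm}, then add $g$ handles each contributing $+2$ to the total zero order by surgeries localized near the zeros) is the same as the paper's, but there is a genuine gap in your surgery toolkit: moves (b) and (c) -- ``adjoin a single zero of order $2$'' and ``raise one zero's order from $e$ to $e+2$'' via a single trivial-holonomy handle -- are asserted, not constructed, and they are not supplied by the subsections you cite. Subsection \ref{trhan} produces \emph{two} new simple zeros at positive distance from $P$, and Subsection \ref{sec:zerosplit} only splits zeros; there is no ``merge'' surgery anywhere in the paper, and colliding zeros is a degeneration out of the stratum, not an operation on a fixed surface. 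More fundamentally, any slit-based handle attachment with trivial holonomy identifies the endpoints of two parallel slits in pairs, and each of the two resulting identification classes gains exactly order $+1$; concentrating the full $+2$ at a single point would force the two classes to coincide, i.e.\ would require two distinct geodesic segments with the same developed image joining the same pair of points -- impossible when the auxiliary endpoints are regular and the zeros have distinct developed images as arranged in \ref{sec:zeropos}. A concrete instance where your three modes fail: the stratum $\mathcal{H}(2,2;4)$ on $S_{1,1}$ satisfies both hypotheses ($d_j=2\le 4-1-1$ and $2+2=4+2-2$), the only admissible genus-zero base is the single-zero surface with $\kappa'=(2)$ or the two-zero surface with $\kappa'=(1,1)$, and from $(1,1)$ your moves produce $(1,1,1,1)$, $(2,1,1)$ or $(3,1)$ but never $(2,2)$; reaching $(2,2)$ requires raising \emph{two distinct} zeros by $1$ each with a single handle.

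That ``$+1/+1$ distributed over two zeros'' move is precisely what the paper's proof is organized around, and it is where the real work lies. The paper's reduction decrements the \emph{last two} entries by $1$ each (or strips trailing $1$'s), matching the available surgeries, and then each handle is attached along a pair of slits $QR$ and $Q_iR_i$ (or $RS$ and $R'_1S'_1$) whose existence requires auxiliary regular points with prescribed developed images relative to the current zeros -- Properties (1) and (2) of Subsection \ref{subsec:handleptchoice} -- which must be verified for $(W_0,\tau_0)$ and shown to survive each handle attachment; the two-zero case (your $\mathcal{H}(2,2;4)$ example iterated $g$ times) needs a separate argument exploiting the $n$ saddle connections produced by the slit construction of $(W_0,\tau_0)$ itself. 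None of this is ``routine given \ref{trhan}, \ref{sec:zerosplit}, and \ref{sec:zeropos}'': it is the technical core of Section \ref{sec:trivholposgen}, and your proposal would need to either reproduce it or supply a genuinely new construction realizing your moves (b) and (c), which as stated do not exist.
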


\noindent Notice that $k\ge2$ is an essential assumption because of Proposition \ref{singlezero}. We shall continue to call the two conditions (1) and (2) in the statement of the Proposition, as the \textit{order condition} and \textit{degree condition}, respectively.\\

\noindent The strategy of the proof is to first perform a series of $g$ reductions (described below) on the set $\{ d_1, \ldots, d_k \}$ to obtain another non-increasing sequence $\{ d_1, \ldots, d_{l-2}, d'_{l-1}, d'_l \}$ where $2\leq l\leq k$  that satisfies the conditions of Proposition \ref{thm:mainthm} of the $g=0$ case in the previous section. We  therefore obtain a meromorphic differential on the sphere with zeros of orders $\{ d_1, \ldots, d'_{l-1}, d'_l \}$ and poles of orders $\{ p_1, \ldots, p_n\}$; denote this translation structure by $(W_0,\tau_0)$. We then add $g$ handles, one by one, to obtain a sequence of translation surfaces $(W_i,\tau_i)$ for $1\leq i \leq g$. At each step, the set of orders of the zeros changes by undoing each reduction step, and the final $(W_g,\tau_g)$ is our desired translation surface.

%satisfying the following conditions:
%\begin{itemize}
 %   \item $2 \leq l \leq k$,\\
  %  \item whenever $3 \leq l$ then: $1 \leq d'_{l-1} \leq d_{l-1}$ and $1 \leq d'_l \leq d_l$. In the case $l=2$, then $1 \leq d'_{l-1} \leq d_{l-1}$ and $0 \leq d'_l \leq d_l$,\\
  %  \item $d'_{l-1} \geq d'_l$, and\\
  %  \item $\displaystyle\sum_{j=1}^l d_j + (d'_{l-1}-d_{l-1}) + (d'_l - d_l) = \displaystyle\sum_{i=1}^n p_i - 2$.
%\end{itemize}

\subsection{Reduction procedure and a motivating example} 
Before we describe a motivating example for the strategy we just outlined, we explain the reduction procedure mentioned in the strategy above. Each step of this process consists of applying one of the following moves which we are going to describe.

\noindent Given a set of positive integers $\{ e_1, \ldots, e_l \}$ indexed in non-increasing order with $l \geq 2$, 

\begin{itemize}
    \item if the last two integers are both greater than $1$, we reduce it to $\{ e_1, \ldots, e_{l-2}, e_{l-1}-1, e_l-1 \}$,
    \item if the last integer is $1$, we reduce it to $\{ e_1, \ldots, e_{l-2}, e_{l-1}-1 \}$. This is the same as the previous case with $e_l - 1$ not being included because it is $0$,
    \item if the last two integers are $1$, we reduce it to $\{ e_1, \ldots, e_{l-2} \}$.
\end{itemize}

\noindent We note that each reduction step only makes the last two integers of the resulting sequence different, smaller in particular, from the original sequence, if at all. This explains the form $\{ d_1, \ldots, d_{l-2}, d'_{l-1}, d'_l \}$. Each step of the process also maintains the non-increasing order of the integers.

\noindent Moreover, each reduction decreases the sum of the set of integers by $2$ and, after performing $g$ reductions, we can find a set of integers $\{ d_1, \ldots, d_{l-2}, d'_{l-1}, d'_l \}$  that sastifies the degree condition for the sphere ($g=0$). \\

\noindent Before proceeding, we verify that $g$ reductions can be performed. Indeed, there are two possible obstructions to applying the reduction process $g$ times, which we eliminate:
\begin{itemize}
    \item[\textbf{i.}]  We could end up with the empty set after $h<g$ reductions. However, if this happens, then it would mean that 
    \[ \sum_{j=1}^k d_j = 2h
    \] and this contradicts condition \ref{cond4mainthm} of Proposition \ref{thm:mainthm2}.\\
    \item[\textbf{ii.}] We could end up with $\{ e \}$, for some $e \geq 1$, after $h<g$ reductions. However, if this happens, we would have 
    \[ e + 2h = \displaystyle\sum_{j=1}^k d_j = \displaystyle\sum_{i=1}^n p_i + 2g - 2.
    \] Using the order condition, we then obtain 
    \[\sum_{i=1}^n p_i + 2(g-h) - 2 \leq \sum_{i=1}^n p_i - n - 1,
    \] or $0 < 2(g-h) \leq -n + 1 \leq 0$,  which gives us the desired contradiction.\\
\end{itemize}
\noindent Thus, applying the reduction process $g$ times on the set $\{ d_1, \ldots, d_k \}$ is always possible.  Finally, we need to check that the set of integers $\{ d_1, \ldots, d'_{l-1}, d'_l \}$ satisfies the conditions in Proposition \ref{thm:mainthm2} for $g=0$. The order condition is automatically satisfied and the degree condition holds by design. Suppose $l=2$ with $d'_l = 0$. We then have, 
\[d'_1 + 2g = \sum_{j=1}^k d_j = \sum_{i=1}^n p_i + 2g - 2,\quad \text{ or } \quad d'_1 = \sum_{i=1}^n p_i - 2.
\] But the order condition implies 
\[ d'_1 \leq \sum_{i=1}^n p_i - n - 1,
\] which means $n \leq 1$. Thus, condition \ref{cond2mainthm} of the Proposition holds as well and we have a translation surface $(W_0,\tau_0)$ as required. Let  $\{ d_1, \ldots, d_{l-2}, d'_{l-1}, d'_l \}$  denote the set obtained after performing $g$ reductions on $\{ d_1, \ldots, d_k \}$.\\
\noindent At this point, it is useful to consider an example as done in the previous chapter. 

\begin{ex} \label{posgex}
Let $g=3$, $p_1 =4$, $p_2 = 5$, $d_1 = 5$, $d_2=4$, $d_3 = 3$, $d_4 = 1$. Then, the reduction proceeds as follows - 
\begin{equation}
    \{ 5,4,3,1 \} \to \{ 5,4,2 \} \to \{ 5,3,1 \} \to \{ 5,2 \}
\end{equation}
Following Proposition \ref{thm:mainthm}, we obtain $(W_0, \tau_0)$ with poles of order 4 and 5 and zeroes of order 5 and 2 at points (say) $Q$ and $R$ respectively. Let $R'_1$ be a point different from $R$ such that $QR$ and $QR'_1$ have the same developed image. Such a point exists because we have 6 rays emanating from $Q$ that have the same developed image as the ray from $Q$ in the direction of $R$ and at most 2 such rays can have the point $R$ at a distance $\vert QR \vert$ from $Q$.\\
\noindent We now choose points $S'_1$ and $S$ sufficiently near $R'_1$ and $R$ resp. (ensuring that the requirements of \ref{sec:zeropos} hold) such that $RS$ and $R'_1S'_1$ have the same developed image. Making slits along $RS$ and $R'_1S'_1$ as shown in Figure \ref{fig:exw1tau1} and identifying appropriately adds a handle to $(W_0, \tau_0)$ and gives us $(W_1, \tau_1)$ with zeroes of order 5, 3 and 1 at points $Q, R$ and $S$ resp. \\
\noindent Now let $R'_2$ be a point different from $R$ and $R'_1$ (which are now identified) such that $QR$ and $QR'_2$ have the same developed image. Such a point lies in one of the remaining 4 gray rays in Figure \ref{fig:exw1tau1}.\\
Picking $S'_2$ similarly and proceeding as before gives us $(W_2, \tau_2)$ of genus 2 with zeroes of order 5, 4 and 2 at points $Q, R$ and $S$ resp.
\noindent For the next step, we look at $S'_3$ different from $S$ (and the points identified with $S$) near $R$ such that $RS$ and $RS'_3$ have the same developed image. As before, the orders of the zeros at $R$ and $S$ tell us that such an $S'_3$ exists. Considering points $T$ and $T'_3$ near $S$ and $S'_3$ resp. and proceeding as before gives us the final translation surface $(W_3, \tau_3)$ with poles of order 4 and 5 and zeroes of order 5, 4, 3 and 1.

\begin{figure}[!h]
    \centering
    \begin{tikzpicture}
    \definecolor{pallido}{RGB}{221,227,227}
    \draw [thin, dashed, pattern=north west lines, pattern color=pallido] (0,0) circle (35mm);
    \draw [red] (-1,0) -- (1,0);
    \node[below left] (p) at (-1.1,0) {$Q$};
    \fill (-1,0) circle (1pt);
    \node[below right] at (1,0) {$R$};
    \draw[thin] (1,0) .. controls +(37:0.25) .. +(45:0.5);
    \draw[thin] (1,0) .. controls +(53:0.25) .. +(45:0.5);
    \fill (1,0) circle (1pt);
    \fill ($(1,0)+ (45:0.5)$) circle (1pt);
    \node[right] at ($(1,0)+ (45:0.5)$) {$S$};
    \fill ($(-1,0)+(60:2)$) circle (1pt);
    \fill ($(-1,0)+(60:2)+(90:0.5)$) circle (1pt);
    \node[left] at ($(-1,0)+(60:2)$) {$R'_1$};
    \node[left] at ($(-1,0)+(60:2)+ (90:0.5)$) {$S'_1$};
    \draw[thin] ($(-1,0)+(60:2)$) .. controls +(82:0.25) .. +(90:0.5);
    \draw[thin] ($(-1,0)+(60:2)$) .. controls +(98:0.25) .. +(90:0.5);
    \draw[thin, gray][->] (-1, 0) -- +(60:2.5);
    \draw[thin, gray][->] (-1, 0) -- +(120:1);
    \draw[thin, gray][->] (-1, 0) -- +(180:1);
    \draw[thin, gray][->] (-1, 0) -- +(-60:1);
    \draw[thin, gray][->] (-1, 0) -- +(-120:1);
    %\draw[thin, gray][->] (-1, 0) -- +(120:2.5) node[below left, black] at ++(120:2) {$\widetilde{Q_{i_1}}$};
   
    %\fill (tq) circle (1pt);
    %\draw[gray, loosely dash dot] (-1,0)+(130:0.75) arc [start angle = 130,end angle = 290,radius = 0.75];
    %\draw[gray, loosely dash dot] (-1,0)+(70:0.75) arc [start angle = 70,end angle = 110,radius = 0.75];
    \begin{scope}[shift = {(-1,0)}]
    \node[right] at (60:2.5) {$r_1$};
    \node[above] at (120:1) {$r_2$};
    \node[left] at (180:1) {$r_3$};
    \node[below] at (240:1) {$r_4$};
    \node[below] at (300:1) {$r_5$};
    \end{scope}
    \end{tikzpicture}
    \caption{Construction of $(W_1, \tau_1)$ in example \ref{posgex}. The gray rays $r_i$ have the same developed image as $QR$ and the slits $RS$ and $R'_1S'_1$ have the same developed image.} \label{fig:exw1tau1}
\end{figure}
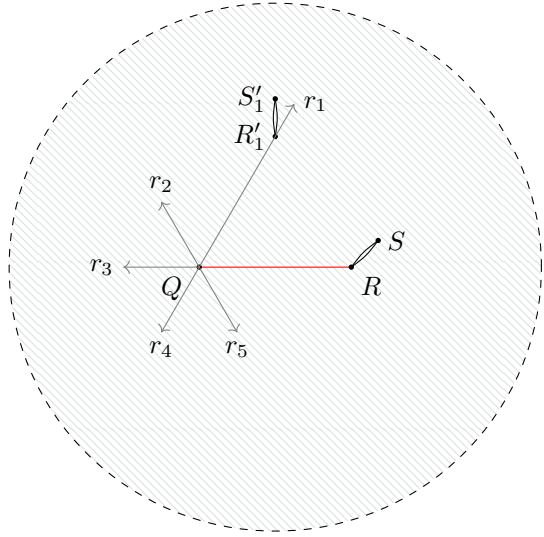
\end{ex}

\noindent The rest of the proof is a generalisation of the procedure carried out in example \ref{posgex}.

\subsection{Proof of Proposition \ref{thm:mainthm2}} As in the previous section, our proof is divided into steps, each  corresponding to a subsection.

\subsubsection{Finding suitable points for handle construction} \label{subsec:handleptchoice}
In this section, we single out suitable points in $(W_i,\tau_i)$ that we shall use for performing a handle construction and obtain a new translation surface $(W_{i+1},\tau_{i+1})$. The existence of such points will be expressed in terms of two properties we are going to introduce below. We assume that the orders of the zeros of $(W_i,\tau_i)$ are $\{ \ldots, a,b,c \}$ and denote the last three zeros of $(W_i,\tau_i)$ by $P$, $Q$ and $R$ respectively. In case of $(W_0,\tau_0)$, the point $P$ denotes the zero of order $d_{l-2}$, the point $Q$ denotes the zero of order $d'_{l-1}$ and finally $R$ denotes the zero of order $d'_l$. The point $P$ need not always exist, if there are only two zeros in $(W_i,\tau_i)$, we only look at $Q$ and $R$.

\begin{py}\label{propy} Let us consider the following properties.
%\text{}\\
\begin{enumerate}
    \item \label{itm:prop1} When $b - c > 0$, there exist $b-c$ many regular points $R'_1, \ldots , R'_{(b-c)}$ different from $R$ such that $QR'_i$ is a geodesic line segment having the same developed image as $QR$.\\
    \item \label{itm:prop2} When $a-b > 0$, there exist $a-b$ many regular points $Q_1, \ldots , Q_{(a-b)}$ different from $Q$ such that $PQ_i$ is a geodesic line segment with the same developed image as $PQ$, and near each $Q_i$ there exists a point $R_i$ such that $Q_iR_i$ is a geodesic line segment with same developed image as $QR$. 
\end{enumerate}
\end{py}

\noindent In the sequel we shall refer to these properties simply as property \ref{itm:prop1} and \ref{itm:prop2}. We shall proceed as follows. We begin with by showing that the properties just mentioned above hold for $(W_0,\tau_0)$. We then move on to describe how to attach a handle when the properties hold. Finally, we show that whenever the properties \ref{propy} hold for $(W_i,\tau_i)$, then they also hold for ($W_{i+1},\tau_{i+1})$. \\

\noindent It is sufficient to show the existence of $R'_i$'s, $Q_i$'s and $R_i$'s that have the required developing image. The fact that these points are regular follows from the positioning of the zeros enforced in section \ref{sec:zeropos}. Moreover, except for the point $P$ or $Q$, no other point in the geodesic line segments in the subsequent arguments shall be singular points.  

\subsubsection{Properties \ref{propy} hold for $(W_0,\tau_0)$} \label{subsec:propywo} This follows from the following Lemmata.

\begin{lem}
Property (\ref{itm:prop1}) of \ref{propy} holds for $(W_0,\tau_0)$.
\end{lem}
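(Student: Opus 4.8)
The plan is to prove the lemma by a direct cone-angle counting argument at the zero $Q$, combined with the control on the positions of the zeros provided by section \ref{sec:zeropos}. Recall that for $(W_0,\tau_0)$ the points $P,Q,R$ are the zeros of orders $a=d_{l-2}$, $b=d'_{l-1}$, $c=d'_l$, and that $QR$ is a (short) saddle connection created by the last zero-splitting in the $g=0$ construction. Fixing a developed chart around $Q$, write $v=\textsf{dev}(R)-\textsf{dev}(Q)\in\C^*$ for the developed image of $QR$ and $\ell=|v|$ for its length. Since $Q$ has cone angle $2\pi(b+1)$, there are exactly $b+1$ geodesic rays emanating from $Q$ whose developed image is the half-line $\textsf{dev}(Q)+\R_{\ge 0}\,v$; call these $\rho_1,\dots,\rho_{b+1}$. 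The first step is to bound how many of these rays can reach $R$: if $\rho_i$ reaches $R$ then, traversed backwards, it is one of the $c+1$ geodesic rays issuing from $R$ developing onto $\textsf{dev}(R)-\R_{\ge0}\,v$ (the cone angle at $R$ being $2\pi(c+1)$), and distinct $\rho_i$ give distinct such rays; hence at most $c+1$ of the $\rho_i$ end at $R$, leaving at least $(b+1)-(c+1)=b-c$ that do not.

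The second step is to check that each of these remaining $\ge b-c$ rays can be prolonged to length exactly $\ell$, terminating at a point that is regular and different from $R$. Since the developed image of $\rho_i$ is the straight half-line of direction $v$, the point of $\rho_i$ at arclength $t$ has developed image at distance $t$ from $\textsf{dev}(Q)$, so it can coincide with $\textsf{dev}(R)$ only when $t=\ell$. By the positioning enforced in section \ref{sec:zeropos}, every zero of $(W_0,\tau_0)$ other than $Q$ and $R$ has developed image at distance at least $|PQ|>3\ell$ from $\textsf{dev}(Q)$, and no two zeros share a developed image; moreover each pole of $(W_0,\tau_0)$ is ``at infinity'', the flat geometry being metrically complete near it, so a geodesic segment of finite length cannot reach a pole. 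Consequently the only singular point a length-$\ell$ geodesic ray from $Q$ can meet is $R$, and only at its endpoint; since we discarded the $\rho_i$ that end at $R$, the remaining ones prolong freely to length $\ell$ and terminate at regular points, which are moreover $\neq Q$ as they lie at developed distance $\ell>0$ from $\textsf{dev}(Q)$. Letting $R'_1,\dots,R'_{b-c}$ be the endpoints of $b-c$ of these rays, each $QR'_i$ is a geodesic segment of length $\ell$ developing onto $[\textsf{dev}(Q),\textsf{dev}(R)]$ — the same developed image as $QR$ — and the $R'_i$ are regular and distinct from $R$, which is exactly property \ref{itm:prop1} of \ref{propy} for $(W_0,\tau_0)$.

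The only genuinely delicate point is the second step, namely ruling out interference from the other singularities when prolonging the rays; this is precisely why the zeros of the sphere construction were arranged as in section \ref{sec:zeropos}, with each successive saddle connection shorter than a third of the previous one and with all developed images of zeros pairwise distinct. Granting that placement, the first step is elementary bookkeeping with cone angles. I expect the companion statement, that property \ref{itm:prop2} holds for $(W_0,\tau_0)$, to be proved by the same method, applied first at $P$ to produce the points $Q_i$ with $\textsf{dev}(Q_i)=\textsf{dev}(Q)$, and then locally near each such $Q_i$ to produce the corresponding $R_i$.
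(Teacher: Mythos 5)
Your proof is correct and follows essentially the same route as the paper: the paper likewise counts the $d'_{l-1}+1$ rays emanating from $Q$ in the direction of $QR$, observes that at most $d'_l+1$ of them can have $R$ at distance $\vert QR\vert$, and takes the $R'_i$ on the remaining rays, with regularity of the endpoints deferred to the zero-positioning of section \ref{sec:zeropos}. You merely spell out two steps the paper leaves implicit (the backward-ray count at $R$ and the non-interference of other singularities when prolonging the rays), which is fine.
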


\begin{proof} It suffices to show that whenever $d'_{l-1} - d'_l > 0$, there exist $d'_{l-1} - d'_l$ many points $R'_1, \ldots , R'_{(d'_{l-1} - d'_l)}$ different from $R$ such that $Q\,R'_i$ is a geodesic segment with the same developed image as the geodesic segment $Q\,R$. To see this, we look at the $d'_{l-1} + 1$ many rays emanating from $Q$ in the direction of $QR$. At most $d'_l + 1$ many of these rays can have the point $R$ at a distance $\vert QR \vert$ from $Q$. We then choose the points $R'_i$ at a distance of $\vert QR \vert$ from $Q$ in the remaining $d'_{l-1} - d'_l$ rays. 
\end{proof}

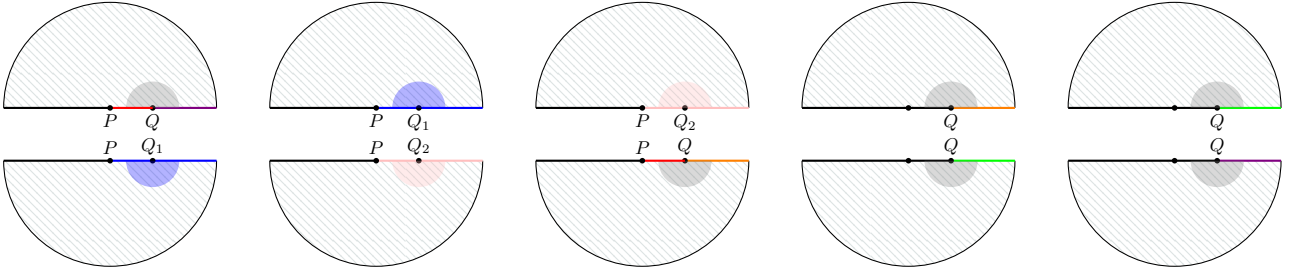
\begin{figure}
    \centering
    \begin{tikzpicture}[scale=0.7, every node/.style={scale=0.7}]
    \definecolor{pallido}{RGB}{221,227,227}
    \foreach \x [evaluate=\x as \coord using 4 + 5*\x] in {0, 1, ..., 4} %Draw upper and lower semicircles
    {
    \draw [pattern=north west lines, pattern color=pallido] (\coord,0) arc [start angle = 0,end angle = 180,radius = 2];
    \draw [pattern=north west lines, pattern color=pallido] (\coord,-1) arc [start angle = 0,end angle = -180,radius = 2];
    }
    \foreach \x [evaluate=\x as \leftend using 5*\x] [evaluate=\x as \rightend using 2 + 5*\x] in {0, 1, 2} %Draw black lines part 1
    {
    \draw [thick] (\leftend, 0) -- (\rightend, 0);
    \draw [thick] (\leftend, -1) -- (\rightend, -1);
    \node[below] at (\rightend, 0) {$P$};
    \node[above] at (\rightend, -1) {$P$};
    }
    \foreach \x / \y [evaluate=\x as \leftend using 5*\x] [evaluate=\x as \rightend using 2.8 + 5*\x] [evaluate=\x as \splpt using 2 + 5*\x] in {3/1, 4/2} %Draw black lines part 2
    {
    \draw [thick] (\leftend, 0) -- (\rightend, 0);
    \draw [thick] (\leftend, -1) -- (\rightend, -1);
    \fill (\splpt, 0) circle (1.5pt);
    \fill (\splpt, -1) circle (1.5pt);
    }
    
    \foreach \botindex / \topindex / \colr [evaluate=\botindex as \botleftend using 5*\botindex + 2] [evaluate=\botindex as \botrightend using 5*\botindex + 4] [evaluate=\topindex as \topleftend using 5*\topindex + 2] [evaluate=\topindex as \toprightend using 5*\topindex + 4] [evaluate=\topindex as \topsplpt using 5*\topindex + 2.8] [evaluate=\botindex as \botsplpt using 5*\botindex + 2.8] [evaluate=\topindex as \topcircle using 5*\topindex + 3.3] [evaluate=\botindex as \botcircle using 5*\botindex + 3.3] in {0/1/blue, 1/2/pink} %Draw coloured lines part 1
    {
    \draw [thick, \colr] (\topleftend, 0) -- (\toprightend, 0);
    \draw [thick, \colr] (\botleftend, -1) -- (\botrightend, -1);
    \fill (\topleftend, 0) circle (1.5pt);
    \fill (\botleftend, -1) circle (1.5pt);
    \fill (\topsplpt, 0) circle (1.5pt);
    \fill (\botsplpt, -1) circle (1.5pt);
    \node[below] at (\topsplpt, 0) {$Q_{\topindex}$};
    \node[above] at (\botsplpt, -1) {$Q_{\topindex}$};
    \fill[\colr, opacity=0.3] (\topcircle,0) arc [start angle = 0,end angle = 180,radius = 0.5] -- cycle;
    \fill[\colr, opacity=0.3] (\botcircle,-1) arc [start angle = 0,end angle = -180,radius = 0.5] -- cycle;
    }
    
    \foreach \botindex / \topindex / \colr [evaluate=\botindex as \botleftend using 5*\botindex + 2.8] [evaluate=\botindex as \botrightend using 5*\botindex + 4] [evaluate=\topindex as \topleftend using 5*\topindex + 2.8] [evaluate=\topindex as \toprightend using 5*\topindex + 4] [evaluate=\topindex as \topcircle using 5*\topindex + 3.3] [evaluate=\botindex as \botcircle using 5*\botindex + 3.3] in {2/3/orange, 3/4/green, 4/0/violet} %Draw coloured lines part 2
    {
    \draw [thick, \colr] (\topleftend, 0) -- (\toprightend, 0);
    \draw [thick, \colr] (\botleftend, -1) -- (\botrightend, -1);
    \fill (\topleftend, 0) circle (1.5pt);
    \fill (\botleftend, -1) circle (1.5pt);
    \node[below] at (\topleftend, 0) {$Q$};
    \node[above] at (\botleftend, -1) {$Q$};
    \fill[gray, opacity=0.3] (\topcircle,0) arc [start angle = 0,end angle = 180,radius = 0.5] -- cycle;
    \fill[gray, opacity=0.3] (\botcircle,-1) arc [start angle = 0,end angle = -180,radius = 0.5] -- cycle;
    }
    
    %Others
    \draw [thick, red] (2,0) -- (2.8,0);
    \fill (2,0) circle (1.5pt);
    \draw [thick, red] (12,-1) -- (12.8,-1);
    \fill (12,-1) circle (1.5pt);
    \end{tikzpicture}
    \caption{In the case the points $P$ and $Q$, each of order $2$, are obtained after splitting a zero of order $4$, the points $Q_i$ are as shown above. 
    %Here, $R$ is located inside the gray neighbourhood so we can obtain $R_i$ in the shaded neighbourhoods of $Q_i$. 
    %In the same fashion, we can find $R_i$ in the general case. 
    }
    \label{fig:prop2}
\end{figure}

\begin{lem}
Property (\ref{itm:prop2}) of \ref{propy} holds for $(W_0,\tau_0)$.
\end{lem}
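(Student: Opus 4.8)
The plan is to imitate the argument just used for Property (\ref{itm:prop1}), this time exploiting the cone angle at the point $P$ rather than at $Q$. First I would dispose of the degenerate case: if $(W_0,\tau_0)$ has only the two zeros $Q$ and $R$ (that is, $l=2$), then there is no point $P$, so Property (\ref{itm:prop2}) is vacuously true. Hence assume $l\ge 3$, and write $a=d_{l-2}$, $b=d'_{l-1}$, $c=d'_l$ for the orders of $P$, $Q$, $R$, so that $P$, $Q$, $R$ are cone points of angle $2\pi(a+1)$, $2\pi(b+1)$, $2\pi(c+1)$ respectively. We may assume $a-b>0$, since otherwise there is nothing to prove.

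The key point is to count geodesic rays at $P$ with the developed image of the saddle connection $\overline{PQ}$. There are exactly $a+1$ rays of length $\vert PQ\vert$ emanating from $P$ whose developed image is that of $\overline{PQ}$. By the positioning of the zeros imposed in section \ref{sec:zeropos} --- the only zero at distance $\le \vert PQ\vert$ from $P$ is $Q$ itself, the zero $R$ lies within distance $\vert QR\vert<\tfrac13\vert PQ\vert$ of $Q$ but along a direction different from $\overline{PQ}$ (so its developed image does not lie on the line through the developed images of $P$ and $Q$), and every remaining zero lies at distance greater than $3\vert PQ\vert$ from $P$ --- none of these $a+1$ rays meets a zero in its interior, and such a ray terminates at a zero only if that zero has the same developed image as $Q$, which forces it to equal $Q$. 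Now at most $b+1$ of the rays can terminate at $Q$: each one arrives at $Q$ along one of the $b+1$ prongs at $Q$ pointing opposite to $\overline{PQ}$, and two distinct rays from $P$ cannot arrive along the same prong. Therefore at least $(a+1)-(b+1)=a-b$ of the rays terminate at regular points; let $Q_1,\dots,Q_{a-b}$ be the endpoints of $a-b$ of them. These are distinct from $Q$, and, choosing the rays appropriately (again using the positioning of the zeros), we may take them to be pairwise distinct; each $\overline{PQ_i}$ is a geodesic segment with the same developed image as $\overline{PQ}$, which is the first half of the statement.

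For the second half, fix one $Q_i$. Being a regular point lying at distance $\vert PQ\vert$ from $P$ along a sheet of the cone distinct from the one containing $Q$, the positioning of the zeros ensures that no zero lies within distance $\vert QR\vert$ of $Q_i$, so a Euclidean disk of radius slightly larger than $\vert QR\vert$ about $Q_i$ embeds isometrically in $(W_0,\tau_0)$. Inside this disk I would draw the geodesic segment $\overline{Q_iR_i}$ of length $\vert QR\vert$ starting at $Q_i$ in the direction whose developed image is that of $\overline{QR}$; its endpoint $R_i$ is a regular point near $Q_i$, and $\overline{Q_iR_i}$ has the same developed image as $\overline{QR}$, as required. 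The one genuinely delicate point in the argument is the count of rays at $P$ together with the verification, via section \ref{sec:zeropos}, that these rays really are embedded geodesic segments avoiding the singular set except possibly at their endpoints; once that is granted, the rest is immediate, and an entirely analogous bookkeeping will be reused in the inductive step showing that Properties \ref{propy} pass from $(W_i,\tau_i)$ to $(W_{i+1},\tau_{i+1})$.
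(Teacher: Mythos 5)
Your proof is correct and follows essentially the same route as the paper: count the $d_{l-2}+1$ rays at $P$ with the developed image of $\overline{PQ}$, observe that at most $d'_{l-1}+1$ of them can terminate at $Q$, and use the zero-positioning of section \ref{sec:zeropos} to guarantee the remaining endpoints are regular points with embedded disks of radius exceeding $\vert QR\vert$ in which the $R_i$ can be drawn. The extra care you take (the vacuous case $l=2$, distinctness of the $Q_i$) is fine but not needed beyond what the paper records.
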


\begin{proof} We need to show that when $d_{l-2} - d'_{l-1} > 0$, there exist $d_{l-2} - d'_{l-1}$ many points $Q_1, \ldots , Q_{(d_{l-2} - d'_{l-1})}$ such that $P\,Q_i$ is a geodesic line segment with the same developed image as $P\,Q$, and near each $Q_i$ there exists a point $R_i$ such that $Q_i\,R_i$ has the same developed image as $Q\,R$. As before we can find the points $Q_i$ on the $d'_{l-2} - d'_{l-1}$ rays emanating from $P$ along $PQ$ that do not lead to $Q$. Since we have positioned zeros according to the conditions mentioned in section \ref{sec:zeropos}, these $Q_i$'s are regular points that have neighborhoods isomorphic to disks of radius bigger than $\vert QR \vert $. In these neighbourhoods, we find the points $R_i$. For example, when $P$ and $Q$ each having order 2 are obtained after splitting a zero of order 4, the points $Q_i$ are as given in Figure \ref{fig:prop2}. Here, $R$ is located inside the gray neighborhood, and we obtain $R_i$ in the other shaded neighbourhoods of $Q_i$.
\end{proof}

\subsubsection{Adding the first handle} \label{sec:firsthandle} We wish to add a handle to the surface $(W_0,\tau_0)$, and get $(W_1,\tau_1)$ such that the zeros of the differential $\tau_1$ are the ones that we had just before the last reduction step. For instance, if the set of orders $\{ d_1, \ldots, d'_{l-1}, d'_l \}$ was obtained from $\{ d_1, \ldots, d'_{l-1}, d'_l, 1, 1 \}$, then we want the differential $\tau_1$ to have zeros of orders $\{ d_1, \ldots, d'_{l-1}, d'_l, 1, 1 \}$ with the orders of the poles remaining unchanged. We look at this process case by case.\\

\paragraph{\textbf{Case 1 - $\{ d_1, \ldots, d'_{l-1}, d'_l \}$ is empty}} This can happen only when the following condition holds:
\[\sum_{j=1}^k d_j = 2g.\] This, along with the degree condition implies that $n=1$ and $p_1 = 2$. But then, the order condition would imply that $d_j = 0$ for all $j$. This leads to a contradiction and so we conclude that $\{ d_1, \ldots, d'_{l-1}, d'_l \}$ can never be empty.\\

\paragraph{\textbf{Case 2 - $d'_l = 0$ in $\{ d_1, \ldots, d'_{l-1}, d'_l \}$ and $l=2$}} We can notice that $\{ d'_1 \}$ can only be obtained after a reduction from $\{ d'_1 + 1, 1 \}$ or $\{ d_1, 1,1 \}$, and in the latter case, $d'_1 = d_1$. We first rule out the former case. Recall that Proposition \ref{singlezero} says that when $(W_0,\tau_0)$ has only one zero then there is only one pole of order $p_1$ and $d'_1 = p_1 - 2$. On the other hand, the order condition  implies that $d'_1 + 1 \leq p_1 - 1 - 1$ and this leads to a contradiction. Thus, $\tau_0$ has to be the meromorphic differential on the sphere with a single zero of order $d_1 = p_1 - 2$ and a single pole of order $p_1$ is the one given by $z^{(p_1-2)}dz$ on $\Bbb C=\cp \backslash \{ \infty \}$. We use the singular point for attaching a handle with trivial holonomy by using the construction introduced in subsection \ref{trhan}. Once the handle is attached, we obtain a structure such that the resulting differential has two additional zeros near the zero of order $d_1$. We now have a meromorphic differential on the torus with three zeros of orders $\{ d_1, 1,1 \}$ respectively and a single pole of order $p_1$. It is easy to check that the positions of the new zeros can be made to satisfy the requirements of section \ref{sec:zeropos}. \\

%We check that the two properties described in section \ref{subsec:handleptchoice} still hold. The last zero is not smaller than the second tolast zero, so the first property does not apply. For the second property, we have to check that we have $d_1 - 1$ many geodesic line segments starting from the zero of order $d_1$ that have the same developed image as the saddle connection joining the zero of order $d_1$ and the second to last zero. \textcolor{red}{Write as per above description} 

\paragraph{\textbf{Case 3 - $d'_l \neq 0$ in $\{ d_1, \ldots, d'_{l-1}, d'_l \}$ and $l \geq 2$} }  When $d_l'\neq0$ we shall consider in turn different sub-cases.
\begin{enumerate}
    \item If $\{ d_1, \ldots, d'_{l-1}, d'_l \}$ is obtained by reduction from $\{ d_1, \ldots, d'_{l-1}, d'_l, 1, 1 \}$, we shall employ the construction as described in the previous case. 
    \item If $\{ d_1, \ldots, d'_{l-1}, d'_l \}$ is obtained by reduction from $\{ d_1, \ldots, d'_{l-1}, d'_l+1, 1 \}$, $d'_{l-1} > d'_l$. We pick a point $R'_1$ as in section \ref{subsec:handleptchoice}. Let $S'_1$ and $S$ be points near $R'_1$ and $R$ respectively such that $R'_1S'_1$ and $RS$ have the same developed image. We make slits across $R'_1S'_1$ and $RS$ and glue them to get the handle. Once again, we check that the choice of $S$ and $S'_1$ can be made to satisfy the requirements of section \ref{sec:zeropos}.
    \item If $\{ d_1, \ldots, d'_{l-1}, d'_l \}$ is obtained by reduction from $\{ d_1, \ldots, d'_{l-1} + 1, d'_l+1\}$ and $l>2$, that means that $d_{l-2} > d'_{l-1}$, we pick a points $Q_1$ and $R_1$ as in section \ref{subsec:handleptchoice}. We make slits across $Q_1R_1$ and $QR$ and glue them to get the handle.
    \item If $\{ d'_1, d'_2 \}$ is obtained by reduction from $\{ d'_1 +1, d'_2 + 1\}$, then it means that $d_1 \geq d'_1 + 1$ and $d_2 \geq d'_2 + 1$. Using the order condition, we obtain 
    \[ d'_1 \leq \sum_{i=1}^n p_i - n - 2
    \] which in turn implies $d'_2 \geq n$. Reversing the indices, we also have $d'_1 \geq n$. This tells us that, in the slits $P_iQ_i$ in the construction $(W_0,\tau_0)$ in section \ref{sec:necsuftriv} (see, for example, Figure \ref{fig:exy1eta1}), there is at least one singular point among the $P_i$'s and at least one singular point among the $Q_i$'s (If we have regular points at both the ends of the slits, then a slit construction involving $n$ slits produces zeros of order $n-1$ at the identified ends of the slits). If $P_{i_1}$ is a singular point, we can find $\widetilde{Q}_{i_1}$ such that $P_{i_1}\widetilde{Q}_{i_1}$ and $P_{i_1}Q_{i_1}$ have the same developed image. Similarly, if $Q_{i_2}$ is a singular point, we can find $\widetilde{P}_{i_2}$ such that $Q_{i_2}\widetilde{P}_{i_2}$ and $Q_{i_2}P_{i_2}$ have the same developed image. A picture of this situation can be seen in Figure \ref{fig:spltrivhandle}. If no slits were made for the construction of $(W_0,\tau_0)$, then the two zeros of this structure were obtained by splitting, in which case the geodesic line segments described in section \ref{sec:zerosplit} with the same developed image as the saddle connection joining the zeros gives the required $\widetilde{Q}_{i_1}$ and $\widetilde{P}_{i_2}$. In the sequential slit construction involved in obtaining $(W_0,\tau_0)$, the points $P_{i_1}$ gets identified to the point $P$ and the points $Q_{i_2}$ gets identified to the point $Q$. We make slits across $P\widetilde{Q}_{i_1}$ and $Q\widetilde{P}_{i_2}$ and glue them appropriately. Since both slits are made on the same surface, gluing them has the effect of adding a handle. Also since both ends of the slits are identified to an existing zero, this does not introduce any additional zeros. Instead, the effect of slit construction is to increase the orders of the zeros at the ends of the slits by $1$. 
\end{enumerate}

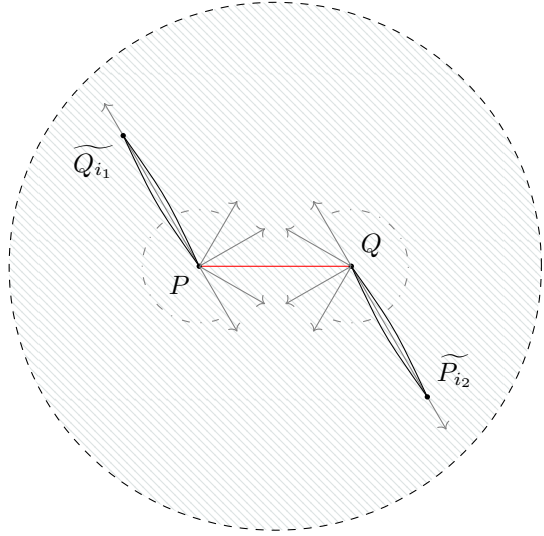
\begin{figure}
    \centering
    \begin{tikzpicture}
    \definecolor{pallido}{RGB}{221,227,227}
    \draw [thin, dashed, pattern=north west lines, pattern color=pallido] (0,0) circle (35mm);
    \draw [red] (-1,0) -- (1,0);
    \node[below left] (p) at (-1,0) {$P$};
    \fill (-1,0) circle (1pt);
    \node[above right] at (1,0) {$Q$};
    \fill (1,0) circle (1pt);
    \draw[thin, gray][->] (-1, 0) -- +(30:1);
    \draw[thin, gray][->] (-1, 0) -- +(60:1);
    \draw[thin, gray][->] (-1, 0) -- +(120:2.5) node[below left, black] at ++(120:2) {$\widetilde{Q_{i_1}}$};
    \draw[thin, gray][->] (-1, 0) -- +(-30:1);
    \draw[thin, gray][->] (-1, 0) -- +(-60:1);
    \draw (-1,0) .. controls +(115:1) .. +(120:2);
    \draw (-1,0) .. controls +(125:1) .. +(120:2) coordinate (tq);
    \fill (tq) circle (1pt);
    \draw[gray, loosely dash dot] (-1,0)+(130:0.75) arc [start angle = 130,end angle = 290,radius = 0.75];
    \draw[gray, loosely dash dot] (-1,0)+(70:0.75) arc [start angle = 70,end angle = 110,radius = 0.75];
    \draw[thin, gray][->] (1, 0) -- +(120:1);
    \draw[thin, gray][->] (1, 0) -- +(150:1);
    \draw[thin, gray][->] (1, 0) -- +(210:1);
    \draw[thin, gray][->] (1, 0) -- +(240:1);
    \draw[thin, gray][->] (1, 0) -- +(-60:2.5) node[above right, black] at +(-60:2) {$\widetilde{P_{i_2}}$};
    \draw (1,0) .. controls +(-55:1) .. +(-60:2);
    \draw (1,0) .. controls +(-65:1) .. +(-60:2) coordinate (tp);
    \fill (tp) circle (1pt);
    \draw[gray, loosely dash dot] (1,0)+(110:0.75) arc [start angle = 110,end angle = -50,radius = 0.75];
    \draw[gray, loosely dash dot] (1,0)+(-70:0.75) arc [start angle = -70,end angle = -110,radius = 0.75];
    \end{tikzpicture}
    \caption{Illustration for (4) of case 3. All the rays in gray have the same developed image}
    \label{fig:spltrivhandle}
\end{figure}

\subsubsection{Adding subsequent handles} To get $(W_2,\tau_2)$ from $(W_1,\tau_1)$, we need to add a handle to $(W_1,\tau_1)$ such that the zeros of the resulting differential are the zeros obtained by undoing the reduction step that gave us the zeros of $(W_1,\tau_1)$. We shall use the same idea to get $(W_{i+1}, \tau_{i+1})$ from $(W_i,\tau_i)$ for any $2\le i\le g$. We first get to a point where $(W_{i+1},\tau_{i+1})$ has at least three zeros. We then show that obtaining $(W_{i+1},\tau_{i+1})$ from $(W_i,\tau_i)$ is possible when $(W_i,\tau_{i})$ satisfies two properties mentioned in subsection \ref{subsec:handleptchoice}. Finally we show that if $(W_i,\tau_i)$ satisfies two properties mentioned in subsection \ref{subsec:handleptchoice}, then so does $(W_{i+1},\tau_{i+1})$. This will enable us to get to $(W_g,\tau_g)$ from $(W_1,\tau_1)$. This is required only when $g \geq 2$, for $g=1$ the previous section suffices. We distinguish two cases according to the number of zeros of $(W_{i+1},\tau_{i+1})$.\\

\paragraph{\textbf{Case A:} \textit{When $(W_{i+1},\tau_{i+1})$ has exactly two zeros}} We first note that $(W_1,\tau_1)$ has at least two zeros and that $(W_{i+1},\tau_{i+1})$ can have exactly two zeros only when $(W_i,\tau_i)$ has exactly two zeros. This means the structure $(W_0, \tau_0)$ we begin with has exactly two zeros of orders $\{ d'_1, d'_2 \}$ which necessarily satisfies the relation
\[ d'_1 + d'_2 = \sum_{i=1}^n p_i - 2.
\] Let $h$ be the integer satisfying $1 \leq h \leq g$ such that $(W_i,\tau_i)$ has zeros of orders $\{ d'_1 + i, d'_2 + i \}$ for $1 \leq i \leq h$, and $(W_{h+1},\tau_{h+1})$ (if any) has zeros of orders $\{ d'_1 + h, d'_2 + h + 1, 1 \}$ or $\{ d'_1 + h, d'_2 + h , 1, 1 \}$. Thus, once $(W_h,\tau_h)$ is constructed and we have shown that it satisfies the two properties mentioned in section \ref{subsec:handleptchoice}, we can move to the next section. Since we want $(W_h,\tau_h)$ to have zeros of orders $\{ d'_1 + h, d'_2 + h \}$, it means that $d'_1 + h \leq d_1$. The order condition then tells us that 
\[ d'_1 + h \leq \sum_{i=1}^n p_i - n - 1,
\] and since 
\[ d'_1 + d'_2 = \sum_{i=1}^n p_i - 2,
\] we have $d'_2 \geq n-1 + h$. Switching the indices, we derive the same lower bound for $d'_1$.  Following the notation of section \ref{ssth}, we have seem that, in the construction of $(W_0,\tau_0)$, the $n$ slits $P_i\,Q_i$ end up giving us $n$ saddle connections between the two zeros $P$ and $Q$ of $(W_0, \tau_0)$. Since the order of the zero at $P$ is greater than or equal to $n-1 + h$, we have at least $h$ geodesic segments $\big \{ P\,\widetilde{Q}_j \big \}_{1 \leq j \leq h}$ that have the same developed image as $P\,Q$. Similarly, we have at least $h$ geodesic segments $\big \{ \widetilde{P}_j\,Q \big \}_{1 \leq j \leq h}$, that have the same developed image as $P\,Q$. Such $\big \{ P\widetilde{Q}_j \big \}_{1 \leq j \leq h}$ and $\big \{ \widetilde{P}_j\,Q \big \}_{1 \leq j \leq h}$ can be found even when the zeros of $(W_0,\tau_0)$ are obtained by splitting a zero instead of sequential slit construction. In this case, $n=1$ and since we have zeros of order $d'_1, d'_2 \geq h$ obtained be splitting a zero of order $d'_1 + d'_2$, the the geodesic line segments described in section \ref{sec:zerosplit} are the required $\big \{ P\widetilde{Q}_j \big \}_{1 \leq j \leq h}$ and $\big \{ \widetilde{P}_j\,Q \big \}_{1 \leq j \leq h}$. For each $1 \leq j \leq h$, we make slits across $P\,\widetilde{Q}_j$ and $\widetilde{P}_j\,Q$ and glue them. This has the effect of adding $h$ handles. For each handle, one of the ends is $P$ and the other is $Q$, and each handle increases the degree of the zero at each end by 1. This gives us the required $(W_h,\tau_h)$ with zeros of orders $\{ d'_1 + h, d'_2 + h \}$.  The second property of subsection \ref{subsec:handleptchoice} is trivially true for $(W_h,\tau_h)$ since it has only two zeros. To check the first property, we note that $d'_2 \geq n-1 + h$ implies $d'_1 \geq n-1 + h + (d'_1 - d'_2)$. Thus, in addition to the $\widetilde{Q}_j$ of the previous paragraph, we can find $d'_1 - d'_2$ many points, that we denote by $Q''_j$ (for $1\leq j \leq d'_1-d'_2$) such that $PQ''_j$ has the same developed image as $P\,Q$. This shows that the first property is satisfied.\\

\paragraph{\textbf{Case B:} \textit{When $(W_{i+1},\tau_{i+1})$ has more than two zeros}} We assume that the orders of the zeros of $(W_i, \tau_i)$ are $\{ \ldots, \alpha, \beta, \gamma \}$ and that $(W_i,\tau_i)$ satisfies the two properties mentioned in subsection \ref{subsec:handleptchoice}. Following the notation of subsection \ref{subsec:handleptchoice}, we denote the last three zeros of $(W_i,\tau_i)$ by $P$, $Q$ and $R$ respectively. Recall that the points $R'_i$, $Q_i$, and $R_i$ satisfy properties 1 and 2 of section \ref{subsec:handleptchoice}. If we want $(W_{i+1},\tau_{i+1})$ to have zeros of orders $\{ \ldots, \alpha, \beta + 1, \gamma + 1 \}$, it can only happen when $\alpha > \beta >0$. Making slits across $Q_1\,R_1$ and $Q\,R$ and gluing, we obtain $(W_{i+1},\tau_{i+1})$. If we want $(W_{i+1},\tau_{i+1})$ to have zeros of orders $\{ \ldots, \alpha, \beta, \gamma + 1, 1 \}$, it can only happen when $\beta > \gamma$. In this case, we consider the point $R'_1$, and let $S'_1$ and $S$ be points near $R'_1$ and $R$ respectively such that $R'_1S'_1$ and $RS$ have the same developed image. We make slits across $R'_1S'_1$ and $RS$ and glue them to get the handle. Finally, if we want $(W_{i+1},\tau_{i+1})$ to have zeros of orders $\{ \ldots, \alpha, \beta, \gamma, 1, 1 \}$, we use the singular point of order $\gamma$ for attaching a handle with trivial holonomy by using the construction introduced in subsection \ref{trhan}.

\subsubsection{Verifying the two properties for $(W_{i+1},\tau_{i+1})$} In this section, we show that if $(W_i,\tau_i)$ satisfies the two properties mentioned in subsection \ref{subsec:handleptchoice}, then they also hold for the structure $(W_{i+1},\tau_{i+1})$, obtained in the way described previously. As before, we assume that the orders of the zeros of $(W_i,\tau_i)$ are $\{ \ldots, \alpha, \beta, \gamma \}$. While constructing $(W_{i+1},\tau_{i+1})$ which has zeros of orders $\{ \ldots, \gamma, 1, 1 \}$, the first property is vacuously true for $(W_{i+1},\tau_{i+1})$ since the second to last zero does not have higher order than the last zero. For the second property, we note that the two arcs $\delta_1$ and $\delta_2$ as in subsection \ref{trihan} were chosen out of $\gamma + 1$ many choices of arcs that have the same developed image. This means that even after handle construction, we can find arcs $\delta_3, \ldots, \delta_{\gamma+1}$ that have the same developed image as $\delta_1$ and $\delta_2$. The ends of these additional $\delta_i$'s serve as the choice of $Q_i$'s and $R_i$'s to satisfy the second property.\\
\noindent Next, we look at the case when $(W_{i+1},\tau_{i+1})$ has zeros of orders $\{ \ldots, \beta, \gamma + 1, 1 \}$. Since $R$ is a zero of order $\gamma$, there are $\gamma$ many points $S'_i$ such that $R\,S'_i$ has the same developed image as $R\,S$. Thus, the first property holds for $(W_{i+1},\tau_{i+1})$. To check the second property, we note that there are $\beta - \gamma$ many $R'_i$ for which $Q\,R'_i$ and $Q\,R$ have the same developed image. Since $R'_1$ is used for the handle construction, we have $\beta - \gamma - 1$ many $R'_i$ after the handle is constructed. For each of these $\beta - \gamma - 1$ many $R'_i$ we have $S'_i$ such that $R'_i\,S'_i$ has the same developed image as $R'_1\,S'_1$. This means that the second property holds for $(W_{i+1},\tau_{i+1})$. \\
\noindent The case when $(W_{i+1},\tau_{i+1})$ has zeros of orders $\{ \ldots, \alpha, \beta + 1, \gamma + 1 \}$ only occurs when $(W_{i+1},\tau_{i+1})$ has at least three zeros since we have separately dealt with the case where $W_{i+1}$ has exactly two zeros. Here, $\alpha > \beta$ and we have $\alpha - \beta$ many $Q_i$ and $R_i$ satisfying the second property. For the handle construction, one pair of $Q_i$ and $R_i$ is utilized, say, $Q_1$ and $R'_1$. The rest of the points show us that the second property is satisfied for $(W_{i+1},\tau_{i+1})$. The $\beta - \gamma$ many points $R'_i$ satisfying the first property for $(W_i,\tau_i)$ are not affected by this handle construction, and give us $(\beta + 1) - (\gamma +1)$ many points satisfying the first property for $(W_{i+1},\tau_{i+1})$.\\

\noindent This completes the inductive step and the proof of Proposition \ref{thm:mainthm2}; we obtain a sequence of translation surfaces $(W_i,\tau_i)$ for $1\leq i\leq g$ as described in the strategy of the proof at the beginning of the section, and $(W_g, \tau_g)$ is the desired translation surface. Together with Proposition \ref{thm:mainthm}  (that handles the $g=0$ case), this completes the proof of Theorem \ref{main:thmb} as stated in the Introduction.   

\subsection{Further consequences of Theorem \ref{main:thmb}} The techniques developed in the previous subsections can be used to prove a special case of the Hurwitz Existence Problem. For the reader's convenience, we briefly recall our notation. Let $S$ and $\Sigma$ be two surfaces, let $d\ge2$ be a positive integer and consider a collection $\mathcal{B}$ of $n$ partitions of $d$ denoted as $B_1,\dots,B_n$ where $B_i=\{d_{ij}\}_{1\le j\le m_i}$. The string $\mathcal{D}=\big(S,\Sigma,d,n,\mathcal{B}\big)$ yields an abstract branch datum whenever the equation
\medskip
\begin{equation} \label{eq:rh}
    \chi(S)-\widetilde{n}=d\cdot\big(\chi(\Sigma)-n\big)
\end{equation}
\medskip
holds, where $\widetilde{n}$ is the sum of the lengths of the partitions $B_i$. Recall that an abstract branch datum is \emph{realizable} if there exists a branched covering $f:S\longrightarrow \Sigma$ that realizes $\mathcal{D}$. The following statement holds.

\begin{cor}\label{consthmb2}
Let $\mathcal{D}=\big(S_g, \Bbb S^2, d, n, \mathcal{B}\big)$ be an abstract branch datum such that 
\begin{itemize}
    \item $\widetilde{n}= 2-2g+ d \cdot (n-2)$, which is nothing but equation \eqref{eq:rh}.
    \item $B_i\ni d_{ij} = 1$ whenever $i \neq 1$ and $j \neq 1$.
\end{itemize}
Then $\mathcal{D}$ is realizable.
\end{cor}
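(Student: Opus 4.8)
The plan is to translate the combinatorial datum $\mathcal{D}$ into a stratum of meromorphic differentials with trivial holonomy, invoke Theorem \ref{main:thmb}, and recover the branched cover as the extended developing map, following the dictionary set up in subsection \ref{necondtri}. Write $B_1=\{d_{11},\dots,d_{1m_1}\}$; for $i\ge 2$ the partition $B_i$ has the shape $\{d_{i1}\}\cup\{1\}^{\,d-d_{i1}}$, and since $v_i$ is a genuine branch value we have $d_{i1}\ge 2$. I would set $\nu=(d_{11}+1,\dots,d_{1m_1}+1)$ (prospective pole orders) and $\kappa=(d_{21}-1,\dots,d_{n1}-1)$ (prospective zero orders, an $(n-1)$-tuple of positive integers), and look for a differential $\omega\in\mathcal{H}(\kappa;\nu)$ with trivial holonomy. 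The guiding principle is that for such an $\omega$ the developing map extends to a holomorphic branched cover $\overline{\textsf{dev}}\colon S_g\to\cp\cong\Bbb S^2$ of degree $\sum_j d_{1j}=d$ by \eqref{eqdeg}, sending each pole of order $d_{1j}+1$ to $\infty$ with local degree $d_{1j}$, and having local degree $d_{i1}$ at the zero of order $d_{i1}-1$.

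Next I would check the hypotheses of Theorem \ref{main:thmb} for $(\kappa,\nu,g)$. Hypothesis (i) is immediate since $d_{1j}+1\ge 2$. Hypothesis (ii) amounts to $d_{i1}-1\le\sum_j(d_{1j}+1)-m_1-1=d-1$, which holds because $d_{i1}$ is a part of a partition of $d$. The degree condition \eqref{zpec} is a short computation: using $|B_i|=1+(d-d_{i1})$ for $i\ge 2$, the Riemann--Hurwitz identity \eqref{rhcon} rearranges to $\sum_{i=2}^n d_{i1}=m_1+n-3+2g+d$, and substituting this into $\sum_{i=2}^n(d_{i1}-1)-\sum_{j=1}^{m_1}(d_{1j}+1)$ collapses it to $2g-2$. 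For hypothesis (iii) note first that $n\ge 2$ always, as a connected degree $d\ge 2$ cover of $\Bbb S^2$ cannot have $n\le 1$ branch values (visible from \eqref{rhcon}); and if $g>0$ then $n\ge 3$, since $n=2$ would force $d_{21}=m_1-1+2g+d>d$ from the identity above, contradicting $d_{21}\le d$. Hence the number of prospective zeros $n-1$ is at least $2$ whenever $g>0$, so (iii) holds.

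Theorem \ref{main:thmb} (via Propositions \ref{thm:mainthm} and \ref{thm:mainthm2}) then furnishes $(X,\omega)$ on $S_{g,m_1}$ with trivial holonomy, poles of orders $d_{1j}+1$ and zeros of orders $d_{i1}-1$; moreover, by the discussion on positioning of zeros in subsection \ref{sec:zeropos} (which is invoked in both Proposition \ref{thm:mainthm} and Proposition \ref{thm:mainthm2}) the construction may be arranged so that the zeros have pairwise distinct developed images, which are automatically finite. As in subsection \ref{necondtri}, $\overline{\textsf{dev}}$ extends to a branched cover $S_g\to\Bbb S^2$ whose fiber over $\infty$ is exactly the set of poles, with profile $\{d_{11},\dots,d_{1m_1}\}=B_1$. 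For $i\ge 2$ put $v_i=\overline{\textsf{dev}}(z_i)$, where $z_i$ is the zero of order $d_{i1}-1$; then $\overline{\textsf{dev}}^{-1}(v_i)$ consists of $z_i$ with local degree $d_{i1}$ together with $d-d_{i1}$ unramified points, so its profile is $B_i$. The points $\infty,v_2,\dots,v_n$ are pairwise distinct, and they are the only branch values because the critical points of $\overline{\textsf{dev}}$ are precisely the poles and zeros of $\omega$. Therefore $\mathcal{D}(\overline{\textsf{dev}})=\big(S_g,\Bbb S^2,d,n,\mathcal{B}\big)=\mathcal{D}$, so $\mathcal{D}$ is realizable.

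I expect the main obstacle to be bookkeeping rather than geometry: ensuring that no spurious branch value appears and that the $n-1$ branch values coming from the zeros are genuinely distinct and different from $\infty$ — which is exactly the flexibility that subsection \ref{sec:zeropos} is designed to provide — together with the small case analysis for hypothesis (iii) of Theorem \ref{main:thmb}, the crucial point there being that the special shape of $\mathcal{B}$ combined with \eqref{rhcon} forces $g=0$ as soon as $n=2$. The remainder is the elementary identity turning \eqref{rhcon} into \eqref{zpec}.
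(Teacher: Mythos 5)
Your proposal is correct and follows essentially the same route as the paper: convert $B_1$ into pole orders $d_{1j}+1$ and the distinguished entries $d_{i1}$ of the other partitions into zero orders $d_{i1}-1$, verify the hypotheses of Theorem \ref{main:thmb} from \eqref{rhcon}, and read off the branched cover from the extended developing map, using the zero-positioning of subsection \ref{sec:zeropos} to keep the branch values distinct. The only (harmless) shortcut is your assertion that $d_{i1}\ge 2$ for $i\ge 2$ ``since $v_i$ is a genuine branch value'': the paper instead handles possible trivial partitions explicitly by discarding them and adding back regular fibers, a reduction worth including since the definition of an abstract branch datum does not by itself forbid an all-ones $B_i$.
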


\begin{proof} We begin noticing that since $\widetilde{n}>0$, when $n=1$, we must have $g=0$, $d=1$ and $\widetilde{n}=1$. Thus, this case is degenerate and henceforth we assume $n\ge2$. Secondly, we note that if $d_{ij} = 1$ for all $i$ and $j$, then we must have $d=1$ and $g=0$, which means the identity map gives us the result. We may assume that for each $i$ there exists some $j$ for which $d_{ij} \neq 1$. To see this, we reduce the given datum $\big(S_g, \Bbb S^2, d, n, \mathcal{B}\big)$ to $\big(S_g, \Bbb S^2, d, m, \mathcal{B}'\big)$ for $m<n$ by removing those $i$ for which $d_{ij}=1$ for all $1 \leq j \leq m_i$. If the branch datum $\big(S_g, \Bbb S^2, d, m,\mathcal{B}'\big)$ is realizable, then $\big(S_g, \Bbb S^2, d, n, \mathcal{B}\big)$ is also realizable by adding the trivial branching data of some $n-m$ regular points.  From the branch datum $\big(S_g, \Bbb S^2, d, n, \mathcal{B}\big)$, we define the following datum that shall satisfy the hypotheses of Theorem \ref{main:thmb}:
\begin{itemize}
    \item for $1 \leq j \leq m_1$, define $p_j = d_{1j} + 1$,
    \item for $2 \leq i \leq n$, define $d_{i-1} = d_{i1}-1$.
\end{itemize}
Note that since we are looking only at non-trivial partitions, and $d_{ij}=1$ when $i \geq 2, j\neq1$, $d_{i1}>1$ and so $d_{i1}-1>0$. We now verify that $p_1, \ldots, p_{m_1}$ and $d_1, \ldots, d_{n-1}$ satisfy the requirements in the hypotheses of Theorem \ref{main:thmb}:
\begin{itemize}
\item First, since $\displaystyle\sum_{i=1}^{n-1}d_i = (n-1)d - (\widetilde{n}- m_1)$. Substituting $\widetilde{n} = 2-2g+ d \cdot (n-2)$, we have $$\displaystyle\sum_{i=1}^{n-1}d_i = d + m_1 + 2g-2 = \sum_{j=1}^{m_1}p_j + 2g -2$$
    and the degree condition is satisfied.
\item The requirement (i) that $p_i \geq 2$ is clear from the definition.
    
\item The requirement (ii) follows from the facts $\displaystyle \sum_{j=1}^{m_1} p_j - m_1 - 1 = d - 1$ and $d_{ij} \leq d$.

\item Finally, whenever $g=0$ and $n=2$, we have $m_1 = m_2 = 1$. This means that as soon as $m_1 > 1$, we must have $n>2$, namely $(n-1)>1$, which is requirement (iii).
\end{itemize}

\noindent We now use Theorem \ref{main:thmb} to obtain a meromorphic differential with trivial holonomy and consider the extended developing map of the corresponding translation structure. The branching data at $\infty$ gives us the partition $B_1$. All other branch values are images of zeros of the differential and the images of all the zeros are branch values. Going through the details of the construction of such a differential with trivial holonomy in the preceding sections, we note that the developed images of the zeros are all different (\textit{c.f.}  Section \ref{sec:zeropos}) . Thus the branching data at all branch values other than infinity are of the desired form $B_i$ for $i \geq 2$. 
\end{proof}

\noindent For similar results that provide sufficient criteria for the realizability of branch data, the reader can consult \cite{BK} and \cite{PePe}, and the references therein.

\section{Spheres with non-trivial holonomy} \label{sec:nontrivholsphere}

\noindent Let $\chi_n : \Gamma_{0,n} \to \mathbb{C}$ be any non-trivial representation. Notice that $n\ge2$ necessarily. This section is devoted to prove the following refinement of Proposition \ref{propb}.

\begin{prop}\label{hgc:mainprop}
Let $\chi_n : \Gamma_{0,n} \to \mathbb{C}$ be a non-trivial representation. Let $\{p_1, p_2, \ldots, p_n\}$ and $\{d_1, d_2, \ldots, d_k\}$ positive integers satisfying the following properties. 
\begin{itemize}
    \item Either \begin{itemize}
        \item[i.] $\textsf{\emph{Im}}(\chi_n)$ is not contained in the $\mathbb{R}$-span of some $c \in \mathbb{C}$,
        \item[ii.] $\textsf{\emph{Im}}(\chi_n)$ is not contained in the $\mathbb{Q}$-span of some $c \in \mathbb{C}$
        \item[iii.] at least one of $p_1, p_2, \ldots ,p_n$ is different from 1,
    \end{itemize}
    \item $p_i \geq 2$ whenever $\chi_n (\gamma_i) =0$,
    \item $\displaystyle\sum_{j=1}^k d_j = \sum_{i=1}^n p_i -2$.
\end{itemize}
Then $\chi_n$ appears as the holonomy of a translation structure on $S_{0,n}$ determined by a meromorphic differential on $\cp$ with zeros of orders $d_1, d_2,\ldots, d_k$ and a pole of order $p_i$ at the puncture enclosed by $\gamma_i$, for each $1\leq i\leq n$. 
\end{prop}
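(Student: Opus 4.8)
The strategy is to build the desired translation structure by gluing together explicit planar pieces, in the spirit of Proposition \ref{propb}, but now keeping careful track of the orders of zeros and poles. We may assume $\chi_n(\gamma_i) \neq 0$ for all $i$: any puncture with trivial holonomy has $p_i \geq 2$ by hypothesis, so it can be produced at the end by splitting off a $(\C, z^{p_i-2}\,dz)$-piece using a slit construction (section \ref{sec:seqslit}) from an existing zero, exactly as in the trivial-holonomy construction of section \ref{sec:necsuftriv}; this reduces the degree bookkeeping to the case of all-nontrivial holonomy at the cost of enlarging the zero set, which is harmless since the $d_j$ are unconstrained beyond the degree condition. So assume $z_j := \chi_n(\gamma_j) \in \C^*$ with $\prod \gamma_j$ trivial, i.e. $\sum z_j = 0$.

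\textbf{The core construction (item (i), non-collinear image).} When $\textsf{Im}(\chi_n)$ is not contained in an $\R$-line, after reordering the $\gamma_j$ so that the arguments $\theta_j$ of the $z_j$ are cyclically ordered, the polygonal path $\zeta_1 \to \zeta_1 + z_1 \to \cdots \to \zeta_1$ bounds a \emph{non-degenerate} convex $n$-gon $\mathcal P$, and the half-strip construction from the proof of Proposition \ref{propb} produces a translation surface on $S_{0,n}$ with holonomy $\chi_n$, with the $i$-th puncture a pole of order $2$ (a half-strip end is a cylinder of circumference $|z_j|$, hence a simple pole — wait: each half-strip is an infinite cylinder, giving a \emph{simple} pole). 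More precisely the half-strip $\mathcal S_j$ with $r_j^+$ glued to $r_j^-$ is a half-infinite cylinder, so each puncture is a simple pole ($p_i = 1$), and there is a single branch point of order $2n-2$, i.e. one zero of order $2n-2 = \sum_i 1 \cdot \text{(something)}$; indeed $\sum d_j = 2n-2 = \sum p_i - 2$ with $p_i = 1$ checks out. To get a pole of order $p_i > 1$ at the $i$-th puncture I would, instead of gluing $r_i^+$ directly to $r_i^-$, first perform the surgery described in section \ref{lgp}, case 3.2 (or equivalently pull back by $z \mapsto z^{p_i - 1}$ near that end): slit the cylinder along a ray to the puncture and glue in $p_i - 1$ copies of a slit plane, converting the simple pole into a pole of order $p_i$ and creating a branch point of order $p_i - 1$ at the base of the slit. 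Doing this at every puncture converts the surface into one with poles of orders $p_1, \ldots, p_n$ and a collection of zeros whose orders sum, by the degree condition, to $\sum p_i - 2$. Finally, collect all the zeros thus produced into a single zero (by a reverse slit construction / merging, or simply by arranging the surgeries to share a branch point) and then split it into zeros of orders $d_1, \ldots, d_k$ using the splitting procedure of section \ref{sec:zerosplit}; this preserves the holonomy and is always possible once we have a single zero of the correct total order and each $d_j$ no larger than it, which follows from the order-type bounds implicit in the degree condition. In the degenerate (collinear-image) subcases this is exactly where items (ii)--(iv) are needed: one uses the degenerate-polygon (parallelogram) version of the Proposition \ref{propb} construction, or, when the image is not contained in a $\Q$-line, invokes the technical lemma deferred to Appendix \ref{app:irrmultproof} to realize the required combinatorial splitting of a cylinder into the prescribed poles.

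\textbf{Main obstacle.} The delicate point is ensuring that, when the image of $\chi_n$ is contained in an $\R$-line (so the $n$-gon degenerates to a segment, all ends lying along one direction), one can still carve out the prescribed pole orders and zero orders compatibly — one cannot freely create branch points in a flat cylinder without reference to its circumference, and the residues (which are pinned down by $\chi_n$) interact with the pole orders via the local models of section \ref{lgp}. This is precisely the reason the hypothesis splits into the cases (i)--(iii): in case (i) there is enough two-dimensional room (the non-degenerate polygon plus half-strips give genuine Euclidean plane pieces near each end to absorb branch points), in case (iii) at least one $p_i \geq 2$ gives a pole of order $\geq 2$ whose local geometry already contains a half-plane's worth of room, and in case (ii) the $\Q$-independence is exploited (via Appendix \ref{app:irrmultproof}) to realize the combinatorics of distributing cone angles. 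I would organize the proof by first disposing of item (iii) and the trivial-holonomy punctures by slit constructions reducing to fewer punctures or to known cases, then handle item (i) by the half-strip construction plus local pole-order surgeries plus a final zero-splitting, and finally treat item (ii) by combining the degenerate-polygon construction with the deferred lemma; in each case the verification that holonomy is exactly $\chi_n$ is immediate from the construction, and the count of zero/pole orders is forced by the degree condition together with Gauss--Bonnet.
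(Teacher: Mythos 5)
Your treatment of case (i) is essentially the paper's: a non-degenerate convex polygon with half-strips attached gives simple poles at the punctures with nontrivial holonomy and a single branch point; gluing $p_i-1$ copies of $\mathbb{E}^2$ along $r_i^+=r_i^-$ raises the $i$-th pole order to $p_i$ (and since these gluings are performed along the rays based at the existing branch point, no new zeros appear — your "arrange the surgeries to share a branch point" is the correct alternative, whereas "reverse slit construction / merging" is not a defined operation anywhere in the paper); the trivial-holonomy punctures are handled by sequential slit constructions with $(\C, z^{p_i-2}\,dz)$ along a side of the polygon with the zero at one end of the slit; and the single zero of order $\sum_i p_i - 2$ is then split into zeros of orders $d_1,\dots,d_k$. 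One arithmetic slip: with all $p_i=1$ the polygon construction's branch point has cone angle $(2n-2)\pi$, i.e.\ it is a zero of order $n-2=\sum p_i-2$, not of order $2n-2$.

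The gap is in the degenerate cases (ii) and (iii), which are where the actual difficulty of the proposition lies and which you only gesture at. The "degenerate-polygon (parallelogram) version of Proposition \ref{propb}" produces \emph{two} branch points of fixed orders, so taking it as a starting point does not let you reach an arbitrary stratum (in particular not the single-zero stratum); it is not an adequate substitute for a construction with one zero. For case (ii) the paper uses Lemma \ref{lem:irrmult} not to "realize the combinatorial splitting of a cylinder into the prescribed poles" but to \emph{reorder} the positive and negative boundary holonomies so that no proper partial sums of the two families coincide; with that ordering the construction is an infinite horizontal strip of width $\zeta_k$ carrying upward and downward half-infinite vertical slits at the intermediate points $\zeta_2,\dots,\zeta_{k-1},\zeta_{k+1},\dots,\zeta_m$, and the pairwise distinctness of these points (which is exactly what the lemma guarantees) ensures that after the identifications there is a single singular point, whose order is then adjusted by gluing planes and split at the end. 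For case (iii) the paper gives an explicit two-subcase construction (according to whether $|J|=n$) in which the available pole of order $\geq 2$ supplies the slit plane needed to assemble everything into a surface with one zero. Your sketch names the right ingredients but does not supply the constructions that actually produce a single zero in these degenerate situations, and without that the final zero-splitting step has nothing of the correct order to act on.
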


\begin{rmk}
The remaining case of "rational representations" when $\textsf{Im}(\chi_n)$ is contained in the $\mathbb{Q}$-span of some $c \in \mathbb{C}$, and  $p_1,p_2,\ldots , p_n$ are all $1$, is discussed in section \ref{sec:comb}; our Theorem \ref{main:thme} provides necessary and sufficient conditions in that case. 
\end{rmk}

\noindent We shall need the following technical lemma: 

\begin{lem} \label{lem:irrmult}
Let $s_1, \ldots, s_n$ and $t_1, \ldots, t_m$ be positive real numbers such that 
\begin{itemize}
    \item $\displaystyle\sum_{i=1}^n s_i = \sum_{j=1}^m t_j$,
    \item there exists a pair of numbers in $\{ s_1, \ldots, s_n, t_1, \ldots, t_m \}$ with irrational ratio.
\end{itemize}
Then, there exists a reordering of $s_i$'s and $t_j$'s such that 
\begin{equation}\label{combcond}
    \sum_{i=1}^k s_i = \sum_{j=1}^l t_j
\end{equation}
holds only for $k=n$ and $l=m$.
\end{lem}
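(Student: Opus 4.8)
The statement is a purely combinatorial lemma about two finite multisets of positive reals with equal sums, one pair of which has irrational ratio, and we must reorder them so that no proper common initial partial sums coincide. The plan is to argue by induction on the total number $n+m$ of entries, after first applying a normalization that isolates where the irrationality lives. First I would rescale so that one of the numbers involved in the irrational pair equals $1$; say $s_1 = 1$ and there is some entry (an $s_i$ or a $t_j$) that is irrational. It is convenient to separate the entries of $\{s_1,\dots,s_n\}\cup\{t_1,\dots,t_m\}$ into those that are rational and those that are irrational; since the two total sums are equal, the sum of the irrational $s_i$'s minus the sum of the irrational $t_j$'s is rational, and in fact the combined multiset cannot consist entirely of rationals by hypothesis.

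\textbf{Key steps.} The main idea is a greedy/extremal choice at the front of each list. I would place at the very front of the $s$-list and of the $t$-list entries chosen so that the first place where a partial sum of one list can possibly equal a partial sum of the other is forced to be the end. Concretely, here is the inductive step I would run. If every entry is rational, the hypothesis fails, so assume not. Pick an irrational entry $\xi$; WLOG (swapping the roles of the two lists if needed) $\xi$ is among the $s_i$'s, say after reordering it is $s_1$. Put $s_1 = \xi$ first in the $s$-list. Now, among the remaining $n-1+m$ numbers, I would like to reorder the rest so that the only coincidence of partial sums is the trivial one; but the presence of $\xi$ at the front of the $s$-list already guarantees that any partial sum $s_1 + \dots + s_k$ with $k \geq 1$ has the same "irrational part" as $\xi$ plus the irrational parts of $s_2,\dots,s_k$. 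The cleaner route is: reorder so that all \emph{irrational} entries of the $s$-list come first (in any order), followed by all rational entries of the $s$-list; do the same for the $t$-list. Suppose $s_1+\dots+s_k = t_1+\dots+t_l$ with $(k,l)\neq(n,m)$ and $(k,l)\neq(0,0)$. Taking the "irrational component" of both sides (i.e. projecting modulo $\mathbb{Q}$, viewing $\mathbb{R}$ as a $\mathbb{Q}$-vector space) forces the multiset of irrational entries among $s_1,\dots,s_k$ to have the same image as that among $t_1,\dots,t_l$. This need not immediately finish things if there are several $\mathbb{Q}$-independent irrationals, so I would instead induct: remove a maximal-length initial block from whichever list it helps, reducing $n+m$, and invoke the inductive hypothesis on the shorter configuration — being careful that the reduced configuration still has equal sums and still contains an irrational pair, which is where a small case analysis (what if all remaining entries became rational?) is needed, handled by keeping at least one irrational entry in reserve.

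\textbf{Main obstacle.} The subtle point — and the step I expect to be the real work — is making the inductive reduction legitimate: when I chop off an initial segment to shorten the problem, I must ensure the remaining two sub-multisets still satisfy \emph{both} hypotheses of the lemma (equal sums and an irrational ratio present), and that a bad coincidence in the full reordering would descend to a bad coincidence in the reduced one. If a reduction step exhausts all irrational entries on one side, one must argue directly that no nontrivial coincidence can occur, using that a single irrational number cannot equal a rational partial sum. I would organize the argument so that at each stage I peel off the \emph{shortest} nontrivial coincident pair of partial sums (if one exists at all), derive a contradiction with the irrationality via the $\mathbb{Q}$-linear projection, and conclude; alternatively, I would present the slicker non-inductive version: order both lists with all irrationals first, and show that a coincidence $s_1+\dots+s_k = t_1+\dots+t_l$ projects modulo $\mathbb{Q}$ to an equality of sums of irrationals, then choose the initial irrational orderings generically (e.g. so that the partial sums of the irrational parts are all $\mathbb{Q}$-linearly distinct, which is possible since there are only finitely many forbidden configurations) to rule out all but $k=l=$ (number of irrationals on each side), after which the rational tails can only coincide at the very end because a genuine single equality $\sum \text{(rationals)} = \sum \text{(rationals)}$ combined with generic placement of the rational tail forces $k=n, l=m$. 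The detailed verification that such a generic ordering exists — counting the finitely many linear constraints to avoid — is the technical heart, and is exactly the content deferred to Appendix \ref{app:irrmultproof} in the paper.
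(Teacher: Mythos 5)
Your proposal does not follow the paper's argument, and as written it contains a genuine gap. The paper's proof (Appendix B) is a constructive greedy algorithm: normalize both sums to $1$, view the two lists as blocks filling two boxes, and alternately place the smallest available block in whichever series is currently behind, choosing it so that its right endpoint avoids all previously placed separators; the algorithm can only stall when every unplaced block in each series has a fixed length $s$, resp.\ $t$, with $s/t$ rational, and in that degenerate case one locates the rightmost separator at irrational distance (in units of $\lambda$, where $s=p\lambda$, $t=q\lambda$) from the right wall and cyclically shifts one block, moving an entire tail of separators by an irrational multiple of $\lambda$ and destroying all coincidences. Irrationality enters only in that final repair step.

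The gap in your approach is the central prescription ``put all irrational entries first in both lists, then order those irrational prefixes generically so their partial sums are $\mathbb{Q}$-linearly distinct.'' This is not always possible, because the only freedom you have is permutation of a \emph{fixed} multiset. Take $s=(\alpha,\alpha,1)$ and $t=(\alpha,\alpha,1)$ with $\alpha$ irrational: the hypotheses of the lemma hold (equal sums $2\alpha+1$, and $\alpha/1$ is irrational), but every ``irrationals first'' ordering forces both lists to begin $(\alpha,\alpha,\ldots)$, so $s_1=t_1$ and $s_1+s_2=t_1+t_2$ for \emph{every} admissible choice --- there is no generic ordering to pass to, since the irrational prefix $(\alpha,\alpha)$ admits only one arrangement. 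The lemma is still true here, but only via an ordering that interleaves a rational entry into one of the prefixes, e.g.\ $s=(\alpha,\alpha,1)$, $t=(1,\alpha,\alpha)$. So any correct proof must allow rational and irrational entries to mix, which your segregation step forbids. The $\mathbb{Q}$-linear projection is also weaker than you suggest: a coincidence only forces equality of the images of the irrational parts, which (as you note) does not terminate the argument when the irrational entries are $\mathbb{Q}$-dependent on each other and on $1$; and your inductive fallback of ``peeling off the shortest coincident pair'' conflates verifying a given ordering with constructing one, and does not preserve the irrational-ratio hypothesis on the truncated configuration. To repair the argument you would essentially be led back to something like the paper's interleaved greedy placement.
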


\noindent For the sake of the exposition we shall postpone the proof of this lemma to Appendix \ref{app:irrmultproof}.  

\begin{proof}[Proof of Proposition \ref{hgc:mainprop}]
\noindent Let $J := \{1 \leq i \leq n : \chi_n(\gamma_i) \neq 0\}$, and let $\vert J \vert = m$. Note that $J$ is non empty because $\chi_n$ is assumed to be non-trivial. We may assume, by reordering, that $\{ 1,2, \ldots, m \} \in J$ and $\{ m+1,m+2, \ldots, n \} \notin J$. We shall proceed case by case according to the items $(i)$ and $(ii)$ in the list of Proposition \ref{hgc:mainprop}.\\

\noindent \textbf{Case 1.} We first assume that $\textsf{Im}(\chi_n)$ is not contained in the $\mathbb{R}$-span of some $c \in \mathbb{C}$ and proceed along the lines of the proof of Proposition \ref{propb}. We consider $\chi_n(\gamma_i)$ for $i \in J$ and construct the polygon as in the proof of \ref{propb}. The condition on the image implies that the polygon obtained in this way is not degenerate. Proceeding with the construction, we obtain a translation structure on the $m-$punctured sphere where the corresponding meromorphic differential has poles of order $1$ at the punctures and a single zero of order $m-2$. After gluing $p_i - 1$ many copies of $\mathbb{E}^2$ along $r_i^+ = r_i^-$ as described in subsection \ref{lgp} , we obtain a surface with $m$ poles $\{P_1,\dots,P_m\}$ such that each of the $P_i$ has order $p_i$ and holonomy given by $\chi_n(\gamma_i)$ for every $i \in J$. The resulting surface, call it $(Y,\eta)$, has a single zero of appropriate order determined by the degree condition. We then add poles of order $p_i \geq 2$ for $i \notin J$ by performing sequential slits construction. To $(Y,\eta)$, we glue surfaces $(X_i,\omega_i)$ which are determined by the meromorphic differential $z^{p_i-2}dz$ on the Riemann sphere $\cp$. The slit in $(Y,\eta)$ is made across one of the sides of the polygon and the slit in $(X_i,\omega_i)$ is made such that the zero of the differential is at one of the ends of the slit. Figure \ref{fig:nontrivholsphere1} below depicts an example where $\vert J \vert = 7$. The resulting surface, say $(Z,\xi)$, has a single zero of order $p_1+\cdots+p_n -2$. We finally split this zero locally to get zeros of order $d_1, d_2, \ldots, d_k$ and this completes the construction. 

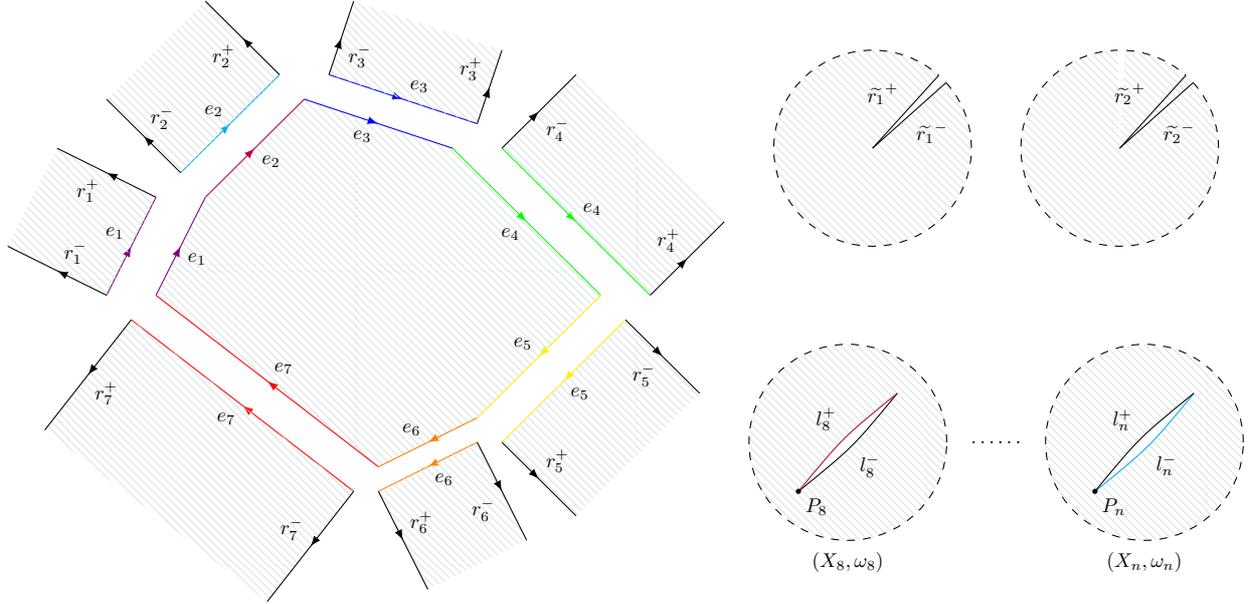
\begin{figure}[h!] 
    \centering
    \begin{tikzpicture}[scale=0.65, every node/.style={scale=0.75}]
    \definecolor{pallido}{RGB}{221,227,227} 
    \path[pattern=north west lines, pattern color=pallido] (0,0) -- ++(1,2) -- ++ (2,2) -- ++ (3,-1) -- ++ (3, -3) -- ++(-2.5, -2.5) -- ++(-2, -1) -- ++ (-4.5, 3.5); %The convex polygon
    \coordinate (A) at (0,0);
    \coordinate (B) at ($(A) + (1,2)$);
    \coordinate (C) at ($(B)+(2,2)$);
    \coordinate (D) at ($(C) + (3,-1)$);
    \coordinate (E) at ($(D) + (3, -3)$);
    \coordinate (F) at ($(E) + (-2.5, -2.5)$);
    \coordinate (G) at ($(F) + (-2,-1)$);
    \foreach \stpt / \endpt / \colr in {(A)/(B)/violet, (B)/(C)/purple, (C)/(D)/blue, (D)/(E)/green, (E)/(F)/yellow, (F)/(G)/orange, (G)/(A)/red}
    {
        \draw[\colr, decoration={markings, mark=at position 0.5 with {\arrow{latex}}}, postaction={decorate}] \stpt -- \endpt;
    }
    
    %The strips
    \draw[violet, decoration={markings, mark=at position 0.5 with {\arrow{latex}}}, postaction={decorate}] ($(A)+(-1,0)$) -- ($(B)+(-1,0)$);
    \path[pattern=north west lines, pattern color=pallido] ($(A)+(-1,0)+(-2,1)$) -- ($(A)+(-1,0)$) -- ($(B)+(-1,0)$) --  ($(B)+(-1,0)+(-2,1)$);
    \draw[decoration={markings, mark=at position 0.5 with {\arrow{latex}}}, postaction={decorate}] ($(A)+(-1,0)$) -- +(-2,1);
    \draw[decoration={markings, mark=at position 0.5 with {\arrow{latex}}}, postaction={decorate}] ($(B)+(-1,0)$) -- +(-2,1);
    \draw[cyan, decoration={markings, mark=at position 0.5 with {\arrow{latex}}}, postaction={decorate}] ($(B)+(-0.5,0.5)$) -- ($(C)+(-0.5,0.5)$);
    \path[pattern=north west lines, pattern color=pallido] ($(B)+(-0.5,0.5) + (-1.5,1.5)$) -- ($(B)+(-0.5,0.5)$) -- ($(C)+(-0.5,0.5)$) --  ($(C)+(-0.5,0.5)+(-1.5,1.5)$);
    \draw[decoration={markings, mark=at position 0.5 with {\arrow{latex}}}, postaction={decorate}] ($(C)+(-0.5,0.5)$) -- +(-1.5,1.5);
    \draw[decoration={markings, mark=at position 0.5 with {\arrow{latex}}}, postaction={decorate}] ($(B)+(-0.5,0.5)$) -- +(-1.5,1.5);
    \draw[blue, decoration={markings, mark=at position 0.5 with {\arrow{latex}}}, postaction={decorate}] ($(C)+(0.5,0.5)$) -- ($(D)+(0.5,0.5)$);
    \path[pattern=north west lines, pattern color=pallido] ($(C)+(0.5,0.5)+(0.5,1.5)$) -- ($(C)+(0.5,0.5)$) -- ($(D)+(0.5,0.5)$) --  ($(D)+(0.5,0.5)+(0.5,1.5)$);
    \draw[decoration={markings, mark=at position 0.5 with {\arrow{latex}}}, postaction={decorate}] ($(C)+(0.5,0.5)$) -- +(0.5,1.5);
    \draw[decoration={markings, mark=at position 0.5 with {\arrow{latex}}}, postaction={decorate}] ($(D)+(0.5,0.5)$) -- +(0.5,1.5);
    \draw[green, decoration={markings, mark=at position 0.5 with {\arrow{latex}}}, postaction={decorate}] ($(D)+(1,0)$) -- ($(E)+(1,0)$);
    \path[pattern=north west lines, pattern color=pallido] ($(D)+(1,0)+(1.5,1.5)$) -- ($(D)+(1,0)$) -- ($(E)+(1,0)$) --  ($(E)+(1,0)+(1.5,1.5)$);
    \draw[decoration={markings, mark=at position 0.5 with {\arrow{latex}}}, postaction={decorate}] ($(E)+(1,0)$) -- +(1.5,1.5);
    \draw[decoration={markings, mark=at position 0.5 with {\arrow{latex}}}, postaction={decorate}] ($(D)+(1,0)$) -- +(1.5,1.5);
    \draw[yellow, decoration={markings, mark=at position 0.5 with {\arrow{latex}}}, postaction={decorate}] ($(E)+(0.5,-0.5)$) -- ($(F)+(0.5,-0.5)$);
    \path[pattern=north west lines, pattern color=pallido] ($(E)+(0.5,-0.5)+(1.5,-1.5)$) -- ($(E)+(0.5,-0.5)$) -- ($(F)+(0.5,-0.5)$) --  ($(F)+(0.5,-0.5)+(1.5,-1.5)$);
    \draw[decoration={markings, mark=at position 0.5 with {\arrow{latex}}}, postaction={decorate}] ($(E)+(0.5,-0.5)$) -- +(1.5,-1.5);
    \draw[decoration={markings, mark=at position 0.5 with {\arrow{latex}}}, postaction={decorate}] ($(F)+(0.5,-0.5)$) -- +(1.5,-1.5);
    \draw[orange, decoration={markings, mark=at position 0.5 with {\arrow{latex}}}, postaction={decorate}] ($(F)+(0,-0.5)$) -- ($(G)+(0,-0.5)$);
    \path[pattern=north west lines, pattern color=pallido] ($(F)+(0,-0.5)+(1,-2)$) -- ($(F)+(0,-0.5)$) -- ($(G)+(0,-0.5)$) --  ($(G)+(0,-0.5)+(1,-2)$);
    \draw[decoration={markings, mark=at position 0.5 with {\arrow{latex}}}, postaction={decorate}] ($(F)+(0,-0.5)$) -- +(1,-2);
    \draw[decoration={markings, mark=at position 0.5 with {\arrow{latex}}}, postaction={decorate}] ($(G)+(0,-0.5)$) -- +(1,-2);
    \draw[red, decoration={markings, mark=at position 0.5 with {\arrow{latex}}}, postaction={decorate}] ($(G)+(-0.5,-0.5)$) -- ($(A)+(-0.5,-0.5)$);
    \path[pattern=north west lines, pattern color=pallido] ($(A)+(-0.5,-0.5)+(-1.75,-2.25)$) -- ($(A)+(-0.5,-0.5)$) -- ($(G)+(-0.5,-0.5)$) --  ($(G)+(-0.5,-0.5)+(-1.75,-2.25)$);
    \draw[decoration={markings, mark=at position 0.5 with {\arrow{latex}}}, postaction={decorate}] ($(A)+(-0.5,-0.5)$) -- +(-1.75,-2.25);
    \draw[decoration={markings, mark=at position 0.5 with {\arrow{latex}}}, postaction={decorate}] ($(G)+(-0.5,-0.5)$) -- +(-1.75,-2.25);
    
    %Labels
    \node[below right] at ($(A)!0.5!(B)$) {$e_1$};
    \node[above left] at ($(A)!0.5!(B)+(-1,0)$) {$e_1$};
    \node[below right] at ($(B)!0.5!(C)$) {$e_2$};
    \node[above left] at ($(B)!0.5!(C)+(-0.5,0.5)$) {$e_2$};
    \node[below left] at ($(C)!0.5!(D)$) {$e_3$};
    \node[above right] at ($(C)!0.5!(D)+(0.5,0.5)$) {$e_3$};
    \node[below left] at ($(D)!0.5!(E)$) {$e_4$};
    \node[above right] at ($(D)!0.5!(E)+(1,0)$) {$e_4$};
    \node[above left] at ($(E)!0.5!(F)$) {$e_5$};
    \node[below right] at ($(E)!0.5!(F)+(0.5,-0.5)$) {$e_5$};
    \node[above left] at ($(F)!0.5!(G)$) {$e_6$};
    \node[below right] at ($(F)!0.5!(G)+(0,-0.5)$) {$e_6$};
    \node[above right] at ($(A)!0.5!(G)$) {$e_7$};
    \node[below left] at ($(A)!0.5!(G)+(-0.5,-0.5)$) {$e_7$};
    \node[above right] at ($(A)+(-1,0)+ 0.5*(-2,1)$) {$r_1^-$};
    \node[below left] at ($(B)+(-1,0)+ 0.5*(-2,1)$) {$r_1^+$};
    \node[below left] at ($(C)+(-0.5,0.5)+ 0.5*(-1.5,1.5)$){$r_2^+$};
    \node[above right] at ($(B)+(-0.5,0.5)+ 0.5*(-1.5,1.5)$){$r_2^-$};
    \node[below right] at ($(C)+(0.5,0.5)+ 0.5*(0.5,1.5)$){$r_3^-$};
    \node[above left] at ($(D)+(0.5,0.5)+ 0.5*(0.5,1.5)$){$r_3^+$};
    \node[above left] at ($(E)+(1,0)+ 0.5*(1.5,1.5)$){$r_4^+$};
    \node[below right] at ($(D)+(1,0)+ 0.5*(1.5,1.5)$){$r_4^-$};
    \node[below left] at ($(E)+(0.5,-0.5)+ 0.5*(1.5,-1.5)$){$r_5^-$};
    \node[above right] at ($(F)+(0.5,-0.5)+ 0.5*(1.5,-1.5)$){$r_5^+$};
    \node[below left] at ($(F)+(0,-0.5)+ 0.5*(1,-2)$){$r_6^-$};
    \node[above right] at ($(G)+(0,-0.5)+ 0.5*(1,-2)$){$r_6^+$};
    \node[above left] at ($(G)+(-0.5,-0.5)+ 0.5*(-1.75,-2.25)$){$r_7^-$};
    \node[below right] at ($(A)+(-0.5,-0.5)+ 0.5*(-1.75,-2.25)$){$r_7^+$};
    
    \begin{scope}[shift={(2,3)}]
    \draw [dashed, pattern=north west lines, pattern color=pallido] ($(12.5,0) + (42:2)$) arc [start angle = 42,end angle = -312,radius = 2];
    \fill[white] (12.5,0) -- ($(12.5,0) + (42:2)$) arc [start angle = 42,end angle = 48,radius = 2] -- (12.5,0);
    \draw (12.5,0) -- +(42:2);
    \draw (12.5,0) -- +(48:2);
    \node[below right] at ($(12.5,0)+0.5*(42:2)$) {$\widetilde{r_1}^-$};
    \node[above left] at ($(12.5,0)+0.5*(48:2)$){$\widetilde{r_1}^+$};
    \draw [dashed, pattern=north west lines, pattern color=pallido] ($(17.5,0) + (42:2)$) arc [start angle = 42,end angle = -312,radius = 2];
    \fill[white] (17.5,0) -- ($(17.5,0) + (87:2)$) arc [start angle = 42,end angle = 48,radius = 2] -- (17.5,0);
    \draw (17.5,0) -- +(42:2);
    \draw (17.5,0) -- +(48:2);
    \node[below right] at ($(17.5,0)+0.5*(42:2)$) {$\widetilde{r_2}^-$};
    \node[above left] at ($(17.5,0)+0.5*(48:2)$){$\widetilde{r_2}^+$};
    \end{scope}
    
    \begin{scope}[shift={(2,-3)}]
    \draw [dashed, black, pattern=north west lines, pattern color=pallido] (12,0) circle (2);
    \fill[white] (11,-1) .. controls (12.1, -0.1) .. (13,1) .. controls (11.9, 0.1) .. (11,-1);
    \draw (11,-1) .. controls (12.1, -0.1) .. (13,1);
    \draw[purple] (11,-1) .. controls (11.9, 0.1) .. (13,1);
    \fill (11,-1) circle (1.5pt);
    \node[below right] at (11,-1) {$P_8$};
    \node[above left] at (11.9, 0.1) {$l_8^+$};
    \node[below right] at (12.1, -0.1) {$l_8^-$}; 
    \node[below] at (12,-2.1) {$(X_8, \omega_8)$};
    \node at (15,0) {$\ldots \ldots$};
    \draw [dashed, black, pattern=north west lines, pattern color=pallido] (18,0) circle (2);
    \fill[white] (17,-1) .. controls (18.1, -0.1) .. (19,1) .. controls (17.9, 0.1) .. (17,-1);
    \draw[cyan] (17,-1) .. controls (18.1, -0.1) .. (19,1);
    \fill (17,-1) circle (1.5pt);
    \node[below right] at (17,-1) {$P_n$};
    \draw (17,-1) .. controls (17.9, 0.1) .. (19,1);
    \node[above left] at (17.9, 0.1) {$l_n^+$};
    \node[below right] at (18.1, -0.1) {$l_n^-$};
    \node[below] at (18,-2.1) {$(X_n, \omega_n)$};
    \end{scope}
    \end{tikzpicture}
    \caption{An example to illustrate the steps involved in the construction of the translation structure when $\textsf{Im}(\chi_n)$ is not contained in the $\mathbb{R}$-span of some $c \in \mathbb{C}$.  The figure depicts an example where $\vert J \vert = 7$. With the only exception of $p_4=3$, $p_j = 1$ for all $j \in J$. In the top right, two copies of $\mathbb{E}^2$ are attached to the surface determined by the polygons and the infinite half strips by the identifications $r_4^+ \sim \widetilde{r_1}^-$, $\widetilde{r_1}^+ \sim \widetilde{r_2}^-$, $\widetilde{r_2}^+ \sim r_4^-$. In the bottom right, we have the surfaces $X_i$ as described with the zeros being located at $P_i$ and the identifications $l_i^- \sim l_{i+1}^+$ except for one pair which are identified with the pair of segments along $e_2$ as shown. The sides of the half strips are glued as done for proving Proposition \ref{propb} unless otherwise mentioned.}
    \label{fig:nontrivholsphere1}
\end{figure}

\noindent \textbf{Case 2.} We now consider the case when $\textsf{Im}(\chi_n)$ is  contained in the $\mathbb{R}$-span of some $c \in \mathbb{C}$, but there does not exist any $\widetilde{c}$ such that $\textsf{Im}(\chi_n)$ is contained in the $\mathbb{Q}$-span of $\widetilde{c}$. For convenience, we assume that $c$ is real. In this case, the polygon as described earlier is degenerate. As in the proof of Proposition \ref{propb}, we assume that $\{ \chi_n(\gamma_1), \ldots, \chi_n(\gamma_{k-1}) \}$ are positive and $\{ \chi_n(\gamma_{k}), \ldots, \chi_n(\gamma_m) \}$ are negative. Define $s_i := \chi_n(\gamma_i)$, for $1 \leq i \leq k-1$ and $t_j := -\chi_n(\gamma_{m-j+1})$, for $1 \leq j \leq m+1-k$. Then, $s_1, \ldots, s_{k-1}$ and $t_1, \ldots, t_{m+1-k}$ satisfy the hypothesis of Lemma \ref{lem:irrmult} and we reorder the indices of $\gamma_1, \ldots, \gamma_m$ such that the conclusion of Lemma \ref{lem:irrmult} is satisfied for $s_1, \ldots, s_{k-1}$ and $t_1, \ldots, t_{m+1-k}$.  Having reordered the indices of $\gamma_1, \ldots, \gamma_m$, let $\{ \zeta_i \}_{1 \leq i \leq m}$ be as in the proof of Proposition \ref{propb} with $\zeta_1 = 0$ and consider the infinite strip $\{ z \in \mathbb{C} \,\, \vert \,\, 0 < \Re(z) < \zeta_k \}$. In this infinite strip, we make half infinite vertical slits pointing upwards at the points $\zeta_2, \ldots, \zeta_{k-1}$ and half infinite vertical slits pointing downwards at the points $\zeta_{k+1}, \ldots, \zeta_{m}$ as in Figure \ref{fig:qspan}. Gluing $r_i^+$ and $r_i^-$ gives us the surface $(Z, \xi)$ in which we split the zero as in case-1 to complete the construction. 
\noindent A crucial requirement that ensures that $(Z, \xi)$ is a surface, is that $\zeta_{\lambda} \neq \zeta_{\mu}$ for $\lambda \neq \mu$, and this is ensured by the conclusion of Lemma \ref{lem:irrmult} (\textit{c.f.} Figure \ref{fig:29}). This is easy to see when $1 \leq \lambda, \mu \leq k$ or $k \leq \lambda, \mu \leq m$.  For the remaining cases, we may assume without loss of generality that $1 \leq \lambda \leq k$ and $k+1 \leq \mu \leq m$. Then, $\zeta_{\lambda} = \zeta_{\mu}$ implies
\begin{align}
    \sum_{i=1}^{\lambda-1} \chi_n(\gamma_i) &= \sum_{j=1}^{\mu-1} \chi_n(\gamma_j) \\
    &= - \bigg( \sum_{j=\mu }^{m} \chi_n(\gamma_j) \bigg) \\
    &= \sum_{j=1}^{m +1 - \mu}  - \chi_n(\gamma_{m+1-j})
\end{align}
which is not possible by our choice of ordering.

\begin{figure}[!h]
     \centering
     \begin{tikzpicture}[scale=1, every node/.style={scale=0.75}]
        \definecolor{pallido}{RGB}{221,227,227}
         \fill[pattern=north west lines, pattern color=pallido] (0,3) -- (0,-3) -- (3,-3) -- (3, 3);
         \fill[pattern=north west lines, pattern color=pallido] (5,3) -- (5,-3) -- (8.5,-3) -- (8.5, 3);
         \fill[white] ($(1,0) + (92:3)$)--(1,0)-- ++(88:3);
         \fill[white] ($(2,0) + (-92:3)$)--(2,0)-- ++(-88:3);
         \fill[white] ($(6,0) + (92:3)$)--(6,0)-- ++(88:3);
         \fill[white] ($(7,0) + (-92:3)$)--(7,0)-- ++(-88:3);
         \draw[decoration={markings, mark=at position 0.6 with {\arrow{latex}}}, postaction={decorate}] (0,0) -- (0,3);
         \draw[decoration={markings, mark=at position 0.6 with {\arrow{latex}}}, postaction={decorate}] (0,0) -- (0,-3);
         \draw[decoration={markings, mark=at position 0.6 with {\arrow{latex}}}, postaction={decorate}] (1,0) -- ++(92:3);
         \draw[decoration={markings, mark=at position 0.6 with {\arrow{latex}}}, postaction={decorate}] (1,0) -- ++(88:3);
         \draw[decoration={markings, mark=at position 0.6 with {\arrow{latex}}}, postaction={decorate}] (2,0) -- ++(-88:3);
         \draw[decoration={markings, mark=at position 0.6 with {\arrow{latex}}}, postaction={decorate}] (2,0) -- ++(-92:3);
         \node at (4,0) {$\ldots$};
         \draw[decoration={markings, mark=at position 0.6 with {\arrow{latex}}}, postaction={decorate}] (6,0) -- ++(92:3);
         \draw[decoration={markings, mark=at position 0.6 with {\arrow{latex}}}, postaction={decorate}] (6,0) -- ++(88:3);
         \draw[decoration={markings, mark=at position 0.6 with {\arrow{latex}}}, postaction={decorate}] (7,0) -- ++(-88:3);
         \draw[decoration={markings, mark=at position 0.6 with {\arrow{latex}}}, postaction={decorate}] (7,0) -- ++(-92:3);
         \draw[decoration={markings, mark=at position 0.6 with {\arrow{latex}}}, postaction={decorate}] (8.5,0) -- (8.5,-3);
        \draw[decoration={markings, mark=at position 0.6 with {\arrow{latex}}}, postaction={decorate}] (8.5,0) -- (8.5,3);
         \fill (0,0) circle (1pt);
         \fill (1,0) circle (1pt);
         \fill (2,0) circle (1pt);
         \fill (6,0) circle (1pt);
         \fill (7,0) circle (1pt);
         \fill (8.5,0) circle (1pt);
         \node[right] at (0, 1.5) {$r_1^-$};
         \node[left] at ($(1,0) + (92:1.5)$) {$r_1^+$};
         \node[right] at ($(1,0) + (88:1.5)$) {$r_2^-$};
         \node[left] at ($(6,0) + (92:1.5)$) {$r_{k-2}^+$};
         \node[right] at ($(6,0) + (88:1.5)$) {$r_{k-1}^-$};
        \node[left] at (8.5, 1.5) {$r_{k-1}^+$};
        \node[right] at (0, -1.5) {$r_m^+$};
         \node[left] at ($(2,0) + (-92:1.5)$) {$r_m^-$};
         \node[right] at ($(2,0) + (-88:1.5)$) {$r_{m-1}^+$};
         \node[left] at ($(7,0) + (-92:1.5)$) {$r_{k+1}^-$};
         \node[right] at ($(7,0) + (-88:1.5)$) {$r_{k}^+$};
        \node[left] at (8.5, -1.5) {$r_{k}^-$};
        \node[left] at (0,0) {$\zeta_1$};
        \node[below] at (1,0) {$\zeta_2$};
        \node[below] at (6,0) {$\zeta_{k-1}$};
        \node[right] at (8.5,0) {$\zeta_k$};
        \node[above] at (7,0) {$\zeta_{k+1}$};
        \node[above] at (2,0) {$\zeta_m$};
     \end{tikzpicture}
     \caption{When $\textsf{Im}(\chi_n)$ is not contained in the $\mathbb{Q}$-span of some $c \in \mathbb{C}$} \label{fig:qspan}
 \end{figure}
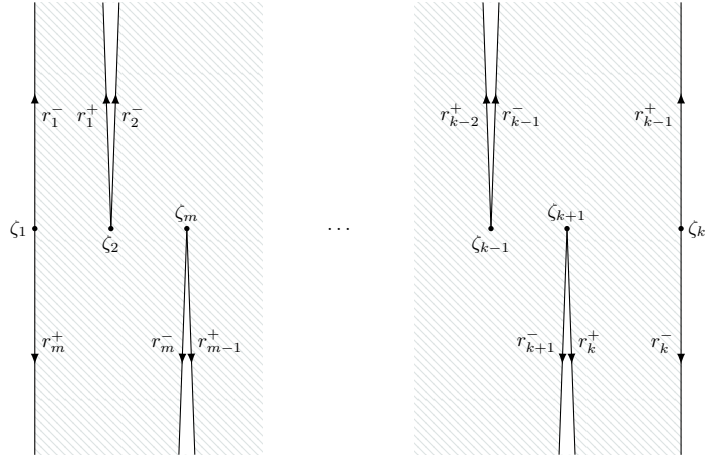

\text{}\\
\noindent \textbf{Case 3.} We now assume $\textsf{Im}(\chi_n)$ is contained in the $\mathbb{R}$-span of some $c \in \mathbb{C}$ and not all the $p_i$'s are 1. In this case, the polygon obtained previously turns out degenerate. We may again assume that $\theta = 0$, that is, the degenerate polygon is along the real axis. The construction now splits into two sub-cases.\\

\noindent The first sub-case occurs when $\vert J \vert = m \neq n$. We may note that the condition about the order of all poles not being $1$ is automatically satisfied. Consider the half strips $\mathcal{S}_j$, as already done in the proof of Proposition \ref{propb} in section \ref{fsps}, with base $e_j$ and the infinite rays $r_j^+$ and $r_j^-$ pointing upwards for all $j=1,\dots, n$ and pointing downwards for $k+1 \leq j \leq n$. Additionally, we consider the surfaces $(X_i,\omega_i)$, where $X_i=\cp$ and  $\omega_i=z^{p_i-2}dz$ for every $i \notin J$, with slits made on them along $\overline{\zeta_{1}\,\zeta_{k}}$, where $\zeta_i$ are as defined in section \ref{fsps}. As done in section \ref{fsps}, we label the two sides of the slit $l_i^+$ and $l_i^-$. For $1 \leq j \leq k$, we identify $e_j$ with adjacent sub-segments of $l_n^-$. For $k+1 \leq j \leq n$, we identify $e_j$ with adjacent sub-segments of $l_{m+1}^+$. For the other slits, we identify $l_i^-$ with $l_{i+1}^+$ for $m+1 \leq i \leq n-1$. $r_j^+$ and $r_j^-$ are identified as before. In Figure \ref{fig:nontrivholsphere2}, we see an example where $\vert J \vert = 5$, with $k=3$. We now have a differential with poles of order $p_i$ for $i \notin J$. For $i \in J$, we need to glue $p_i - 1$ many copies of $\mathbb{E}^2$ along $r_i^+ = r_i^-$ by cutting along infinite rays as done in subsection \ref{lgp}. The differential now has all poles with orders and residues as required and a single zero. Splitting this zero locally completes the construction.\\

%First, we look at a sequential slit construction of surfaces determined by the differential $z^{p_i-2}dz$ on the once punctured sphere, for $i \notin J$. 

%The slit is made across the segment determined by the complex number $\overline{\zeta_{1}\,\zeta_{k}}$ with one end at the zero. We pick one pair of slit edges in the sequential slit construction that are identified, say $l^+$ and $l^-$ with $l^+$ being the left edge and $l^-$ being the right edge. Since $\overline{\zeta_{1}\,\zeta_{k}}$ is oriented along the positive real axis, this means that $l^+$ and $l^-$ are horizontal, and before being identified, $l^+$ has the surface above it and $l^-$ has the surface below it. Figure \ref{fig:nontrivholsphere2} depicts 
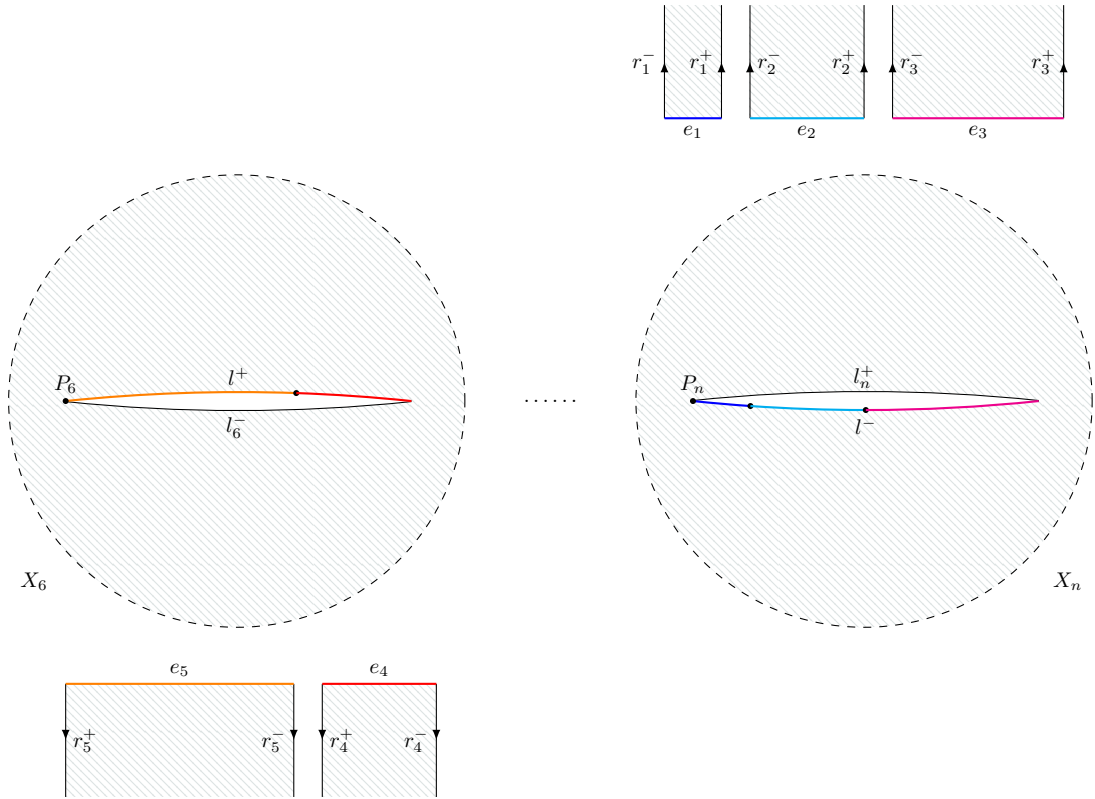
\begin{figure}[h!]
\centering
\begin{tikzpicture}[scale=0.75, every node/.style={scale=0.8}]
\definecolor{pallido}{RGB}{221,227,227}

\begin{scope}[shift = {(11,0)}]
\draw [dashed, black, pattern=north west lines, pattern color=pallido] (0,0) circle (4);
\draw[thick] (-3,0) arc [start angle = 96,end angle = 84,radius = 29];
\fill[white] (-3,0) arc [start angle = 96,end angle = 84,radius = 29] arc [start angle = -84,end angle = -96,radius = 29];
\draw[thick, blue] (-3,0) arc [start angle = -96,end angle = -94,radius = 29];
\coordinate (A) at ($(-3,0) + (84:29) + (-94:29)$);
\fill (A) circle (1.5pt);
\draw[thick, cyan] (A) arc [start angle = -94,end angle = -90,radius = 29];
\coordinate (B) at ($(A)+ (86:29) + (-90:29)$);
\fill (B) circle (1.5pt);
\node[below] at (B) {$l^-$};
\draw[thick, magenta] (B) arc [start angle = -90,end angle = -84,radius = 29];
\fill (-3,0) circle (1.5pt);
\node[above] at (-3,0) {$P_n$};
\node[below, right] at (-45:4.5) {$X_n$};
\node[above] at (0, 0.1) {$l_n^+$};
%\node[below] at (0, -0.1) {$l_0^-$}; 

\path[pattern=north west lines, pattern color=pallido] (-3.5,7) -- (-3.5,5) -- (-2.5,5) -- (-2.5,7);
\draw[decoration={markings, mark=at position 0.5 with {\arrow{latex}}}, postaction={decorate}] (-3.5,5) -- (-3.5,7);
\draw[decoration={markings, mark=at position 0.5 with {\arrow{latex}}}, postaction={decorate}] (-2.5,5) -- (-2.5,7);
\draw[thick, blue] (-3.5,5) -- (-2.5,5);
\node[left] at (-3.5, 6) {$r_1^-$};
\node[left] at (-2.5, 6) {$r_1^+$};
\node[below] at (-3,5) {$e_1$};

\path[pattern=north west lines, pattern color=pallido] (-2,7) -- (-2,5) -- (0,5) -- (0,7);
\draw[decoration={markings, mark=at position 0.5 with {\arrow{latex}}}, postaction={decorate}] (-2,5) -- (-2,7);
\draw[decoration={markings, mark=at position 0.5 with {\arrow{latex}}}, postaction={decorate}] (-0,5) -- (-0,7);
\draw[thick, cyan] (-2,5) -- (0,5);
\node[right] at (-2, 6) {$r_2^-$};
\node[left] at (0, 6) {$r_2^+$};
\node[below] at (-1,5) {$e_2$};

\path[pattern=north west lines, pattern color=pallido] (0.5,7) -- (0.5,5) -- (3.5,5) -- (3.5,7);
\draw[decoration={markings, mark=at position 0.5 with {\arrow{latex}}}, postaction={decorate}] (0.5,5) -- (0.5,7);
\draw[decoration={markings, mark=at position 0.5 with {\arrow{latex}}}, postaction={decorate}] (3.5,5) -- (3.5,7);
\draw[thick, magenta] (0.5,5) -- (3.5,5);
\node[right] at (0.5, 6) {$r_3^-$};
\node[left] at (3.5, 6) {$r_3^+$};
\node[below] at (2,5) {$e_3$};
\end{scope}

\node at (5.5,0) {$\ldots \ldots$};

\begin{scope}[shift = {(-11,0)}]
\draw [dashed, black, pattern=north west lines, pattern color=pallido] (11,0) circle (4);
\draw[thick] (8,0) arc [start angle = -96,end angle = -84,radius = 29];
\fill[white] (8,0) arc [start angle = 96,end angle = 84,radius = 29] arc [start angle = -84,end angle = -96,radius = 29];
\draw[thick, orange] (8,0) arc [start angle = 96,end angle = 88,radius = 29];
\coordinate (C) at ($(8,0) + (-84:29) + (88:29)$);
\fill (C) circle (1.5pt);
\draw[thick, red] (C) arc [start angle = 88,end angle = 84,radius = 29];
\coordinate (D) at ($(8,0) + (-84:29) + (90:29)$);
\node[above] at (D) {$l^+$};
\node[below] at (11,-0.1) {$l_6^-$};
\fill (8,0) circle (1.5pt);
\node[above] at (8,0) {$P_6$};
\node[below, left] at ($(11,0)+(225:4.5)$) {$X_6$};

\path[pattern=north west lines, pattern color=pallido] (8,-7) -- (8,-5) -- (12,-5) -- (12,-7);
\draw[decoration={markings, mark=at position 0.5 with {\arrow{latex}}}, postaction={decorate}] (8,-5) -- (8,-7);
\draw[decoration={markings, mark=at position 0.5 with {\arrow{latex}}}, postaction={decorate}] (12,-5) -- (12,-7);
\draw[thick, orange] (8,-5) -- (12,-5);
\node[right] at (8, -6) {$r_5^+$};
\node[left] at (12, -6) {$r_5^-$};
\node[above] at (10,-5) {$e_5$};

\path[pattern=north west lines, pattern color=pallido] (14.5,-7) -- (14.5,-5) -- (12.5,-5) -- (12.5,-7);
\draw[decoration={markings, mark=at position 0.5 with {\arrow{latex}}}, postaction={decorate}] (14.5,-5) -- (14.5,-7);
\draw[decoration={markings, mark=at position 0.5 with {\arrow{latex}}}, postaction={decorate}] (12.5,-5) -- (12.5,-7);
\draw[thick, red] (14.5,-5) -- (12.5,-5);
\node[right] at (12.5, -6) {$r_4^+$};
\node[left] at (14.5, -6) {$r_4^-$};
\node[above] at (13.5,-5) {$e_4$};
\end{scope}

%\draw (8,0) -- (14,0);
%\node[above] at (7, 0.1) {$l_n^+$};
%\node[below] at (7, -0.1) {$l_n^-$}; 

\end{tikzpicture}
\caption{An example to illustrate the construction of the translation structure when $\textsf{Im}(\chi_n)$ lies along the real axis and $\vert J \vert \neq n$. The zero of the differential in $X_i$ is located at $P_i$.}
\label{fig:nontrivholsphere2}
\end{figure}

\noindent The second sub-case occurs when $\vert J \vert = n$. Here, we take a copy of $\mathbb{E}^2$ and make a slit along the segment $\overline{\zeta_{1}\,\zeta_{k}}$. We label the two sides of the slit $l^+$ and $l^-$ and attach half strips as before. Now, suppose $m$ is an index such that $p_m \geq 2$. Then the rays $r_m^+$ and $r_m^-$ are both pointing upwards or both pointing downwards. Assume that they point upwards. We consider a ray $\widetilde{r}$ in the upward direction starting from some point on $l^+$ which is not identified with an end point of some $e_j$. Figure \ref{fig:nontrivholsphere3} shows a possible location for $\widetilde{r}$ when $\vert J \vert = n = 5$ and at least one of $p_1, p_2$ and $p_3$ is different from 1. After making a slit along $\widetilde{r}$, we obtain two rays $\widetilde{r}^+$ and $\widetilde{r}^-$. We identify $r_m^+$ with $\widetilde{r}^-$ and $r_m^-$ with $\widetilde{r}^+$. For all other indices $j$ except $m$, we identify $r_j^+$ with $r_j^-$. The bases $e_j$ are identified with the edges of the slit in an appropriate manner as before. We can perform a similar construction even when $r_m^+$ and $r_m^-$ point downwards.

\begin{figure}[h!] 
\centering
\begin{tikzpicture}[scale=0.85, every node/.style={scale=0.9}]
\definecolor{pallido}{RGB}{221,227,227}

\draw [dashed, pattern=north west lines, pattern color=pallido] (87:4) arc [start angle = 87,end angle = -267,radius = 4];
%\draw [dashed, black, pattern=north west lines, pattern color=pallido] (0,0) circle (4);
\coordinate (D1) at ($(-3,0) + (-84:29) + (90.1:29)$);
\coordinate (D2) at ($(-3,0) + (-84:29) + (89.9:29)$);
\fill[white] (D2) -- (87:4) arc [start angle = 87,end angle = 93,radius = 4] -- (D1);
%\fill (D) circle (1.5pt);
\draw[decoration={markings, mark=at position 0.5 with {\arrow{latex}}}, postaction={decorate}] (D2) -- +(87:2);
\node[right] at ($(D2)+(87:1)$) {$\widetilde{r}^-$};
\node[left] at ($(D1)+(93:1)$) {$\widetilde{r}^+$};
\draw[decoration={markings, mark=at position 0.5 with {\arrow{latex}}}, postaction={decorate}] (D1) -- +(93:2);
\fill[white] (-3,0) arc [start angle = 96,end angle = 84,radius = 29] arc [start angle = -84,end angle = -96,radius = 29];
\draw[thick, orange] (-3,0) arc [start angle = 96,end angle = 90.1,radius = 29];
\draw[thick, orange] (D2) arc [start angle = 89.9,end angle = 88,radius = 29];
\coordinate (C) at ($(-3,0) + (-84:29) + (88:29)$);
\fill (C) circle (1.5pt);
\draw[thick, red] (C) arc [start angle = 88,end angle = 84,radius = 29];
\draw[thick, blue] (-3,0) arc [start angle = -96,end angle = -94,radius = 29];
\coordinate (A) at ($(-3,0) + (84:29) + (-94:29)$);
\fill (A) circle (1.5pt);
\draw[thick, cyan] (A) arc [start angle = -94,end angle = -90,radius = 29];
\coordinate (B) at ($(A)+ (86:29) + (-90:29)$);
\fill (B) circle (1.5pt);
\draw[thick, magenta] (B) arc [start angle = -90,end angle = -84,radius = 29];
\node[above right] at (-3, 0.1) {$l^+$};
\node[below right] at (-3, -0.1) {$l^-$}; 
%\node at (5.5,0) {$\ldots \ldots$};

%\draw [dashed, pattern=north west lines, pattern color=pallido] ($(11,0)+(87:4)$) arc [start angle = 87,end angle = -267,radius = 4];
%\coordinate (E1) at ($(8,0) + (-84:29) + (90.1:29)$);
%\coordinate (E2) at ($(8,0) + (-84:29) + (89.9:29)$);
%\fill[white] (E2) -- ($(11,0)+(87:4)$) arc [start angle = 87,end angle = 93,radius = 4] -- (E1);
%\fill (D) circle (1.5pt);
%\draw (E2) -- +(87:2);
%\draw (E1) -- +(93:2);

%\draw[thick] (8,0) arc [start angle = -96,end angle = -84,radius = 29];
%\fill[white] (8,0) arc [start angle = 96,end angle = 84,radius = 29] arc [start angle = -84,end angle = -96,radius = 29];
%\draw[thick, orange] (8,0) arc [start angle = 96,end angle = 90.1,radius = 29];
%\draw [thick, orange] (E2) arc [start angle = 89.9,end angle = 84,radius = 29];

%\draw (8,0) -- (14,0);
%\node[above] at (7, 0.1) {$l_n^+$};
%\node[below] at (7, -0.1) {$l_n^-$}; 

\begin{scope}[shift = {(-8.5,-4)}]
\path[pattern=north west lines, pattern color=pallido] (-3.5,7) -- (-3.5,5) -- (-2.5,5) -- (-2.5,7);
\draw[decoration={markings, mark=at position 0.5 with {\arrow{latex}}}, postaction={decorate}] (-3.5,5) -- (-3.5,7);
\draw[decoration={markings, mark=at position 0.5 with {\arrow{latex}}}, postaction={decorate}] (-2.5,5) -- (-2.5,7);
\draw[thick, blue] (-3.5,5) -- (-2.5,5);
\node[left] at (-3.5, 6) {$r_1^-$};
\node[left] at (-2.5, 6) {$r_1^+$};
\node[below] at (-3,5) {$e_1$};

\path[pattern=north west lines, pattern color=pallido] (-2,7) -- (-2,5) -- (0,5) -- (0,7);
\draw[decoration={markings, mark=at position 0.5 with {\arrow{latex}}}, postaction={decorate}] (-2,5) -- (-2,7);
\draw[decoration={markings, mark=at position 0.5 with {\arrow{latex}}}, postaction={decorate}] (-0,5) -- (-0,7);
\draw[thick, cyan] (-2,5) -- (0,5);
\node[right] at (-2, 6) {$r_2^-$};
\node[left] at (0, 6) {$r_2^+$};
\node[below] at (-1,5) {$e_2$};

\path[pattern=north west lines, pattern color=pallido] (0.5,7) -- (0.5,5) -- (3.5,5) -- (3.5,7);
\draw[decoration={markings, mark=at position 0.5 with {\arrow{latex}}}, postaction={decorate}] (0.5,5) -- (0.5,7);
\draw[decoration={markings, mark=at position 0.5 with {\arrow{latex}}}, postaction={decorate}] (3.5,5) -- (3.5,7);
\draw[thick, magenta] (0.5,5) -- (3.5,5);
\node[right] at (0.5, 6) {$r_3^-$};
\node[left] at (3.5, 6) {$r_3^+$};
\node[below] at (2,5) {$e_3$};
\end{scope}

\begin{scope}[shift = {(-19.5,4)}]
\path[pattern=north west lines, pattern color=pallido] (8,-7) -- (8,-5) -- (12,-5) -- (12,-7);
\draw[decoration={markings, mark=at position 0.5 with {\arrow{latex}}}, postaction={decorate}] (8,-5) -- (8,-7);
\draw[decoration={markings, mark=at position 0.5 with {\arrow{latex}}}, postaction={decorate}] (12,-5) -- (12,-7);
\draw[thick, orange] (8,-5) -- (12,-5);
\node[right] at (8, -6) {$r_5^-$};
\node[left] at (12, -6) {$r_5^+$};
\node[above] at (10,-5) {$e_5$};

\path[pattern=north west lines, pattern color=pallido] (14.5,-7) -- (14.5,-5) -- (12.5,-5) -- (12.5,-7);
\draw[decoration={markings, mark=at position 0.5 with {\arrow{latex}}}, postaction={decorate}] (14.5,-5) -- (14.5,-7);
\draw[decoration={markings, mark=at position 0.5 with {\arrow{latex}}}, postaction={decorate}] (12.5,-5) -- (12.5,-7);
\draw[thick, red] (14.5,-5) -- (12.5,-5);
\node[right] at (12.5, -6) {$r_4^-$};
\node[left] at (14.5, -6) {$r_4^+$};
\node[above] at (13.5,-5) {$e_4$};
\end{scope}
\end{tikzpicture}
\caption{An example to illustrate the construction of the translation structure when $\textsf{Im}(\chi_n)$ lies along the real axis and $\vert J \vert = n$.}
\label{fig:nontrivholsphere3}
\end{figure}
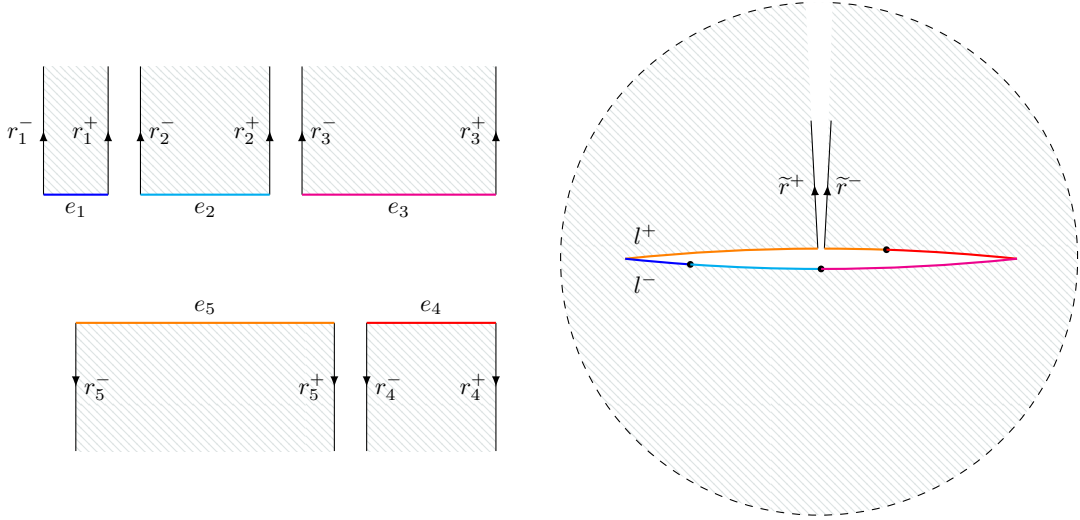

\noindent At this stage, we have a meromorphic differential with a single zero and the desired holonomy around the punctures. However,  all poles but one have order 1. To increase the order at the poles, we now glue sufficiently many copies of $\mathbb{E}^2$ along $r_j^+ = r_j^-$ (along $r_m^+ = \widetilde{r}^-$ when $j=m$) and denote the resulting surface by $(Z,\xi)$. We split the single zero of $(Z,\xi)$ to get zeros of required orders and finish the construction. 
\end{proof}

\begin{rmk} In fact, the above construction also works when $\textsf{Im}(\chi_n) = \mathbb{Z}$, and the prescribed integer holonomies around the punctures  satisfy the combinatorial condition  \eqref{combcond}, where $s_1,s_2,\ldots,s_n$ are the holonomies around the positive punctures, and $t_1,t_2,\ldots, t_m$ are the absolute values of the holonomies around the negative punctures.   However,  this is subsumed by the necessary and sufficient criterion provided by Theorem \ref{main:thme} for the case of integral holonomy. 
\end{rmk}

\section{Positive genus surfaces with non-trivial holonomy} \label{sec:gennontrivhol}
\noindent We now extend our main Theorem \ref{mainthm} for surfaces of positive genus. In particular, we shall provide conditions for a non-trivial representation $\chi$ to appear as the holonomy of some translation surface with prescribed zeros and poles.

\begin{thm}\label{thm:nthpgs}
Given a representation $\chi\in\textsf{\emph{Hom}}(\Gamma_{g,n}, \mathbb{C})$ and positive integers $(p_1, p_2, \ldots, p_n)$ and $(d_1, d_2, \ldots d_k)$ satisfying the following requirements
\begin{enumerate}
    \item One of the following holds: 
    \begin{itemize}
        \item[i.] The $\chi_n$ determined by $\chi$ is trivial.
        \item[ii.] At least one of $p_1, p_2, \ldots p_n$ is different from 1.
        \item[iii.] $\textsf{\emph{Im}}(\chi_n)$ is not contained in the $\mathbb{Q}$-span of some $c \in \mathbb{C}$.
        \item[iv.] $\textsf{\emph{Im}}(\chi)$ is not contained in the $\mathbb{Q}$-span of some $c \in \mathbb{C}$.
    \end{itemize}
    \item $p_i \geq 2$ whenever $\chi_n (\gamma_i) =0$, and
    \item $\displaystyle\sum_{j=1}^k d_j = \sum_{i=1}^n p_i + 2g-2$,
\end{enumerate}
then $\chi$ appears as the holonomy of a translation structure on $S_{g,n}$ induced by a meromorphic  differential on $S_{g,n}$  with zeros of orders $d_1,d_2,\ldots, d_k$ and a  pole of order $p_i$ at the puncture enclosed by the curve $\gamma_i$.
\end{thm}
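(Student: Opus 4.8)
The plan is to reduce Theorem~\ref{thm:nthpgs} to the punctured-sphere cases already established (Theorem~\ref{main:thmb} for trivial $\chi_n$ and Proposition~\ref{hgc:mainprop} for non-trivial $\chi_n$), together with the handle-attachment surgeries from sections~\ref{htc} and~\ref{surg}. Recall from \eqref{eqn:tworeps} that $\chi$ determines $\chi_g:\Gamma_g\to\C$ and $\chi_n:\Gamma_{0,n}\to\C$, with $\chi$ completely determined by the pair $(\chi_g,\chi_n)$. The strategy is: (a) first construct a translation structure on a punctured sphere $S_{0,n'}$ realizing $\chi_n$ with a carefully chosen set of zeros and poles so that there is ``room'' for $g$ handles; (b) attach $g$ handles realizing $\chi_g$ using the slit/handle constructions; (c) re-split zeros at the end so that the final orders are exactly $(d_1,\dots,d_k)$, subject to the degree condition $\sum d_j=\sum p_i+2g-2$.

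The first step I would take is to distinguish cases according to which item of hypothesis~(1) holds, mirroring the organization of Proposition~\ref{hgc:mainprop}. In case~(i), $\chi_n$ is trivial, so $\chi=\chi_g$ is supported on the closed-surface part, and the whole problem becomes: realize $\chi_g$ on $S_g$ with poles $p_i$ at the $n$ punctures and zeros $(d_1,\dots,d_k)$. If $\chi_g$ satisfies Haupt's conditions this is essentially the Bainbridge--Johnson--Judge--Park / Le Fils refinement plus a slit gluing of a punctured-sphere piece with trivial holonomy (Theorem~\ref{main:thmb}); otherwise $\chi_g$ has non-positive volume or lattice image and one uses the handle constructions of section~\ref{htc} as in section~\ref{gos}, but now tracking zeros/poles. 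In cases~(ii),~(iii),~(iv), $\chi_n$ is non-trivial: here I would first apply Proposition~\ref{hgc:mainprop} to produce a translation structure $(Y,\eta)$ on $S_{0,n}$ realizing $\chi_n$ with poles $p_1,\dots,p_n$ and a single zero of order $\sum p_i-2$ (deferring the final zero-splitting). The point of keeping one big zero is that a zero of order $D$ has angle $2\pi(D+1)$ and hence admits many geodesic twin paths and saddle connections with the same developed image, giving exactly the room needed to attach handles of arbitrary prescribed holonomy via the ``trivial handle'' and ``elementary handle'' constructions of subsections~\ref{trhan}--\ref{elhan} and the negative/positive-volume handle constructions of subsections on handles.

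For step~(b), the handles realizing $\chi_g$ are attached one pair $\{\alpha_i,\beta_i\}$ at a time. As in section~\ref{gos}, reorder so that the handles split into those with negative volume, zero volume, and positive volume. Handles with nonzero volume are glued in via sequential slit construction with handle construction (subsection~\ref{sec:seqslithandle}), each contributing a bounded region and a controllable change to the order of the zero at the gluing point. Handles with zero volume use Lemmata~\ref{lelhan}, \ref{lelhann}, and trivial handles use the construction of subsection~\ref{trhan}; in the rational/irrational subtleties of items~(iii)--(iv) one needs the image of $\chi$ (not just $\chi_n$) to be non-rational so that the relevant fundamental parallelograms or slit segments are non-degenerate — this is precisely where item~(iv) is invoked. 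Crucially, I would arrange that after all $g$ handles are attached, the surface is a genus-$g$, $n$-punctured translation surface with poles of the prescribed orders $p_i$ and a single (large) zero whose order is forced by Gauss--Bonnet to be $\sum p_i+2g-2=\sum d_j$. The final step~(c) is then to split this zero into zeros of orders $d_1,\dots,d_k$ using the zero-splitting surgery of subsection~\ref{sec:zerosplit}, which preserves holonomy and topology; the positioning remarks of subsection~\ref{sec:zeropos} guarantee this can be done with distinct developed images if needed.

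The main obstacle I expect is item~(1)(iv) — the genuinely rational-image case where $\chi_n$ is non-trivial but all $p_i=1$ would force us outside the scope (that case is reserved for Theorem~\ref{main:thme}), while under~(iv) with $\textsf{Im}(\chi)$ non-rational one must show the handles can be made to ``fit inside'' the cylindrical ends corresponding to the simple poles. Concretely, when all poles are simple, the geometry near each puncture is an infinite cylinder (subsection~\ref{lgp}, case~2), which offers far less room than a pole of order $\ge 2$; attaching a handle of prescribed holonomy requires finding a fundamental parallelogram of $\langle\chi(\alpha_i),\chi(\beta_i)\rangle$ together with a compatible slit in the cylinder, and this is only possible after choosing a suitable symplectic basis of $\Gamma_g<\Gamma_{g,n}$ — which is why one needs the $\spz$-action lemmata of the later subsection on $\mathrm{mcg}$ actions. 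So the real work is: (1) an argument that when $\textsf{Im}(\chi)\not\subset\Q c$ one can choose a symplectic basis $\{\alpha_i,\beta_i\}$ of $\Gamma_g$ with every handle of non-zero (indeed, controllable) volume and with $\chi(\alpha_i),\chi(\beta_i)$ in ``general position'' relative to the residues of the simple poles, and (2) a geometric construction inserting each such handle into a cylindrical end by a slit construction without disturbing the already-placed handles or the other poles. Everything else — the bookkeeping of zero orders under slit and splitting surgeries, verifying the degree condition, and reducing to the already-proven sphere cases — is routine given the machinery set up in sections~\ref{surg}, \ref{htc}, \ref{fsps}, and \ref{sec:nontrivholsphere}.
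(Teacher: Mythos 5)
Your overall strategy coincides with the paper's: split into the four cases of hypothesis~(1), realize $\chi_n$ on a punctured sphere with the prescribed poles and a \emph{single} zero, attach the $g$ handles realizing $\chi_g$ after normalizing by the mapping class group action, and only at the very end split that zero into zeros of orders $d_1,\dots,d_k$. You also correctly locate the genuine difficulty (fitting handles into cylindrical ends when all poles are simple, and the need for a good symplectic basis). Two concrete gaps remain, however. First, your route through case~(i) via Haupt's theorem and the Bainbridge--Johnson--Judge--Park / Le~Fils refinement is both unnecessary and, taken literally, broken: that refinement requires $\textsf{vol}(\chi_g)>0$ and, for lattice-valued $\chi_g$, $\textsf{vol}(\chi_g)\ge(\max_i d_i+1)\,\text{Area}(\C/\Lambda)$ --- neither of which is a hypothesis of Theorem~\ref{thm:nthpgs} --- and in any case a slit gluing of a closed-surface piece to a punctured-sphere piece changes the cone angles at the slit endpoints, so the final zero orders would not be the prescribed ones. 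The paper never realizes $\chi_g$ on a closed surface: in case~(i) it starts from $(\C,z^{p_1-2}dz)$ (legitimate since hypothesis~(2) forces every $p_i\ge2$ there), places all $g$ slits and parallelograms end to end ($P_i=Q_{i-1}$) inside an embedded quadrant so that only one singular point is ever created, and splits at the end.

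Second, for cases~(iii)--(iv) you name the two pieces of ``real work'' but do not supply the mechanisms, and these are the actual content of the proof: (a)~when the ratio $\chi(\alpha_i)/\chi(\beta_i)$ is irrational, a Euclidean-algorithm sequence of Dehn twists (Lemmata~\ref{lem:irrsmallhol} and~\ref{lem:handholirr}) makes both handle periods positive and arbitrarily small, so the corresponding slits fit inside a cylinder of fixed circumference; (b)~when the handle periods are not collinear with the cylinder direction, replacing $\beta$ by $\beta+n\alpha$ for $n$ large makes the unit translates of the fundamental parallelogram disjoint, so it fits inside a single fundamental domain of the half-infinite cylinder; and (c)~in case~(iv) with $\textsf{Im}(\chi_g)$ not contained in $\R c$, one handle must be placed so as to collapse the two singular points of the degenerate-polygon construction of Proposition~\ref{propb} into one, since otherwise the resulting differential has two zeros of uncontrolled orders before the final splitting. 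Without (a)--(c) the plan does not close; with them it becomes exactly the paper's argument.
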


\noindent Before we prove the theorem, we discuss an important tool used at various times in the proof. 

\subsection{Action of the mapping class group} \label{sec:mcgaction} 
Let $\text{Mod}(S_{g,n})$ denote the mapping class group of $S_{g,n}$. For every mapping class $\varphi \in \text{Mod}(S_{g,n})$, we have an induced automorphism $\varphi_*$ of $\Gamma_{g,n}$. For a representation $\chi$ induced by a holomorphic differential, $\chi \circ \varphi_*$ is the representation induced by the pullback of the differential via $\varphi$. Thus, to prove the theorem for a given $\chi$, it is sufficient to construct a holomorphic differential on $S_{g,n}$ for which the induced representation is $\chi \circ \varphi_*$ for some $\varphi \in \text{Mod}(S_{g,n})$. We use this idea to show that the image of every handle generator under $\chi$ can be assumed to be non-zero if $\chi_g$ is non-trivial. This assumption will be crucial in the construction that follows.

%This idea of looking at $\chi \circ \varphi_*$ can be thought of as a change of basis. First, we fix a basis $\{\alpha_1, \beta_1, \alpha_2, \beta_2, \ldots, \alpha_g, \beta_g, \gamma_1, \ldots, \gamma_{n-1} \}$ of $\Gamma_{g,n}$. This allows us to identify $\textsf{\emph{Hom}}(\Gamma_{g,n}, \mathbb{C})$ with $\mathbb{C}^{2g+n-1}$. We see that if $v$ represents $\chi$ in $\mathbb{C}^{2g+n-1}$, the vector in $\mathbb{C}^{2g+n-1}$ representing $\chi \circ \varphi_*$ is  $Mv$ for some $M \in GL(2g+n-1, \mathbb{Z})$.
%The name change of basis comes from the fact that $Mv$ is simply the vector in $\mathbb{C}^{2g+n-1}$ denoting the representation $\chi$ in a different basis for $\Gamma_{g,n}$.

\begin{lem}\label{lem:allhandholnonzero}
Let $\chi \in\textsf{\emph{Hom}}(\Gamma_{g,n}, \mathbb{C})$ be a representation such that the corresponding $\chi_g$ is not trivial. Then, there exists $\varphi \in \text{\emph{Mod}}(S_{g,n})$ such that $\chi \circ \varphi_*(\alpha_i)$ and $\chi \circ \varphi_*(\beta_i)$ are non zero for all $1 \leq i \leq g$.
\end{lem}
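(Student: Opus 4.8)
The statement is purely topological: we need to show that, given a representation $\chi:\Gamma_{g,n}\to\Bbb C$ whose restriction $\chi_g$ to the handle subgroup $\Gamma_g<\Gamma_{g,n}$ is non-trivial, there is a symplectic basis $\{\alpha_1,\beta_1,\dots,\alpha_g,\beta_g\}$ of $\Gamma_g$ (extended suitably to a basis of $\Gamma_{g,n}$) on which $\chi$ takes non-zero values on each $\alpha_i$ and each $\beta_i$. Since the mapping class group $\text{Mod}(S_{g,n})$ surjects onto $\text{Sp}(2g,\Bbb Z)$ (acting on $\Gamma_g$) -- and more precisely every change of symplectic basis of $\Gamma_g$, together with any compatible adjustment on the puncture loops, is realized by some $\varphi\in\text{Mod}(S_{g,n})$ -- it suffices to produce the desired basis of $\Gamma_g$ purely algebraically; then pulling back the translation structure by (a representative of) the corresponding mapping class transforms $\chi$ into $\chi\circ\varphi_*$ with the required property. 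So the whole problem reduces to linear algebra over $\Bbb Z$ with the symplectic form.

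First I would set up the reduction carefully: fix the standard symplectic basis $\{\alpha_i,\beta_i\}$ of $\Gamma_g$ and the puncture loops $\{\gamma_j\}$ of $\Gamma_{g,n}$, recall that $\Gamma_{g,n}\cong\Gamma_g\oplus\langle\gamma_1,\dots,\gamma_{n-1}\rangle$ with $\chi(\gamma_n)=-\sum_{j<n}\chi(\gamma_j)$, and note that $\chi$ is completely determined by $\chi_g$ together with the values $\chi(\gamma_j)$, which are untouched by a change of symplectic basis of the $\Gamma_g$-summand. Thus we only need to find $\varphi\in\text{Sp}(2g,\Bbb Z)$ with $\chi_g(\varphi(\alpha_i))\neq 0$ and $\chi_g(\varphi(\beta_i))\neq 0$ for all $i$.

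For the core argument I would proceed as follows. Regard $\chi_g$ as a non-zero $\Bbb C$-linear functional on $\Gamma_g\otimes\Bbb R$ (equivalently two non-zero $\Bbb R$-functionals, the real and imaginary parts, not both zero). The set of primitive vectors $v\in\Gamma_g$ with $\chi_g(v)=0$ lies in the kernel hyperplane(s); since $\chi_g\neq 0$, the ``bad'' set is a proper $\Bbb Z$-submodule of rank $\le 2g-1$ (rank $\le 2g-2$ when $\chi_g$ has rank two over $\Bbb R$, and $\le 2g-1$ when it is real-proportional), so a ``generic'' symplectic basis avoids it. Concretely: (i) first choose $\alpha_1$ with $\chi_g(\alpha_1)\neq 0$ -- possible since $\chi_g\neq 0$; (ii) choose $\beta_1$ with $\langle\alpha_1,\beta_1\rangle=1$ and $\chi_g(\beta_1)\neq 0$ -- the vectors satisfying the symplectic pairing condition form a coset of the hyperplane $\alpha_1^\perp$, which is not contained in $\ker\chi_g$ (here one uses that $\{\alpha_1\}$ can be completed so that the pairing is surjective, hence $\alpha_1^\perp$ spans, so we may add a multiple of a vector outside $\ker\chi_g$), and add a large multiple of such a vector; (iii) pass to the symplectic complement $\langle\alpha_1,\beta_1\rangle^\perp\cong\Gamma_{g-1}$, on which $\chi_g$ restricts to $\chi_{g-1}$, and iterate. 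The one subtlety is that $\chi_{g-1}$ might become trivial after peeling off a handle on which all the ``mass'' of $\chi_g$ sits; to handle this, I would arrange at step (ii)-(iii) that the chosen $\langle\alpha_1,\beta_1\rangle$ does \emph{not} contain the ``support'' of $\chi_g$, i.e. choose $\beta_1$ (by adding a suitable element of $\alpha_1^\perp$ lying outside $\ker\chi_g$) so that $\chi_g$ is still non-trivial on the complement -- this is possible precisely because $\ker\chi_g$ has codimension $\ge 1$, so its intersection with the rank-$(2g-2)$ complement module, for a generic admissible choice of $\beta_1$, is a proper submodule. Then the induction hypothesis applies to $(\Gamma_{g-1},\chi_{g-1})$, and the base case $g=1$ is immediate: $\chi_1\neq 0$ means $\chi_1(\alpha_1)$ or $\chi_1(\beta_1)$ is non-zero, and if one vanishes we replace $\beta_1\mapsto\beta_1+\alpha_1$ (a symplectic transvection) to make both non-zero.

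\textbf{Main obstacle.} The delicate point is exactly the bookkeeping in the inductive step: ensuring simultaneously that (a) the pair $\{\alpha_i,\beta_i\}$ is part of a symplectic basis, (b) both $\chi_g(\alpha_i)$ and $\chi_g(\beta_i)$ are non-zero, and (c) $\chi_g$ remains non-trivial on the remaining $\Gamma_{g-i}$-block so the induction can continue. All three are ``generic'' conditions, and the cleanest way to make the argument rigorous is probably to work over $\Bbb Q$ first (where ``generic'' is transparent since a non-zero linear functional cannot vanish on a full-rank subspace), pick a good basis there, then clear denominators / apply an integral transvection to land back in $\text{Sp}(2g,\Bbb Z)$; transvections $v\mapsto v+\langle v,w\rangle w$ generate $\text{Sp}(2g,\Bbb Z)$ and each only modifies $\chi_g(v)$ additively, so one can always tweak a vanishing value into a non-vanishing one without disturbing the pairing. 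I would present the proof in this ``work rationally, then integralize'' style, since it keeps the genericity arguments honest and the realization by a mapping class is guaranteed by surjectivity of $\text{Mod}(S_{g,n})\to\text{Sp}(2g,\Bbb Z)$.
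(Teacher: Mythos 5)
Your proposal is correct and rests on the same key reduction as the paper — namely that the surjection $\text{Mod}(S_{g,n})\to \text{Sp}(2g,\mathbb{Z})$ (realized by mapping classes supported away from the punctures, so that the values $\chi(\gamma_j)$ are untouched) converts the problem into pure symplectic linear algebra over $\mathbb{Z}$. Where you diverge is in how that linear algebra is carried out. The paper does it with two explicit elementary moves applied generator by generator: starting from $\chi(\alpha_1)\neq 0$, each ``dead'' handle (both generators in $\ker\chi$) is revived by the symplectic map $\alpha_i\mapsto \alpha_i+\alpha_1$, $\beta_1\mapsto\beta_1-\beta_i$ (which leaves all other values unchanged because $\chi(\beta_i)=0$), and the remaining vanishing generators are fixed by Dehn twists $\beta_i\mapsto\beta_i+\alpha_i$ (resp. $\alpha_i\mapsto\alpha_i+\beta_i$). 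You instead build a good symplectic basis inductively, peeling off one handle at a time, which forces you to maintain the extra invariant that $\chi$ stays non-trivial on the symplectic complement — the ``main obstacle'' you flag. That invariant can indeed always be arranged (e.g.\ by adding $\beta_2$ or $\alpha_1+\beta_2$ to $\beta_1$ when the complement would otherwise die), so your route works, but it costs a genuine genericity/bookkeeping argument that the paper's version avoids entirely: the explicit moves never destroy a non-zero value already achieved, so no induction on genus and no control of the complement is needed. If you write yours up, the inductive step is the one place that must be proved carefully rather than asserted as ``generic,'' since for representations whose image has rank $\le 2$ the kernel really can contain rank-$(2g-2)$ symplectic complements.
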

\begin{proof}
We may assume that $\chi(\alpha_1) \neq 0$. If $i$ is such that $\chi(\alpha_i) = \chi(\beta_i) = 0$, consider $\varphi$ such that $\varphi_*(\alpha_i) = \alpha_i + \alpha_1$, $\varphi_*(\beta_1) = \beta_1 - \beta_n$ and $\varphi_*$ acts identically on all other handle generators. The existence of such $\varphi$ comes from the surjectivity of the map from $\text{Mod}(S_{g})$ to $\text{Sp}(2g, \mathbb{Z})$. Then $\chi \circ \varphi_*$ equals $\chi$ on all handle generators except $\alpha_i$ and $\chi \circ \varphi_* (\alpha_i) = \chi \circ \varphi_* (\alpha_1) \neq 0$. Thus, by composition of such maps, we now have $\varphi$ for which $\chi \circ \varphi_* (\alpha_i) \neq 0$ for all $1 \leq i \leq g$. Composing $\varphi$ with Dehn twists $\phi$ for which $\phi_*(\beta_i) = \beta_i + \alpha_i$, and denoting the resulting map by $\varphi$, we also have $\chi \circ \varphi_* (\beta_i) \neq 0$ for all $1 \leq i \leq g$.
\end{proof}

\noindent Furthermore, we can also assume that $\chi_g$ is non-trivial if $\chi_n$ is non-trivial. 

\begin{lem}\label{lem:handholnonzero}
Let $\chi \in\textsf{\emph{Hom}}(\Gamma_{g,n}, \mathbb{C})$ be a representation such that the corresponding $\chi_n$ is not trivial. Then, there exists $\varphi \in \text{\emph{Mod}}(S_{g,n})$ such that $(\chi \circ \varphi_*)_g$ is non-trivial.
\end{lem}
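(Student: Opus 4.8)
\textbf{Proof plan for Lemma \ref{lem:handholnonzero}.}

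The plan is to use the same strategy as in the proof of Lemma \ref{lem:allhandholnonzero}: since the ambient target $\mathbb{C}$ is abelian, postcomposing $\chi$ with the automorphism $\varphi_*$ of $\Gamma_{g,n}$ induced by a mapping class $\varphi$ realizes exactly the change-of-basis action on the homology, so it suffices to exhibit an automorphism of $\Gamma_{g,n}$ (realized by a mapping class) that moves some nonzero value of $\chi$ from a puncture generator onto a handle generator. First I would fix notation: let $\{\alpha_1,\beta_1,\dots,\alpha_g,\beta_g,\gamma_1,\dots,\gamma_n\}$ be the standard symplectic-type basis of $\Gamma_{g,n}$, subject to $\gamma_1+\cdots+\gamma_n = 0$ (in homology), and recall that $(\chi\circ\varphi_*)_g$ is trivial precisely when $\chi\circ\varphi_*$ vanishes on all of $\alpha_1,\beta_1,\dots,\alpha_g,\beta_g$. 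Since $\chi_n$ is non-trivial there is an index $i_0$ with $\chi(\gamma_{i_0})\neq 0$; in particular $n\ge 2$ and at least two of the $\gamma_i$ have nonzero $\chi$-value, but we only need one.

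Next I would produce the required mapping class. Consider the mapping class $\varphi$ supported in a neighborhood of a simple closed curve that geometrically intersects $\alpha_1$ and a simple closed loop around the $i_0$-th puncture; concretely, one can take $\varphi$ to be (a product of) Dehn twists so that on homology $\varphi_*(\alpha_1) = \alpha_1 + \gamma_{i_0}$ while $\varphi_*$ fixes all the other standard generators. Such a mapping class exists: a loop around a puncture is a non-separating simple closed curve in $S_{g,n}$ once $g\ge 1$, and the change of basis sending $\alpha_1\mapsto\alpha_1+\gamma_{i_0}$ (leaving the remaining generators fixed) preserves the algebraic intersection form on $\Gamma_{g,n}$, hence is realized by an element of $\mathrm{Mod}(S_{g,n})$ by the standard surjectivity of $\mathrm{Mod}(S_{g,n})$ onto the group of automorphisms of $\Gamma_{g,n}$ preserving the intersection pairing (equivalently, one writes it explicitly as a composition of Dehn twists about curves representing $\alpha_1$, $\gamma_{i_0}$, and a curve in their homology class sum, exactly as in Lemma \ref{lem:allhandholnonzero}). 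Then $(\chi\circ\varphi_*)(\alpha_1) = \chi(\alpha_1) + \chi(\gamma_{i_0})$. If $\chi(\alpha_1)\neq -\chi(\gamma_{i_0})$ we are done with this single $\varphi$; otherwise $\chi(\alpha_1) = -\chi(\gamma_{i_0})\neq 0$, so already $\chi(\alpha_1)\neq 0$ and the identity mapping class works. In either case we obtain $\varphi\in\mathrm{Mod}(S_{g,n})$ with $(\chi\circ\varphi_*)(\alpha_1)\neq 0$, hence $(\chi\circ\varphi_*)_g$ is non-trivial.

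The only subtlety — and the step I would be most careful about — is the genus-zero edge case: the statement implicitly requires $g\ge 1$, since for $g=0$ there are no handle generators and "$\chi_g$ non-trivial" is vacuous (indeed impossible), but also in that case Theorem \ref{thm:nthpgs} is just Proposition \ref{hgc:mainprop} and this lemma is not invoked. So I would either state the lemma for $g\ge 1$ or remark that for $g=0$ there is nothing to prove. Assuming $g\ge 1$, the argument above is complete; the existence of the required mapping class is the routine input, supplied exactly as in the proof of Lemma \ref{lem:allhandholnonzero}, using that a simple loop around a puncture is non-separating and that $\mathrm{Mod}(S_{g,n})$ surjects onto the appropriate automorphism group of $\Gamma_{g,n}$. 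I do not expect any genuine obstacle here; the content is entirely parallel to the previous lemma, with a puncture generator playing the role that $\alpha_1$ played there.
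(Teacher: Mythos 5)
Your proposal is correct and follows essentially the same route as the paper: the paper reduces to the case $\chi_g$ trivial and then replaces a handle generator $\alpha_j$ by a curve $\widetilde{\alpha}_j$ with $[\alpha_j]-[\widetilde{\alpha}_j]=[\gamma_i]$ (so $\chi(\widetilde{\alpha}_j)=-\chi(\gamma_i)\neq 0$), realized by a mapping class as in its Figure; your version adds $\gamma_{i_0}$ to $\alpha_1$ and handles the cancellation case explicitly, which is the same idea up to sign. The only cosmetic difference is that the paper justifies the existence of the mapping class by exhibiting the curve $\widetilde{\alpha}_j$ pictorially rather than by appealing to surjectivity onto the intersection-preserving automorphisms.
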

\begin{proof}
Assume $\chi(\alpha_i) = \chi(\beta_i) = 0$ for all $1 \leq i \leq g$. Let $\gamma_i$ be a small loop around a puncture such that $\chi(\gamma_i) \neq 0$. Then, we can find some handle generator $\alpha_j$ such that we have two curves $\alpha_j$ and $\widetilde{\alpha}_j$ satisfying $[\alpha_j]-[\widetilde{\alpha}_j] = [\gamma_i]$ in $\Gamma_{g,n}$ as shown in Figure \ref{fig:handlechoice}. This tells us that $\chi(\widetilde{\alpha}_j) \neq 0$. Thus, we can take $\varphi$ to be an element of $\text{Mod}(S_{g,n})$ which takes $\alpha_j$ to $\widetilde{\alpha}_j$ and leaves the other handle generators constant.
\end{proof}

\begin{figure}[!h]
    \centering
    \begin{tikzpicture}[scale=0.8, every node/.style={scale=1}]
         \draw[decoration={markings, mark=at position 0.5 with {\arrow{latex}}}, postaction={decorate}] (-110:4) .. controls (-85:3) .. (-60:4);
         \draw[decoration={markings, mark=at position 0.5 with {\arrow{latex}}}, postaction={decorate}] (-110:4) .. controls (-85:1) .. (-60:4);
         \draw[decoration={markings, mark=at position 0.5 with {\arrow{latex}}}, postaction={decorate}] (-60:4) .. controls (-35:3) .. (-10:4);
         \draw[decoration={markings, mark=at position 0.5 with {\arrow{latex}}}, postaction={decorate}] (-10:4) .. controls (15:3) .. (40:4);
         \draw[decoration={markings, mark=at position 0.5 with {\arrow{latex}}}, postaction={decorate}] (40:4) .. controls (65:3) .. (90:4);
         \draw[dashed] (-110:4) .. controls (-135:3) .. (-160:4);
         \fill (-85:2.5) circle (1.5pt);
         \fill (-0.5,1) circle (1.5pt);
         \fill (-1.5, -0.5) circle (1.5pt);
         \fill (1,0) circle (1.5pt);
         \node[below] at (-85:3.25) {$\alpha_1$};
         \node[above] at (-85:1.65) {$\widetilde{\alpha_1}$};
         \node[below right] at (-35:3.25) {$\beta_1$};
         \node[above right] at (15:3.25) {$\alpha_1^{-1}$};
         \node[above] at (65:3.25) {$\beta_1^{-1}$};
         \draw[decoration={markings, mark=at position 0.7 with {\arrow{latex}}}, postaction={decorate}] (-85:2.25) arc [start angle = 95,end angle = 455,radius = 0.25];
         \node[below right] at ($(-85:2.25)+ (-45:0.25)$) {$\gamma_i$};
         \begin{scope}[shift = {($(120:4)+(90:4)$)}]
         \draw[dashed] (-110:4) .. controls (-85:4.5) .. (-60:4);
         \end{scope}
         
     \end{tikzpicture}
    \caption{Choosing a handle generator whose image under $\chi$ is non-zero.}
    \label{fig:handlechoice}
\end{figure}
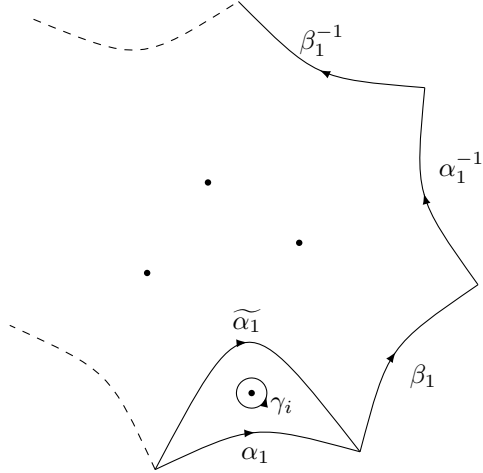

\noindent We shall also require the use of the following lemma in later sections. 

\begin{lem}\label{lem:irrsmallhol}
Let $\chi \in\textsf{\emph{Hom}}(\Gamma_{g,n}, \mathbb{C})$ be a representation such that the corresponding $\textsf{\emph{Im}}(\chi) \subset \mathbb{R}$. Let 
\[ J = \Big\{ 1 \leq i \leq g \,\,\vert\,\, \chi(\alpha_i),\, \chi(\beta_i) > 0 \text{ and } \frac{\chi(\alpha_i)}{\chi(\beta_i)}\notin \mathbb{Q}\Big\}.
\]Then, there exists $\varphi \in \text{\emph{Mod}}(S_{g,n})$ such that $\chi \circ \varphi_*(\alpha_i)$ and $\chi \circ \varphi_*(\beta_i)$ are arbitrarily small for all $i \in J$.
\end{lem}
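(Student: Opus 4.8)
\textbf{Proof plan for Lemma \ref{lem:irrsmallhol}.} The goal is to find a mapping class $\varphi$ so that, after pulling back, each pair $\chi\circ\varphi_*(\alpha_i),\,\chi\circ\varphi_*(\beta_i)$ for $i\in J$ becomes arbitrarily small while keeping them positive (and presumably keeping their ratio irrational, so that one stays inside the set $J$). The plan is to work one handle at a time: fix $i\in J$, write $a=\chi(\alpha_i)$ and $b=\chi(\beta_i)$, both positive reals with $a/b$ irrational, and use the fact that $\mathrm{Mod}(S_{g,n})$ surjects onto $\mathrm{Sp}(2g,\mathbb Z)$ (as already invoked in Lemma \ref{lem:allhandholnonzero}) so that any symplectic change of basis supported on the $i$-th handle, i.e.\ any element of $\mathrm{SL}(2,\mathbb Z)$ acting on $\langle\alpha_i,\beta_i\rangle$, is realized by some $\varphi_*$. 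Under such an $\mathrm{SL}(2,\mathbb Z)$ matrix $\begin{pmatrix} p & q \\ r & s\end{pmatrix}$, the pair $(a,b)$ is sent to $(pa+qb,\ ra+sb)$. So the problem reduces to a purely arithmetic statement: given $a,b>0$ with $a/b$ irrational, there exist integer vectors $(p,q)$ that can be completed to an element of $\mathrm{SL}(2,\mathbb Z)$ with $|pa+qb|$ as small as we like.

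First I would establish this arithmetic core. By irrationality of $a/b$, the set $\{pa+qb : (p,q)\in\mathbb Z^2\}$ is dense in $\mathbb R$, and in fact by the theory of continued fractions (or by Dirichlet's approximation theorem applied to $a/b$) there are infinitely many coprime pairs $(p,q)$ — necessarily with $\gcd(p,q)=1$, hence completable to an $\mathrm{SL}(2,\mathbb Z)$ matrix — with $pa+qb$ arbitrarily close to $0$; indeed one takes convergents $p_k/q_k$ of $-b/a$, so that $a p_k + b q_k = a(p_k + (b/a) q_k) \to 0$, and consecutive convergents are automatically coprime. Having chosen such $(p,q)$, pick any $(r,s)$ with $ps-qr=1$; then $ra+sb$ is some fixed real, but we may further adjust by adding multiples of $(p,q)$ to $(r,s)$ (which preserves the determinant) to arrange $ra+sb$ to have whatever sign we want and, if needed, to land in any prescribed small interval around a convenient positive value — although for the statement as given it suffices that both coordinates be small, which we can also achieve by first making $pa+qb$ small and then a second small modification; alternatively one observes that since both $a$ and $b$ are positive one of the two lattice directions already gives something small and one chooses the matrix accordingly. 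One must also check the image stays in $\mathbb R_{>0}\times\mathbb R_{>0}$ and keeps irrational ratio: the ratio $(pa+qb)/(ra+sb)$ is a M\"obius image of $a/b$ under an element of $\mathrm{SL}(2,\mathbb Z)$ and hence still irrational, so the perturbed handle remains in $J$; positivity is arranged by the sign adjustments just described.

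Then I would assemble the handles: since the $\mathrm{Sp}(2g,\mathbb Z)$-action decomposes as a product over the symplectic summands $\langle\alpha_i,\beta_i\rangle$, I can choose, for each $i\in J$, an $\mathrm{SL}(2,\mathbb Z)$-matrix $M_i$ doing the above (with the smallness threshold $\varepsilon$ prescribed in advance), take the block-diagonal symplectic matrix that is $M_i$ on the $i$-th block and the identity elsewhere, and lift it to a single $\varphi\in\mathrm{Mod}(S_{g,n})$ via the surjection $\mathrm{Mod}(S_g)\twoheadrightarrow\mathrm{Sp}(2g,\mathbb Z)$ (which also lifts to $\mathrm{Mod}(S_{g,n})$ by including the punctures, as in the earlier lemmata). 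Then $\chi\circ\varphi_*(\alpha_i)$ and $\chi\circ\varphi_*(\beta_i)$ are exactly the components of $M_i\cdot(a,b)^T$ for $i\in J$, hence smaller than $\varepsilon$.

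\textbf{Main obstacle.} The genuinely nontrivial point is the arithmetic lemma — producing, for irrational $a/b$, a \emph{unimodular} integer matrix (not merely an integer vector) whose action makes \emph{both} coordinates of $(a,b)$ small simultaneously while preserving positivity, since making one coordinate tiny via a good rational approximation typically leaves the other of order $\max(a,b)$; resolving this requires either iterating the continued-fraction algorithm (the Euclidean-type descent on the pair $(a,b)$ never terminates precisely because $a/b\notin\mathbb Q$, and along the way both entries shrink past any threshold) or a careful two-step adjustment as sketched above. The topological ingredient — realizing symplectic matrices by mapping classes and restricting attention to a single handle — is entirely standard and already used in the preceding lemmata, so it is not where the difficulty lies.
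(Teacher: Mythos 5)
Your proposal is correct and, in its essential content, coincides with the paper's proof: the paper's argument is exactly the non-terminating Euclidean descent on the pair $(\chi(\alpha_i),\chi(\beta_i))$ — which you identify in your final paragraph as the resolution of the "both coordinates small simultaneously" difficulty — realized by compositions of Dehn twists supported on the $i$-th handle and then composed over $i\in J$. The detour through convergents of $-b/a$ and completion to an $\mathrm{SL}(2,\mathbb Z)$ matrix is unnecessary (and, as you note, would need the vague "second small modification" to be made precise); the Euclidean-algorithm descent alone does the whole job, since its remainders are positive, strictly decreasing, and tend to $0$ precisely because $a/b\notin\mathbb Q$.
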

\begin{proof}
Fix some $i \in J$ and assume $\chi(\alpha_i) > \chi(\beta_i)$. Define $\alpha_{i,0}:= \alpha_i$, $\beta_{i,0}:= \beta_i$. For $k \geq 1$, $\alpha_{i,k}$ and $\beta_{i,k}$ are defined recursively as $\alpha_{i,k}:= \alpha_{i,k-1} - n_k\beta_{i,k-1}$ and $\beta_{i,k}:= \beta_{i,k-1} - m_k\alpha_{i,k}$ for $n_k, m_k \in \mathbb{N}$ satisfying $0 < \chi(\alpha_{i,k-1} - n_k\beta_{i,k-1}) < \chi(\beta_{i,k-1})$ and $0 < \chi(\beta_{i,k-1} - m_k\alpha_{i,k}) < \chi(\alpha_{i,k})$. This is the Euclidean algorithm and since the ratio $\frac{\chi(\alpha_i)}{\chi(\beta_i)}$ is irrational, $\chi(\alpha_{i,k})$ and $\chi(\beta_{i,k})$ become arbitrarily small as $k$ becomes large. This holds even when $\chi(\alpha_i) < \chi(\beta_i)$, after appropriate modifications. Now, by a composition of Dehn twists, we can obtain $\varphi_i \in \text{Mod}(S_{g,n})$ for which $(\varphi_i)_*$ takes $\alpha_i$ to $\alpha_{i,k}$, $\beta_i$ to $\beta_{i,k}$ and acts trivially on other handle generators. Then, the required $\varphi$ is the composition of such $\varphi_i$ for all $i \in J$.
\end{proof}

\subsection{Proof of theorem} As usual, we shall prove Theorem \ref{thm:nthpgs} case by case according to the items $(i)-(iv)$ of requirement (1). It is easy to prove the necessity of the other requirements. 

\subsubsection{When the $\chi_n$ determined by $\chi$ is trivial.} Since $\textsf{Im}(\chi)$ is not trivial, there is a handle generator whose holonomy is not zero. By Lemma \ref{lem:allhandholnonzero}, we can assume that the holonomy of all handle generators is non-zero. We then pick one pair of handle generators. \\ 
\noindent Consider the translation surface $(\C,\, z^{p_1-2}dz)$. Note that each $p_i >1$ by requirement (2) in the hypotheses of the Theorem. By cutting out a slit or parallelogram, we can add a handle to this surface such that the holonomy of the handle generators is the desired holonomy. Moreover, by ensuring that one of the vertices of the slit of the parallelogram is at the zero of the differential $z^{p_1-2}dz$, the resulting surface, say $(Y,\eta)$, can be made to have only one singular point. We now consider, for $2 \leq i \leq n$, the translation surfaces $(X_i,\omega_i)$ induced by the holomorphic differential $z^{p_i-2}dz$ on $\C$, \emph{i.e} $(X_i,\omega_i)=(\C,\,z^{p_i-2}dz)$. We glue them via a sequential slit construction to $(Y,\eta)$, as described in section \ref{sec:seqslithandle}, by taking care that when $p_i > 2$, the slits are made in such a way that one of the ends of the slit is the zero of the differential $z^{p_i-2}dz$. This gives us differential on a genus $1$ surface with a single zero and $n$ poles of orders $p_1, p_2, \ldots, p_n$, all having zero residues. Call the resulting surface $(Z,\xi)$. When $g=1$, we split the zero of $(Z,\xi)$ into zeros of required orders and obtain the desired surface.\\
\noindent When $g>1$, we need to add other handles. We may note that there is an embedded copy of the first quadrant of $\mathbb{E}^2$ in $(Z,\xi)$ such that the origin in this embedded copy is the singular point of $(Z,\xi)$. Of the remaining $g-1$ handles, assume that $m \leq g-1$ handles have non-negative volume, which means $m$ slits that have to be made for adding these handles. Label the ends of these slits $P_i$ and $Q_i$, for $1 \leq i \leq m$, such that $\overline{P_iQ_i}$ makes an angle $\theta_i \in [0, \pi)$ with the positive real axis. For the remaining $g-1-m$ handles, we need to cut out $g-1-m$ many parallelograms from $(Z, \xi)$ and identify their edges. For such parallelograms, we identify two special points. For $m+1 \leq i \leq g-1$,  we let $P_i$ be the bottom most point on the parallelogram. When the parallelogram has a horizontal side, we let $P_i$ be the bottom-most and leftmost point. $Q_i$ is the point diagonally opposite to $P_i$. We then position the slits and parallelograms so that $P_1$ is at the singular point of $(Z, \xi)$ and $P_i = Q_{i-1}$ for $i \geq 2$. An example can be seen in Figure \ref{fig:gennontrivhol1} where we add one handle of each of the volume types. Thus, after constructing handles we obtain a surface with the required holonomy where the differential has a single zero. We split the zero as required to obtain the desired surface.

\begin{figure}[h!] 
    \centering
    \begin{tikzpicture}[scale=0.9, every node/.style={scale=0.9}]
    \definecolor{pallido}{RGB}{221,227,227}
    %\path[pattern=north west lines, pattern color=pallido] (0,0) -- (11,0) -- (11,5) -- (0,5);
    \path[pattern=north west lines, pattern color=pallido] (-1,-1) -- (11,-1) -- (11,5) -- (-1,5);
    \fill[white] (0,0) .. controls (0.9,0.6) .. (2,1) .. controls (1.1,0.4) .. (0,0);
    \fill[white] (2,1) .. controls (4,1.1) .. (6,1) .. controls (4,0.9) .. (2,1);
    \draw (0,0) .. controls (0.9,0.6) .. (2,1);
    \draw (0,0) .. controls (1.1,0.4) .. (2,1);
    \draw[blue] (2,1) .. controls (4,1.1) .. (6,1);
    \draw[red] (2,1) .. controls (4,0.9) .. (6,1);
    \fill[white] (6,1) -- ++ (3,1) -- ++ (1,2) -- ++ (-3,-1) -- ++ (-1,-2);
    \draw[thick, purple, ->] (7.5, 1.7) -- ++ (0.75, 1.5);
    \draw[thick, orange, ->] (6.7, 2) -- ++(2.4, 0.8);
    \draw (6,1) -- ++ (3,1) -- ++ (1,2) -- ++ (-3,-1) -- ++ (-1,-2);
    \node[below left] at (0,0) {$P_1$};
    \fill (0,0) circle (1.5pt);
    \node[above left] at (2,1) {$Q_1 = P_2$};
    \fill (2,1) circle (1.5pt);
    \node[below right] at (6,1) {$Q_2 = P_3$};
    \fill (6,1) circle (1.5pt);
    \node[above right] at (10,4) {$Q_3$};
    \fill (10,4) circle (1.5pt);
    \path[pattern=north west lines, pattern color=pallido] (13,1) -- ++(4,0) -- ++(-1,2) -- ++(-4,0) -- ++(1,-2);
    \draw[red] (13,1) -- ++(4,0);
    \draw[decoration={markings, mark=at position 0.5 with {\arrow{latex}}}, postaction={decorate}] (13,1) -- ++(-1,2);
    \draw[decoration={markings, mark=at position 0.5 with {\arrow{latex}}}, postaction={decorate}] (17,1) -- ++(-1,2);
    \draw[blue] (12,3) -- ++(4,0);
    \end{tikzpicture}
    \caption{Attaching handles to an embedded copy of $\mathbb{E}^2$. The edges of the slit determined by $P_1Q_1$ are identified as in section \ref{elhan}. A parallelogram is attached to the slit determined by $P_2Q_2$ as shown. The third handle is obtained by identifying the opposite edges of a parallelogram (having diagonal $P_3Q_3$) cut out.}
    \label{fig:gennontrivhol1}
\end{figure}
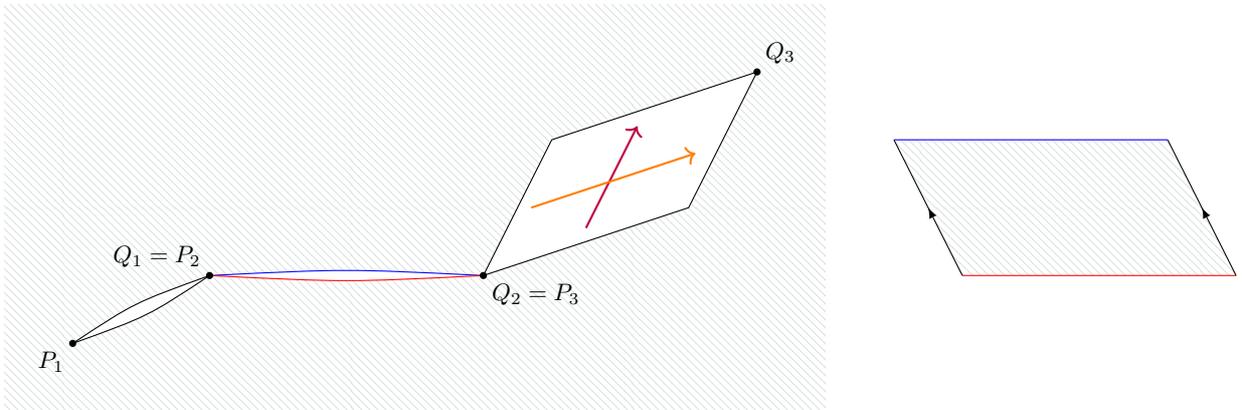

\subsubsection{At least one of $p_1, p_2, \ldots p_n$ is different from 1.} This is the simplest case to deal with. Now we assume that $\chi_n$ is not trivial and that at least one of $p_1, p_2, \ldots p_n$ is different from $1$. By Lemmata \ref{lem:handholnonzero} and \ref{lem:allhandholnonzero} we can also assume that $\chi(\alpha_i)$ and $\chi(\beta_i)$ are non zero for all $1 \leq i \leq g$. Following section \ref{sec:nontrivholsphere}, we can obtain a translation surface $(Z,\xi)$ determined by a differential on the $n$ punctured sphere with the required holonomy around the poles (located at the $n$ punctures) and only a single zero. Then, we add handles to $(Z,\xi)$ such that the resulting differential on the punctured genus $g$ surface still has a single zero. Now, since $(Z,\xi)$ is obtained via the construction in section \ref{sec:nontrivholsphere} when not all poles have order $1$, there is an embedded copy of the first quadrant of $\mathbb{E}^2$ such that the origin in this embedded copy is the singular point of $(Z,\xi)$. We can now add handles just as in the previous case when $\chi_n$ was trivial. This is followed by splitting the zero to complete the construction.

\subsubsection{When $\textsf{\emph{Im}}(\chi_n)$ is not contained in the $\mathbb{Q}$-span of some $c \in \mathbb{C}$} In this case, we assume that $\chi_n$ is not trivial and all of the $p_i$'s are one, but $\textsf{Im}(\chi_n)$ is not contained in the $\mathbb{Q}$-span of some $c \in \mathbb{C}$. This means that either 
\begin{itemize}
    \item $\textsf{Im}(\chi_n)$ is not contained in the $\mathbb{R}$-span of some $c \in \mathbb{C}$, or
    \item $\textsf{Im}(\chi_n)$ is contained in the $\mathbb{R}$-span of some $c \in \mathbb{C}$, but there does not exist any $\widetilde{c} \in \mathbb{C}$ such that $\textsf{Im}(\chi_n)$ is contained in the $\mathbb{Q}$-span of  $\widetilde{c} \in \mathbb{C}$.
\end{itemize}  We consider these two sub-cases separately.\\

\paragraph{\textbf{Case 1.}} We first deal with the case where $\textsf{Im}(\chi_n)$ is not contained in the $\mathbb{R}$-span of some $c \in \mathbb{C}$. In this case, we can proceed as in section \ref{sec:nontrivholsphere} and obtain a differential on the $n$ punctured sphere with the required properties. Here, we do not have an embedded copy of the first quadrant to work with, but we can modify the handles so that the parallelogram cut out or the slit required for them ``fits" inside one of the infinite half cylinders determined by the half strip $\mathcal{S}_j$. In what follows, we describe this in more detail.\\
\noindent We first deal with the slits. Since we are working in the case where all the $\chi_n(\gamma_i)$ are not collinear, we see that for a given slit, there exists $e_j$ which is not collinear with the slit. Then, by looking at a different fundamental domain for the infinite half cylinder formed by identifying the sides of the half strip with base $e_j$, we see that this slit can be cut out in the infinite half cylinder determined by $e_j$, as shown in Figure \ref{fig:gennontrivhol2}. In case multiple slits have to be cut out in the same cylinder, we place them adjacent to each other as before. 

\begin{figure}[h!] 
    \centering
    \begin{tikzpicture}
    \definecolor{pallido}{RGB}{221,227,227}
    \path[pattern=north west lines, pattern color=pallido] (0,4) -- (0,0) -- (1,0) -- (1,4);
    \draw[decoration={markings, mark=at position 0.5 with {\arrow{latex}}}, postaction={decorate}] (0,0) -- (0,4);
    \draw[decoration={markings, mark=at position 0.5 with {\arrow{latex}}}, postaction={decorate}] (1,0) -- (1,4);
    \draw (0,0) -- (1,0);
    \node[below] at (0.5, 0) {$e_j$};
    \node[left] at (0,2) {$r_j^-$};
    \node[right] at (1,2) {$r_j^+$};
    \draw[<->] (2, 2) -- (3, 2);
    %\fill[pattern=north west lines, pattern color=pallido] (4.2,2) -- (3,0) -- (4,0) -- (5.2,2);
    \fill[pattern=north west lines, pattern color=pallido] (2.8,4) -- (4.2,2) -- (3,0) -- (4,0) -- (5.2,2) -- (3.8,4);
    \draw (3,0) -- (4,0);
    \node[below] at (3.5, 0) {$e_j$};
    \draw[decoration={markings, mark=at position 0.5 with {\arrow{latex}}}, postaction={decorate}] (3,0) -- (4.2, 2);
    \draw[decoration={markings, mark=at position 0.5 with {\arrow{latex}}}, postaction={decorate}] (4,0) -- (5.2, 2);
    \node[left] at (3.6,1) {$r_{j,1}^-$};
    \node[right] at (4.6,1) {$r_{j,1}^+$};
    \draw[decoration={markings, mark=at position 0.5 with {\arrow{latex}}}, postaction={decorate}] (4.2,2) -- (2.8,4);
    \node[left] at (3.5,3) {$r_{j,2}^-$};
    \node[right] at (4.5,3) {$r_{j,2}^+$};
    \draw[decoration={markings, mark=at position 0.5 with {\arrow{latex}}}, postaction={decorate}] (5.2, 2) -- (3.8, 4);
    \fill[white] (3,0) .. controls (3.95,1.05) .. (5, 2).. controls (4.05,0.95) .. (3, 0);
    \draw (3,0) .. controls (3.95,1.05) .. (5, 2).. controls (4.05,0.95) .. (3, 0);
    \fill[white] (3,4) .. controls (3.95,2.95) .. (5, 2).. controls (4.05,3.05) .. (3, 4);
    \draw (3,4) .. controls (3.95,2.95) .. (5, 2).. controls (4.05,3.05) .. (3, 4);
    \end{tikzpicture}
    \caption{Changing the fundamental domain for the infinite half cylinder.}
    \label{fig:gennontrivhol2}
\end{figure}
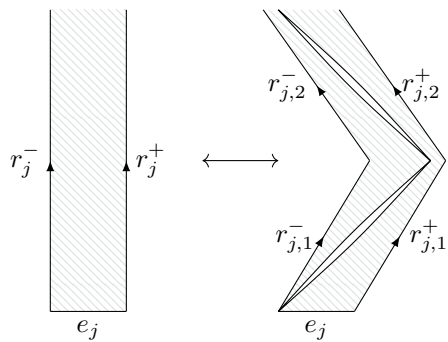

\noindent For the parallelograms, we show that after a sequence of Dehn twists as in section \ref{sec:mcgaction}, we can ensure that the parallelogram to be cut out stays inside a particular fundamental domain for the half strips. Given a non-degenerate parallelogram and some $e_j$, we can assume, after a Dehn twist if necessary, that neither side of the parallelogram is collinear with $e_j$. Assuming for simplicity that $e_j = 1$, we require that the horizontal translates of the parallelogram by 1 unit are disjoint. If the sides are labelled $a$ and $b$ as shown in Figure \ref{fig:gennontrivhol3}, initially the translates need not be disjoint. However, if we consider parallelograms with sides $a$ and $b + na$, for an integer $n$, then the parallelograms are disjoint when the point $B$ is to the right of the side $b$ based at $A$. Algebraically, this condition can be stated as
\begin{gather}
    \Im\bigg( \overline{(b+na)} (-a + 1 + b + na) \bigg) < 0 \\
    \text{i.e. } \Im \big ( -\bar{b}a + \bar{b} + n\bar{a} \big) < 0
\end{gather}
 which is true for $n$ sufficiently large.  Thus, we can place the slits and the parallelogram cut outs adjacent to each other and add the $g$ handles in such a way that the resulting differential has only one zero. Splitting the zero completes the construction.\\

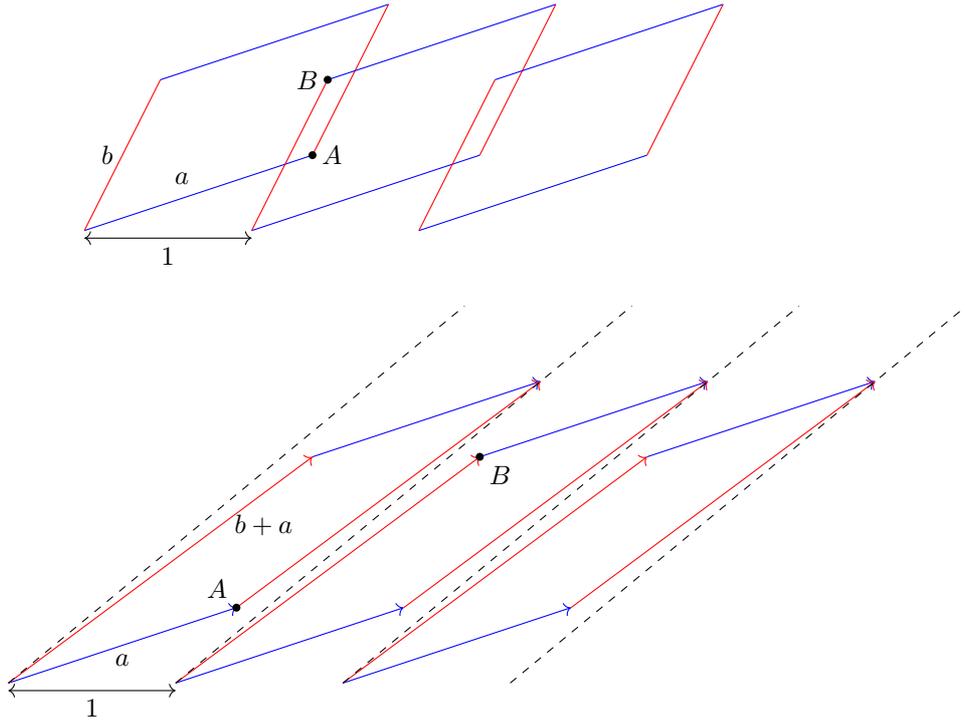
\begin{figure}[h!]
    \centering
    \begin{tikzpicture}
    \foreach \x [evaluate=\x as \bluetipx using \x + 3] [evaluate=\x as \redtipx using \x+ 1] in {-2.2,0,2.2}
    {
    \draw[blue] (\x, 0) -- ++(3,1);
    \draw[red] (\x, 0) -- ++(1,2);
    \draw[blue] (\redtipx, 2) -- ++(3, 1);
    \draw[red] (\bluetipx, 1) -- ++(1,2);
    }
    \node[right] at (0.8, 1) {$A$};
    \fill (0.8,1) circle (1.5pt);
    \node[left] at (1,2) {$B$};
    \fill (1,2) circle (1.5pt);
    \node[above left] at (-0.7, 0.5) {$a$};
    \node[left] at (-1.7, 1) {$b$};
    \draw[<->] (-2.2, -0.1) -- ++ (2.2,0);
    \node[below] at (-1.1, -0.1) {1};
    \begin{scope}[shift = {(-1,-6)}]
    \foreach \x [evaluate=\x as \bluetipx using \x + 3] [evaluate=\x as \redtipx using \x+ 4] in {-2.2,0,2.2}
    {
    \draw[->, blue] (\x, 0) -- ++(3,1);
    \draw[->, red] (\x, 0) -- ++(4,3);
    \draw[->, blue] (\redtipx, 3) -- ++(3, 1);
    \draw[->, red] (\bluetipx, 1) -- ++(4,3);
    \draw[dashed] (\x,0) -- ++(6,5);
    }
    \draw[dashed] (4.4,0) -- ++(6,5);
    \node[above left] at (0.8, 1) {$A$};
    \fill (0.8,1) circle (1.5pt);
    \node[below right] at (4,3) {$B$};
    \fill (4,3) circle (1.5pt);
    \node[below] at (-0.7, 0.5) {$a$};
    \node[right] at (0.65, 2.1) {$b+a$};
    \draw[<->] (-2.2, -0.1) -- ++ (2.2,0);
    \node[below] at (-1.1, -0.1) {1};
    \end{scope}
    \end{tikzpicture}
    \caption{For sufficiently large $n$, horizontal translates of the parallelogram formed by $a$ and $b+na$ by 1 unit are disjoint. In this example, $n=1$. The dotted line depicts a new fundamental domain for the infinite half cylinder with base $e_j$ from which a parallelogram with sides $a$ and $b+na$ can be cut out.} 
     \label{fig:gennontrivhol3}
\end{figure}

\paragraph{\textbf{Case 2.}} The remaining case is that of when $\textsf{Im}(\chi_n)$ is contained in the $\mathbb{R}$-span of some $c \in \mathbb{C}$, but there does not exist any $\widetilde{c} \in \mathbb{C}$ such that $\textsf{Im}(\chi_n)$ is contained in the $\mathbb{Q}$-span of  $\widetilde{c} \in \mathbb{C}$. We can obtain the differential on the $n$ punctured sphere with the required properties by using Proposition \ref{hgc:mainprop}, and add all handles for which the handle holonomy is not along $c$. Thus, we need to only look at the image of handle generators $\alpha_i, \beta_i$ such that $\chi(\alpha_i)= \lambda_i c$ and $\chi(\beta_i)=\mu_i c$ for real valued $\lambda_i$ and $\mu_i$ which may be assumed to be positive after Dehn twists. Now, if $\frac{\lambda}{\mu}$ is irrational, by Lemma \ref{lem:irrsmallhol} we may assume $\lambda$ and $\mu$ to be arbitrarily small. Otherwise, we know that there exists $i$ such that $\chi_n(\gamma_i)$ is not a rational multiple of $\lambda c$. Considering the handle generator $\widetilde{\alpha_i}$ as in the proof of Lemma \ref{lem:handholnonzero} in place of $\alpha_i$ now gives us irrational ratio and we can make the handle holonomy arbitrarily small. This allows us to construct these handles using arbitrarily small slits made adjacent to each other with one end of the sequence of slits being a zero of the differential as shown in Figure \ref{fig:handrspan}.

 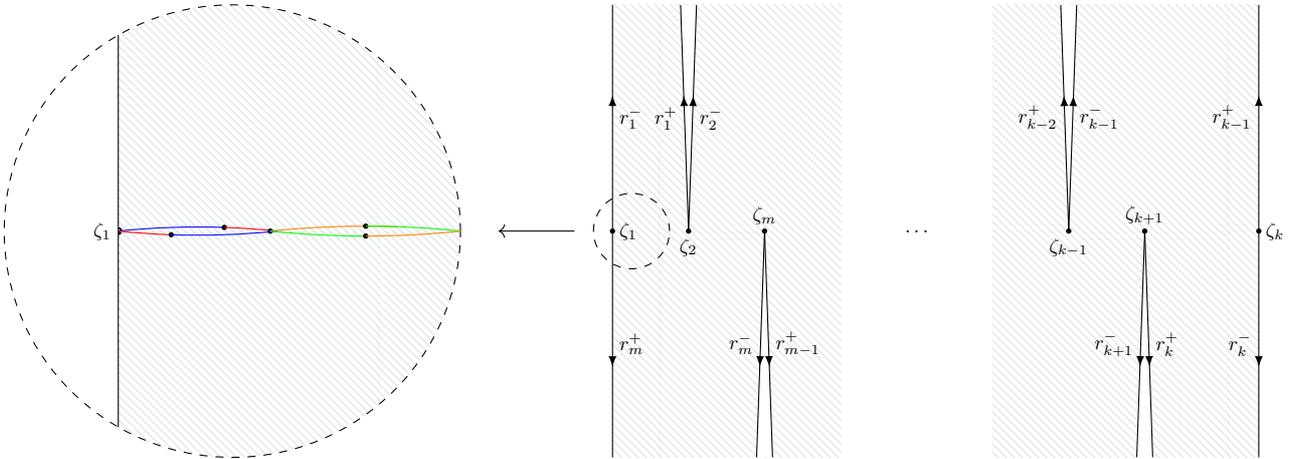
\begin{figure}[!h]
     \centering
     \begin{tikzpicture}[scale=1, every node/.style={scale=0.75}]
        \definecolor{pallido}{RGB}{221,227,227}
         \fill[pattern=north west lines, pattern color=pallido] (0,3) -- (0,-3) -- (3,-3) -- (3, 3);
         \fill[pattern=north west lines, pattern color=pallido] (5,3) -- (5,-3) -- (8.5,-3) -- (8.5, 3);
         \fill[white] ($(1,0) + (92:3)$)--(1,0)-- ++(88:3);
         \fill[white] ($(2,0) + (-92:3)$)--(2,0)-- ++(-88:3);
         \fill[white] ($(6,0) + (92:3)$)--(6,0)-- ++(88:3);
         \fill[white] ($(7,0) + (-92:3)$)--(7,0)-- ++(-88:3);
         \draw[decoration={markings, mark=at position 0.6 with {\arrow{latex}}}, postaction={decorate}] (0,0) -- (0,3);
         \draw[decoration={markings, mark=at position 0.6 with {\arrow{latex}}}, postaction={decorate}] (0,0) -- (0,-3);
         \draw[decoration={markings, mark=at position 0.6 with {\arrow{latex}}}, postaction={decorate}] (1,0) -- ++(92:3);
         \draw[decoration={markings, mark=at position 0.6 with {\arrow{latex}}}, postaction={decorate}] (1,0) -- ++(88:3);
         \draw[decoration={markings, mark=at position 0.6 with {\arrow{latex}}}, postaction={decorate}] (2,0) -- ++(-88:3);
         \draw[decoration={markings, mark=at position 0.6 with {\arrow{latex}}}, postaction={decorate}] (2,0) -- ++(-92:3);
         \node at (4,0) {$\ldots$};
         \draw[decoration={markings, mark=at position 0.6 with {\arrow{latex}}}, postaction={decorate}] (6,0) -- ++(92:3);
         \draw[decoration={markings, mark=at position 0.6 with {\arrow{latex}}}, postaction={decorate}] (6,0) -- ++(88:3);
         \draw[decoration={markings, mark=at position 0.6 with {\arrow{latex}}}, postaction={decorate}] (7,0) -- ++(-88:3);
         \draw[decoration={markings, mark=at position 0.6 with {\arrow{latex}}}, postaction={decorate}] (7,0) -- ++(-92:3);
         \draw[decoration={markings, mark=at position 0.6 with {\arrow{latex}}}, postaction={decorate}] (8.5,0) -- (8.5,-3);
        \draw[decoration={markings, mark=at position 0.6 with {\arrow{latex}}}, postaction={decorate}] (8.5,0) -- (8.5,3);
         \fill (0,0) circle (1pt);
         \fill (1,0) circle (1pt);
         \fill (2,0) circle (1pt);
         \fill (6,0) circle (1pt);
         \fill (7,0) circle (1pt);
         \fill (8.5,0) circle (1pt);
         \node[right] at (0, 1.5) {$r_1^-$};
         \node[left] at ($(1,0) + (92:1.5)$) {$r_1^+$};
         \node[right] at ($(1,0) + (88:1.5)$) {$r_2^-$};
         \node[left] at ($(6,0) + (92:1.5)$) {$r_{k-2}^+$};
         \node[right] at ($(6,0) + (88:1.5)$) {$r_{k-1}^-$};
        \node[left] at (8.5, 1.5) {$r_{k-1}^+$};
        \node[right] at (0, -1.5) {$r_m^+$};
         \node[left] at ($(2,0) + (-92:1.5)$) {$r_m^-$};
         \node[right] at ($(2,0) + (-88:1.5)$) {$r_{m-1}^+$};
         \node[left] at ($(7,0) + (-92:1.5)$) {$r_{k+1}^-$};
         \node[right] at ($(7,0) + (-88:1.5)$) {$r_{k}^+$};
        \node[left] at (8.5, -1.5) {$r_{k}^-$};
        \node[right] at (0,0) {$\zeta_1$};
        \node[below] at (1,0) {$\zeta_2$};
        \node[below] at (6,0) {$\zeta_{k-1}$};
        \node[right] at (8.5,0) {$\zeta_k$};
        \node[above] at (7,0) {$\zeta_{k+1}$};
        \node[above] at (2,0) {$\zeta_m$};
        \draw [dashed] (0.25,0) circle (0.5);
        \draw[->] (-0.5,0) -- (-1.5, 0);
        
        \begin{scope}[shift = {(-1, 0)}]
        \fill (-5.5, 0) circle (1.5pt);
        \fill[pattern=north west lines, pattern color=pallido] (-4,0) circle (3);
        \fill[white] (-5.5, -3) -- (-5.5, 3) -- (-7, 3) -- (-7, -3);
        %\fill[white] ($(-2.5,0) + (92:3)$)--(-2.5,0)-- ++(88:3);
        \draw[dashed] (-4,0) circle (3);
        \draw (-5.5, 2.6) -- (-5.5, -2.6);
        %\draw (-2.5,0)-- ++(88:2.5);
        %\draw (-2.5,0)-- ++(92:2.6);
        %\draw (-5.5,0) arc [start angle = -96,end angle = -84,radius = 5];
        %\fill[white] (-5.5,0) arc [start angle = 96,end angle = 84,radius = 5] arc [start angle = -84,end angle = -96,radius = 5];
        \node[left] at (-5.5, 0) {$\zeta_1$};
        \draw[ blue] (-5.5,0) arc [start angle = 96,end angle = 88,radius = 10];
        \coordinate (C) at ($(-5.5,0) + (-84:10) + (88:10)$);
        \fill (C) circle (1pt);
        \draw[ red] (C) arc [start angle = 88,end angle = 84,radius = 10];
        \fill (-3.5, 0) circle (1 pt);
        \draw[ red] (-5.5,0) arc [start angle = -96,end angle = -92,radius = 10];
        \coordinate (D) at ($(-5.5,0) + (84:10) + (-92:10)$);
        \fill (D) circle (1pt);
        \draw[blue] (D) arc [start angle = -92,end angle = -84,radius = 10];
        \draw[orange] (-3.5,0) arc [start angle = 96,end angle = 88,radius = 12];
        \coordinate (C) at ($(-3.5,0) + (-84:12) + (90:12)$);
        \fill (C) circle (1pt);
        \draw[green] (C) arc [start angle = 90,end angle = 84,radius = 12];
        %\fill (-1.5, 0) circle (1 pt);
        \draw[green] (-3.5,0) arc [start angle = -96,end angle = -90,radius = 12];
        \coordinate (D) at ($(-3.5,0) + (84:12) + (-90:12)$);
        \fill (D) circle (1pt);
        \draw[orange] (D) arc [start angle = -90,end angle = -84,radius = 12];
        \end{scope}
     \end{tikzpicture}
     \caption{Constructing handles with arbitrarily small real valued handle holonomy on the surface obtained from Proposition \ref{hgc:mainprop}} \label{fig:handrspan}
 \end{figure}

\subsubsection{When $\textsf{\emph{Im}}(\chi)$ is not contained in the $\mathbb{Q}$-span of some $c \in \mathbb{C}$} We now come to the last case where we assume that $\chi_n$ is not trivial, all of the $p_i$'s are one, and $\textsf{Im}(\chi_n)$ is contained in the $\mathbb{Q}$-span of some $c \in \mathbb{C}$ but $\textsf{Im}(\chi)$ is not contained in the $\mathbb{Q}$-span of $c$. This means that either 
\begin{itemize}
    \item $\textsf{Im}(\chi_g)$ is not contained in the $\mathbb{R}$-span of $c$, or
    \item $\textsf{Im}(\chi_g)$ is contained in the $\mathbb{R}$-span of $c$, but not in the $\mathbb{Q}$- span of $c$.
\end{itemize}  
We consider these two sub-cases separately. \\

\paragraph{\textbf{Case 1.}} We first consider the case when $\textsf{Im}(\chi_n)$ lies in the $\mathbb{Q}$-span, and therefore the $\mathbb{R}$-span of some $c \in \mathbb{C}$, but $\textsf{Im}(\chi_g)$ does not lie in the $\mathbb{R}$-span of $c$. We may assume that $\textsf{Im}(\chi_n)$ lies along $\mathbb{R}$. Then, following the second construction in the proof of Proposition \ref{propb}, we obtain a translation structure on the $m-$punctures sphere that has the required $\chi_n$, but with two singular points. We shall now attach $g$ handles to this surface, and in the process obtain only one singular point. Note that we have the flexibility to choose the vertical side of the parallelogram $\mathcal{P}$ in the previous construction. If at least one of the $g$ handles has positive volume, then we can attach one such handle by choosing the vertical side of the parallelogram so that the required slit is made along the diagonal of the parallelogram as shown in the bottom half of Figure \ref{fig:collapsezero} for adding a handle of positive volume. After this construction, the resulting translation structure has only one singular point. Otherwise, we have at least one handle with negative volume. Here too, we can ensure, by Dehn twists as in the previous section, that the horizontal translates of the parallelogram formed by the handle holonomy by $\overline{\zeta_{1}\,\zeta_{k}}$ are disjoint. Then we can choose $\mathcal{P}$ so that opposite vertices of the parallelogram to be cut-out coincide with opposite vertices of $\mathcal{P}$ as shown in the top half of Figure \ref{fig:collapsezero}. After adding this handle, the resulting structure has only one singular point. We can now attach the remaining $g-1$ handles in one of the half strips as done previously and complete the construction.\\

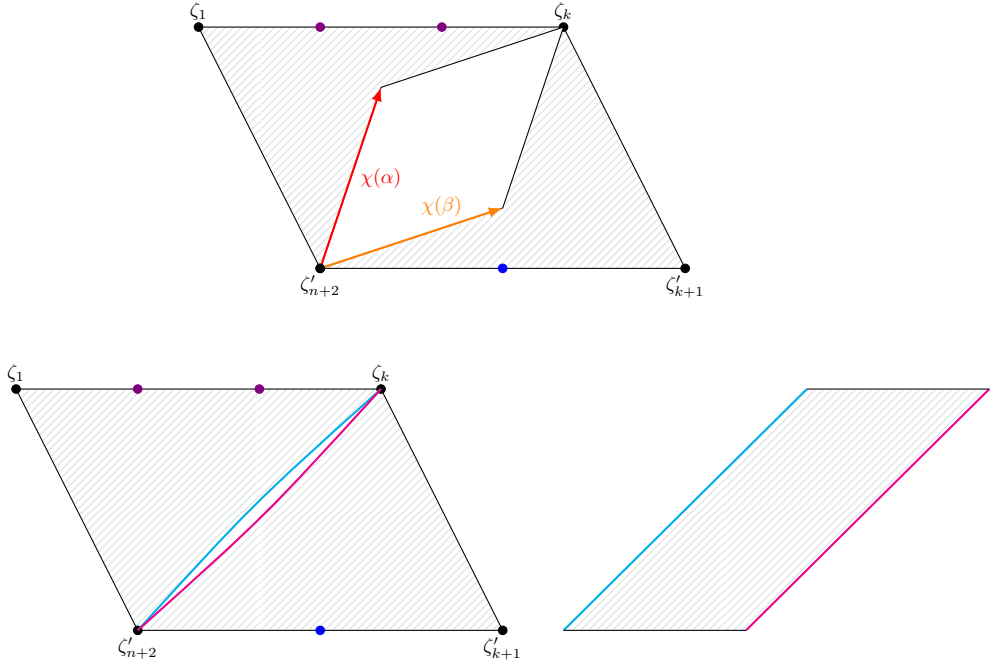
\begin{figure}[!h]
    \centering
    \begin{tikzpicture}[scale=0.8, every node/.style={scale=0.75}]
    \definecolor{pallido}{RGB}{221,227,227}
    %\draw [help lines] (-8,-6) grid (8,6);
    \draw [black, pattern=north east lines, pattern color=pallido] (-7,-2) -- (-1,-2) -- (-3,2) -- (-9,2) -- (-7,-2);
    \draw plot [mark=*, smooth] coordinates {(-7,-2)};
    \draw plot [mark=*, smooth] coordinates {(-1,-2)};
    \draw plot [mark=*, smooth] coordinates {(-3,2)};
    \draw plot [mark=*, smooth] coordinates {(-9,2)};
    \node[above] at (-9,2) {$\zeta_1$};
    \node[below] at (-7,-2) {$\zeta'_{n+2}$};
    \node[above] at (-3,2) {$\zeta_{k}$};
    \node[below] at (-1,-2) {$\zeta'_{k+1}$};
    \draw [violet] plot [mark=*, smooth] coordinates {(-5,2)};
    \draw [violet] plot [mark=*, smooth] coordinates {(-7,2)};
    \draw [blue] plot [mark=*, smooth] coordinates {(-4,-2)};
    
    \begin{scope}[shift = {(-5, 0)}]
    \fill [white, thin, draw=black] (-2,-2) to (1,-1) to (2,2) to (-1,1) to (-2,-2);
    \draw[thick, -latex, orange] (-2,-2) to (1,-1);
    \draw[thick, -latex, red] (-2,-2) to (-1,1);
    \draw [black] plot [mark=*, smooth] coordinates {(-2,-2)};
    %\node[left] at (-2.25,-2.25) {$P$};
    \node at (-1,-0.5) {\textcolor{red}{$\chi(\alpha)$}};
    \node[above] at (0,-1.25) {\textcolor{orange}{$\chi(\beta)$}};
    %\node at (-1.5,2.5) {$\mathcal{E}$};
    %\node at (0,0) {$\mathcal{P}$};
    \end{scope}
    
    \begin{scope}[shift={(-3,-6)}]
    \draw [black, pattern=north east lines, pattern color=pallido] (-7,-2) -- (-1,-2) -- (-3,2) -- (-9,2) -- (-7,-2);
    \draw plot [mark=*, smooth] coordinates {(-7,-2)};
    \draw plot [mark=*, smooth] coordinates {(-1,-2)};
    \draw plot [mark=*, smooth] coordinates {(-3,2)};
    \draw plot [mark=*, smooth] coordinates {(-9,2)};
    \node[above] at (-9,2) {$\zeta_1$};
    \node[below] at (-7,-2) {$\zeta'_{n+2}$};
    \node[above] at (-3,2) {$\zeta_{k}$};
    \node[below] at (-1,-2) {$\zeta'_{k+1}$};
    \draw [violet] plot [mark=*, smooth] coordinates {(-5,2)};
    \draw [violet] plot [mark=*, smooth] coordinates {(-7,2)};
    \draw [blue] plot [mark=*, smooth] coordinates {(-4,-2)};
    \fill[white] (-7,-2) .. controls (-5.1, 0.1) .. (-3,2) .. controls (-4.9, -0.1) .. (-7,-2);
    \draw[thick, cyan] (-7,-2) .. controls (-5.1, 0.1) .. (-3,2);
    \draw[thick, magenta] (-3,2) .. controls (-4.9, -0.1) .. (-7,-2);

    \fill[pattern=north east lines, pattern color=pallido] (0,-2) -- (3, -2) -- (7, 2) -- (4, 2) -- (0,-2);
    \draw[thick, cyan] (0,-2) -- (4,2);
    \draw[thick, magenta] (3,-2) -- (7,2);
    \draw (0,-2) -- (3,-2);
    \draw (4,2) -- (7,2);
    \end{scope}
    \end{tikzpicture}
    \caption{Choosing $\mathcal{P}$ in the proof of Proposition \ref{propb} so that adding handles to $\mathcal{P}$ collapses the two zeros into one zero.} \label{fig:collapsezero}
\end{figure}

\paragraph{\textbf{Case 2.}} In what remains, we deal with the case when $\textsf{Im}(\chi_n)$ lies in the $\mathbb{Q}$-span of some $c \in \mathbb{C}$, and $\textsf{Im}(\chi_g)$ lies in the $\mathbb{R}$-span of $c$, but not in the $\mathbb{Q}$-span of $c$. This means that for all handle generators, we have $\chi(\alpha_j) = \lambda_j c$ and $\chi(\beta_j) = \mu_j c$ for non-rational values $\lambda_j$ and $\mu_j$. In what follows, we can assume without loss of generality, that $c = 1$ and make use of the following lemma.

\begin{lem}\label{lem:handholirr}
Let $\chi \in\textsf{\emph{Hom}}(\Gamma_{g,n}, \mathbb{C})$ be a representation such that $\textsf{\emph{Im}}(\chi_n) \subset \mathbb{Q}$, but $\textsf{\emph{Im}}(\chi) \not\subset \mathbb{Q}$. Then, there exists $\varphi \in \text{\emph{Mod}}(S_{g,n})$ such that $\chi \circ \varphi_*(\alpha_i)$ and $\chi \circ \varphi_*(\beta_i)$ are positive and the ratio $\frac{\chi \circ \varphi_*(\alpha_i)}{\chi \circ \varphi_*(\beta_i)}$ is irrational for all $1 \leq i \leq g$. Further, we may assume that $\chi \circ \varphi_*(\alpha_i)$ and $\chi \circ \varphi_*(\beta_i)$ are arbitrarily small.
\end{lem}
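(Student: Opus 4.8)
The plan is to obtain $\varphi$ as a composition of three families of mapping classes: those inducing symplectic automorphisms of $\Gamma_g$ (available since $\text{Mod}(S_{g,n})\to\spz$ is surjective), point-pushing maps around a puncture, and the Dehn-twist moves already packaged in Lemma~\ref{lem:irrsmallhol}. After normalising $c=1$ we have $\textsf{Im}(\chi)\subset\R$ and $\textsf{Im}(\chi_n)\subset\Q$; since $\textsf{Im}(\chi)\not\subset\Q$ while $\textsf{Im}(\chi_n)\subset\Q$, the restriction $\chi_g$ takes an irrational value on some handle generator, and (recalling that in the case where this lemma is invoked $\chi_n$ is non-trivial, so $n\ge 2$) there is a puncture loop $\gamma_{j_0}$ with $\chi(\gamma_{j_0})=r\in\Q\setminus\{0\}$. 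Every mapping class we use will be chosen to fix the homology classes of the puncture loops, so that $\chi_n$ --- and hence the translation structure on the $n$-punctured sphere already built earlier in Section~\ref{sec:gennontrivhol} --- is left undisturbed.

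First I would arrange $\chi(\alpha_i)\notin\Q$ for all $i$. After possibly swapping $\alpha\leftrightarrow\beta$ in one handle and permuting handles (both symplectic, both realised by mapping classes fixing the punctures), we may take $\chi(\alpha_1)\notin\Q$. Then, arguing as in the proof of Lemma~\ref{lem:allhandholnonzero}, for each $i$ with $\chi(\alpha_i)\in\Q$ we apply the mapping class inducing $\alpha_i\mapsto\alpha_i+\alpha_1$ and $\beta_1\mapsto\beta_1-\beta_i$ (identity on all remaining generators and on all puncture loops); this turns $\chi(\alpha_i)$ into $\chi(\alpha_i)+\chi(\alpha_1)\notin\Q$ and leaves every other $\chi(\alpha_k)$ unchanged. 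After finitely many such moves, $\chi(\alpha_i)\notin\Q$ for each $i$.

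Next I would make each pair $\{\chi(\alpha_i),\chi(\beta_i)\}$ linearly independent over $\Q$. If $\chi(\beta_i)=q_i\,\chi(\alpha_i)$ with $q_i\in\Q$, push the puncture enclosed by $\gamma_{j_0}$ once around a simple closed curve in the homology class $\alpha_i$: this point-pushing map induces $\beta_i\mapsto\beta_i\pm\gamma_{j_0}$ and is the identity on all the other standard generators and on every puncture loop, so the new value $\chi(\beta_i)=q_i\,\chi(\alpha_i)\pm r$ is $\Q$-independent of the irrational number $\chi(\alpha_i)$. Doing this for each offending $i$ (these moves fix all $\chi(\alpha_k)$), every handle pair acquires irrational ratio. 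Finally, inside each handle apply the symplectic quarter-turn on $\langle\alpha_i,\beta_i\rangle$ that makes both $\chi(\alpha_i)$ and $\chi(\beta_i)$ positive --- one of the four always works, and being a $\Q$-linear automorphism of the pair it preserves irrationality of the ratio. Now all indices belong to the set $J$ appearing in Lemma~\ref{lem:irrsmallhol}, so composing with the mapping class furnished by that lemma makes all the $\chi(\alpha_i)$ and $\chi(\beta_i)$ arbitrarily small; as its moves are Dehn twists internal to each handle, they preserve both positivity and the $\Q$-independence of each pair. The composite of all these mapping classes is the desired $\varphi$.

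The main obstacle is precisely the point-pushing step. Symplectic changes of basis on $\Gamma_g$ alone cannot cure the degenerate situation in which all handle holonomies lie in a single $\Q$-line $\Q\xi$ with $\xi$ irrational (for instance $\Q\sqrt 2$), since that line is preserved by every integral change of basis and every ratio stays rational. The hypothesis $\textsf{Im}(\chi)\not\subset\Q$ together with $\textsf{Im}(\chi_n)\subset\Q$ and the non-triviality of $\chi_n$ is exactly what rules this out: there is a genuinely rational non-zero holonomy around a puncture, and transporting a copy of that puncture loop into a handle generator via a point-push injects the missing $\Q$-independence. The remaining verifications --- that the symplectic and point-pushing maps can be chosen to fix the puncture-loop classes, and the bookkeeping that later steps do not undo earlier ones --- are routine.
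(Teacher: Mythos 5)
Your proof is correct and follows essentially the same route as the paper's: spread irrationality to every handle via the symplectic move $\alpha_i\mapsto\alpha_i+\alpha_1$, $\beta_1\mapsto\beta_1-\beta_i$, then break rational ratios by adding the class of a puncture loop with non-zero rational holonomy to one generator of each offending pair (your point-push on $\beta_i$ is the same homological move as the paper's replacement of $\alpha_i$ by $\widetilde{\alpha}_i$ from Lemma \ref{lem:handholnonzero}), and finish with Dehn twists for positivity and Lemma \ref{lem:irrsmallhol} for smallness. Your explicit remark that the non-triviality of $\chi_n$ is what rules out the degenerate case where all handle holonomies lie in a single $\mathbb{Q}$-line is a correct and worthwhile clarification of a hypothesis the paper uses only implicitly.
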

\begin{proof}
Since $\textsf{Im}(\chi) \not\subset \mathbb{Q}$, we have at least one handle generator whose image under $\chi$ is irrational. Without loss of generality, we assume that $\chi(\alpha_1)$ is irrational. First, we show the existence of $\widetilde{\varphi} \in \text{Mod}(S_{g,n})$ such that $\chi \circ \widetilde{\varphi}_*(\alpha_i)$ and $\chi \circ \widetilde{\varphi}_*(\beta_i)$ are not both rational. To see this, let $j \geq 2$ be such that $\chi(\alpha_j)$ and $\chi(\beta_j)$ are both rational. Let $\widetilde{\varphi}_j \in \text{Mod}(S_{g,n})$ be such that 
\begin{align}
    (\widetilde{\varphi}_j)_*(\alpha_j) &= \alpha_j + \alpha_1 \\
    (\widetilde{\varphi}_j)_*(\beta_1) &= \beta_1 - \beta_j \\
    (\widetilde{\varphi}_j)_*(\gamma) &= \gamma \text{ for } \gamma \neq \{ \alpha_j, \beta_1 \}
\end{align}
Then, the required $\widetilde{\varphi}$ is the composition of $\widetilde{\varphi}_j$ for all $j$ such that $\chi(\alpha_j)$ and $\chi(\beta_j)$ are both rational. By including Dehn twists in this composition, we may also assume that $\chi \circ \varphi_*(\alpha_i), \chi \circ \varphi_*(\beta_i) > 0$ for $1 \leq i \leq g$. 

\noindent $\widetilde{\varphi}$ need not satisfy the requirements of the lemma since the ratio $\frac{\chi \circ \varphi_*(\alpha_i)}{\chi \circ \varphi_*(\beta_i)}$ may be rational for some $i$ when $\chi \circ \varphi_*(\alpha_i)$ and $\chi \circ \varphi_*(\beta_i)$ are both irrational. Composing $\varphi$ with the diffeomorphisms that take $\alpha_i$ to $\widetilde{\alpha_i}$ for all such $i$ as in the proof of Lemma \ref{lem:handholnonzero} gives $\varphi$ which makes the ratios $\frac{\chi \circ \varphi_*(\alpha_i)}{\chi \circ \varphi_*(\beta_i)}$ irrational. Since Lemma \ref{lem:irrsmallhol} preserves the irrationality of the ratios $\frac{\chi \circ \varphi_*(\alpha_i)}{\chi \circ \varphi_*(\beta_i)}$, we may assume that $\chi \circ \varphi_*(\alpha_i)$ and $\chi \circ \varphi_*(\beta_i)$ are arbitrarily small.
\end{proof}

\noindent As a consequence of Lemma \ref{lem:handholirr} we may assume that, for $1 \leq j \leq g$, $\lambda_j$ and $\mu_j$ are arbitrarily small and positive such that the ratio $\frac{\lambda_j}{\mu_j}$ is irrational. Now let $n$ be such that $\text{max}(\zeta_{k-1}, \zeta_{k+1})< \lambda_1 + (n+1)\mu_1 < \zeta_k$.\\
\noindent Next, we consider the infinite strip $\{ z \in \mathbb{C} \,\vert\, 0 < \Re(z) < \zeta_k \}$ as in the proof of \ref{hgc:mainprop} and make a slit along the horizontal segment joining $\zeta_1$ and $\zeta_1 + \lambda_1 + (n+1)\mu_1$. Marking the points $\zeta_2, \ldots , \zeta_{k-1}$ on the top segment of the slit and the points $\zeta_{k+1}, \ldots, \zeta_{m}$ on the bottom segment as shown in Figure \ref{fig:irrhandhol}, we define $r_j^+$ and $r_j^-$ as before. By making further horizontal slits to the right of the slit already made, and identifying the edges of the slits appropriately, along with the identification of $r_j^+$ with $r_j^-$, we obtain a translation surface whose holonomy at the first handle differs from the required holonomy by $n$ Dehn twists. Reversing these twists gives us the required translation surface after splitting the single zero into zeroes of required orders.\\

\noindent This concludes the proof of Theorem \ref{thm:nthpgs}.

 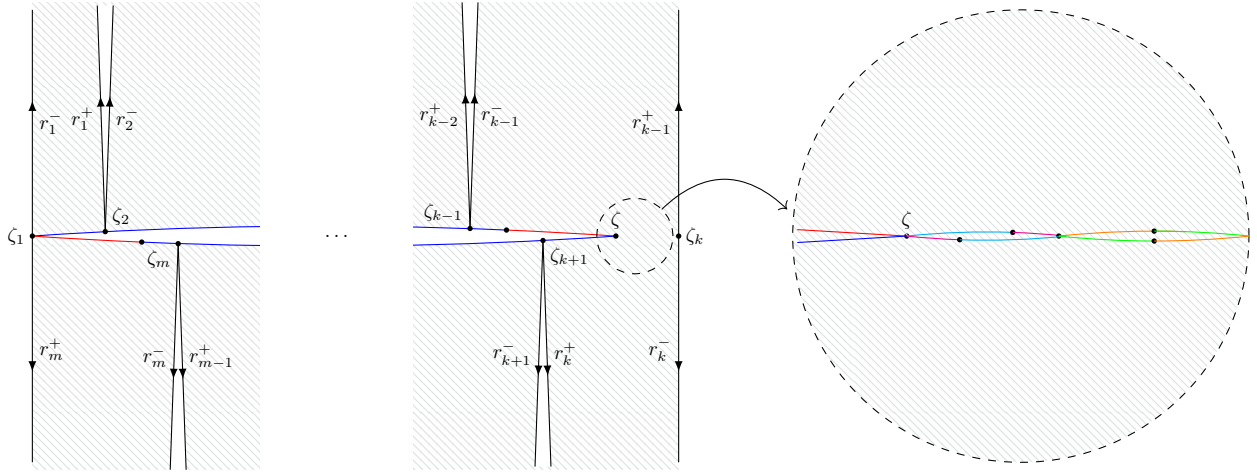
\begin{figure}[!h]
     \centering
     \begin{tikzpicture}[scale=1, every node/.style={scale=0.75}]
        \definecolor{pallido}{RGB}{221,227,227}
         \fill[pattern=north west lines, pattern color=pallido] (0,3.1) -- (0,-3.1) -- (8.5,-3.1) -- (8.5, 3.1);
         %\fill[pattern=north west lines, pattern color=pallido] (5,3) -- (5,-3) -- (8.5,-3) -- (8.5, 3);
         \fill[white] (0,0) arc [start angle = 94,end angle = 86,radius = 55] arc [start angle = -86,end angle = -94,radius = 55];
         \draw[blue] (0,0) arc [start angle = 94,end angle = 87.5,radius = 55];
         \coordinate (I) at ($(0,0) + (-86:55) + (87.5:55)$);
         \draw[red] (I) arc [start angle = 87.5,end angle = 86,radius = 55];
         \fill (I) circle (1pt);
         \draw[red] (I) [start angle = 87.5,end angle = 86,radius = 55];
         \draw[red] (0,0) arc [start angle = -94,end angle = -92.5,radius = 55];
         \coordinate (J) at ($(0,0) + (86:55) + (-92.5:55)$);
         \coordinate (K) at ($(0,0) + (86:55) + (-86:55)$);
         \draw[blue] (J) arc [start angle = -92.5,end angle = -86,radius = 55];
         \fill (J) circle (1pt);
         \fill (K) circle (1pt);
         \fill[white] (3,3.1) -- (3,-3.1) -- (5,-3.1) -- (5, 3.1);
         \coordinate (E) at ($(0,0) + (-86:55) + (93:55)$);
         \coordinate (F) at ($(0,0) + (86:55) + (-92:55)$);
         \coordinate (G) at ($(0,0) + (-86:55) + (88:55)$);
         \coordinate (H) at ($(0,0) + (86:55) + (-87:55)$);
         \fill[white] ($(E) + (92:3)$)--(E)-- ++(88:3);
         \fill[white] ($(F) + (-92:3)$)--(F)-- ++(-88:3);
         \fill[white] ($(G) + (92:3)$)--(G)-- ++(88:3);
         \fill[white] ($(H) + (-92:3)$)--(H)-- ++(-88:3);
         \draw[decoration={markings, mark=at position 0.6 with {\arrow{latex}}}, postaction={decorate}] (0,0) -- (0,3);
         \draw[decoration={markings, mark=at position 0.6 with {\arrow{latex}}}, postaction={decorate}] (0,0) -- (0,-3);
         \draw[decoration={markings, mark=at position 0.6 with {\arrow{latex}}}, postaction={decorate}] (E) -- ++(92:3);
         \draw[decoration={markings, mark=at position 0.6 with {\arrow{latex}}}, postaction={decorate}] (E) -- ++(88:3);
         \draw[decoration={markings, mark=at position 0.6 with {\arrow{latex}}}, postaction={decorate}] (F) -- ++(-88:3);
         \draw[decoration={markings, mark=at position 0.6 with {\arrow{latex}}}, postaction={decorate}] (F) -- ++(-92:3);
         \node at (4,0) {$\ldots$};
         \draw[decoration={markings, mark=at position 0.6 with {\arrow{latex}}}, postaction={decorate}] (G) -- ++(92:3);
         \draw[decoration={markings, mark=at position 0.6 with {\arrow{latex}}}, postaction={decorate}] (G) -- ++(88:3);
         \draw[decoration={markings, mark=at position 0.6 with {\arrow{latex}}}, postaction={decorate}] (H) -- ++(-88:3);
         \draw[decoration={markings, mark=at position 0.6 with {\arrow{latex}}}, postaction={decorate}] (H) -- ++(-92:3);
         \draw[decoration={markings, mark=at position 0.6 with {\arrow{latex}}}, postaction={decorate}] (8.5,0) -- (8.5,-3);
        \draw[decoration={markings, mark=at position 0.6 with {\arrow{latex}}}, postaction={decorate}] (8.5,0) -- (8.5,3);
         \fill (0,0) circle (1pt);
         \fill (E) circle (1pt);
         \fill (F) circle (1pt);
         \fill (G) circle (1pt);
         \fill (H) circle (1pt);
         \fill (8.5,0) circle (1pt);
         \node[right] at (0, 1.5) {$r_1^-$};
         \node[left] at ($(E) + (92:1.5)$) {$r_1^+$};
         \node[right] at ($(E) + (88:1.5)$) {$r_2^-$};
         \node[left] at ($(G) + (92:1.5)$) {$r_{k-2}^+$};
         \node[right] at ($(G) + (88:1.5)$) {$r_{k-1}^-$};
        \node[left] at (8.5, 1.5) {$r_{k-1}^+$};
        \node[right] at (0, -1.5) {$r_m^+$};
         \node[left] at ($(F) + (-92:1.5)$) {$r_m^-$};
         \node[right] at ($(F) + (-88:1.5)$) {$r_{m-1}^+$};
         \node[left] at ($(H) + (-92:1.5)$) {$r_{k+1}^-$};
         \node[right] at ($(H) + (-88:1.5)$) {$r_{k}^+$};
        \node[left] at (8.5, -1.5) {$r_{k}^-$};
        \node[left] at (0,0) {$\zeta_1$};
        \node[above right] at (E) {$\zeta_2$};
        \node[above left] at (G) {$\zeta_{k-1}$};
        \node[right] at (8.5,0) {$\zeta_k$};
        \node[below right] at (H) {$\zeta_{k+1}$};
        \node[below left] at (F) {$\zeta_m$};
        \node[above] at (K) {$\zeta$};
        \draw [dashed] ($(K)+(0.25,0)$) circle (0.5);
        \draw[->] ($(K)+(0.25,0)+(45:0.5)$) parabola[bend pos=0.5] bend +(0,0.4) ($(K)+(1.9,0)+(45:0.5)$);
        %\draw[->] (-0.5,0) -- (-1.5, 0);
        
        \begin{scope}[shift = {(17, 0)}]
        \fill (-5.5, 0) circle (1pt);
        \fill[pattern=north west lines, pattern color=pallido] (-4,0) circle (3);
        %\fill[white] (-5.5, -3) -- (-5.5, 3) -- (-7, 3) -- (-7, -3);
        %\fill[white] ($(-2.5,0) + (92:3)$)--(-2.5,0)-- ++(88:3);
        \draw[dashed] (-4,0) circle (3);
        %\draw (-5.5, 2.6) -- (-5.5, -2.6);
        %\draw (-2.5,0)-- ++(88:2.5);
        %\draw (-2.5,0)-- ++(92:2.6);
        %\draw (-5.5,0) arc [start angle = -96,end angle = -84,radius = 5];
        %\fill[white] (-5.5,0) arc [start angle = 96,end angle = 84,radius = 5] arc [start angle = -84,end angle = -96,radius = 5];
        \node[above] at (-5.5, 0) {$\zeta$};
        \draw[red] (-5.5,0) arc [start angle = 86,end angle = 86.25,radius = 330];
        \draw[blue] (-5.5,0) arc [start angle = -86,end angle = -86.25,radius = 330];
        \draw[cyan] (-5.5,0) arc [start angle = 96,end angle = 88,radius = 10];
        \coordinate (C) at ($(-5.5,0) + (-84:10) + (88:10)$);
        \fill (C) circle (1pt);
        \draw[magenta] (C) arc [start angle = 88,end angle = 84,radius = 10];
        \fill (-3.5, 0) circle (1 pt);
        \draw[magenta] (-5.5,0) arc [start angle = -96,end angle = -92,radius = 10];
        \coordinate (D) at ($(-5.5,0) + (84:10) + (-92:10)$);
        \fill (D) circle (1pt);
        \draw[cyan] (D) arc [start angle = -92,end angle = -84,radius = 10];
        \draw[orange] (-3.5,0) arc [start angle = 96,end angle = 88,radius = 12];
        \coordinate (C) at ($(-3.5,0) + (-84:12) + (90:12)$);
        \fill (C) circle (1pt);
        \draw[green] (C) arc [start angle = 90,end angle = 84,radius = 12];
        %\fill (-1.5, 0) circle (1 pt);
        \draw[green] (-3.5,0) arc [start angle = -96,end angle = -90,radius = 12];
        \coordinate (D) at ($(-3.5,0) + (84:12) + (-90:12)$);
        \fill (D) circle (1pt);
        \draw[orange] (D) arc [start angle = -90,end angle = -84,radius = 12];
        \end{scope}
     \end{tikzpicture}
     \caption{ Marking the points $\zeta_2, \ldots , \zeta_{k-1}$ on the top segment and the points $\zeta_{k+1}, \ldots, \zeta_{m}$ on the bottom segment of the horizontal slit made across $\zeta_1$ and $\zeta := \zeta_1 + \lambda_1 + (n+1)\mu_1$ for the construction of the first handle. Further handles, which may be assumed to have arbitrarily small handle holonomy, are constructed to the right of $\zeta$.} \label{fig:irrhandhol}
 \end{figure}

\section{The case of rational holonomy} \label{sec:comb}

\noindent The remaining case is when all the poles are of order $1$, and $\textsf{Im}(\chi)$ is  contained in the $\mathbb{Q}$-span of some $c \in \mathbb{C}^\ast$. As before, we shall say that a representation $\chi:\Gamma_{g,n}\to \mathbb{C}$ is \textit{realizable} if there is a translation structure on $S_{g,n}$ with holonomy $\chi$. If we can prescribe, in addition, the zeros of the corresponding abelian differential and their orders, we shall say $\chi$ is \textit{realizable in a prescribed stratum}. Note that since the holonomy around a puncture equals the residue of the simple pole of the corresponding abelian differential, no puncture is an apparent singularity.  This necessitates the additional assumption that $\chi(\gamma) \neq 0$ for any $\gamma$ that is a loop around a puncture.

%Observe that by scaling the corresponding differential by the complex constant $N/c$ where $N$ is the least common multiple of the denominators of the rational numbers in the image of a generating set of $\chi$, it suffices to consider the case when $\textsf{Im}(\chi)$ is contained in $\mathbb{Z}$. 

%As we shall see, by scaling the abelian differential, it shall suffice to consider the case when $\textsf{Im}(\chi)$ is contained in $\mathbb{Z}$. 

\subsection{Some reductions}\label{ssec:somered} We start by noting that it suffices to consider the case when the holonomy is \textit{integral}.

\begin{lem}
Let $\chi : \Gamma_{g,n} \to \mathbb{C}$ be a non-trivial representation, such that $\textsf{\emph{Im}}(\chi)$ is  contained in the $\mathbb{Q}$-span of some $c \in \mathbb{C}^\ast$. Then there is a surjective representation $\widehat{\chi}:\Gamma_{g,n}  \to \mathbb{Z}$ such that $\chi$ is realizable in a  prescribed stratum if and only if $\widehat{\chi}$ is realizable in the same stratum. 
\end{lem}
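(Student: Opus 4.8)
The plan is to exploit the fact that $\textsf{Im}(\chi)$ is a finitely generated subgroup of the $\Q$-span of $c$, hence an infinite cyclic group $\langle c_0\rangle$ for some $c_0\in\C^*$ (it is a finitely generated torsion-free abelian group of rank $1$, since $\Gamma_{g,n}$ is finitely generated and the image is a nontrivial subgroup of $\Q c\cong\Q$). Thus $\chi$ factors as $\Gamma_{g,n}\xrightarrow{\widehat\chi}\Z\xrightarrow{\cdot c_0}\C$, where $\widehat\chi$ is the induced surjective homomorphism onto $\Z$; this $\widehat\chi$ is the representation the lemma asks for. The point of passing to $\widehat\chi$ is that its image $\Z$ is a lattice is no longer true — rather the point is that $c_0$ determines a real-linear isomorphism of $\C$ with itself once we rotate and scale, so realizability questions for $\chi$ and $\widehat\chi$ are interchangeable.

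The key observation is that a translation structure with holonomy $\chi$ can be transformed into one with holonomy $\widehat\chi$ (and conversely) by post-composing the developing map with the $\R$-linear automorphism $A:\C\to\C$ that sends $c_0\mapsto 1$ — explicitly, if $c_0 = re^{i\theta}$, take $A(z) = e^{-i\theta}z/r$, which is multiplication by the nonzero complex number $c_0^{-1}$. Since $A$ is $\C$-linear (not merely $\R$-linear), it carries holomorphic $1$-forms to holomorphic $1$-forms: if $(X,\omega)$ is a translation surface with developing map $\textsf{dev}$ equivariant with respect to $\chi$, then $A\circ\textsf{dev}$ is the developing map of the translation structure determined by the abelian differential $c_0^{-1}\omega$ on the \emph{same} Riemann surface $X$, and it is equivariant with respect to $A\circ\chi = \widehat\chi$ (using $A(c_0 n)=n$). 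Crucially, multiplication by a nonzero scalar does not change the location or order of any zero or pole of the differential: $c_0^{-1}\omega$ lies in exactly the same stratum $\mathcal H(\kappa;\nu)$ as $\omega$. Hence $(X,\omega)$ realizes $\chi$ in a given stratum if and only if $(X, c_0^{-1}\omega)$ realizes $\widehat\chi$ in that same stratum, which is precisely the asserted equivalence.

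I would write the proof in essentially three short steps: first, identify $\textsf{Im}(\chi)$ as infinite cyclic, generated by some $c_0\in\C^*$, and define $\widehat\chi:\Gamma_{g,n}\to\Z$ by $\chi = (\cdot\, c_0)\circ\widehat\chi$, noting $\widehat\chi$ is surjective by construction; second, recall that scaling a translation structure by $z\mapsto \lambda z$ with $\lambda\in\C^*$ preserves the underlying Riemann surface, multiplies the abelian differential by $\lambda$, preserves all orders of zeros and poles, and scales the holonomy representation by $\lambda$ — this is immediate from Definition \ref{tswop} and the description of the developing map; third, apply this with $\lambda = c_0^{-1}$ to convert realizations of $\chi$ to realizations of $\widehat\chi$ and with $\lambda = c_0$ for the converse, observing that since $c_0$ lies in the $\Q$-span of $c$, no new apparent singularities are introduced and the condition "$\chi(\gamma)\neq 0$ around each puncture" transfers verbatim to "$\widehat\chi(\gamma)\neq 0$". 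I do not anticipate a genuine obstacle here; the only mild care needed is the elementary algebra showing $\textsf{Im}(\chi)$ is cyclic (a nontrivial finitely generated subgroup of $(\Q,+)$ is $\cong\Z$), and the bookkeeping that the scalar is genuinely complex so that holomorphicity — not merely the flat structure — is preserved.
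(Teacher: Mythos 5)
Your proof is correct and follows essentially the same route as the paper: both arguments rest on the observation that rescaling the abelian differential by a nonzero complex constant preserves the Riemann surface and the orders of all zeros and poles (hence the stratum) while scaling the period character. Your identification of $\textsf{Im}(\chi)$ as infinite cyclic with generator $c_0$ is a mild streamlining of the paper's two-step version (scale by $N/c$ with $N$ the lcm of denominators, then rescale again by $d$ to force surjectivity), but it is the same argument.
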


\begin{proof}
Let $\Gamma_{g,n} = H_1(S_{g,n}; \mathbb{Z})$ be generated by the handle-generators $\alpha_1,\beta_1, \alpha_2,\beta_2,\ldots, \alpha_g, \beta_g$ and loops around all except one puncture, namely $\gamma_1,\gamma_2,\ldots, \gamma_{n-1}$. From our assumption, the $\chi-$images of these generators are of the form $q_i\cdot c$ where $q_i \in \mathbb{Q}$ for $1\leq i\leq 2g+n-1$.  Let $N$ be the least common multiple of the denominators of the rational numbers $\{q_i\}$.\\
\noindent If $\chi$ is realizable in a given stratum, then scale the corresponding abelian differential by the constant $N/c$. The resulting translation structure will then have integral holonomy $\widehat{\chi}:\Gamma_{g,n} \to \mathbb{Z}$, where $\widehat{\chi}(\gamma) = (N/c) \cdot \chi(\gamma)$ for any $\gamma \in \Gamma_{g,n}$.  Conversely, suppose $\widehat{\chi}:\Gamma_{g,n} \to \mathbb{Z}$ is defined by sending the above generators of $\Gamma_{g,n}$ to $N \cdot q_i$. If $\widehat{\chi}$ is realizable, then scaling the corresponding abelian differential on $S_{g,n}$ by $c/N$ results in a translation structure with holonomy $\chi$. Note that such scaling preserves the numbers of zeros and poles, and do not change their orders. Hence if one of the representations is realizable in a prescribed stratum, so is the other.\\ 
\noindent We can further assume that the representation $\widehat{\chi}$ \textit{surjects} on to the integers $\mathbb{Z}$: any non-trivial proper subgroup of $\mathbb{Z}$ is of the form $d\mathbb{Z}$ for some integer $d\geq 2$, a  further scaling by $d$ would suffice to obtain a surjection.
\end{proof}

\medskip
\noindent We shall also need the following reduction. As before, given a representation $\chi:\Gamma_{g,n} \to \mathbb{Z}$ we can define the restrictions $\chi_g:\Gamma_g\to \mathbb{Z}$ and  $\chi_n:\Gamma_{0,n}\to \mathbb{Z}$. 

\begin{lem}\label{lem:red2} 
 Let $\chi:\Gamma_{g,n}\to \mathbb{Z}$ be a surjective representation. Then there exists a choice of handle-generators $\{\alpha_i,\beta_i\}_{1\leq i\leq g}$ such that $\chi_g(\eta) = 1$ for any handle-generator $\eta$.
\end{lem}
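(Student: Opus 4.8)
The plan is to exploit the surjectivity of $\chi$ together with the freedom afforded by the mapping class group action described in subsection \ref{sec:mcgaction}. Since $\chi:\Gamma_{g,n}\to\mathbb{Z}$ is surjective and $\chi$ is determined by the pair $(\chi_g,\chi_n)$, the image of $\chi_g$ together with the image of $\chi_n$ generates $\mathbb{Z}$; but in fact the key first observation is that we may assume $\chi_g$ itself is surjective. If it is not, then $\mathrm{Im}(\chi_g) = d\mathbb{Z}$ for some $d\geq 2$ (or $\chi_g$ is trivial), and then some loop $\gamma_i$ around a puncture satisfies $\gcd(\chi(\gamma_i),d)<d$. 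Using a mapping class $\varphi$ of the type constructed in Lemma \ref{lem:handholnonzero} — which replaces a handle generator $\alpha_j$ by $\widetilde\alpha_j$ with $[\alpha_j]-[\widetilde\alpha_j]=[\gamma_i]$ — one can adjust a handle generator so that the new $\chi_g$ has image a strictly larger subgroup of $\mathbb{Z}$; iterating, we reach a $\varphi$ with $(\chi\circ\varphi_*)_g$ surjective. Since pulling back the translation structure by $\varphi$ changes the realizability question only by a change of basis, we may henceforth assume $\chi_g$ is onto $\mathbb{Z}$.

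Granting that $\chi_g:\Gamma_g\to\mathbb{Z}$ is surjective, the heart of the argument is a purely symplectic-linear-algebra statement: \emph{if $\chi_g:\Gamma_g\to\mathbb{Z}$ is a surjective homomorphism, then there is a symplectic basis $\{\alpha_i,\beta_i\}$ of $\Gamma_g$ with $\chi_g(\alpha_i)=\chi_g(\beta_i)=1$ for all $i$.} I would prove this by first recalling that $\mathrm{Sp}(2g,\mathbb{Z})$ acts transitively on primitive vectors of $\Gamma_g=\mathbb{Z}^{2g}$, hence (dualizing) transitively on surjective homomorphisms $\Gamma_g\to\mathbb{Z}$; so it suffices to exhibit \emph{one} symplectic basis on which some fixed surjection takes the value $1$ on every basis element. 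Take the surjection $\phi$ defined on a standard symplectic basis $\{a_i,b_i\}$ by $\phi(a_1)=1$ and $\phi(a_i)=\phi(b_i)=0$ for the rest; then set $\alpha_1=a_1$, $\beta_1 = b_1 + \sum_{i\ge 2}(\text{something})$ — more cleanly, choose new vectors $\alpha_i = a_1 + (\text{isotropic corrections})$, $\beta_i = b_1 + (\text{corrections})$ — arranged so that the collection is still a symplectic basis and $\phi$ evaluates to $1$ on each. Concretely one can take $\alpha_i := a_i + a_1$ for $i\ge 2$ (these are still mutually symplectically orthogonal and each has $\phi$-value $1$) and similarly $\beta_i := b_i + a_1$, checking that $\langle \alpha_i,\beta_j\rangle = \delta_{ij}$ still holds because $\langle a_1, a_k\rangle = \langle a_1, b_k\rangle = 0$ for $k\neq 1$ while $\langle a_1, b_1\rangle$ does not enter; a small bookkeeping fixes $\alpha_1,\beta_1$. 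Applying the appropriate element of $\mathrm{Sp}(2g,\mathbb{Z})$ carrying $\phi$ to $\chi_g$ transports this basis to the desired one for $\chi_g$. Since $\mathrm{Mod}(S_g)\to\mathrm{Sp}(2g,\mathbb{Z})$ is surjective, such a change of basis is realized by a mapping class, and this mapping class extends to $S_{g,n}$ fixing the punctures (acting trivially on the $\gamma_i$), so the $\chi_n$ part is unaffected.

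The main obstacle I anticipate is the linear-algebra lemma: one must produce a symplectic basis with \emph{all} $2g$ evaluations equal to $1$ simultaneously, and the naive corrections $\alpha_i = a_i + a_1$, $\beta_i = b_i + a_1$ do not automatically preserve the symplectic form among themselves (for instance $\langle \alpha_i,\alpha_j\rangle = \langle a_i+a_1, a_j+a_1\rangle = 0$ is fine, but $\langle \alpha_i,\beta_i\rangle = \langle a_i + a_1, b_i + a_1\rangle = \langle a_i,b_i\rangle + \langle a_1,b_i\rangle + \langle a_i, a_1\rangle = 1 + 0 + 0 = 1$ only because $i\neq 1$, and the pair $(\alpha_1,\beta_1)$ needs separate handling since $\langle a_1, a_1\rangle = 0$ makes the correction degenerate). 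The clean fix is to work with the surjection $\phi$ that is supported on a single symplectic pair, say $\phi(a_1)=\phi(b_1)=1$ and zero elsewhere — achievable after an $\mathrm{Sp}$-move — and then correct the remaining pairs by isotropic vectors drawn from $\mathrm{span}(a_1,b_1)$, e.g. $\alpha_i := a_i + a_1 - b_1$? — one verifies $\phi(a_1-b_1)=0$, not $1$; so instead use $\alpha_i := a_i + a_1$, $\beta_i := b_i + b_1$ for $i\ge 2$, giving $\phi(\alpha_i)=\phi(\beta_i)=1$, $\langle \alpha_i,\beta_i\rangle = 1$, $\langle\alpha_i,\alpha_j\rangle=\langle\beta_i,\beta_j\rangle=0$, and $\langle\alpha_i,\beta_j\rangle = \langle a_i,b_j\rangle + \langle a_i,b_1\rangle + \langle a_1,b_j\rangle + \langle a_1,b_1\rangle = \delta_{ij} + 0 + 0 + 1$, which is wrong for $i\neq j$. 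This shows the corrections must be chosen more carefully — the right approach is to make the corrections mutually cancel, e.g. telescoping: $\alpha_i := a_i + (a_1 - a_{i-1})$-type adjustments, or simply to induct on $g$, peeling off one handle at a time. I would carry out the induction: reduce $\chi_g$ restricted to a genus-$(g-1)$ subsurface and handle the last handle using the value-$1$ generator produced at the previous stage, which is exactly the mechanism already used repeatedly in the paper (cf. Lemmata \ref{lem:allhandholnonzero}, \ref{lem:handholnonzero}). This inductive route sidesteps the global symplectic bookkeeping and is the version I would write up in full.
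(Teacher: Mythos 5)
Your first reduction --- mixing puncture loops into the handle generators so that $\chi_g$ itself surjects onto $\mathbb{Z}$ --- is correct and is essentially Case~2 of the paper's proof (your iteration on the gcd is, if anything, slightly more careful than the paper's one-step claim). The problem is the second half. You correctly identify the key fact, the transitivity of $\mathrm{Sp}(2g,\mathbb{Z})$ on primitive vectors of $\mathbb{Z}^{2g}$, but you then use it only to normalize the surjection to a standard one, after which you try to exhibit by hand a symplectic basis on which that standard surjection is identically $1$. Both explicit attempts you write down fail, as you yourself verify (the corrections $\alpha_i = a_i + a_1$, $\beta_i = b_i + b_1$ destroy $\langle\alpha_i,\beta_j\rangle=\delta_{ij}$ for $i\neq j$), and the fallback --- ``induct on $g$, peeling off one handle at a time'' --- is announced but never carried out. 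As written, the proposal therefore does not contain a proof of the central linear-algebra step.

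The gap is easily closed, and the closure is exactly what the paper does: apply the transitivity fact \emph{directly to the value vector}, with no normalization of the surjection and no explicit basis. Once $\chi_g$ is surjective, the vector $v=\big(\chi_g(\alpha_1),\chi_g(\beta_1),\ldots,\chi_g(\alpha_g),\chi_g(\beta_g)\big)\in\mathbb{Z}^{2g}$ is primitive; the target $(1,1,\ldots,1)$ is also primitive; hence some element of $\mathrm{Sp}(2g,\mathbb{Z})$ carries $v$ to $(1,\ldots,1)$ (a change of symplectic basis by $M$ transforms the value vector by $M^{T}$, which again lies in $\mathrm{Sp}(2g,\mathbb{Z})$, so the two actions agree for this purpose). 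That element is induced by a mapping class supported on the genus-$g$ subsurface, so the $\chi_n$-part is untouched. One never needs to write the new basis down, and no induction on $g$ is required; your detour through a ``standard'' surjection is what created the bookkeeping you could not resolve.
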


\begin{proof}
\noindent Since the holonomy around some (in fact, each) puncture is non-trivial, we can assume, by changing the handle-generators if need be, that the $\chi_g-$image around \textit{some} handle-generator is non-trivial.  It follows, as in section 11.1, that the $\chi_g-$image of each handle-generator is non-trivial.\\
\noindent Let $\vec{t} = (t_1,t_2,\ldots, t_{2g+n-1}) \in \mathbb{Z}^{2g+n-1}$ be the integer tuple that are the $\chi-$images of a generating set (i.e.\ the union of the handle-generators and the loops around all punctures except one.) Since these images generate $\mathbb{Z}$, this tuple is a primitive integer vector in $\mathbb{Z}^{2g+n-1}$, that is, the greatest common divisor of their entries in $1$. Consider the sub-tuple $v = (t_1,t_2,\ldots, t_{2g})^T\in \mathbb{Z}^{2g}$ that is the image of the handle-generators, and is thought of as a column vector.\\ 
\noindent \textit{Case 1: $v$ is a primitive integer vector.} There is an element $A\in \text{Sp}(2g, \mathbb{Z})$ such that $A\cdot v = (1,1,\ldots 1)$. This is because $\text{Sp}(2g, \mathbb{Z})$ acts transitively on primitive integer vectors in $\mathbb{Z}^{2g}$ (see, for example, section 5.1 of \cite{MS}). Such a transformation $A$ is induced by a mapping class of $S_{g,n}$ supported on the subsurface $S_{g,0}$ containing the handles; this induces a change of a generating basis of $\Gamma_{g,n}$ yielding the desired set of handle-generators. \\
\noindent \textit{Case 2: The entries of $v$ have greatest common divisor $d$.}  In this case there is an element $A\in \text{Sp}(2g, \mathbb{Z})$ such that $A\cdot v = (d,d,\ldots d)$.  Since $\vec{t}$ is a primitive integer vector, there is a puncture with holonomy $d^\prime$ that is not divisible by $d$.  We can change one of the handle-generators by a mapping class that adds a loop around that puncture; the resulting handle-generator then has holonomy $d + d^\prime$. The new holonomy-vector for the handles is now a primitive integer vector, so we are reduced to Case 1.
\end{proof}

\subsection{Branched covers} The main result of this subsection is to show that our problem of constructing a translation structure on $S_{g,n}$ with prescribed integral holonomy $\chi$ and prescribed zeros and poles, is equivalent to constructing a branched cover of $\Bbb S^2$ with prescribed branching data. The following terminology will be useful.

\begin{defn}\label{defn:ppunc}  For a representation $\chi:\Gamma_{g,n} \to \mathbb{Z}$
we say that a puncture is \textit{positive} if the holonomy around it is a translation by a positive integer, and is  \textit{negative} otherwise. 
\end{defn} 

\begin{prop}\label{prop:bc}  Suppose  $\chi:\Gamma_{g,n} \to \mathbb{Z}$ is a non-trivial representation. Let the holonomies around the positive punctures be given by the integer-tuple $\lambda \in \mathbb{Z}_+^k$ and the holonomies around the negative punctures be given by $-\mu$ where $\mu \in \mathbb{Z}_+^l$.  
Then the following are equivalent:
\begin{enumerate}
    \item There is a translation structure on $S_{g,n}$ with holonomy $\chi$, with simple poles at the punctures and a set of $r$ zeros with prescribed orders $(d_1,d_2,\ldots, d_r)$ that satisfies \[\displaystyle\sum\limits_{i=1}^r d_i = 2g+ n-2.\] 
    \item There is a branched covering $f:S_{g} \to \Bbb S^2$  with two special branch values $p_+$ and $p_-$ such that
    \begin{itemize}
        \item $f^{-1}(p_+)$ is a set of $k$ points with local degrees given by $\lambda$, 
        \item $f^{-1}(p_-)$ is a set of $l$ points with local degrees given by $\mu$, where $k+l = n$, and 
        \item apart from these, there are exactly $r$ other branch points on $S_g$ with local degrees $(d_1+1,\ldots, d_r+1)$.
    \end{itemize} 
    \end{enumerate}

\end{prop}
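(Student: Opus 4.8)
The plan is to exhibit both conclusions as two faces of the same ``develop, then extend'' picture, with the flat cylinder $C_0=(\cp,\,z^{-1}\,dz)$ as the universal model: here $p_+:=0$ and $p_-:=\infty$ are the two ends of $\cp\cong\mathbb{S}^2$, and the abelian differential $\omega_0:=z^{-1}\,dz$ has a simple pole of residue $+1$ at $p_+$ and $-1$ at $p_-$, its translation structure on $\cp\setminus\{p_+,p_-\}$ having holonomy generating $\mathbb{Z}$. The Proposition then says that a translation structure on $S_{g,n}$ with integral holonomy and simple poles at the punctures is precisely the pullback of $(C_0,\omega_0)$ along a holomorphic map that extends to a branched cover $S_g\to\cp$.

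For $(1)\Rightarrow(2)$: let $(X,\omega)$, with $X=\overline{X}\setminus\Sigma$, realize $\chi$ as in $(1)$; note $\sum_i\lambda_i=\sum_j\mu_j=:d$ since the puncture loops sum to zero in $\Gamma_{g,n}$ (equivalently, by the residue theorem on $\overline X$). The developing map $\textsf{dev}\colon\widetilde{S_{g,n}}\to\mathbb{C}$ is equivariant for $\chi$, and because $\chi(\Gamma_{g,n})\subseteq\mathbb{Z}$ acts on $\mathbb{C}$ by integer translations, it descends to a holomorphic map $\overline{g}\colon S_{g,n}\to C_0$ pulling $\omega_0$ back to $\omega$. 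Around a positive puncture $\omega$ has a simple pole of residue $\lambda_i$, so by the local description in section \ref{lgp} a neighbourhood of that puncture is a half-infinite cylinder of circumference $\lambda_i$ wrapping $\lambda_i$ times over the $p_+$-end of $C_0$; hence $\overline g$ extends over the puncture as a map of local degree $\lambda_i$ to $p_+$, and likewise $\mu_j$ at the negative punctures. Away from the zeros of $\omega$, $\overline g$ is a local isometry, hence unramified; near a zero of order $d_j$ we have $\omega=z^{d_j}dz=\bigl(z^{d_j+1}/(d_j+1)\bigr)^{*}(d\zeta)$, so $\overline g$ has local degree $d_j+1$ there. Therefore the meromorphic extension $f\colon\overline{X}\to\cp$ is a branched cover with exactly the branching data of $(2)$, with $\deg f=d$, and the degree condition $\sum_j d_j=2g+n-2$ is now precisely Riemann--Hurwitz for $f$.

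For $(2)\Rightarrow(1)$: given $f$, set $\omega:=f^{*}\omega_0$, a meromorphic differential on $S_g$. Running the local computation above in reverse shows that $\omega$ has a simple pole of residue $\lambda_i$ at each point of $f^{-1}(p_+)$, residue $-\mu_j$ at each point of $f^{-1}(p_-)$, a zero of order $d_j$ at each of the $r$ remaining branch points, and is holomorphic and non-vanishing elsewhere. Deleting the $n$ poles yields a translation structure with poles on $S_{g,n}$; its holonomy $\psi$ sends a class $[\sigma]$ to $\int_{f\circ\sigma}\omega_0\in\mathbb{Z}$, so $\psi\colon\Gamma_{g,n}\to\mathbb{Z}$, and $\psi$ equals $\lambda_i$ around the $i$-th positive puncture and $-\mu_j$ around the $j$-th negative one; thus $\psi$ agrees with $\chi$ on every loop around a puncture. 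If in addition $\psi$ and $\chi$ lie in the same $\text{Mod}(S_{g,n})$-orbit, then pulling the translation structure back by a mapping class carrying $\psi$ to $\chi$ produces one with holonomy exactly $\chi$, finishing the proof.

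The main obstacle is precisely this orbit-matching of $\psi$ and $\chi$. By the reductions of section \ref{ssec:somered} we may take $\chi$ surjective; then Lemma \ref{lem:red2} puts $\chi$ into the normal form in which every handle-generator has holonomy $1$ and the puncture holonomies are the prescribed $\lambda_i,-\mu_j$, and the same applies to $\psi$ provided $\psi$ is surjective, at which point the two normal-form generating sets differ by an element of $\text{Mod}(S_{g,n})$. The delicate point is therefore the surjectivity of $\psi$: for a fixed branching datum the winding representation depends both on which branch values are named $p_+,p_-$ and on the monodromy of $f$, so one must argue that among the covers realizing the datum there is one whose winding representation is onto (for instance, one arising from a translation structure that already realizes $\chi$). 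Reconciling the combinatorial flexibility in $(2)$ with the rigidity of the holonomy $\chi$ in $(1)$ is where the real work lies; everything else is the routine ``develop / pull back'' dictionary together with Riemann--Hurwitz.
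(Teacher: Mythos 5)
Your argument is the same ``develop / pull back along the flat cylinder'' dictionary that the paper uses: descend the developing map to $\mathbb{C}/\langle z\mapsto z+1\rangle$ and extend over the punctures for $(1)\Rightarrow(2)$, pull back $\omega_0$ and compute local degrees for $(2)\Rightarrow(1)$, with Riemann--Hurwitz accounting for the degree condition. Both directions of your local analysis (residue $=$ local degree at the preimages of $p_\pm$, local degree $d_j+1$ at a zero of order $d_j$) match the paper's.

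The one place your write-up stops short is exactly the ``orbit-matching'' step you flag at the end, and you should not leave it as ``where the real work lies'': the paper closes it with Lemma \ref{lem:red2}, not with any search over covers realizing the branch datum. Concretely: by the reductions of subsection \ref{ssec:somered} one takes $\chi$ surjective, and Lemma \ref{lem:red2} then produces handle-generators on which $\chi$ is identically $1$; the same lemma, applied to the pulled-back representation $\psi$ (which agrees with $\chi$ on all puncture loops), produces handle-generators on which $\psi$ is identically $1$. Since a surjective integral representation is thus determined, up to the $\text{Mod}(S_{g,n})$-action, by its puncture holonomies alone, $\psi$ and $\chi$ take identical values on corresponding generating sets and differ by a mapping class, and pulling back the translation structure by that mapping class finishes $(2)\Rightarrow(1)$. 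The only hypothesis you need to verify is surjectivity of $\psi$; this is automatic when $\gcd(\lambda_1,\ldots,\lambda_k,\mu_1,\ldots,\mu_l)=1$ (the image of $\psi$ already contains the puncture holonomies), and in the remaining case the paper normalizes it with the brief ``by a rescaling, if necessary'' remark before invoking Lemma \ref{lem:red2}. So your instinct that this is the only non-routine point is correct, but the intended resolution is the normal-form lemma rather than choosing a special cover -- your parenthetical suggestion to use ``a cover arising from a translation structure that already realizes $\chi$'' would be circular in the $(2)\Rightarrow(1)$ direction and should be dropped.
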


\begin{proof}
Notice that, by Lemma \ref{lem:red2} we can assume, without loss of generality, that $\chi(\gamma) =1$ for any handle-generator $\gamma \in \Gamma_{g,n}$.\\
\noindent We begin with (2) $\implies$ (1). Equip $\Bbb S^2 \setminus \{p_-,p_+\}$ with a translation structure in which it is isomorphic to an infinite Euclidean cylinder of circumference $1$, such that the holonomy around $p_-$ is $-1$ and $p_+$ is $+1$ respectively. (Recall that the holonomy around a puncture is the holonomy around a loop that is oriented anti-clockwise.) Note that the corresponding abelian differential has simple poles at $p_\pm$.\\ 
\noindent Pulling back via the branched covering $f$, we obtain a translation structure on $S_g \setminus \big(f^{-1}(p_-) \cup f^{-1}(p_+)\big)$. Note that the $n$  punctures correspond to simple poles of the corresponding abelian differential, since they are lifts of the simple poles at $p_\pm$ on $\Bbb S^2$. The holonomy around any puncture in $f^{-1}(p_+)$ would equal the local degree, i.e.\ the corresponding entry of $\lambda$, since 
\begin{equation}
    f^\ast \Big(\frac{dz}{z}\Big) = \text{ord}(f) \cdot \frac{dz}{z}
\end{equation} in local coordinates. Similarly, the holonomy around any puncture in $f^{-1}(p_-)$ would equal the corresponding entry of $-\mu$. Thus, the translation structure we obtained on $S_{g,n}$ has a holonomy $\chi^\prime:\Gamma_{g,n} \to \mathbb{Z}$ such that the holonomies around the punctures agree with those of $\chi$. By a rescaling, if necessary, we  can assume that $\chi^\prime$ is surjective. Now by Lemma \ref{lem:red2} there exists a choice of handle-generators such that $\chi^\prime = \chi$, and we are done.\\

\noindent For the reverse implication, namely (1)$\implies$ (2), consider the developing map $\widetilde{f}: \widetilde{S_{g,n}} \to \mathbb{C}$ of the given translation structure on $S_{g,n}$. This mapping is $\chi$-equivariant, namely $f(\gamma \cdot x) = \chi(\gamma) \cdot f(x)$ for any $x\in \widetilde{S_{g,n}}$ and any $\gamma \in \Gamma_{g,n}$. Since $\chi(\gamma)$ is some integer translation, the developing map descends to a map \begin{equation}
    \overline{f}:S_{g,n} \to \mathcal{C} = {\mathbb{C}}/{\langle z\mapsto z+1\rangle}.
\end{equation}
\noindent We can think of the target cylinder $\mathcal{C}$ as $\Bbb S^2$ with two punctures $p_+$ and $p_-$, corresponding to the two ends. Recall that the developing map $\widetilde{f}$ is an immersion except at any zero of the abelian differential (corresponding to the given translation structure), where it has local degree equal to one more than the order of the zero.  Thus, the map $\overline{f}$ has $r$ branch points with local degrees $(d_1+1,d_2+1,\ldots, d_r+1)$. It follows from the flat geometry of the abelian differential near simple poles, that the map $\overline{f}$ near any puncture maps to one of the two ends of the cylinder $\mathcal{C}$. In fact, the restriction of $\overline{f}$ to any cylindrical end of the translation structure on $S_{g,n}$  is a covering map to an end of $\mathcal{C}$.\\
\noindent Hence,  $\overline{f}$ extends continuously to a branched covering of the closed surface $f:S_g\to \Bbb S^2$, where any  positive puncture  (see Definition \ref{defn:ppunc}) is mapped to $p_+$ and any negative puncture is mapped to $p_-$. Indeed, if $\omega$ is the abelian differential corresponding to the translation structure on $S_{g,n}$, then its lift to the universal cover $\widetilde{\omega} = \widetilde{f}^\ast (dz)$; hence $\omega = f^\ast(\omega_o)$ where $\omega_o = dz/z$ is a meromorphic abelian differential on $\mathcal{C} = \mathbb{C}/\langle z\mapsto z+1\rangle$ with simple poles at $p_-$ and $p_+$ and residues $+1$ and $-1$ respectively. The positive (resp. negative) punctures are exactly those poles where $\omega$ has positive (resp. negative) residues. The local degree of $f$ at any point in $f^{-1}(p_+) \cup f^{-1}(p_-)$ is exactly the ratio of residues of $\omega$ and $\omega_o$ at the point, and hence the tuple of local degrees equals $\lambda$ at the positive punctures and $-\mu$ at the negative punctures. Thus, $f:S_g \to \Bbb S^2$ is our desired branched covering.
\end{proof}

\noindent We also note the following corollary, the converse of which will be proved in the next subsection:

\begin{cor}\label{cor:pgen}  Let $\chi:\Gamma_{g,n} \to \mathbb{Z}$ be a surjective representation.  Let the holonomies around the positive punctures be given by the integer-tuple $\lambda \in \mathbb{Z}_+^k$ and the holonomies around the negative punctures be given by $-\mu$ where $\mu \in \mathbb{Z}_+^l$.  Then if $\chi$ is realizable in the stratum where the punctures are simple poles and the zeros have orders  $(d_1,d_2,\ldots, d_r)$  then $d_i \leq d-1$ for each $1\leq i\leq r$ where $d = \sum\limits_{i=1}^k \lambda_i = \sum\limits_{i=1}^l \mu_i$. 
\end{cor}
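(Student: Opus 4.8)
\textbf{Proof proposal for Corollary \ref{cor:pgen}.}

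The plan is to deduce this immediately from the equivalence established in Proposition \ref{prop:bc}, namely the implication (1)$\implies$(2). Suppose $\chi$ is realizable in the stratum where all punctures are simple poles and the zeros have orders $(d_1,d_2,\ldots,d_r)$. After the reductions of subsection \ref{ssec:somered} we may assume $\chi$ is integral and surjective, and by Lemma \ref{lem:red2} that $\chi$ sends every handle-generator to $1$. Applying Proposition \ref{prop:bc}, the realizing translation structure gives rise to a branched covering $f:S_g\to\Bbb S^2$ with two distinguished branch values $p_+$ and $p_-$, whose fibres $f^{-1}(p_+)$ and $f^{-1}(p_-)$ carry local degrees recorded by $\lambda\in\mathbb{Z}_+^k$ and $\mu\in\mathbb{Z}_+^l$ respectively, and with exactly $r$ further branch points carrying local degrees $(d_1+1,\ldots,d_r+1)$.

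The key observation is simply that the degree of the branched cover $f$ is the sum of the local degrees over any fibre, so $\deg(f)=\sum_{i=1}^k\lambda_i=\sum_{i=1}^l\mu_i=:d$ (this common value being forced by looking at the two special fibres). But the local degree of $f$ at any point cannot exceed $\deg(f)$; in particular, at each of the $r$ extra branch points we have $d_j+1\le d$, i.e.\ $d_j\le d-1$ for every $1\le j\le r$. This is exactly the assertion. One should also remark that the equality $\sum_{i=1}^r d_i = 2g+n-2$ together with the Riemann--Hurwitz relation is consistent here, but it is not needed for the bound itself.

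I do not expect any real obstacle: the statement is a direct translation of ``local degree $\le$ global degree'' through the dictionary of Proposition \ref{prop:bc}. The only mild points to handle carefully are (i) checking that the rescalings used in subsection \ref{ssec:somered} to pass between $\chi$ and an integral surjective representation preserve the stratum, which is noted in the proof of the first Lemma of that subsection, so that ``realizable in the prescribed stratum'' is invariant under this reduction; and (ii) noting that the value $d$ appearing in the statement is unambiguous, since the two fibres $f^{-1}(p_\pm)$ have the same total local degree. With these in place the corollary follows in a couple of lines.
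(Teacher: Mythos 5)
Your proposal is correct and follows exactly the paper's own argument: apply Proposition \ref{prop:bc} to obtain a degree-$d$ branched cover $f:S_g\to\Bbb S^2$ with $r$ branch points of local degrees $d_i+1$, and conclude from the fact that the local degree cannot exceed $\deg(f)=d$. The additional remarks about the reductions and the well-definedness of $d$ are fine but not needed beyond what the paper already records.
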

\begin{proof}
By Proposition \ref{prop:bc} there is a branched cover $f:S_g \to \Bbb S^2$ of degree $d$ whose branch points include $r$ points with local degrees $d_i+1$ for $1\leq i\leq r$.  Since the local degree is at most the degree of the map, it follows that $d_i \leq d-1$ for each $i$.
\end{proof}

\begin{rmk}
The above corollary  implies that for  fixed integer vectors $\lambda$ and $\mu$, there is a \textit{finite} set of possible genera $g$ for which there is a translation structure on $S_{g,n}$ with integral holonomy having the holonomies around the punctures given by $\lambda$ and $-\mu$.  This is because 
\begin{equation}
    2g + n-2 = \sum\limits_{i=1}^r d_i \leq (d-1)\cdot r < (d-1)\cdot d,
\end{equation} which provides an upper bound on $g$. 
\end{rmk}

\subsection{Proof of Theorem \ref{main:thme}} \label{pfthme} 

Ideas from the recent work in \cite{GT}, also lead to the proof of the converse to Corollary \ref{cor:pgen}, that we describe in this section. \\

\noindent We shall use the following result for the case $g=0$, i.e.\ for punctured spheres, that Gendron-Tahar proved:

\begin{prop}\emph{\cite[Theorem 1.2(ii)]{GT}}\label{propgt2}
Suppose  $\chi:\Gamma_{0,n} \to \mathbb{Z}$ is a non-trivial surjective representation. Let the holonomies around the positive punctures be given by the integer-tuple $\lambda \in \mathbb{Z}_+^k$ and the holonomies around the negative punctures be given by $-\mu$ where $\mu \in \mathbb{Z}_+^l$.  
Then there is a translation structure on $S_{0,n}$ with holonomy $\chi$, with simple poles at the punctures and a set of $r$ zeros with prescribed orders $(d_1,d_2,\ldots, d_r)$ that satisfies the degree condition $\displaystyle\sum\limits_{i=1}^r d_i = 2g-2+n$ if and only if 
     \begin{equation}\label{eqn:comb3}
    \sum_{i} \lambda_i=\sum_j \mu_j> \max\{d_1,d_2,\ldots, d_r\}.
\end{equation}
    
\end{prop}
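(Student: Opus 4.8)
The plan is to prove both implications, treating necessity as immediate and reserving the construction of the translation structure for the substance. For \textbf{necessity}, note that since the structure has only simple poles, the holonomy around each puncture is (a constant multiple of) the residue of the corresponding abelian differential $\omega$ on $\Bbb S^2$. The residue theorem on the compact surface forces the residues to sum to zero, which yields $\sum_i \lambda_i = \sum_j \mu_j =: d$. For the strict inequality I would simply invoke Corollary \ref{cor:pgen} with $g=0$, which asserts $d_i \le d-1$ for every $i$; this is exactly $d > \max\{d_1,\dots,d_r\}$.

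For \textbf{sufficiency}, the first step is to reduce, via the $g=0$ case of Proposition \ref{prop:bc}, to producing a degree-$d$ branched cover $\Bbb S^2 \to \Bbb S^2$ whose branching over two distinguished values is $\lambda$ and $\mu$, together with $r$ further branch points of local degrees $d_1+1,\dots,d_r+1$. Equivalently, in monodromy terms, one seeks a tuple $(\sigma_+,\sigma_-,\tau_1,\dots,\tau_r)$ in the symmetric group $S_d$ with $\sigma_+$ of cycle type $\lambda$, $\sigma_-$ of type $\mu$, each $\tau_i$ a single $(d_i+1)$-cycle, product equal to the identity, and transitive image. A direct count shows the Riemann--Hurwitz relation is automatically satisfied and yields genus $0$, so the only genuine issue is existence together with connectivity. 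Rather than manipulate permutations, I would build the flat surface directly in two phases.

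In \textbf{Phase 1} I start from the base case $n=2$, $k=l=1$: the bi-infinite Euclidean cylinder of circumference $d$, a twice-punctured sphere carrying simple poles of residues $\pm d$ and no zeros. Using the pair-of-pants form of the slit construction of Section \ref{sec:seqslit}, I split the positive end successively into half-cylinders of circumferences $\lambda_1,\dots,\lambda_k$ (this uses $k-1$ splits) and the negative end into $\mu_1,\dots,\mu_l$ (using $l-1$ splits). Each split adds one puncture and introduces a single simple zero while keeping the surface a genus-$0$ translation surface, and the holonomies around the punctures come out exactly as $\lambda$ and $-\mu$. The outcome is a genus-$0$ structure realising $\chi$ with $n-2$ simple zeros; using the positioning freedom of Section \ref{sec:zeropos} I would place these zeros in $r$ tight clusters of sizes $d_1,\dots,d_r$ (recall $\sum_i d_i = n-2$), the clusters pairwise far apart. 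In \textbf{Phase 2} I merge, within each cluster, its $d_i$ simple zeros into one zero of order $d_i$ by running the ``splitting a zero'' surgery of Section \ref{sec:zerosplit} in reverse, $d_i-1$ times. Since that surgery is local and preserves both topology and holonomy, the genus stays $0$, the representation stays $\chi$, and the poles and their residues are untouched, so only the zero data changes, to $(d_1,\dots,d_r)$ as required.

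The \textbf{main obstacle} is to show that the merges in Phase 2 can always be completed, and that this is possible precisely when $d > \max d_i$. Creating a zero of order $d_i$ amounts to building a cone point with $d_i+1$ horizontal prongs, i.e.\ local degree $d_i+1$ for the covering $\Bbb S^2 \to \Bbb S^2$; this fits inside a degree-$d$ cover exactly when $d_i+1 \le d$, which is the hypothesis. Concretely, I must guarantee that after Phase 1 the simple zeros destined for a common cluster are mutually joined by saddle connections with matching developed images, so that the successive merges are legal, while never requiring more than the $d$ available sheets. The extremal case $r=1$, $d_1 = n-2$, in which all $n-2$ simple zeros merge into a single one, is where the bound is sharp: it forces $d \ge n-1 > n-2$, matching the necessity computation exactly. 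Making the clustering and the saddle-connection bookkeeping uniform across all configurations, so that $d > \max d_i$ is seen to be not merely necessary but sufficient, is the technical heart of the argument, and is where I expect the real work (and the methods of \cite{GT}) to be concentrated.
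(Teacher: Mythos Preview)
The paper does not prove this proposition at all: it is quoted verbatim from \cite{GT} and used as a black box (as the base case $g=0$ feeding into Propositions \ref{prop:new1} and \ref{prop:newr}). So there is no ``paper's own proof'' to compare against; what one can compare is your sketch against the general methodology of both \cite{GT} and the present paper.

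Your necessity argument is fine and is exactly how the paper would phrase it (residue theorem plus Corollary \ref{cor:pgen}).

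Your sufficiency argument, however, has a genuine gap, and it stems from running the construction in the wrong direction. Throughout this paper (and in \cite{GT}) the paradigm is: first build a translation surface with the correct poles and \emph{as few zeros as possible}, then use the local surgery of Section \ref{sec:zerosplit} to \emph{split} a high-order zero into several lower-order ones. Splitting is a purely local, always-available operation. You propose the opposite: produce $n-2$ simple zeros and then \emph{merge} them. But ``running splitting in reverse'' is not a well-defined surgery in general. Colliding two zeros along a saddle connection only yields a translation surface when that saddle connection can be contracted without simultaneously collapsing other saddle connections of the same holonomy; there is no reason the simple zeros produced by your successive end-splittings sit in such a configuration, and the freedom of Section \ref{sec:zeropos} (which concerns placing zeros \emph{after} a split, not before a merge) does not give you this. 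You effectively concede this at the end by calling the bookkeeping ``the technical heart'' and deferring it to \cite{GT}.

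Note also that the obvious fix --- build a single zero of order $n-2$ and then split --- does not work either, because that would require $d>n-2$, whereas the hypothesis only gives $d>\max_i d_i$; one can have, e.g., all $\lambda_i=\mu_j=1$ so that $d=n/2<n-2$. This is why the actual argument in \cite{GT} is more delicate than either direction of your sketch, and why the present paper imports it rather than reproving it.
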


\medskip 

\noindent In the remainder of  this section, we shall fix a representation  $\chi:\Gamma_{g,n} \to \mathbb{Z}$  such that:
\begin{itemize}
\item $\chi$ is non-trivial, and surjective,
\item the $\chi$-images of the loops  around the positive punctures are given by the integer-tuple $\lambda \in \mathbb{Z}_+^k$ and the loops around the negative punctures be given by $-\mu$ where $\mu \in \mathbb{Z}_+^l$, and
\item  $\chi(\gamma) = 1$ for each handle-generator  (see Lemma \ref{lem:red2}). 
\end{itemize}

\medskip 
\noindent We shall  start with the case when the prescribed stratum has a single zero; first, we need the following definition:

\begin{defn}\label{combad} An integer tuple $\nu = (\lambda_1,\lambda_2,\ldots, \lambda_k, -\mu_1,-\mu_2,\ldots, - \mu_l)$ where $\lambda_i>0$ and $ \mu_j>0$ for each $i,j$  is said to be \textit{$g$-combinatorially admissible} for an integer $g\geq 0$ if 
\begin{equation}\label{eqn:gcomb}
    \sum_{i} \lambda_i=\sum_j  \mu_j  > 2g -2 + k + l. 
\end{equation}
\end{defn}

\noindent Note that in the single-zero case, being $g$-combinatorially admissible is equivalent to satisfying \eqref{eqn:comb3} since the order of the zero must equal $2g-2+n$.\\

\noindent We now prove:

\begin{prop}[Single-zero case] \label{prop:new1}
Let  $\chi:\Gamma_{g,n} \to \mathbb{Z}$  be as in the beginning of the section. Then there is a translation structure on $S_{g,n}$ with holonomy $\chi$, with simple poles at the punctures and a single zero of order $d$  (satisfying $d= 2g-2+n$)  if and only if  $\sum_{i}  \lambda_i=\sum_j \mu_j> d$,  that is, the integer tuple $\nu = (\lambda_1,\lambda_2,\ldots, \lambda_k, -\mu_1,-\mu_2,\ldots, - \mu_l)$  that records the residues at the punctures is $g$-combinatorially admissible. 
\end{prop}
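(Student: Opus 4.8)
The plan is to combine the genus-$0$ case of Gendron--Tahar (Proposition \ref{propgt2}) with an inductive handle-addition argument that mirrors the techniques already developed for the trivial and non-trivial holonomy cases earlier in the paper, while taking care that the handles ``fit'' inside the cylindrical ends so that no new zeros are created. Since the stratum here has a single zero, there is no zero-splitting to worry about; the entire difficulty lies in adding $g$ handles to a genus-$0$ translation surface without changing the combinatorics of the poles and while increasing the order of the unique zero by exactly $2$ at each stage.

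\textbf{Necessity.} This direction is immediate from Corollary \ref{cor:pgen}: if such a translation structure exists, then $d \leq d_{\max} = d < \sum_i \lambda_i$, wait---more precisely, Corollary \ref{cor:pgen} gives $d_i \leq d-1$ where $d = \sum_i \lambda_i = \sum_j \mu_j$, and since here the single zero has order $d$ (using the notation clash, call the order $d_0 = 2g-2+n$), we get $d_0 \leq d - 1 < d$, i.e. $\sum_i \lambda_i = \sum_j \mu_j > d_0$, which is exactly $g$-combinatorial admissibility. So necessity needs no new work.

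\textbf{Sufficiency.} Assume $\nu$ is $g$-combinatorially admissible, so $\sum_i \lambda_i = \sum_j \mu_j > 2g-2+n$. First I would reduce to the genus-$0$ tuple: consider the residue tuple $\nu$ together with a single zero of order $d_0' := n - 2 = 2\cdot 0 - 2 + n$. Since $\sum_i \lambda_i > 2g-2+n \geq n-2 = d_0'$ (as $g \geq 1$ implies $2g - 2 \geq 0$), the tuple $\nu$ is $0$-combinatorially admissible, so by Proposition \ref{propgt2} there is a translation structure $(W_0,\tau_0)$ on $S_{0,n}$ with holonomy $\chi_n$ (after rescaling and applying Lemma \ref{lem:red2}), simple poles at the $n$ punctures with the prescribed residues, and a single zero of order $n-2$. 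Next I would add $g$ handles one at a time. By Lemma \ref{lem:red2} we may assume each handle-generator has $\chi$-image $1$; thus each handle requires gluing a slit (or a degenerate parallelogram) of holonomy-vector $(1,0)$ or, more generally, a surgery that realizes a $\mathbb{Z}^2 \to \mathbb{Z}$ representation sending both generators to (small) integers. The key geometric point is that a cylinder end of circumference $1$ coming from a simple pole contains, inside a single fundamental domain of the half-cylinder, an embedded slit of any prescribed direction and sufficiently small length; making two adjacent such slits (with one endpoint of the chain at the existing zero) and identifying appropriately adds a handle with the desired holonomy and raises the order of the zero by $2$ without introducing any further singularity. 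This is precisely the kind of surgery carried out in Case B of subsection \ref{sec:gennontrivhol} (``fitting'' handles inside infinite half-cylinders, cf. Figures \ref{fig:gennontrivhol2} and \ref{fig:gennontrivhol3}); since $\chi$ restricted to each handle-generator is the standard translation by $1$, and the holonomies along the cylinder direction are rational (indeed integral), these slits can always be arranged. Iterating $g$ times produces a translation structure $(W_g,\tau_g)$ on $S_{g,n}$ with holonomy agreeing with $\chi$ on the punctures and, after a final application of Lemma \ref{lem:red2} to match the handle-generators, with holonomy exactly $\chi$; its unique zero now has order $n-2 + 2g = 2g-2+n = d_0$, as required.

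\textbf{Main obstacle.} The delicate step is verifying that the slit surgeries used to attach the handles actually embed inside the cylindrical ends of $(W_0,\tau_0)$ and remain disjoint when several handles are glued into the same cylinder---exactly the technical content already handled in subsection \ref{sec:gennontrivhol}, Case 1 and Case 2. Concretely, one must check: (a) that a cylinder end of circumference $1$ admits, in a suitable fundamental domain, an embedded geodesic segment of prescribed slope and small length disjoint from the identification rays; and (b) that for two generators of a handle with images $(1,0)$, or after Dehn twists images $(1, n)$ for large $n$, the associated parallelogram's horizontal translates by the circumference are disjoint (the inequality $\Im(-\bar b a + \bar b + n\bar a) < 0$ of Figure \ref{fig:gennontrivhol3}). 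Both are routine given the earlier lemmas, so the proof is essentially a bookkeeping assembly of Proposition \ref{propgt2} with the handle-addition surgery of section \ref{sec:gennontrivhol}; I expect the write-up to be short, citing those constructions rather than repeating them.
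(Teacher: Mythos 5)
Your necessity argument is fine and matches the paper's (both go through Corollary \ref{cor:pgen}). The sufficiency direction, however, has a fatal gap, and there is a clean way to see it: your construction only uses the hypothesis $\sum_i\lambda_i=\sum_j\mu_j>n-2$, which is what is needed to invoke Proposition \ref{propgt2} for the genus-zero base surface with its single zero of order $n-2$; after that you claim the $g$ handles of holonomy $(1,1)$ can ``always be arranged'' inside the existing cylindrical ends, without changing the residues and without creating new zeros. If that claim were true, the proposition would hold under the strictly weaker hypothesis $\sum_i\lambda_i>n-2$ --- contradicting the necessity direction you just proved, which shows $\sum_i\lambda_i>2g-2+n$ is required. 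So the handle-addition step cannot be ``routine bookkeeping''; it must fail precisely in the range $n-2<\sum_i\lambda_i\le 2g-2+n$, and your outline never identifies where the full strength of $g$-combinatorial admissibility enters.

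The concrete reason the step fails is that the fitting arguments of section \ref{sec:gennontrivhol} you cite all rely on either a pole of order at least $2$ (an embedded copy of $\mathbb{E}^2$ to work in), non-collinearity of the periods, or irrationality: Lemma \ref{lem:irrsmallhol} uses the Euclidean algorithm to make handle holonomies arbitrarily small, which is impossible for a handle with holonomy $(1,1)$ since Dehn twists preserve the gcd of the periods. Moreover, a slit surgery whose endpoints are not all located at the existing zero necessarily distributes the new $4\pi$ of cone angle over more than one point, destroying the single-zero requirement of the stratum. This is exactly why the paper's proof of Proposition \ref{prop:new1} takes a different route: it first performs $g$ reductions on the residue tuple itself (each reduction spends $2$ units of positive and $2$ units of negative residue --- this is where $g$-admissibility is consumed, via the Claim that a $g$-admissible tuple with $g\ge1$ has entries $\lambda_i\ge3$ and $\mu_j\ge2$ available to reduce), realizes the reduced, possibly shorter, tuple on a sphere via Gendron--Tahar, and then restores the residues one handle at a time by gluing in an infinite cylinder of circumference $2$ along rays based at the zero, building the handle on the length-$2$ saddle connection of the freshly glued cylinder so that all new cone angle lands on the unique zero. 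Keeping the residues fixed throughout, as you propose, removes the very mechanism that makes the handles fit.
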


\begin{proof}
The ``only if" part follows from Corollary \ref{cor:pgen}: namely, consider the branched covering $f : S_g \to \mathbb{S}^2$ given by Proposition \ref{prop:bc}. The single zero of the differential corresponding to the translation structure is a branch point  for the map $f$ of order $d + 1 = 2g+ n-1$. Since the order of this branch point cannot be more than the degree of $f$, we have, 
\begin{equation}
    \sum_{i} \lambda_i=\sum_j  \mu_j \geq 2g + n-1 = d + 1.
\end{equation}

\noindent Moving on to the other implication, we may assume, by Lemma \ref{lem:red2}, that $\chi(\rho) = 1$ for each handle generator $\rho$. We proceed as in Section \ref{sec:trivholposgen}, employing a reduction process consisting of $g$ reduction steps to obtain a surjective representation $\chi : \Gamma_{0,m} \to \mathbb{Z}$, for $m \leq n$ which shall be realised as the holonomy of a translation structure on $S_{0,m}$ induced by a meromorphic differential with simple poles  at the punctures, and a single zero. We then perform $g$ constructions on this translation surface, reversing the reduction steps, to obtain the final surface. To justify the reduction step, we first prove:\\

\noindent \textit{Claim.} Let  $\nu$ be a $g-$combinatorially admissible tuple with $g \geq 1$. Then one of the following must hold:\begin{enumerate}
    \item $\lambda_i \geq 3$ for some $i$ and  $\mu_j \geq 2$ for some $j$, or 
    \item $\mu_j \geq 3$ for some $j$ and  $\lambda_i \geq 2$ for some $i$.
\end{enumerate}

\noindent \textit{Proof of claim.}  Since $\nu$ is $g$-combinatorially admissible tuple with $g \geq 1$, we have 
\begin{equation}\label{eqn:gcomb2}
    \sum_{i} \lambda_i=\sum_j \mu_j > 2g + n-2 \geq n
\end{equation}
Assume, to the contrary, that $\lambda_i \leq 2$ for all $1 \leq i \leq k$ and $\mu_j \leq 2$ for all $1 \leq j \leq l$. This gives us, 
\begin{equation}
    2n < \sum_{i} \lambda_i + \sum_j \mu_j \leq 2k + 2l = 2n
\end{equation}
which is a contradiction. Without loss of generality, we may now assume that $\lambda_1 \geq 3$. It is now easy to see that a tuple $\nu$ cannot be $g$-combinatorially admissible if all the entries of the tuple $\mu$ are 1. $\qed$\\

\noindent We can now state the reduction process. Given a $g$-combinatorially admissible tuple $\nu$, with $g \geq 1$, we reduce it to $(\lambda_1, \ldots, \lambda_i -2, \ldots, \lambda_k, -\mu_1, \ldots, -\mu_j +2, \ldots, -\mu_l)$, with the indices $i$ and $j$ coming from the claim above, and the understanding that the reduced tuple is an $n-1$ tuple if either of $\lambda_i$ or $\mu_j$ is $2$. Since the reduction decreases both the sum of the positive periods and the absolute value of the negative periods by $2$, the reduced tuple (whether an $n$-tuple or an $(n-1)$-tuple) is $(g-1)$-combinatorially admissible and thus, non-trivial. So, starting with a $g$-combinatorially admissible tuple, it is possible to perform $g$ reductions to end up with a $0$-combinatorially admissible tuple. We let $\nu^{(i)} = (\lambda_1^{(i)}, \ldots, \lambda_{k_i}^{(i)}, -\mu_1^{(i)}, \ldots, -\mu_{l_i}^{(i)})$ denote the tuple obtained after the $i^{th}$ reduction step, for each $1\leq i\leq g$. Note that $k_i$ and $l_i$ are the number of positive and negative entries, respectively, of $\nu^{(i)}$.

\noindent By Proposition \ref{propgt2}, the tuple $\nu^{(g)}$ denotes the holonomy of a translation structure on $S_{0,n}$ induced by a meromorphic differential with simple poles  at the punctures, and a single zero. Following the notation of section \ref{sec:trivholposgen} we denote this translation surface by $(W_0, \tau_0)$ and proceed to construct translation surfaces $(W_i, \tau_i)$, for $ 1 \leq i \leq g$, given by a meromorphic differential on $S_{i, (k_i + l_i)}$ with a single zero, simple poles at the punctures and holonomy at the punctures given by the tuple $\nu^{(g-i)}$. The holonomy around each handle generator in all surfaces $(W_i, \tau_i)$ shall be the translation by $1$. We first construct $(W_1, \tau_1)$, by considering three cases.\\

\paragraph{\textbf{Case 1}- $k_g = k_{g-1}, l_g = l_{g-1}$} Let $i_g$ and $j_g$ be the indices of the periods changed in the $g^{th}$ reduction step. From the construction of $(W_0, \tau_0)$ as in the proof of Theorem 1.2 (ii) in \cite{GT}, it follows that there exists a  vertical geodesic ray, say $r_{up}$ going upward starting from the zero of $\tau_0$, towards the puncture with holonomy $\lambda_{i_g}^{(g)}$. Similarly, there is a vertical geodesic ray, say $r_{dn}$ going downward starting from the zero of $\tau_0$ towards the puncture with holonomy $-\mu_{j_g}^{(g)}$. We slit along these rays, and glue an infinite cylinders of circumference $2$ to $(W_0, \tau_0)$ using these two rays  as shown in Figure \ref{fig:onezerorathol1}. We then construct a handle with holonomy $1$ at each handle generator in the usual way along the saddle connection of length 2 on the freshly glued cylinder. \\

 \begin{figure}[!h]
     \centering
     \begin{tikzpicture}[scale=1, every node/.style={scale=0.65}]
        \definecolor{pallido}{RGB}{221,227,227}
         \fill[pattern=north west lines, pattern color=pallido] (0,3) -- (0,-3) -- (11,-3) -- (11, 3);
         \fill[white] (1,3.1) -- (1,-3.1) -- (2,-3.1) -- (2, 3.1);
         \fill[white] (5,3.1) -- (5,-3.1) -- (6,-3.1) -- (6, 3.1);
         \fill[white] (9,3.1) -- (9,-3.1) -- (10,-3.1) -- (10, 3.1);
         \node at (1.5,0) {$\ldots$};
         \node at (5.5,0) {$\ldots$};
         \node at (9.5,0) {$\ldots$};
         \fill[white] ($(3,0) + (92:3)$)--(3,0)-- ++(88:3);
         \fill[white] ($(4,0) + (92:3)$)--(4,0)-- ++(88:3);
         \fill[white] ($(7,0) + (-92:3)$)--(7,0)-- ++(88:3);
         \fill[white] ($(8,0) + (-92:3)$)--(8,0)-- ++(-88:3);
         \draw[<->] (3.1,3.1) -- (3.9,3.1);
         \draw[<->] (7.1,-3.1) -- (7.9,-3.1);
         \draw[<->] (12,3.1) -- (14,3.1);
         \draw[decoration={markings, mark=at position 0.6 with {\arrow{latex}}}, postaction={decorate}] (0,0) -- (0,3);
         \draw[decoration={markings, mark=at position 0.6 with {\arrow{latex}}}, postaction={decorate}] (0,0) -- (0,-3);
         \draw[decoration={markings, mark=at position 0.6 with {\arrow{latex}}}, postaction={decorate}] (3,0) -- ++(92:3);
         \draw[decoration={markings, mark=at position 0.6 with {\arrow{latex}}}, postaction={decorate}] (3,0) -- ++(88:3);
         \draw[decoration={markings, mark=at position 0.6 with {\arrow{latex}}}, postaction={decorate}] (4,0) -- ++(88:3);
         \draw[decoration={markings, mark=at position 0.6 with {\arrow{latex}}}, postaction={decorate}] (4,0) -- ++(92:3);
         \draw[decoration={markings, mark=at position 0.6 with {\arrow{latex}}}, postaction={decorate}] (7,0) -- ++(-92:3);
         \draw[decoration={markings, mark=at position 0.6 with {\arrow{latex}}}, postaction={decorate}] (7,0) -- ++(-88:3);
         \draw[decoration={markings, mark=at position 0.6 with {\arrow{latex}}}, postaction={decorate}] (8,0) -- ++(-88:3);
         \draw[decoration={markings, mark=at position 0.6 with {\arrow{latex}}}, postaction={decorate}] (8,0) -- ++(-92:3);
         \draw[decoration={markings, mark=at position 0.6 with {\arrow{latex}}}, postaction={decorate}] (11,0) -- (11,-3);
        \draw[decoration={markings, mark=at position 0.6 with {\arrow{latex}}}, postaction={decorate}] (11,0) -- (11,3);
         \fill (0,0) circle (1pt);
         \fill (3,0) circle (1pt);
         \fill (4,0) circle (1pt);
         \fill (7,0) circle (1pt);
         \fill (8,0) circle (1pt);
         \fill (11,0) circle (1pt);
         \node[right] at ($(3,0) + (88:1)$) {$r_{up}^-$};
         \node[left] at ($(4,0) + (92:1)$) {$r_{up}^+$};
         \node[left] at ($(8,0) + (-92:0.8)$) {$r_{dn}^-$};
         \node[right] at ($(7,0) + (-88:0.8)$) {$r_{dn}^+$};
         \node[above] at (3.5, 3.1) {$\lambda_{i_g}^{(g)}$};
         \node[below] at (7.5, -3.1) {$\mu_{j_g}^{(g)}$};
         \node[below] at (13,3) {$2$};
         \node[right] at (12, 1.5) {$s^-$};
         \node[left] at (14,1.5) {$s^+$};
         \node[right] at (12, -1.5) {$t^+$};
         \node[left] at (14,-1.5) {$t^-$};
        
        \path[pattern=north west lines, pattern color=pallido] (12,-3) -- (12,0) -- (14,0) -- (14,-3);
        \draw[decoration={markings, mark=at position 0.6 with {\arrow{latex}}}, postaction={decorate}] (12,0) -- (12,-3);
        \draw[decoration={markings, mark=at position 0.6 with {\arrow{latex}}}, postaction={decorate}] (14,0) -- (14,-3);
        \draw[blue] (12,0) arc [start angle = 96,end angle = 90,radius = 10];
        \coordinate (A) at ($(12,0) + (-84:10) + (90:10)$);
        \draw[red] (A) arc [start angle = 90,end angle = 84,radius = 10];
        \fill (A) circle (1pt);
        
        \path[pattern=north west lines, pattern color=pallido] (12,3) -- (12,0) -- (14,0) -- (14,3);
        \draw[decoration={markings, mark=at position 0.6 with {\arrow{latex}}}, postaction={decorate}] (12,0) -- (12,3);
        \draw[decoration={markings, mark=at position 0.6 with {\arrow{latex}}}, postaction={decorate}] (14,0) -- (14,3);
        \draw[red] (12,0) arc [start angle = -96,end angle = -90,radius = 10];
        \coordinate (B) at ($(12,0) + (84:10) + (-90:10)$);
        \draw[blue] (B) arc [start angle = -90,end angle = -84,radius = 10];
        \fill (B) circle (1pt);
     \end{tikzpicture} 
     \caption{Construction of $(W_1, \tau_1)$ in Case 1. Slitting along $r_{up}$ gives $r_{up}^+$ and $r_{up}^-$ which are identified with $s^-$ and $s^+$ respectively. Similarly, $r_{dn}^+$ and $r_{dn}^-$ are identified with $t^-$ and $t^+$ respectively.} \label{fig:onezerorathol1}
 \end{figure}

\paragraph{\textbf{Case 2}- $k_g = k_{g-1}, l_g = l_{g-1}-1$} We let $i_g$ and $j_g$ be as in the previous case and look only at the geodesic ray $r_g^+$. We glue an infinite cylinder of circumference $2$ along the ray $r_{up}^+$ as shown in Figure \ref{fig:onezerorathol2}. The handle is constructed in the same way as the previous case.\\

\begin{figure}[!h]
     \centering
     \begin{tikzpicture}[scale=1, every node/.style={scale=0.65}]
        \definecolor{pallido}{RGB}{221,227,227}
         \fill[pattern=north west lines, pattern color=pallido] (0,3) -- (0,-3) -- (8,-3) -- (8, 3);
         \fill[white] (1,3.1) -- (1,-3.1) -- (2,-3.1) -- (2, 3.1);
         \fill[white] (5,3.1) -- (5,-3.1) -- (6,-3.1) -- (6, 3.1);
         \node at (1.5,0) {$\ldots$};
         \node at (5.5,0) {$\ldots$};
         \fill[white] ($(3,0) + (92:3)$)--(3,0)-- ++(88:3);
         \fill[white] ($(4,0) + (92:3)$)--(4,0)-- ++(88:3);
         \draw[<->] (3.1,3.1) -- (3.9,3.1);
         \draw[decoration={markings, mark=at position 0.6 with {\arrow{latex}}}, postaction={decorate}] (0,0) -- (0,3);
         \draw[decoration={markings, mark=at position 0.6 with {\arrow{latex}}}, postaction={decorate}] (0,0) -- (0,-3);
         \draw[decoration={markings, mark=at position 0.6 with {\arrow{latex}}}, postaction={decorate}] (3,0) -- ++(92:3);
         \draw[decoration={markings, mark=at position 0.6 with {\arrow{latex}}}, postaction={decorate}] (3,0) -- ++(88:3);
         \draw[decoration={markings, mark=at position 0.6 with {\arrow{latex}}}, postaction={decorate}] (4,0) -- ++(88:3);
         \draw[decoration={markings, mark=at position 0.6 with {\arrow{latex}}}, postaction={decorate}] (4,0) -- ++(92:3);
         \draw[decoration={markings, mark=at position 0.6 with {\arrow{latex}}}, postaction={decorate}] (8,0) -- (8,-3);
        \draw[decoration={markings, mark=at position 0.6 with {\arrow{latex}}}, postaction={decorate}] (8,0) -- (8,3);
         \fill (0,0) circle (1pt);
         \fill (3,0) circle (1pt);
         \fill (4,0) circle (1pt);
         \fill (8,0) circle (1pt);
         \node[right] at ($(3,0) + (88:1)$) {$r_{up}^-$};
         \node[left] at ($(4,0) + (92:1)$) {$r_{up}^+$};
         \node[above] at (3.5, 3.1) {$\lambda_{i_g}^{(g)}$};
         
         \begin{scope}[shift = {(-3,0)}]
         \draw[<->] (12,3.1) -- (14,3.1);
         \node[below] at (13,3) {$2$};
         \node[right] at (12, 1.5) {$s^-$};
         \node[left] at (14,1.5) {$s^+$};
         \node[right] at (12, -1.5) {$t^+$};
         \node[left] at (14,-1.5) {$t^-$};
        
        \path[pattern=north west lines, pattern color=pallido] (12,-3) -- (12,0) -- (14,0) -- (14,-3);
        \draw[decoration={markings, mark=at position 0.6 with {\arrow{latex}}}, postaction={decorate}] (12,0) -- (12,-3);
        \draw[decoration={markings, mark=at position 0.6 with {\arrow{latex}}}, postaction={decorate}] (14,0) -- (14,-3);
        \draw[blue] (12,0) arc [start angle = 96,end angle = 90,radius = 10];
        \coordinate (A) at ($(12,0) + (-84:10) + (90:10)$);
        \draw[red] (A) arc [start angle = 90,end angle = 84,radius = 10];
        \fill (A) circle (1pt);
        
        \path[pattern=north west lines, pattern color=pallido] (12,3) -- (12,0) -- (14,0) -- (14,3);
        \draw[decoration={markings, mark=at position 0.6 with {\arrow{latex}}}, postaction={decorate}] (12,0) -- (12,3);
        \draw[decoration={markings, mark=at position 0.6 with {\arrow{latex}}}, postaction={decorate}] (14,0) -- (14,3);
        \draw[red] (12,0) arc [start angle = -96,end angle = -90,radius = 10];
        \coordinate (B) at ($(12,0) + (84:10) + (-90:10)$);
        \draw[blue] (B) arc [start angle = -90,end angle = -84,radius = 10];
        \fill (B) circle (1pt);
         \end{scope}
     \end{tikzpicture} 
     \caption{Construction of $(W_1, \tau_1)$ in Case 2. In this case, $r_{up}^+$ and $r_{up}^-$ are identified with $s^-$ and $s^+$ respectively, but $t^-$ and $t^+$ are identified with each other} \label{fig:onezerorathol2}
 \end{figure}
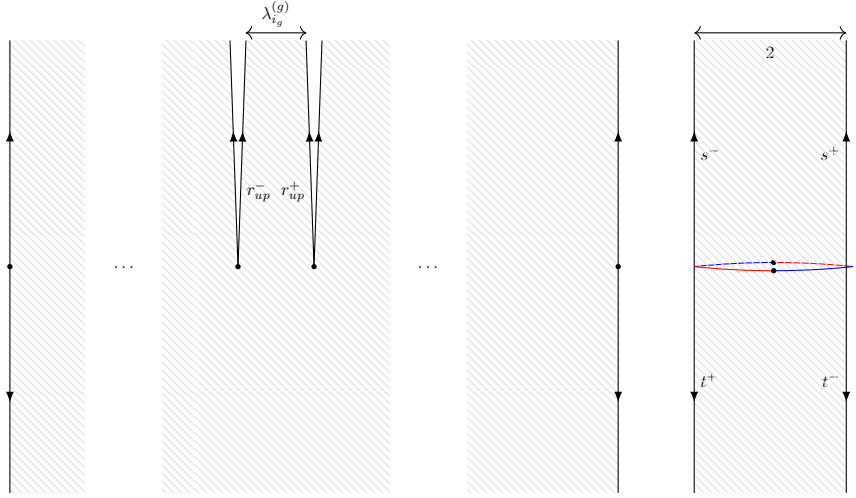

\paragraph{\textbf{Case 3}- $k_g = k_{g-1}-1, l_g = l_{g-1}$} Here, we do the same construction as in the previous case except that we glue along $r_{dn}^-$. \\

\noindent In all the three cases, the resulting translation surface $(W_1, \tau_1)$  is a surface of genus $1$  with punctures where the differential has simple poles, and has residues given by $\nu^{(g-1)}$, with the holonomy around each handle generator being the translation by $1$.  Note that in all three cases, we can still find geodesic rays going upward starting from the zero of $\tau_1$ towards punctures with holonomy $\lambda_{i_g}^{(g-1)}$ and going downward starting from the zero of $\tau_1$ towards punctures with holonomy $-\mu_{j_g}^{(g-1)}$.  We can thus repeat this construction, and in the same manner as described above, we obtain $(W_i, \tau_i)$ from $(W_{i-1}, \tau_{i-1})$, for  each $2 \leq g$. The final surface $(W_g, \tau_g)$ is our desired translation structure on $S_{g,n}$. 
\end{proof} 

\medskip

\noindent Using the previous Proposition as the base case, we shall now prove the general case, where the prescribed stratum has $r$ zeroes, by induction on $r$.  This can be thought of as generalizing Theorem 1.2 (ii) from \cite{GT} that had handled the $g=0$ case.

\begin{prop}[Theorem \ref{main:thme}] \label{prop:newr} 
Let  $\chi:\Gamma_{g,n} \to \mathbb{Z}$  be as in the beginning of the section. Then there is a translation structure on $S_{g,n}$ with holonomy $\chi$, with simple poles at the punctures and a set of $r$ zeros with prescribed orders $(d_1,d_2,\ldots, d_r)$ that satisfies the degree condition $\displaystyle\sum\limits_{i=1}^r d_i = 2g-2+n$ if and only if 
     \begin{equation}\label{gencomp}
    \sum\limits_{i=1}^k \lambda_i=\sum\limits_{j=1}^l \mu_j> \max\{d_1,d_2,\ldots, d_r\}
\end{equation}
where recall that $(\lambda_1,\lambda_2,\cdots, \lambda_k)$ and $(-\mu_1,-\mu_2,\ldots, - \mu_l)$  are the residues at the positive and negative punctures respectively. 
\end{prop}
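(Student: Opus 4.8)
The plan is to treat the two implications separately. The ``only if'' direction is a direct consequence of what has already been established: by Proposition~\ref{prop:bc}, a translation structure on $S_{g,n}$ with holonomy $\chi$, simple poles, and zeros of orders $(d_1,\dots,d_r)$ produces a branched cover $f:S_g\to\Bbb S^2$ of degree $d=\sum_i\lambda_i=\sum_j\mu_j$ having, for each $i$, a branch point of local degree $d_i+1$. Since a local degree never exceeds the degree of the cover, $d_i+1\le d$, which is precisely \eqref{gencomp}; this is exactly the content of Corollary~\ref{cor:pgen}.

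For the ``if'' direction I would induct on the number $r$ of zeros, the base case $r=1$ being Proposition~\ref{prop:new1} (and, for those intermediate surfaces that happen to have genus $0$, the Gendron--Tahar result recorded in Proposition~\ref{propgt2}). Recall that $\chi(\gamma)=1$ on every handle-generator (Lemma~\ref{lem:red2}), so adding a handle with holonomy $1$ on each of its two generators is the standard move of Section~\ref{htc}. Order the $d_i$ so that $d_1\ge\cdots\ge d_r$. The idea is to realise, by the inductive hypothesis, a suitable \emph{coarsening} of the datum---a translation structure with strictly fewer zeros, on a surface of the same or smaller genus, still satisfying \eqref{gencomp} and the degree condition---and then to recover the target by holonomy-preserving surgeries: splitting a zero (Subsection~\ref{sec:zerosplit}), attaching a handle at an existing zero of positive order (which raises that order by $2$ and creates no new zero), and the trivial-handle / multi-slit constructions of Section~\ref{surg} and Subsection~\ref{trhan} (which attach a handle together with one or two zeros of order $1$). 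Concretely, for a parameter $g'\in\{0,\dots,g\}$ set $m:=d_{r-1}+d_r-2(g-g')$; when $1\le m<d$, the $(r-1)$-tuple $(d_1,\dots,d_{r-2},m)$ satisfies the hypotheses at genus $g'$, hence is realisable by induction, and adding $g-g'$ handles at the resulting order-$m$ zero brings it to order $d_{r-1}+d_r$, after which one split produces the orders $d_{r-1},d_r$. When the two smallest orders are both $1$---in particular when all $d_i=1$, where no merge is ever admissible---one instead deletes a handle together with two order-$1$ zeros, realises the remaining $(r-2)$-tuple at genus $g-1$ by induction, and re-attaches a handle carrying two order-$1$ zeros via Section~\ref{surg}.

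The crux of the argument, and the step I expect to be the main obstacle, is an elementary but delicate combinatorial lemma asserting that, whenever \eqref{gencomp} and $\sum_i d_i=2g-2+n$ hold, at least one of the above reduction moves can be applied---that is, that there is a choice of $g'$ (or of the trivial-handle move) for which the intermediate datum is again admissible. The naive ``merge the two smallest zeros at the same genus'' fails because $d_{r-1}+d_r$ may exceed $d$; lowering $g'$ repairs this at the cost of a parity constraint ($m$ and $d_{r-1}+d_r$ must agree mod $2$), and it is exactly to cover the residual parity-obstructed cases---the all-ones case being typical---that the trivial-handle move, which changes the zero count by two, is needed. Once this case analysis is in place, two routine verifications remain: that every surgery can be carried out with all handle holonomies equal to $1$, which is ensured by the flat room available near a zero or near a cylindrical end exactly as in Section~\ref{htc} and in the proof of Proposition~\ref{prop:new1}; and that the finitely many small-$(g,n)$ boundary instances of the induction, where one cannot reduce to a surface that still carries a zero, are handled by hand starting from the Euclidean cylinder $\mathbb{C}/\langle z\mapsto z+1\rangle$ with the differential $dz/z$ and the slit construction of Subsection~\ref{sec:seqslit}.
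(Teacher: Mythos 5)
Your ``only if'' direction is correct and identical to the paper's (it is exactly Corollary \ref{cor:pgen}), and your overall skeleton --- induction on $r$ with Proposition \ref{prop:new1} as base case and Proposition \ref{propgt2} for genus-zero intermediates --- matches the paper. But the inductive step you propose is different from the paper's and, as written, has a genuine gap that you yourself flag: the ``elementary but delicate combinatorial lemma'' asserting that one of your reduction moves (merge the two smallest zeros at genus $g'$ with $m=d_{r-1}+d_r-2(g-g')$ satisfying $1\le m<d$, or else delete a trivial handle with two order-$1$ zeros) is always available is never proved. That lemma is the entire content of the inductive step; asserting that a case analysis will work is not a proof, and the parity and range constraints on $m$ interact with the bounds $d_i\le d-1$ and $\sum d_i=2g-2+n$ in ways that genuinely require checking. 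A second, independent problem is the surgery you rely on to undo the merge: ``attaching a handle at an existing zero of positive order, which raises that order by $2$ and creates no new zero,'' with both handle holonomies equal to $1$. No such surgery is established in the paper. The slit-pair handle constructions of Sections \ref{surg} and \ref{htc} either increase the orders of \emph{two distinct} zeros by $1$ each, or create new order-$1$ singularities (Lemmata \ref{lelhan}, \ref{lelhann}); moreover prescribing holonomy $(1,1)$ forces the slits to be separated by developed distance $1$, which is a metric condition that need not be satisfiable near an arbitrary zero --- the paper only achieves it by first gluing in explicit cylinders of circumference $1$ or $2$ along rays into the cusps. Without a construction of this surgery your scheme does not close.

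For contrast, the paper's inductive step avoids both issues by a different decomposition: it splits off the \emph{largest} zero $d_1$ against $d_1'=\sum_{i\ge2}d_i$, writes $d_1'=2h-2+l+1$ and $d_1=2(g-h)-2+k+1$ after a parity analysis, realizes a single-zero structure on $S_{g-h,k+1}$ (base case) and an $(r-1)$-zero structure on $S_{h,l+1}$ (inductive hypothesis) whose auxiliary poles have matching residues $\mp\sum_i\lambda_i$, and glues the two along truncated cylindrical ends; the parity-obstructed cases are repaired not by a trivial-handle move but by decreasing $d_1,d_2,\lambda_k,\mu_l$ by $1$ and inserting an infinite cylinder of circumference $1$ along a saddle connection, which simultaneously adds the missing handle and restores the orders and residues. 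If you want to salvage your route, you would need to (a) state and prove the combinatorial admissibility lemma for your moves, and (b) supply an actual construction of the order-raising handle surgery with prescribed unit holonomy, including the case where the zero has no unit-size embedded flat neighbourhood.
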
 

\begin{proof}
The proof will proceed by induction on $r$; as observed above, the base case when $r=1$ is handled by Proposition \ref{prop:new1}. For the inductive step, let us assume that we have proved the Proposition for $r-1$ zeroes. 
In what follows we shall divide the set of orders of zeroes $\{d_1,d_2,\ldots, d_r\}$ into $\{d_1\}$, and the rest. 
Let  $d_1^\prime := \sum\limits_{i=2}^r d_i $. \\

\noindent Assume without loss of generality that $k\geq l$; also note that $k+l = n$. We can also deduce that one of $d_1$ or $d_1^\prime$ must be strictly greater than $l$: this is because if both are at most $l \leq k$ then \eqref{dd} cannot hold. In what follows we shall assume that $d_1^\prime \geq l$;  if $d_1 \geq l$  then in what follows we can interchange their roles and the proof works \textit{mutatis mutandis}. We split our construction into two cases. \\

\noindent \textbf{Case 1:  $d_1^\prime  \leq  2g-2 + l + 1 $.} 
We shall first deal with the case when $d_1^\prime $ and $l$ have the opposite  parity, that is, either one is even and the other odd, or vice versa. Note that this happens if and only of $d_1$ and $k$ have the opposite parity,  since  we have
\begin{equation}\label{dd} 
d_ 1 + d_1^\prime  = 2g -2 +  k + l 
\end{equation}
from the degree condition. It is easy to derive (from our assumption on parity) that:
\begin{equation}\label{d1p}
d_1^\prime = 2h -2 + l+ 1 
\end{equation}
for some  integer $0\leq h \leq g$, and consequently from \eqref{dd} that 
\begin{equation}\label{d1} 
d_1 = 2(g-h) -2 + k+ 1 .
\end{equation}

\noindent From the base case of $r=1$ and \eqref{d1}, we know that there is a translation structure on $S_{g-h, k+1}$ with $k+1$ simple poles, having residues $\lambda_1,\ldots,\lambda_k,$ and $-\sum_{i} \lambda_i$, and exactly one zero of order $d_1$. Here, we assume as usual that the holonomy around each handle-generator (if $g-h>0$)  is the translation by $1$.  Note that the corresponding translation surface $(X_1,\omega_1)$ has a cylindrical end of circumference $ \sum_{i} \lambda_i$ that we can assume  (since the corresponding residue is negative) develops out to a half-infinite strip in $\C$ in the negative imaginary direction.
\noindent From our inductive hypothesis and \eqref{d1p}, we also have a a translation structure on $S_{h, l+1}$ with $l+1$ simple poles, having residues $-\mu_1,\ldots,-\mu_l,$ and $\sum_{j} \mu_j$, and $(r-1)$ zeroes of orders $d_2,d_3,\ldots,d_r$.  As before, the holonomy around each handle-generator (if $h>0$) is the translation by $1$.  The corresponding translation surface $(X_2,\omega_2)$ now has a cylindrical end of circumference $ \sum_{j} \mu_j$  that develops out to a half-infinite strip in $\C$ in the \textit{positive} imaginary direction.\\
\noindent By our assumption \eqref{gencomp} the circumferences of two cylindrical ends on $(X_1,\omega_1)$ on $(X_2,\omega_2)$ that we mentioned match, and we can truncate the cylindrical ends and identify the resulting boundary circles by an isometry to obtain a translation surface $(X,\omega)$ that is homeomorphic to $S_{g,n}$.  From our construction, $(X,\omega)$ is our desired translation surface with holonomy $\chi$ and has precisely $r$ zeroes of the prescribed orders $d_1,d_2,\ldots d_r$. \\

  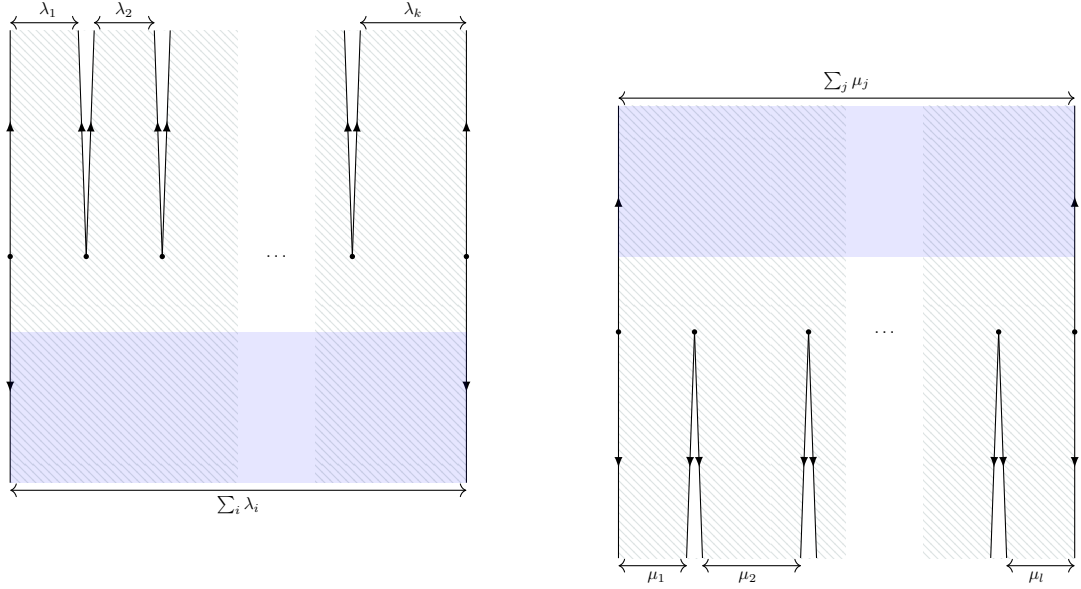
\begin{figure}[!h]
     \centering
     \begin{tikzpicture}[scale=1, every node/.style={scale=0.65}]
        \definecolor{pallido}{RGB}{221,227,227}
         \fill[pattern=north west lines, pattern color=pallido] (0,3) -- (0,-3) -- (6,-3) -- (6, 3);
         \fill[white] (3,3.1) -- (3,-3.1) -- (4,-3.1) -- (4, 3.1);
         \node at (3.5,0) {$\ldots$};

         \fill[white] ($(1,0) + (92:3)$)--(1,0)-- ++(88:3);
         \fill[white] ($(2,0) + (92:3)$)--(2,0)-- ++(88:3);
         \fill[white] ($(4.5,0) + (92:3)$)--(4.5,0)-- ++(88:3);
         \draw[decoration={markings, mark=at position 0.6 with {\arrow{latex}}}, postaction={decorate}] (1,0) -- ++(92:3);
         \draw[decoration={markings, mark=at position 0.6 with {\arrow{latex}}}, postaction={decorate}] (1,0) -- ++(88:3);
         \draw[decoration={markings, mark=at position 0.6 with {\arrow{latex}}}, postaction={decorate}] (2,0) -- ++(92:3);
         \draw[decoration={markings, mark=at position 0.6 with {\arrow{latex}}}, postaction={decorate}] (2,0) -- ++(88:3);
         \draw[decoration={markings, mark=at position 0.6 with {\arrow{latex}}}, postaction={decorate}] (4.5,0) -- ++(92:3);
         \draw[decoration={markings, mark=at position 0.6 with {\arrow{latex}}}, postaction={decorate}] (4.5,0) -- ++(88:3);
        \draw[decoration={markings, mark=at position 0.6 with {\arrow{latex}}}, postaction={decorate}] (6,0) -- (6,-3);
        \draw[decoration={markings, mark=at position 0.6 with {\arrow{latex}}}, postaction={decorate}] (6,0) -- (6,3);

         \draw[<->] (1.1,3.1) -- (1.9,3.1);
         \draw[<->] (0,3.1) -- (0.9,3.1);
         \draw[<->] (4.6,3.1) -- (6,3.1);
         \draw[<->] (0,-3.1) -- (6,-3.1);
         \draw[decoration={markings, mark=at position 0.6 with {\arrow{latex}}}, postaction={decorate}] (0,0) -- (0,3);
         \draw[decoration={markings, mark=at position 0.6 with {\arrow{latex}}}, postaction={decorate}] (0,0) -- (0,-3);

         \fill (0,0) circle (1pt);
         \fill (1,0) circle (1pt);
         \fill (2,0) circle (1pt);
         \fill (4.5,0) circle (1pt);
         \fill (6,0) circle (1pt);
         \node[above] at (0.5, 3.1) {$\lambda_{1}$};
         \node[above] at (1.5, 3.1) {$\lambda_{2}$};
         \node[above] at (5.3, 3.1) {$\lambda_{k}$};
         \node[below] at (3,-3.1) {$\sum_i \lambda_i$};
         \fill[color = blue, opacity = 0.1] (0,-1) -- (0,-3) -- (6,-3) -- (6, -1);
         
         \begin{scope}[shift = {(8,-1)}]
         \fill[pattern=north west lines, pattern color=pallido] (0,3) -- (0,-3) -- (6,-3) -- (6, 3);
         \fill[white] (3,3.1) -- (3,-3.1) -- (4,-3.1) -- (4, 3.1);
         \node at (3.5,0) {$\ldots$};

         \fill[white] ($(1,0) + (-92:3)$)--(1,0)-- ++(-88:3);
         \fill[white] ($(2.5,0) + (-92:3)$)--(2.5,0)-- ++(-88:3);
         \fill[white] ($(5,0) + (-92:3)$)--(5,0)-- ++(-88:3);
         \draw[decoration={markings, mark=at position 0.6 with {\arrow{latex}}}, postaction={decorate}] (1,0) -- ++(-92:3);
         \draw[decoration={markings, mark=at position 0.6 with {\arrow{latex}}}, postaction={decorate}] (1,0) -- ++(-88:3);
         \draw[decoration={markings, mark=at position 0.6 with {\arrow{latex}}}, postaction={decorate}] (2.5,0) -- ++(-92:3);
         \draw[decoration={markings, mark=at position 0.6 with {\arrow{latex}}}, postaction={decorate}] (2.5,0) -- ++(-88:3);
         \draw[decoration={markings, mark=at position 0.6 with {\arrow{latex}}}, postaction={decorate}] (5,0) -- ++(-92:3);
         \draw[decoration={markings, mark=at position 0.6 with {\arrow{latex}}}, postaction={decorate}] (5,0) -- ++(-88:3);
        \draw[decoration={markings, mark=at position 0.6 with {\arrow{latex}}}, postaction={decorate}] (6,0) -- (6,-3);
        \draw[decoration={markings, mark=at position 0.6 with {\arrow{latex}}}, postaction={decorate}] (6,0) -- (6,3);

         \draw[<->] (1.1,-3.1) -- (2.4,-3.1);
         \draw[<->] (0,-3.1) -- (0.9,-3.1);
         \draw[<->] (5.1,-3.1) -- (6,-3.1);
         \draw[<->] (0,3.1) -- (6,3.1);
         \draw[decoration={markings, mark=at position 0.6 with {\arrow{latex}}}, postaction={decorate}] (0,0) -- (0,3);
         \draw[decoration={markings, mark=at position 0.6 with {\arrow{latex}}}, postaction={decorate}] (0,0) -- (0,-3);

         \fill (0,0) circle (1pt);
         \fill (1,0) circle (1pt);
         \fill (2.5,0) circle (1pt);
         \fill (5,0) circle (1pt);
         \fill (6,0) circle (1pt);
         \node[below] at (0.5, -3.1) {$\mu_{1}$};
         \node[below] at (1.7, -3.1) {$\mu_{2}$};
         \node[below] at (5.5, -3.1) {$\mu_{l}$};
         \node[above] at (3, 3.1) {$\sum_j \mu_j$};
         \fill[color = blue, opacity = 0.1] (0,1) -- (0,3) -- (6,3) -- (6, 1);
         \end{scope}

     \end{tikzpicture}
     \caption{The surfaces $(X_1,\omega_1)$ (left) and $(X_2,\omega_2)$ (right) as described in Case-1. The shaded parts are truncated to glue $(X_1,\omega_1)$ and $(X_2,\omega_2)$.}
 \end{figure}

\noindent In the case the pairs $\{d_1^\prime, l\}$ and $\{d_1, k\}$ have the same parity, then consider a new set of $r$ zeroes with orders $\{d_1-1, d_2-1, d_3, d_4,\ldots, d_r\}$  where we have decreased the orders of $d_1$ and $d_2$ by $1$.   In the set of prescribed residues, we decrease $\lambda_k$ and $\mu_l$ by $1$ to modify the residues at the positive punctures to  $\{\lambda_1, \lambda_2,\ldots, \lambda_{k-1}, \lambda_k - 1\}$ and negative punctures to $\{-\mu_1, -\mu_2,\ldots, -\mu_{l-1}, -\mu_l + 1\}$.    Note that if $\lambda_l$   (respectively,  $\mu_l$)  was already equal to $1$, the number of positive  (respectively, negative) punctures reduces by one; the construction divides into some sub-cases depending on whether this happens. Note that if $\lambda_i$s are not all $1$, then we can order them such that   $\lambda_k>1$, and the same for the $\mu_j$s. \\

\noindent \textit{Sub-case (i):  Both $\lambda_k>1$ and $\mu_l>1$.} 
This new data  of residues (after $\lambda_k$ and $\mu_l$  have been decreased by $1$) satisfies the parity assumption, and \eqref{gencomp} is satisfied since both the left and right hand side of the inequality decreases by $1$ by our modification. So the previous construction holds and we obtain a translation surface $(X^\prime,\omega^\prime)$ homeomorphic to $S_{g-1,n}$  having holonomy with the prescribed residues at the  punctures (which are all simple poles) and zeroes of orders $d_1-1,d_2-1,d_3,\ldots, d_r$.   From our construction, the zeroes of orders $d_1-1$ and $d_2-1$ occur on the different intermediate translation surfaces with cylindrical ends that are "combined";  we can also arrange so that they lie at the base of the cylinders corresponding to the poles having residues $\lambda_k-1$ and $-\mu_l+1$ respectively. Moreover, in the gluing step we can arrange so that there is a vertical "saddle-connection" between them on $(X^\prime,\omega^\prime)$.  
\noindent From this discussion, it follows that  there is an isometrically embedded infinite vertical line $\gamma^\prime $ on $(X^\prime,\omega^\prime)$ that contains the two zeroes of orders $d_1-1$ and $d_2-1$ going out the cylindrical ends having circumferences $\lambda_k-1$ and $\mu_l -1$ respectively.  Now, consider an infinite Euclidean cylinder $(Y,\eta)$ of circumference $1$, and an embedded vertical line $\gamma$ there.  Cut $(X^\prime,\omega^\prime)$ along the two rays, say, $r_{up}$ and $r_{dn}$, that form the complement of the interior of the saddle connection $s$ in $\gamma^\prime$ and cut $(Y,\eta)$ along two disjoint rays in $\gamma$ at distance equal to the length of the saddle connection $s$.  Identify the resulting boundary lines by isometries so that the resulting translation surface $(X,\omega)$  is homeomorphic to $S_{g,n}$. The cylindrical ends of $(X^\prime,\omega^\prime)$ and $(Y,\eta)$ combine to produce two cylindrical ends in $(X,\omega)$ having circumference $\lambda_k$ and $\mu_l$; in the identification the zeroes of orders $d_1-1$ and $d_2-1$  get identified with two regular points in $(Y,\eta)$ to become zeroes of orders $d_1$ and $d_2$ respectively. The residues of the rest of the poles, and the holonomy of the handles, is unchanged. Moreover, the handle just added has trivial holonomy for one of the handle generators and holonomy a translation by $1$ for the other handle-generator; however we can easily change to both being a translation by $1$ by applying a Dehn-twist, as in the proof of Lemma \ref{lem:allhandholnonzero}. Thus, the surface $(X,\omega)$ is our desired translation surface with holonomy $\chi$ and zeroes of orders $d_1,d_2,\ldots,d_r$. \\

  \begin{figure}[!h]
     \centering
     \begin{tikzpicture}[scale=1, every node/.style={scale=0.65}]
        \definecolor{pallido}{RGB}{221,227,227}
         \fill[pattern=north west lines, pattern color=pallido] (0,3) -- (0,-4) -- (6,-4) -- (6, 3);
         \fill[pattern=north west lines, pattern color=pallido] (7,3) -- (7,-4) -- (8,-4) -- (8, 3);
         \fill[white] (3,3.1) -- (3,-4.1) -- (4,-4.1) -- (4, 3.1);
         \node at (3.5,0) {$\ldots$};
         \draw[red] (0,0) -- (0,-1); 
         \draw[red] (8,0) -- (8,-1);
         \draw[blue] (6,0) --(6,-1);
         \draw[blue] (7,0) -- (7,-1);
         \node[left] at (0, -0.5) {$s^-$};
         \node[right] at (6, -0.5) {$s^+$};
         \node[left] at (7,-0.5) {$t^-$};
         \node[right] at (8,-0.5) {$t^+$};

         \fill[white] ($(1,0) + (92:3)$)--(1,0)-- ++(88:3);
         \fill[white] ($(2,0) + (92:3)$)--(2,0)-- ++(88:3);
         \fill[white] ($(4.5,0) + (92:3)$)--(4.5,0)-- ++(88:3);
         \draw[decoration={markings, mark=at position 0.6 with {\arrow{latex}}}, postaction={decorate}] (1,0) -- ++(92:3);
         \draw[decoration={markings, mark=at position 0.6 with {\arrow{latex}}}, postaction={decorate}] (1,0) -- ++(88:3);
         \draw[decoration={markings, mark=at position 0.6 with {\arrow{latex}}}, postaction={decorate}] (2,0) -- ++(92:3);
         \draw[decoration={markings, mark=at position 0.6 with {\arrow{latex}}}, postaction={decorate}] (2,0) -- ++(88:3);
         \draw[decoration={markings, mark=at position 0.6 with {\arrow{latex}}}, postaction={decorate}] (4.5,0) -- ++(92:3);
         \draw[decoration={markings, mark=at position 0.6 with {\arrow{latex}}}, postaction={decorate}] (4.5,0) -- ++(88:3);
        \draw[decoration={markings, mark=at position 0.6 with {\arrow{latex}}}, postaction={decorate}] (6,0) -- (6,3);
        \draw[decoration={markings, mark=at position 0.6 with {\arrow{latex}}}, postaction={decorate}] (7,0) -- (7,3);
        \draw[decoration={markings, mark=at position 0.6 with {\arrow{latex}}}, postaction={decorate}] (8,0) -- (8,3);

         \draw[<->] (1.1,3.1) -- (1.9,3.1);
         \draw[<->] (0,3.1) -- (0.9,3.1);
         \draw[<->] (4.6,3.1) -- (6,3.1);
         \draw[decoration={markings, mark=at position 0.6 with {\arrow{latex}}}, postaction={decorate}] (0,0) -- (0,3);
         
         \fill (0,0) circle (1pt);
         \fill (1,0) circle (1pt);
         \fill (2,0) circle (1pt);
         \fill (4.5,0) circle (1pt);
         \fill (6,0) circle (1pt);
         \fill (7,0) circle (1pt);
         \fill (8,0) circle (1pt);
         \node[above] at (0.5, 3.1) {$\lambda_{1}$};
         \node[above] at (1.5, 3.1) {$\lambda_{2}$};
         \node[above] at (5.3, 3.1) {$\lambda_{k}$};
         \node[right] at ($(4.5,0) + (88:1)$) {$r_{up}^-$};
         \node[left] at (6,1) {$r_{up}^+$};
         \node[right] at (7,1) {$\gamma_{up}^-$};
         \node[left] at (8,1) {$\gamma_{up}^+$};
         
         \begin{scope}[shift = {(0,-1)}]
         \fill[white] ($(1,0) + (-92:3)$)--(1,0)-- ++(-88:3);
         \fill[white] ($(2.5,0) + (-92:3)$)--(2.5,0)-- ++(-88:3);
         \fill[white] ($(5,0) + (-92:3)$)--(5,0)-- ++(-88:3);
         \draw[decoration={markings, mark=at position 0.6 with {\arrow{latex}}}, postaction={decorate}] (1,0) -- ++(-92:3);
         \draw[decoration={markings, mark=at position 0.6 with {\arrow{latex}}}, postaction={decorate}] (1,0) -- ++(-88:3);
         \draw[decoration={markings, mark=at position 0.6 with {\arrow{latex}}}, postaction={decorate}] (2.5,0) -- ++(-92:3);
         \draw[decoration={markings, mark=at position 0.6 with {\arrow{latex}}}, postaction={decorate}] (2.5,0) -- ++(-88:3);
         \draw[decoration={markings, mark=at position 0.6 with {\arrow{latex}}}, postaction={decorate}] (5,0) -- ++(-92:3);
         \draw[decoration={markings, mark=at position 0.6 with {\arrow{latex}}}, postaction={decorate}] (5,0) -- ++(-88:3);
         \draw[decoration={markings, mark=at position 0.6 with {\arrow{latex}}}, postaction={decorate}] (6,0) -- (6,-3);
         \draw[decoration={markings, mark=at position 0.6 with {\arrow{latex}}}, postaction={decorate}] (0,0) -- (0,-3);
        \draw[decoration={markings, mark=at position 0.6 with {\arrow{latex}}}, postaction={decorate}] (7,0) -- (7,-3);
        \draw[decoration={markings, mark=at position 0.6 with {\arrow{latex}}}, postaction={decorate}] (8,0) -- (8,-3);

         \draw[<->] (1.1,-3.1) -- (2.4,-3.1);
         \draw[<->] (0,-3.1) -- (0.9,-3.1);
         \draw[<->] (5.1,-3.1) -- (6,-3.1);

         \fill (0,0) circle (1pt);
         \fill (1,0) circle (1pt);
         \fill (2.5,0) circle (1pt);
         \fill (5,0) circle (1pt);
         \fill (6,0) circle (1pt);
         \fill (7,0) circle (1pt);
         \fill (8,0) circle (1pt);
         \node[below] at (0.5, -3.1) {$\mu_{1}$};
         \node[below] at (1.7, -3.1) {$\mu_{2}$};
         \node[below] at (5.5, -3.1) {$\mu_{l}$};
         \node[right] at ($(5,0) + (-88:1)$) {$r_{dn}^+$};
         \node[left] at (6,-1) {$r_{dn}^-$};
         \node[right] at (7,-1) {$\gamma_{dn}^+$};
         \node[left] at (8,-1) {$\gamma_{dn}^-$};
         \end{scope}

     \end{tikzpicture}
     \caption{Illustrating sub-case (i) - The surface on the left is $(X',\omega^\prime)$ and the surface on the right is $Y$. $r_{up}^+$ is identified with $\gamma_{up}^-$, $r_{up}^-$ is identified with $\gamma_{up}^+$,  $r_{dn}^+$ is identified with $\gamma_{dn}^-$, $r_{dn}^-$ is identified with $\gamma_{dn}^+$, $s^+$ is identified with $t^-$ and $s^-$ is identified with $t^+$. The remaining rays are identified in the usual way.}
 \end{figure}
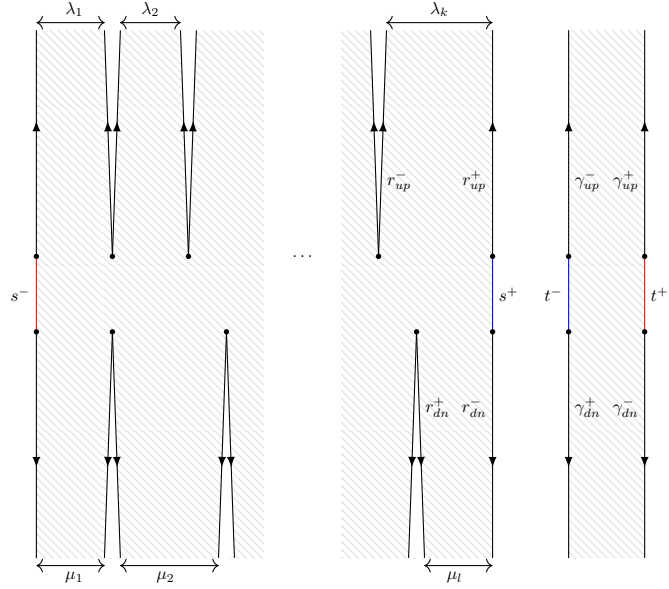

\noindent \textit{Sub-case (ii):  Both $\lambda_k=1$ and $\mu_l=1$.}  This is when $\lambda_i = 1$ and $\mu_j = 1$ for all $1\leq i\leq k$ and $1\leq j\leq l$: in this case the translation surface $X^\prime$ constructed as above has two less punctures, and is homeomorphic to $S_{g,n-2}$.   As before, we can construct $(X^\prime,\omega^\prime)$ in such a way that there is a vertical saddle-connection $s$ between the two zeroes of orders $d_1-1$ and $d_2-1$ respectively.  The combination with the infinite Euclidean cylinder $(Y,\eta)$ of circumference $1$ will now be different:  choose a vertical arc $s^\prime$ on $(Y,\eta)$ of the same length as $s$, slit along $s$ and $s^\prime$ and identify the resulting boundary segments by isometries so that we obtain a translation  surface $(X,\omega)$  homeomorphic to $S_{g,n}$.  (Topologically, $X$ is the connect-sum of $X^\prime$ and $Y$.) The two ends of $(Y,\eta)$ add two cylindrical ends to those already in $(X^\prime,\omega^\prime)$, and correspond to poles of residues $\lambda_k = 1$ and $-\mu_l = -1$ respectively; as before, the orders of the zeroes at the endpoints of $s$ increase by $1$ each after the identification with the two endpoints of $s^\prime$.  The surface $(X,\omega)$ is thus our desired translation surface. \\

   \begin{figure}[!h]
     \centering
          \begin{tikzpicture}[scale=1, every node/.style={scale=0.65}]
        \definecolor{pallido}{RGB}{221,227,227}
         \fill[pattern=north west lines, pattern color=pallido] (0,3) -- (0,-4) -- (6,-4) -- (6, 3);
         \fill[pattern=north west lines, pattern color=pallido] (7,3) -- (7,-4) -- (8,-4) -- (8, 3);
         \fill[white] (3,3.1) -- (3,-4.1) -- (4,-4.1) -- (4, 3.1);
         \node at (3.5,0) {$\ldots$};
         \draw[red] (0,0) -- (0,-1); 
         \draw[red] (8,0) -- (8,-1);
         \draw[blue] (6,0) --(6,-1);
         \draw[blue] (7,0) -- (7,-1);
         \node[left] at (0, -0.5) {$s^-$};
         \node[right] at (6, -0.5) {$s^+$};
         \node[left] at (7,-0.5) {$t^-$};
         \node[right] at (8,-0.5) {$t^+$};

         \fill[white] ($(1,0) + (92:3)$)--(1,0)-- ++(88:3);
         \fill[white] ($(2,0) + (92:3)$)--(2,0)-- ++(88:3);
         \fill[white] ($(4.5,0) + (92:3)$)--(4.5,0)-- ++(88:3);
         \draw[decoration={markings, mark=at position 0.6 with {\arrow{latex}}}, postaction={decorate}] (1,0) -- ++(92:3);
         \draw[decoration={markings, mark=at position 0.6 with {\arrow{latex}}}, postaction={decorate}] (1,0) -- ++(88:3);
         \draw[decoration={markings, mark=at position 0.6 with {\arrow{latex}}}, postaction={decorate}] (2,0) -- ++(92:3);
         \draw[decoration={markings, mark=at position 0.6 with {\arrow{latex}}}, postaction={decorate}] (2,0) -- ++(88:3);
         \draw[decoration={markings, mark=at position 0.6 with {\arrow{latex}}}, postaction={decorate}] (4.5,0) -- ++(92:3);
         \draw[decoration={markings, mark=at position 0.6 with {\arrow{latex}}}, postaction={decorate}] (4.5,0) -- ++(88:3);
        \draw[decoration={markings, mark=at position 0.6 with {\arrow{latex}}}, postaction={decorate}] (6,0) -- (6,3);
        \draw[decoration={markings, mark=at position 0.6 with {\arrow{latex}}}, postaction={decorate}] (7,0) -- (7,3);
        \draw[decoration={markings, mark=at position 0.6 with {\arrow{latex}}}, postaction={decorate}] (8,0) -- (8,3);

         \draw[<->] (1.1,3.1) -- (1.9,3.1);
         \draw[<->] (0,3.1) -- (0.9,3.1);
         \draw[<->] (4.6,3.1) -- (6,3.1);
         \draw[decoration={markings, mark=at position 0.6 with {\arrow{latex}}}, postaction={decorate}] (0,0) -- (0,3);
         
         \fill (0,0) circle (1pt);
         \fill (1,0) circle (1pt);
         \fill (2,0) circle (1pt);
         \fill (4.5,0) circle (1pt);
         \fill (6,0) circle (1pt);
         \fill (7,0) circle (1pt);
         \fill (8,0) circle (1pt);
         \node[above] at (0.5, 3.1) {$\lambda_{1}$};
         \node[above] at (1.5, 3.1) {$\lambda_{2}$};
         \node[above] at (5.3, 3.1) {$\lambda_{k}$};

         \begin{scope}[shift = {(0,-1)}]
         \fill[white] ($(1,0) + (-92:3)$)--(1,0)-- ++(-88:3);
         \fill[white] ($(2.5,0) + (-92:3)$)--(2.5,0)-- ++(-88:3);
         \fill[white] ($(5,0) + (-92:3)$)--(5,0)-- ++(-88:3);
         \draw[decoration={markings, mark=at position 0.6 with {\arrow{latex}}}, postaction={decorate}] (1,0) -- ++(-92:3);
         \draw[decoration={markings, mark=at position 0.6 with {\arrow{latex}}}, postaction={decorate}] (1,0) -- ++(-88:3);
         \draw[decoration={markings, mark=at position 0.6 with {\arrow{latex}}}, postaction={decorate}] (2.5,0) -- ++(-92:3);
         \draw[decoration={markings, mark=at position 0.6 with {\arrow{latex}}}, postaction={decorate}] (2.5,0) -- ++(-88:3);
         \draw[decoration={markings, mark=at position 0.6 with {\arrow{latex}}}, postaction={decorate}] (5,0) -- ++(-92:3);
         \draw[decoration={markings, mark=at position 0.6 with {\arrow{latex}}}, postaction={decorate}] (5,0) -- ++(-88:3);
         \draw[decoration={markings, mark=at position 0.6 with {\arrow{latex}}}, postaction={decorate}] (6,0) -- (6,-3);
         \draw[decoration={markings, mark=at position 0.6 with {\arrow{latex}}}, postaction={decorate}] (0,0) -- (0,-3);
        \draw[decoration={markings, mark=at position 0.6 with {\arrow{latex}}}, postaction={decorate}] (7,0) -- (7,-3);
        \draw[decoration={markings, mark=at position 0.6 with {\arrow{latex}}}, postaction={decorate}] (8,0) -- (8,-3);

         \draw[<->] (1.1,-3.1) -- (2.4,-3.1);
         \draw[<->] (0,-3.1) -- (0.9,-3.1);
         \draw[<->] (5.1,-3.1) -- (6,-3.1);

         \fill (0,0) circle (1pt);
         \fill (1,0) circle (1pt);
         \fill (2.5,0) circle (1pt);
         \fill (5,0) circle (1pt);
         \fill (6,0) circle (1pt);
         \fill (7,0) circle (1pt);
         \fill (8,0) circle (1pt);
         \node[below] at (0.5, -3.1) {$\mu_{1}$};
         \node[below] at (1.7, -3.1) {$\mu_{2}$};
         \node[below] at (5.5, -3.1) {$\mu_{l}$};
         \end{scope}

     \end{tikzpicture}
    \caption{Illustrating sub-case (ii) - The surface on the left is $(X',\omega^\prime)$ and the surface on the right is $(Y,\eta)$. In this case, we only identify the segments - $s^+$ is identified with $t^-$ and $s^-$ is identified with $t^+$. The remaining rays are identified in the usual way.}
 \end{figure}
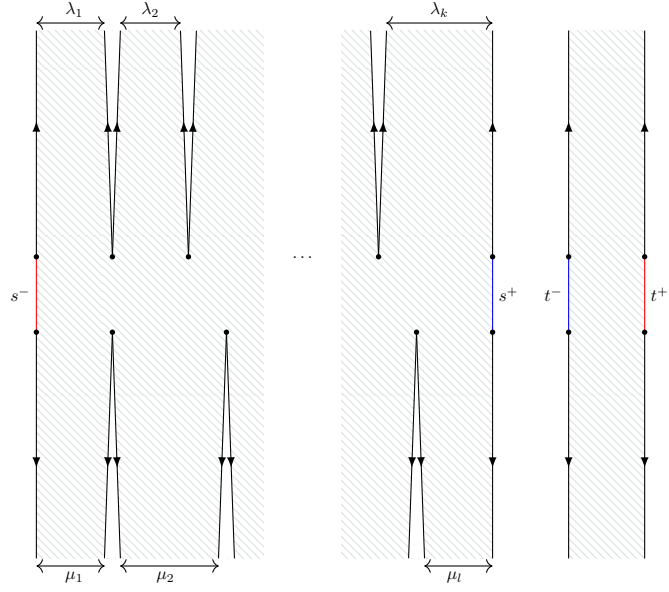

\noindent \textit{Sub-case (iii):  $\lambda_k=1$ and $\mu_l >1$.}  In this case we only decrease the order of $d_1$ by $1$; the  translation surface $(X^\prime,\omega^\prime)$ constructed as above is homeomorphic to $S_{g,n-1}$, realizes the new tuple of residues and has  zeros of orders $d_1-1,d_2,\ldots, d_r$. Moreover, from our construction we can assume that there is a vertical ray $\gamma^\prime$ going ``downward"  from the zero of order $d_1-1$  to the puncture corresponding to the pole of residue $-\mu_l+ 1$.
As before, let $(Y,\eta)$ be an infinite Euclidean cylinder of circumference $1$, and now  let $\gamma$ be an infinite vertical ray in the "downward" direction.  Slit along $\gamma$ and $\gamma^\prime$ and identify the resulting boundary rays such that the resulting translation surface $(X,\omega)$ is homeomorphic to $S_{g,n}$. The ``downward"  end of $(Y,\eta)$ merges with the cylindrical end of circumference $\mu_l-1$ to form a cylindrical end of $(X,\omega)$ of circumference $\mu_l$ that corresponds to the pole of of residue $-\mu_l$. The other end of $(Y,\eta)$ is an additional cylindrical end of $(X,\omega)$ of circumference $1$; this corresponds to the pole of residue $\lambda_k =1$. Moreover, in this combination of $(X^\prime,\omega^\prime)$ and $(Y,\eta)$ the zero of order $d_1-1$ is identified with the endpoint of the vertical ray $\gamma$ on $(Y,\eta)$ and its order thus increases by $1$. The orders of the other zeroes, and the holonomies of the handle-generators  and loops around the other poles on $(X^\prime,\omega^\prime)$ remain unchanged. Thus, $(X,\omega)$ is our desired translation surface.

\begin{rmk}
We can note that $\lambda_l>1$ and all $\mu_l=1$ is not possible, from the assumption that $k\geq l$ and $\sum_i \lambda_i = \sum_j \mu_j$.
\end{rmk}

  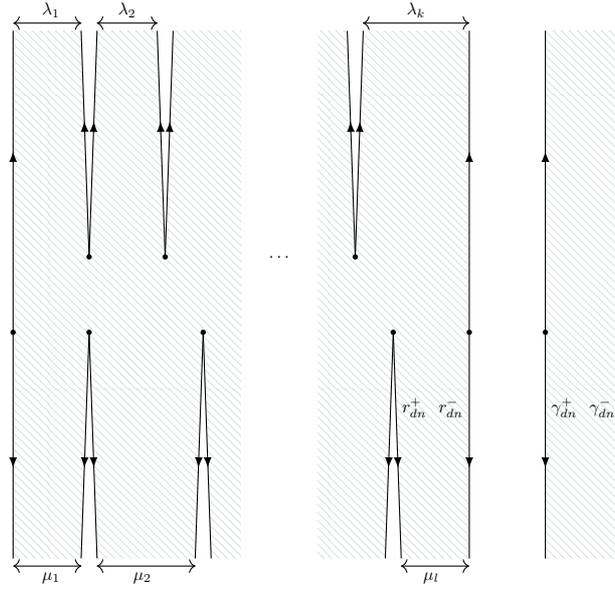
\begin{figure}[!h]
     \centering
     \begin{tikzpicture}[scale=1, every node/.style={scale=0.65}]
        \definecolor{pallido}{RGB}{221,227,227}
         \fill[pattern=north west lines, pattern color=pallido] (0,3) -- (0,-4) -- (6,-4) -- (6, 3);
         \fill[pattern=north west lines, pattern color=pallido] (7,3) -- (7,-4) -- (8,-4) -- (8, 3);
         \fill[white] (3,3.1) -- (3,-4.1) -- (4,-4.1) -- (4, 3.1);
         \node at (3.5,0) {$\ldots$};

         \fill[white] ($(1,0) + (92:3)$)--(1,0)-- ++(88:3);
         \fill[white] ($(2,0) + (92:3)$)--(2,0)-- ++(88:3);
         \fill[white] ($(4.5,0) + (92:3)$)--(4.5,0)-- ++(88:3);
         \draw[decoration={markings, mark=at position 0.6 with {\arrow{latex}}}, postaction={decorate}] (1,0) -- ++(92:3);
         \draw[decoration={markings, mark=at position 0.6 with {\arrow{latex}}}, postaction={decorate}] (1,0) -- ++(88:3);
         \draw[decoration={markings, mark=at position 0.6 with {\arrow{latex}}}, postaction={decorate}] (2,0) -- ++(92:3);
         \draw[decoration={markings, mark=at position 0.6 with {\arrow{latex}}}, postaction={decorate}] (2,0) -- ++(88:3);
         \draw[decoration={markings, mark=at position 0.6 with {\arrow{latex}}}, postaction={decorate}] (4.5,0) -- ++(92:3);
         \draw[decoration={markings, mark=at position 0.6 with {\arrow{latex}}}, postaction={decorate}] (4.5,0) -- ++(88:3);
          \draw[decoration={markings, mark=at position 0.6 with {\arrow{latex}}}, postaction={decorate}] (0,-1) -- (0,3);
        \draw[decoration={markings, mark=at position 0.6 with {\arrow{latex}}}, postaction={decorate}] (6,-1) -- (6,3);
        \draw[decoration={markings, mark=at position 0.6 with {\arrow{latex}}}, postaction={decorate}] (7,-1) -- (7,3);
        \draw[decoration={markings, mark=at position 0.6 with {\arrow{latex}}}, postaction={decorate}] (8,-1) -- (8,3);

         \draw[<->] (1.1,3.1) -- (1.9,3.1);
         \draw[<->] (0,3.1) -- (0.9,3.1);
         \draw[<->] (4.6,3.1) -- (6,3.1);
        
         \fill (1,0) circle (1pt);
         \fill (2,0) circle (1pt);
         \fill (4.5,0) circle (1pt);

         \node[above] at (0.5, 3.1) {$\lambda_{1}$};
         \node[above] at (1.5, 3.1) {$\lambda_{2}$};
         \node[above] at (5.3, 3.1) {$\lambda_{k}$};
         
         \begin{scope}[shift = {(0,-1)}]
         \fill[white] ($(1,0) + (-92:3)$)--(1,0)-- ++(-88:3);
         \fill[white] ($(2.5,0) + (-92:3)$)--(2.5,0)-- ++(-88:3);
         \fill[white] ($(5,0) + (-92:3)$)--(5,0)-- ++(-88:3);
         \draw[decoration={markings, mark=at position 0.6 with {\arrow{latex}}}, postaction={decorate}] (1,0) -- ++(-92:3);
         \draw[decoration={markings, mark=at position 0.6 with {\arrow{latex}}}, postaction={decorate}] (1,0) -- ++(-88:3);
         \draw[decoration={markings, mark=at position 0.6 with {\arrow{latex}}}, postaction={decorate}] (2.5,0) -- ++(-92:3);
         \draw[decoration={markings, mark=at position 0.6 with {\arrow{latex}}}, postaction={decorate}] (2.5,0) -- ++(-88:3);
         \draw[decoration={markings, mark=at position 0.6 with {\arrow{latex}}}, postaction={decorate}] (5,0) -- ++(-92:3);
         \draw[decoration={markings, mark=at position 0.6 with {\arrow{latex}}}, postaction={decorate}] (5,0) -- ++(-88:3);
         \draw[decoration={markings, mark=at position 0.6 with {\arrow{latex}}}, postaction={decorate}] (6,0) -- (6,-3);
         \draw[decoration={markings, mark=at position 0.6 with {\arrow{latex}}}, postaction={decorate}] (0,0) -- (0,-3);
        \draw[decoration={markings, mark=at position 0.6 with {\arrow{latex}}}, postaction={decorate}] (7,0) -- (7,-3);
        \draw[decoration={markings, mark=at position 0.6 with {\arrow{latex}}}, postaction={decorate}] (8,0) -- (8,-3);

         \draw[<->] (1.1,-3.1) -- (2.4,-3.1);
         \draw[<->] (0,-3.1) -- (0.9,-3.1);
         \draw[<->] (5.1,-3.1) -- (6,-3.1);

         \fill (0,0) circle (1pt);
         \fill (1,0) circle (1pt);
         \fill (2.5,0) circle (1pt);
         \fill (5,0) circle (1pt);
         \fill (6,0) circle (1pt);
         \fill (7,0) circle (1pt);
         \fill (8,0) circle (1pt);
         \node[below] at (0.5, -3.1) {$\mu_{1}$};
         \node[below] at (1.7, -3.1) {$\mu_{2}$};
        \node[below] at (5.5, -3.1) {$\mu_{l}$};
        \node[right] at ($(5,0) + (-88:1)$) {$r_{dn}^+$};
         \node[left] at (6,-1) {$r_{dn}^-$};
         \node[right] at (7,-1) {$\gamma_{dn}^+$};
         \node[left] at (8,-1) {$\gamma_{dn}^-$};
         \end{scope}

     \end{tikzpicture}
     \caption{Illustrating sub-case (iii) - The surface on the left is $(X',\omega^\prime)$ and the surface on the right is $(Y,\eta)$. In this case, we only identify the "downward`` rays - $r_{dn}^+$ is identified with $\gamma_{dn}^-$, $r_{dn}^-$ is identified with $\gamma_{dn}^+$. The remaining rays are identified in the usual way.}
 \end{figure}

\noindent \textbf{Case 2:   $d_1^\prime  >  2g-2 + l + 1 $.}   In this case we rewrite 
\begin{equation}\label{d1p-2}
d_1^\prime = 2g -2 + l+ t 
\end{equation}
where $1 <  t  < k$, and consequently 
\begin{equation}\label{d1-2} 
d_1 = k-t.
\end{equation}

\noindent From our inductive hypothesis and \eqref{d1p-2}, we have a translation structure on $S_{g, l+t}$ with $l+t$ simple poles, having residues $-\mu_1,\ldots,-\mu_l,$ and $\lambda_1, \lambda_2,\ldots, \lambda_{t-1}, \sum_{i=t}^k \lambda_i$, and $(r-1)$ zeroes of orders $d_2,d_3,\ldots,d_r$. As before, the holonomy around each handle-generator is the translation by $1$.  One of the cylindrical ends of the corresponding translation surface $(X_1^\prime,\omega_1^\prime)$ has circumference $\sum_{i=t}^k \lambda_i$  and develops out to a half-infinite strip in $\C$ in the {positive} imaginary direction.\\
\noindent From the base case of $r=1$ and \eqref{d1-2}, we know that there is a translation structure on $S_{0, k-t+2}$ with $k-t+2$ simple poles, having residues $\lambda_t,\ldots,\lambda_k$ and $-\sum_{i=t}^k \lambda_i$, and exactly one zero of order $d_1$.   (Note that the inequality  in the hypothesis of Proposition \ref{prop:new1} is satisfied since $\sum_{i=t}^k \lambda_i \geq k-t+1$ which is greater than $d_1$ because of \eqref{d1-2}. ) The corresponding translation surface $(X_2^\prime,\omega_2^\prime)$ now has a cylindrical end of circumference $\sum_{i=t}^k \lambda_i$ that develops out to a half-infinite strip in $\C$ in the negative imaginary direction. We can now "combine" the translation surfaces $(X_1^\prime,\omega_1^\prime)$ and $(X_2^\prime,\omega_2^\prime)$ by truncating the cylindrical ends mentioned of matching circumferences on each, and identifying the resulting boundaries to obtain a translation surface $(X,\omega)$ homeomorphic to $S_{g, n}$ having holonomy $\chi$ and zeroes of the prescribed orders $d_1,d_2,\ldots, d_r$. 
\end{proof}

  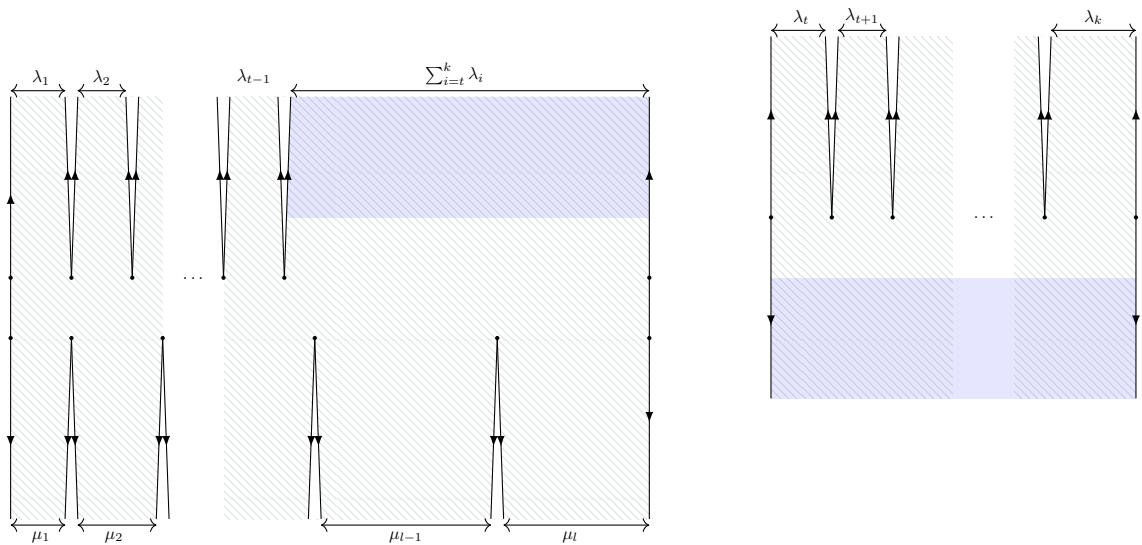
\begin{figure}[!h]
     \centering
     \begin{tikzpicture}[scale=0.8, every node/.style={scale=0.65}]
        \definecolor{pallido}{RGB}{221,227,227}
         \fill[pattern=north west lines, pattern color=pallido] (0,3) -- (0,-4) -- (10.5,-4) -- (10.5, 3);
         \fill[white] (2.5,3.1) -- (2.5,-4.1) -- (3.5,-4.1) -- (3.5, 3.1);
         \node at (3,0) {$\ldots$};
         \fill[color = blue, opacity = 0.1] (4.5,3) -- (4.5,1) -- (10.5,1) -- (10.5, 3);

         \fill[white] ($(1,0) + (92:3)$)--(1,0)-- ++(88:3);
         \fill[white] ($(2,0) + (92:3)$)--(2,0)-- ++(88:3);
         \fill[white] ($(3.5,0) + (92:3)$)--(3.5,0)-- ++(88:3);
         \fill[white] ($(4.5,0) + (92:3)$)--(4.5,0)-- ++(88:3);
         \draw[decoration={markings, mark=at position 0.6 with {\arrow{latex}}}, postaction={decorate}] (1,0) -- ++(92:3);
         \draw[decoration={markings, mark=at position 0.6 with {\arrow{latex}}}, postaction={decorate}] (1,0) -- ++(88:3);
         \draw[decoration={markings, mark=at position 0.6 with {\arrow{latex}}}, postaction={decorate}] (2,0) -- ++(92:3);
         \draw[decoration={markings, mark=at position 0.6 with {\arrow{latex}}}, postaction={decorate}] (2,0) -- ++(88:3);
         \draw[decoration={markings, mark=at position 0.6 with {\arrow{latex}}}, postaction={decorate}] (3.5,0) -- ++(92:3);
         \draw[decoration={markings, mark=at position 0.6 with {\arrow{latex}}}, postaction={decorate}] (3.5,0) -- ++(88:3);
         \draw[decoration={markings, mark=at position 0.6 with {\arrow{latex}}}, postaction={decorate}] (4.5,0) -- ++(92:3);
         \draw[decoration={markings, mark=at position 0.6 with {\arrow{latex}}}, postaction={decorate}] (4.5,0) -- ++(88:3);
        \draw[decoration={markings, mark=at position 0.6 with {\arrow{latex}}}, postaction={decorate}] (10.5,0) -- (10.5,-4);
        \draw[decoration={markings, mark=at position 0.6 with {\arrow{latex}}}, postaction={decorate}] (10.5,0) -- (10.5,3);

         \draw[<->] (1.1,3.1) -- (1.9,3.1);
         \draw[<->] (0,3.1) -- (0.9,3.1);
         \draw[<->] (4.6,3.1) -- (10.5,3.1);
         \draw[decoration={markings, mark=at position 0.6 with {\arrow{latex}}}, postaction={decorate}] (0,-1) -- (0,3);

         \fill (0,0) circle (1pt);
         \fill (1,0) circle (1pt);
         \fill (2,0) circle (1pt);
         \fill (3.5,0) circle (1pt);
         \fill (4.5,0) circle (1pt);
         \fill (10.5,0) circle (1pt);
         \node[above] at (0.5, 3.1) {$\lambda_{1}$};
         \node[above] at (1.5, 3.1) {$\lambda_{2}$};
         \node[above] at (4, 3.1) {$\lambda_{t-1}$};
         \node[above] at (7.3, 3.1) {$\sum_{i=t}^k \lambda_{i}$};
         
          \begin{scope}[shift = {(0,-1)}]
         \fill[white] ($(1,0) + (-92:3)$)--(1,0)-- ++(-88:3);
         \fill[white] ($(2.5,0) + (-92:3)$)--(2.5,0)-- ++(-88:3);
         \fill[white] ($(5,0) + (-92:3)$)--(5,0)-- ++(-88:3);
         \fill[white] ($(8,0) + (-92:3)$)--(8,0)-- ++(-88:3);
         \draw[decoration={markings, mark=at position 0.6 with {\arrow{latex}}}, postaction={decorate}] (1,0) -- ++(-92:3);
         \draw[decoration={markings, mark=at position 0.6 with {\arrow{latex}}}, postaction={decorate}] (1,0) -- ++(-88:3);
         \draw[decoration={markings, mark=at position 0.6 with {\arrow{latex}}}, postaction={decorate}] (2.5,0) -- ++(-92:3);
         \draw[decoration={markings, mark=at position 0.6 with {\arrow{latex}}}, postaction={decorate}] (2.5,0) -- ++(-88:3);
         \draw[decoration={markings, mark=at position 0.6 with {\arrow{latex}}}, postaction={decorate}] (5,0) -- ++(-92:3);
         \draw[decoration={markings, mark=at position 0.6 with {\arrow{latex}}}, postaction={decorate}] (5,0) -- ++(-88:3);
         \draw[decoration={markings, mark=at position 0.6 with {\arrow{latex}}}, postaction={decorate}] (8,0) -- ++(-92:3);
         \draw[decoration={markings, mark=at position 0.6 with {\arrow{latex}}}, postaction={decorate}] (8,0) -- ++(-88:3);
         \draw[decoration={markings, mark=at position 0.6 with {\arrow{latex}}}, postaction={decorate}] (0,0) -- (0,-3);

         \draw[<->] (1.1,-3.1) -- (2.4,-3.1);
         \draw[<->] (0,-3.1) -- (0.9,-3.1);
         \draw[<->] (5.1,-3.1) -- (7.9,-3.1);
         \draw[<->] (8.1,-3.1) -- (10.5,-3.1);

         \fill (0,0) circle (1pt);
         \fill (1,0) circle (1pt);
         \fill (2.5,0) circle (1pt);
         \fill (5,0) circle (1pt);
         \fill (8,0) circle (1pt);
         \fill (10.5,0) circle (1pt);
         \node[below] at (0.5, -3.1) {$\mu_{1}$};
         \node[below] at (1.7, -3.1) {$\mu_{2}$};
         \node[below] at (6.5, -3.1) {$\mu_{l-1}$};
         \node[below] at (9.2, -3.1) {$\mu_{l}$};
         \end{scope}
         
         \begin{scope}[shift = {(12.5,1)}]
                  \fill[pattern=north west lines, pattern color=pallido] (0,3) -- (0,-3) -- (6,-3) -- (6, 3);
         \fill[white] (3,3.1) -- (3,-3.1) -- (4,-3.1) -- (4, 3.1);
         \node at (3.5,0) {$\ldots$};

         \fill[white] ($(1,0) + (92:3)$)--(1,0)-- ++(88:3);
         \fill[white] ($(2,0) + (92:3)$)--(2,0)-- ++(88:3);
         \fill[white] ($(4.5,0) + (92:3)$)--(4.5,0)-- ++(88:3);
         \draw[decoration={markings, mark=at position 0.6 with {\arrow{latex}}}, postaction={decorate}] (1,0) -- ++(92:3);
         \draw[decoration={markings, mark=at position 0.6 with {\arrow{latex}}}, postaction={decorate}] (1,0) -- ++(88:3);
         \draw[decoration={markings, mark=at position 0.6 with {\arrow{latex}}}, postaction={decorate}] (2,0) -- ++(92:3);
         \draw[decoration={markings, mark=at position 0.6 with {\arrow{latex}}}, postaction={decorate}] (2,0) -- ++(88:3);
         \draw[decoration={markings, mark=at position 0.6 with {\arrow{latex}}}, postaction={decorate}] (4.5,0) -- ++(92:3);
         \draw[decoration={markings, mark=at position 0.6 with {\arrow{latex}}}, postaction={decorate}] (4.5,0) -- ++(88:3);
        \draw[decoration={markings, mark=at position 0.6 with {\arrow{latex}}}, postaction={decorate}] (6,0) -- (6,-3);
        \draw[decoration={markings, mark=at position 0.6 with {\arrow{latex}}}, postaction={decorate}] (6,0) -- (6,3);
        \fill[color = blue, opacity = 0.1] (0,-1) -- (0,-3) -- (6,-3) -- (6, -1);

         \draw[<->] (1.1,3.1) -- (1.9,3.1);
         \draw[<->] (0,3.1) -- (0.9,3.1);
         \draw[<->] (4.6,3.1) -- (6,3.1);
         \draw[decoration={markings, mark=at position 0.6 with {\arrow{latex}}}, postaction={decorate}] (0,0) -- (0,3);
         \draw[decoration={markings, mark=at position 0.6 with {\arrow{latex}}}, postaction={decorate}] (0,0) -- (0,-3);

         \fill (0,0) circle (1pt);
         \fill (1,0) circle (1pt);
         \fill (2,0) circle (1pt);
         \fill (4.5,0) circle (1pt);
         \fill (6,0) circle (1pt);
         \node[above] at (0.5, 3.1) {$\lambda_{t}$};
         \node[above] at (1.5, 3.1) {$\lambda_{t+1}$};
         \node[above] at (5.3, 3.1) {$\lambda_{k}$};
         \end{scope}

\end{tikzpicture}
\caption{The surfaces $(X^{\prime}_1,\omega_1^\prime)$ (left) and $(X^{\prime}_2,\omega_2^\prime)$ (right) as described in Case-2. The shaded parts are truncated to glue $(X^{\prime}_1,\omega_1^\prime)$ and $(X^{\prime}_2,\omega_2^\prime)$.}
\end{figure}

\smallskip 

\noindent The Proposition just proved is a re-statement of Theorem \ref{main:thme} of the Introduction.

\subsection{Relation with the Hurwitz Existence Problem} We can use Proposition \ref{prop:bc}  to relate the problem of  realizing a given integral holonomy $\chi$ to solving certain cases of Hurwitz Existence Problem, which we  had described in the Introduction.

\noindent A special case of this problem is  when the target surface $\Sigma = \Bbb S^2$, and is to determine if there is a branched covering $f:S_g \to \Bbb S^2$ with prescribed branching data. Recall that the prescribed branching data comprises the desired degree $\deg(f)=d \geq 2$, the number $n\geq 1$ of branch values on $\Bbb S^2$, and a collection $\mathcal{B} = \{B_1, B_2,\ldots , B_n\}$ where each $B_i$ is an integer partition of $d$ (with at least one integer greater than $1$) that indicates the local degrees at the preimages of the $i$-th branch-value. Moreover, the prescribed branching data satisfy the Riemann-Hurwitz formula \eqref{rhcon}. 
Recall that such a prescribed branching data is said to be \textit{realizable} if there is a positive solution to the Hurwitz existence problem above.

\smallskip
\noindent The following corollary  is immediate from  Proposition \ref{prop:bc} : 

\begin{cor}\label{cor:cond3}  The following condition is equivalent to (1) and (2) of Proposition \ref{prop:bc}:
\begin{itemize}
    \item[(3)]  There exists realizable branching data for a branched covering $f:S_g \to \Bbb S^2$ which is a collection $\mathcal{B}$  of partitions of 
    $d = \sum\limits_{i=1}^k \lambda_i$ satisfying the following:
    \begin{itemize}
        \item $\lambda$ is part of the collection unless $\lambda = (1,1,\ldots, 1)$, 
        \item $\mu$ is part of the collection unless $\mu = (1,1,\ldots, 1)$,  and
        \item in all other partitions, the only integers that are different from $1$ are exactly $\{d_1+1,\ldots, d_r +1\}$.
    \end{itemize}
\end{itemize}

\end{cor}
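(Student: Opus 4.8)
The plan is to invoke Proposition \ref{prop:bc}, which already establishes (1) $\Leftrightarrow$ (2), so that it suffices to prove (2) $\Leftrightarrow$ (3); this amounts to unwinding the notion of branch datum and comparing the two formulations. Note first that in both (2) and (3) the degree of the covering is forced to be $d = \sum_{i=1}^k \lambda_i = \sum_{j=1}^l \mu_j$, since the sum of the local degrees of $f$ over any fibre equals $\deg f$.

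For the implication (2) $\Rightarrow$ (3): starting from a branched covering $f:S_g \to \Bbb S^2$ as in (2), take $\mathcal{B}$ to be the branch datum of $f$, i.e.\ the collection of partitions of $d$ recorded over the branch values of $f$; this is realizable by construction. The special branch value $p_+$ contributes the partition $\lambda$ to $\mathcal{B}$, unless $\lambda=(1,1,\ldots,1)$, in which case $p_+$ is a regular value and contributes nothing; the same applies to $p_-$ and $\mu$. The condition in (2) that the only ramification points of $f$ lying outside $f^{-1}(p_+)\cup f^{-1}(p_-)$ are exactly $r$ points with local degrees $d_1+1,\ldots,d_r+1$ is then precisely the statement that, among all partitions of $\mathcal{B}$ other than $\lambda$ and $\mu$, the entries different from $1$ are collectively $\{d_1+1,\ldots,d_r+1\}$. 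Hence $\mathcal{B}$ satisfies the three bulleted conditions of (3).

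For the converse (3) $\Rightarrow$ (2): let $f:S_g\to\Bbb S^2$ realize the branch datum $\mathcal{B}$ of (3). Declare $p_+$ to be a branch value of $f$ carrying the partition $\lambda$ when $\lambda\neq(1,\ldots,1)$, and an arbitrary regular value when $\lambda=(1,\ldots,1)$; choose $p_-$ analogously using $\mu$, and perturb the choice if necessary so that $p_-\neq p_+$ (possible since $\Bbb S^2$ has infinitely many, in particular infinitely many regular, points). Then $f^{-1}(p_+)$ has local-degree tuple $\lambda$, $f^{-1}(p_-)$ has local-degree tuple $\mu$, so there are $k+l=n$ preimages in total, and the third bullet of (3) says the ramification points outside these two fibres are exactly $r$ points with local degrees $d_1+1,\ldots,d_r+1$. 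This is condition (2), and Proposition \ref{prop:bc} gives (2) $\Leftrightarrow$ (1), completing the proof.

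The main point requiring care in the write-up is the bookkeeping in the degenerate cases $\lambda=(1,\ldots,1)$ (equivalently $k=d$) or $\mu=(1,\ldots,1)$, where the ``special branch value'' is actually a regular value and is therefore absent from $\mathcal{B}$; one must also note that $p_+$ and $p_-$ may be chosen distinct, and that adjoining a trivial partition $(1,\ldots,1)$ to a branch datum does not affect realizability, so there is no clash between the two degenerate cases. Apart from this, the argument is a direct translation, the Riemann--Hurwitz constraint being automatically encoded in the notion of an abstract branch datum $(S_g,\Bbb S^2,d,m,\mathcal{B})$.
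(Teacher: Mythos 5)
Your proposal is correct and follows essentially the same route as the paper: the paper treats the corollary as an immediate consequence of Proposition \ref{prop:bc}, the equivalence of (2) and (3) being exactly the translation between a branched covering with the stated fibre and ramification structure and its branch datum, and your write-up simply makes that translation explicit (including the careful handling of the degenerate cases $\lambda=(1,\ldots,1)$ or $\mu=(1,\ldots,1)$, where the corresponding special value is regular and its trivial partition is omitted from $\mathcal{B}$). Nothing further is needed.
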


\noindent Corollary \ref{cor:cond3} and Theorem \ref{main:thme} then readily imply the statement of Corollary \ref{main:corf} from the Introduction. We remark that Corollary \ref{main:corf} is consistent with Corollary \ref{consthmb2} (Corollary \ref{main:cord} of the Introduction); an instance is given by the following example.

\begin{ex}\label{lastex}
Let $\mathcal{D} = (\mathbb{S}^2, \mathbb{S}^2, 3,6, \mathcal{B})$ be abstract branch datum where $\mathcal{B}$ comprises the three partitions $\lambda = (5,1),  \mu = (3,1,1,1)$ and $(5,1)$. Then $\mathcal{D}$ is realizable by Corollary \ref{consthmb2}.  The requirement \eqref{gencomp} of Theorem \ref{main:thme} is satisfied since $\sum_{i} \lambda_i=\sum_j \mu_j = 6 > 4 = d_1$. This agrees with Corollary \ref{main:corf}. 
\end{ex}

\bigskip

\appendix

\section{Periods of meromorphic differentials with prescribed poles}\label{pmdpp}

\noindent Throughout this appendix, as usual, let $S_{g,n}$ be a $n$-punctured surface of genus $g$ and let $\nu=(p_1,p_2\dots,p_n)$ be a tuple of positive integers. Let $\mathcal{H}(-;\nu)$ denotes the stratum of $\Omega\mathcal{M}_{g,n}$ of meromorphic abelian differentials with poles of degrees $p_1,p_2\dots,p_n$. We consider here the problem of determining the image of $\textsf{Per}_{|\mathcal{H}(-;\nu)}$, that is the subset of $\textsf{Hom}(\Gamma_{g,n},\C)$ of those representations that appear as the period of some meromorphic differential having a pole of order $p_i$ at the $i$-th puncture. 

\begin{thm}\label{thm:mdpp}
Let $S_{g,n}$ be a surface of genus $g$ and $n\geq 1$ punctures. Let $\nu=(p_1,p_2,\ldots p_n)$ be positive integers assigned to each puncture and let $\mathcal{I} = \{ i \ \vert\ p_i = 1\}$. Finally, let $\gamma_i$ denotes a loop around the $i$-th puncture. The following holds.
\begin{itemize}
    \item Suppose $p_i\ge2$ for every $i=1,\dots,n$ (that is $\mathcal{I}=\emptyset$). Then $\textsf{\emph{Im}}\big(\textsf{\emph{Per}}_{|\mathcal{H}(-;\nu)} \big)$ contains all non-trivial representations. Moreover, the period mapping is surjective (that is it contains also the trivial representation) if and only if $n\ge2$ or $p\ge3$ if $n=1$,
    \smallskip
    \item if $1<|\mathcal{I}|<n$, then $\textsf{\emph{Im}}\big(\textsf{\emph{Per}}_{|\mathcal{H}(-;\nu)} \big)$ is the complement of $\displaystyle\bigcup_{i\in\mathcal{I}} \{\chi:\Gamma\to\mathbb{C}\,|\, \chi(\gamma_i)=0 \}$,
    \item if $|\mathcal{I}|=n>1$ then $\textsf{\emph{Im}}\big(\textsf{\emph{Per}}_{|\mathcal{H}(-;\nu)} \big)$ is contained in the complement of $\displaystyle\bigcup_{i=1}^n \{\chi:\Gamma\to\mathbb{C}\,|\, \chi(\gamma_i)=0 \}$. More precisely:
    \begin{itemize}
        \item if $n\ge3$, then $\textsf{\emph{Im}}\big(\textsf{\emph{Per}}_{|\mathcal{H}(-;\nu)} \big)$ is equal to the complement of $\displaystyle\bigcup_{i=1}^n \{\chi:\Gamma\to\mathbb{C}\,|\, \chi(\gamma_i)=0 \}$
        \item if $n=2$ and $g=0$ then $\textsf{\emph{Im}}\big(\textsf{\emph{Per}}_{|\mathcal{H}(-;\nu)} \big)=\textsf{\emph{Hom}}(\Gamma,\mathbb{C}^*)$,
        \item if $n=2$ and $g\ge1$ then $\textsf{\emph{Im}}\big(\textsf{\emph{Per}}_{|\mathcal{H}(-;\nu)} \big)$ is the complement of $\{\chi:\Gamma\to\mathbb{C}\,|\,\chi_2-\text{part is trivial }\}\cup\mathbb{C}\cdot\{\chi \text{ integral }\,|\, \chi(\gamma)=1\}$ where $\gamma$ is a curve enclosing one of the two punctures. 
    \end{itemize}
\end{itemize}
\end{thm}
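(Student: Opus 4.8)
The plan is to assemble Theorem \ref{thm:mdpp} from the results already established in the body of the paper, organizing the proof according to the cardinality of $\mathcal{I}$ and splitting into the necessity and sufficiency directions. First I would establish the \emph{necessity} of the conditions stated in each case. The key observation is that for a meromorphic differential with a simple pole at the $i$-th puncture, the holonomy $\chi(\gamma_i)$ equals $2\pi i \cdot \mathrm{Res}(f,P_i)$, which is nonzero; this immediately shows $\textsf{Im}\big(\textsf{Per}_{|\mathcal{H}(-;\nu)}\big)$ must avoid $\{\chi : \chi(\gamma_i) = 0\}$ for each $i \in \mathcal{I}$, explaining the excluded loci in all cases with $\mathcal{I}\neq\emptyset$. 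For the case $\mathcal{I}=\emptyset$ with $n=1$ and $p=2$, I would invoke (or recall the argument behind) the fact that trivial holonomy forces the developing map to descend to a branched cover $S_g \to \cp$, and by Proposition \ref{singlezero} the only differential with trivial character and a single pole of order $2$ with no zeros is ruled out by the degree condition---more carefully, trivial holonomy with a single double pole and the degree condition $\sum d_j = 2g-2+2 = 2g$ forces $g=0$ and then $z^{0}\,dz = dz$ on $\C$, which has no pole at all, a contradiction; hence the trivial representation is \emph{not} realized when $n=1, p=2$. For $n=2, g\geq 1$, $|\mathcal{I}|=2$, necessity of avoiding $\{\chi_2\text{-part trivial}\}\cup \mathbb{C}\cdot\{\chi\text{ integral},\chi(\gamma)=1\}$: the first comes from the fact that a translation surface with only two simple poles and positive genus cannot have its cylinder ``capped'' trivially (more precisely, from Corollary \ref{cor:pgen} type reasoning the residue tuple $(\lambda,-\mu)$ with $\lambda=\mu=1$ would force $d = 1$, incompatible with $g\geq 1$), and the second is exactly the obstruction identified in Corollary \ref{cor:pgen} / the remark following it.

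Next I would handle \emph{sufficiency} case by case. When $\mathcal{I}=\emptyset$ (all $p_i \geq 2$): every non-trivial representation is realized by Theorem \ref{main:thmc} (items (i)--(iv) are all subsumed since at least one $p_i \geq 2$, which is item (ii)); the trivial representation when $n\geq 2$ or ($n=1$, $p\geq 3$) is realized by Theorem \ref{main:thmb}, whose hypotheses (i), (ii), (iii) are checked directly---(i) holds since all $p_i\geq 2$, (ii) requires exhibiting a valid tuple of zero-orders $d_j$ with $d_j \leq \sum p_i - n - 1$ and $\sum d_j = \sum p_i + 2g - 2$, which one can always do since $\sum p_i - n - 1 \geq 1$ in these cases, and (iii) is automatic when one takes $k$ large or $n = 1$. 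When $1 < |\mathcal{I}| < n$: given $\chi$ with $\chi(\gamma_i)\neq 0$ for all $i\in\mathcal{I}$, I need to realize it in $\mathcal{H}(-;\nu)$; since some $p_j \geq 2$, if $\chi$ is non-trivial item (ii) of Theorem \ref{main:thmc} applies with the prescribed pole orders, giving a differential in the stratum with the required simple poles having nonzero residue (using the hypothesis $p_i \geq 2$ whenever $\chi_n(\gamma_i) = 0$, which is vacuous here since we assumed $\chi(\gamma_i)\neq 0$ for the simple-pole punctures); if $\chi$ is trivial this contradicts $\chi(\gamma_i)\neq 0$ for $i \in \mathcal{I}$, so that locus is genuinely excluded and there is nothing more to prove.

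Then I would treat the subtlest case $|\mathcal{I}| = n$, i.e.\ all poles simple. Here $\chi$ must have $\chi(\gamma_i)\neq 0$ for all $i$, so $\chi_n$ is non-trivial and hence $\chi$ is non-trivial. If $\textsf{Im}(\chi_n)$ (equivalently $\textsf{Im}(\chi)$, modulo the handle contributions) is not contained in the $\mathbb{Q}$-span of a single $c\in\mathbb{C}$, then items (iii) or (iv) of Theorem \ref{main:thmc} apply and we are done---this covers ``most'' of the complement. The remaining case is rational (hence, after rescaling, integral) holonomy with all simple poles, which is exactly the regime of Theorem \ref{main:thme}. For $n \geq 3$: I would argue that given any prescribed nonzero integer holonomies $\lambda\in\mathbb{Z}_+^k$, $-\mu$, $\mu\in\mathbb{Z}_+^l$ around the punctures (with $k+l = n \geq 3$), the degree condition forces $\sum d_i = 2g-2+n$, and one can choose the number $r$ and orders $d_i$ of zeros (they are \emph{not} prescribed in this theorem, only the pole orders are) so that $\max\{d_i\} < \sum \lambda_i = \sum\mu_j$; the point is that with $n\geq 3$ and $r$ large (e.g.\ $r = 2g-2+n$ zeros each of order $1$), $\max\{d_i\} = 1 < \sum\lambda_i$ always holds when $\sum\lambda_i \geq 2$, and $\sum\lambda_i = 1$ would force $k=1,\lambda_1 = 1$ and $l = n-1$, $\mu = (1,\dots,1)$ but then $\sum\mu_j = n-1 \geq 2 \neq 1$, contradiction---so \eqref{gcomb0} is always satisfiable and Theorem \ref{main:thme} realizes $\chi$. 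For $n = 2$, $g = 0$: the representation space of $\Gamma_{0,2}$ is $\mathbb{C}$ and any $\chi$ with $\chi(\gamma)\in\mathbb{C}^*$ is realized by the cylinder $(\C/\langle z\mapsto z+\chi(\gamma)\rangle, dz/z)$-type construction already given in Remark \ref{psrmk} adapted to simple poles (or directly: $(\C/\Z, z^{-1}dz)$ scaled), so $\textsf{Im} = \textsf{Hom}(\Gamma,\C^*)$. For $n=2$, $g\geq 1$: $\chi$ is rational/integral with a single positive and single negative puncture, so $k = l = 1$, $\lambda = (\lambda_1)$, $\mu = (\mu_1)$ with $\lambda_1 = \mu_1 = d$; the degree condition gives $\sum d_i = 2g$, and Theorem \ref{main:thme} requires $d > \max\{d_i\}$; one can realize this precisely when $d \geq 2$ (taking $2g$ zeros of order $1$, need $d > 1$), i.e.\ when $\chi$ is \emph{not} in $\mathbb{C}\cdot\{\chi\text{ integral}, \chi(\gamma)=1\}$, and when additionally $\chi_2$-part non-trivial (the handle holonomies are forced nonzero for the construction, and if $\chi_2$-part were trivial the residue alone is not enough---this is where the excluded locus $\{\chi_2\text{-part trivial}\}$ enters, to be reconciled with Theorem \ref{main:thme}'s surjectivity-of-$\chi$ hypothesis via the reductions in Section \ref{ssec:somered}).

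The main obstacle I anticipate is the bookkeeping in the $n=2$, $g\geq 1$ case: one must carefully reconcile the statement of Theorem \ref{main:thme} (which assumes $\chi$ surjective onto $\mathbb{Z}$ and does not prescribe zero orders) with the precise excluded locus $\{\chi_2\text{-part trivial}\}\cup\mathbb{C}\cdot\{\chi\text{ integral},\chi(\gamma)=1\}$ claimed here, including verifying that the ``$\chi_2$-part trivial'' obstruction is genuinely necessary (it is: a translation structure with two simple poles of residues $+d$ and $-d$ and trivial holonomy on all handles would pull back from the cylinder $(\C/\Z, dz/z)$ via a branched cover $S_g \to \Bbb S^2$ of degree $d$ whose \emph{only} branching away from the two marked points comes from the zeros, but the handle holonomies being trivial would make this cover factor through a genus-zero intermediate, forcing $g$ contributions to vanish---this needs the collapsing argument of Proposition \ref{cdfprop}-type, or a direct monodromy count). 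I would also need to double-check the edge behavior of the degree condition in the $\mathcal{I}=\emptyset$, $n=1$, $p=2$ sub-case to confirm the trivial representation is excluded but every nontrivial one is included, which follows cleanly from Theorem \ref{main:thmc}(ii) for non-trivial $\chi$ and from Proposition \ref{singlezero} plus the degree count for the trivial one.
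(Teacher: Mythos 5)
Your proposal is correct and follows essentially the same route as the paper's Appendix A: reduce to the stratified statement, establish necessity via the residue/holonomy identity at simple poles, and realize each case by choosing $\mu=(1,1,\ldots,1)$ and invoking Theorems \ref{main:thmb}, \ref{main:thmc} (resp.\ Proposition \ref{hgc:mainprop}) and \ref{main:thme}, with the $n=1,p=2$ and $n=2,g\ge1$ edge cases handled by the same degree/combinatorial counts. The only clarification worth making is that your anticipated ``main obstacle'' in the $n=2$, $g\ge1$ case is illusory: ``$\chi_2$-part trivial'' refers to the restriction of $\chi$ to $\Gamma_{0,2}$ (i.e.\ $\chi(\gamma_1)=-\chi(\gamma_2)=0$), so it is excluded by the same simple-pole residue obstruction as in the other cases and requires no collapsing or monodromy argument.
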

\medskip
\noindent Notice that, when $n\ge3$, the statement above simply reduces to Corollary \ref{cor:mdpp}. Let us move on the proof.

\begin{proof}[Proof of Theorem \ref{thm:mdpp}]
Let $\nu=(p_1,p_2,\dots,p_n)$ be a tuple of positive integers. Let us begin by noticing that the space $\mathcal{H}(-,\nu)$ admits a natural stratification by the strata $\mathcal{H}(\mu;\nu)$ enumerated by unordered partitions $\mu$ of $2g-2+p_1+p_2+\cdots+p_n$. Thus, a representation $\chi$ appears as the period of a meromorphic abelian differential with prescribed orders of the poles at punctures if and only if it appears in the image of the period mapping as in \eqref{permap} restricted to some stratum $\mathcal{H}(\mu;\nu)\subset\mathcal{H}(-;\nu)$. \\

\noindent Suppose $p_i\ge2$ for every $i=1,2,\dots,n$. Let $\chi:\Gamma_{g,n}\longrightarrow\C$ be a non-trivial representation and consider any tuple $\mu=(d_1,d_2,\dots,d_k)$ of positive integers such that the equation \eqref{zpec} holds, that is
\[ \sum_{j=1}^k d_j - \sum_{i=1}^n p_i=2g-2.
\] If $g=0$, then Proposition \ref{hgc:mainprop} applies and $\chi$ appears as the holonomy of some translation surface with poles determined by a meromorphic differential on $\cp$ having zeros of orders $d_1,d_2,\dots,d_k$ and poles of order $p_1,p_2,\dots,p_n$ at the punctures. In a similar fashion, if $g\ge1$, then Theorem \ref{main:thmc} (or Theorem \ref{thm:nthpgs}) applies and $\chi$ appears as the holonomy of some translation surface with poles determined by a meromorphic differential on $S_{g,n}$ having zeros of orders $d_1,d_2,\dots,d_k$ and poles of order $p_1,p_2,\dots,p_n$ at the punctures. In both cases, $\chi$ is realized as the period of a meromorphic differential in $\mathcal{H}(-;\nu)$ as desired. Let us consider the trivial representation and pick $\mu=(1,1,\dots,1)$, where $1$ repeats $2g-2+p_1+p_2+\cdots+p_n$.
If we assume $n\ge2$, then the inequality 
\[1\le n-1=2n-(n+1)\le \sum_{i=1}^n p_i -(n+1)
\] always holds. All the conditions of Theorem \ref{main:thmb} are satisfied and hence we can realize the trivial representation as the holonomy of some translation surface with all simple zeros and poles of orders $p_1,p_2,\dots,p_n$ as desired. If $n=1$, then $1\le p-n-1=p-2$ if and only if $p\ge3$. Then we can proceed in the same fashion.\\

\noindent Suppose $1<|\mathcal{I}|\le n$. In the first place we can notice that the holonomy of the curve $\gamma_i$ cannot be trivial because the $i$-th is prescribed as a simple pole for any $i\in\mathcal{I}$. As a consequence every representation in the space
\begin{equation}\label{compl}
    \bigcup_{i\in\mathcal{I}} \{\chi:\Gamma\to\mathbb{C}\,|\, \chi(\gamma_i)=0 \}
\end{equation} 
 can never be realised as the period of any meromorphic differential on $S_{g,n}$. In particular, 
\begin{equation}\label{perineq}
    \textsf{Im}\big(\textsf{Per}_{|\mathcal{H}(-;\nu)}\big)\subseteq \textsf{Hom}\big(\Gamma_{g,n},\C\big)\setminus \displaystyle\bigcup_{i\in\mathcal{I}} \{\chi:\Gamma\to\mathbb{C}\,|\, \chi(\gamma_i)=0 \}.
\end{equation} 
\noindent In the case $1<|\mathcal{I}|< n$ then at least one of the puncture is prescribed as a pole of order at least two. Let us consider $\mu=(1,1,\dots,1)$ and let $\chi$ be any representation in the complement of \eqref{compl}. Then $\chi(\gamma_i)\neq0$ for any $i\in\mathcal{I}$ and, in particular, $p_i\ge2$ whenever $\chi(\gamma_i)=0$.  Even in this case, we can apply Proposition \ref{hgc:mainprop} if $g=0$ or Theorem \ref{main:thmc} if $g\ge1$. In both cases, $\chi$ appears as the holonomy of some translation surface with poles determined by a meromorphic differential on $S_{g,n}$ having simple zeros and poles of order $p_1,p_2,\dots,p_n$ as desired. As a consequence, the equation in \eqref{perineq} turns out an equality.\\

\noindent Let us finally suppose $|\mathcal{I}|=n$. Whenever $\chi$ is a non-rational representation taken in the complement of \eqref{compl} it is straightforward to check that the discussion above applies \emph{mutatis mutandis}. Therefore, 
\begin{equation}\label{perineq2}
    \{\chi:\Gamma\to\mathbb{C}\,|\, \chi \text{ not rational}\}\subseteq
    \textsf{Im}\big(\textsf{Per}_{|\mathcal{H}(-;\nu)}\big)\subseteq \textsf{Hom}\big(\Gamma_{g,n},\C\big)\setminus \displaystyle\bigcup_{i\in\mathcal{I}} \{\chi:\Gamma\to\mathbb{C}\,|\, \chi(\gamma_i)=0 \}.
\end{equation} 

\noindent Recall that a representation $\chi$ is said to be rational if it non-trivial and its image is contained in the $\Q$-span of some complex number $c\in\C^*$. Up to rescaling by a proper factor, we can suppose the representation to be integral, that is we can assume without loss of generality that $\textsf{Im}(\chi)=\Z$. Recall also that a representation $\chi$ is realizable in a certain stratum if and only if the rescaled representations are all realizable in the same stratum.\\
\noindent Our Theorem \ref{main:thmc} does not directly apply to integral representations if the poles are all required to be simple. In fact, the realizability is subject to the further necessary condition \eqref{gcomb0}. This leads to discuss integral representations as follows.\\

\noindent Suppose $n\ge3$ and let $\chi$ be an integral representation in the complement of \eqref{compl}. Let $\mu=(1,1,\dots,1)$, where $1$ repeats $2g-2+n$ times. Then $\chi$ can be realized as the holonomy of some translation structure with all simple zero and simple poles. In fact, if $d_1=d_2=\cdots=d_r=1$, then it is easy to check that the necessary and sufficient condition \eqref{gcomb0} is satisfied and so our Theorem \ref{main:thme} applies. Even in the case the equation \eqref{perineq} turns out an equality.\\

\noindent Let us consider the case $n=2$. A representation $\chi:\Gamma_{g,2}\longrightarrow \C$ belongs to the complement of \eqref{compl} if and only if the $\chi_2$-part of $\chi$ is non-trivial. If $g=0$,  then we can easily note that 
\[ \textsf{Hom}\big(\Gamma_{0,2},\C\big)\setminus\{\chi:\Gamma\to\mathbb{C}\,|\, \chi(\gamma)=0 \}=\textsf{Hom}\big(\Z,\C^*\big)
\] where $\gamma$ is one of the two curves enclosing one puncture. Any representation $\textsf{Hom}\big(\Z,\C^*\big)$ is realised as the holonomy of an Euclidean cylinder and hence as the period of a meromorphic abelian differential in the stratum $\mathcal{H}(\emptyset;1,1)$ by Remark \ref{psrmk}, where $\mathcal{H}(\emptyset;1,1)$ denotes the stratum of Euclidean structures on a cylinder without zeros (all points are regular!).\\

\noindent Suppose $g\ge1$ and $n=2$. Let $\chi:\Gamma_{g,2}\longrightarrow \Z<\C$ be an integral representation such that $\chi(\gamma)=\lambda>0$, where $\gamma$ is a curve enclosing one of the two punctures. In this case, the necessary condition \eqref{gcomb0} simplifies to $\lambda=\chi(\gamma)>\max(d_1,d_2,\dots,d_r)$. If $\lambda=1$, it is an easy matter to check that \eqref{gcomb0} never holds for every partition $\mu$ of $2g$. As a consequence, the whole set $\C\cdot\{\chi \text{ integral }\,|\,\chi(\gamma)=1\}$ does not belong to the image of the period mapping when restricted to the stratum $\mathcal{H}(-;1,1)$. On the other hand, whenever $\lambda\ge2$ the condition \eqref{gcomb0} always holds for the partition $\mu=(1,1,\dots,1)$ and hence the integral representation can be realized as the holonomy of some translation structure with simple zeros and simple poles at the punctures. Thus the desired conclusion holds.
\end{proof}

\bigskip

\section{Proof of Lemma \ref{lem:irrmult}}\label{app:irrmultproof}

\noindent For convenience, we assume $\displaystyle\sum_{i=1}^n s_i = \sum_{j=1}^m t_j = 1$. We consider horizontal two strips of some small width and length 1 and place separators in these strips so that the resulting blocks have lengths $s_i$ and $t_j$ in the respective strips as shown in Figure \ref{fig:app1}. The setting can also be viewed as two boxes of length 1 with blocks of length $s_i$ and $t_j$ placed in them. We shall denote the blocks of length $s_i$ (resp. $t_j$) as the $s$-series (resp. $t$-series). 

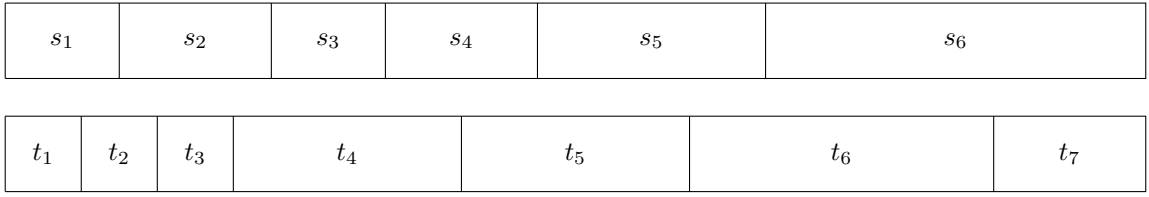
\begin{figure}[!h]
    \centering
    \begin{tikzpicture}
    \draw (0,0) -- (15,0) -- (15,1) -- (0,1);
    \draw (0,1.5) -- (15,1.5) -- (15,2.5) -- (0,2.5);
    \foreach \x / \y [count=\c] in {0/0.5, 1/1.5, 2/2.5, 3/4.5, 6/7.5, 9/11, 13/14}
    {
    \draw (\x, 0) -- (\x, 1);
    \node at (\y, 0.5) {$t_{\c}$};
    }
    \foreach \x / \y [count=\c] in {0/0.75, 1.5/2.5, 3.5/4.25, 5/6, 7/8.5, 10/12.5}
    {
    \draw (\x, 1.5) -- (\x, 2.5);
    \node at (\y, 2) {$s_{\c}$};
    }
    \end{tikzpicture}
    \caption{Partitions of boxes determined the numbers $s_i$ and $t_j$}
    \label{fig:app1}
\end{figure}

\noindent We need to show that there exists an ordering of $s_i$'s and $t_j$'s so that no two separators have the same $x$-coordinate. We do this by first describing an algorithm to place the blocks. Without loss of generality, we may assume that $s_1$ is the smallest of the numbers $\{ s_1, \ldots, s_n, t_1, \ldots, t_m \}$. We shall place the blocks of lengths $\{ s_1, \ldots, s_n, t_1, \ldots, t_m \}$ in their respective boxes from left to right and the order of placing the blocks will determine the order required by the Lemma. \\

%\textcolor{blue}{Algorithm to be formatted}

\noindent Step 1 - Place the block of length $s_1$ in it's box.\\

\noindent Step 2 - If there is only one unplaced block of length $t_j$, place the block. If there are no more blocks to be placed in either of the series, end the algorithm. Else, place the smallest block among the unplaced blocks of length $t_j$ such that, after placing, the right end of the block does not coincide with any of the right ends of the placed blocks of length $s_i$. If no such block exists, replace the rightmost block in the $s$-series with the next smallest unplaced block and repeat step 2.\\

\noindent Step 3 - If the right end of the series of blocks of length $t_j$ that have already been placed is to the right of the right end of the series of blocks of length $s_i$ that have already been placed, go to step 4. Else, go to step 2.\\

\noindent Step 4 - If there is only one unplaced block of length $s_i$, place it. If there are no more blocks to be placed in either of the series, end the algorithm. Else, place the smallest block among the unplaced blocks of length $s_i$ such that, after placing, the right end of the block does not coincide with any of the right ends of the placed blocks of length $t_j$. If no such block exists, replace the rightmost block in the $t$-series with the next smallest unplaced block and repeat step 4.\\

\noindent Step 5 - If the right end of the series of blocks of length $s_i$ that have already been placed is to the right of the right end of the series of blocks of length $t_j$ that have already been placed, go to step 2. Else, go to step 4.\\

\noindent When this algorithm runs to completion, we obtain the required ordering. The only possible case where the algorithm does not run to completion is when all unplaced blocks in the $s$-series have the same length and all unplaced blocks in the $t$-series have the same length. At the point where the algorithm gets stuck, the arrangement looks as in Figure \ref{fig:app2}. Since the distance from the blue dashed line to the right end of the box is an integer multiple of both $s$ and $t$, $s$ and $t$ are rational multiples of each other. Let $s = p \lambda$ and $t = q \lambda$. We fill in the remaining blocks without concerning ourselves with the coinciding separators for the time being. 

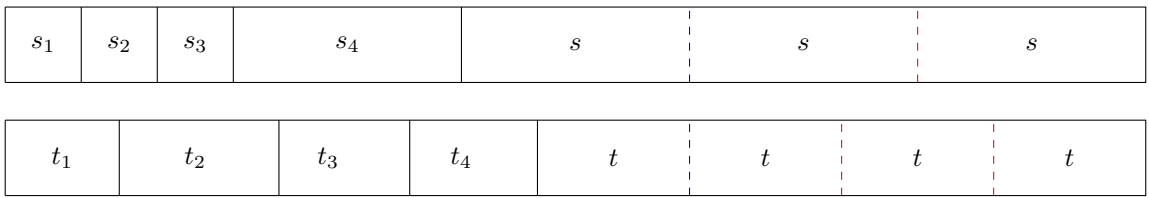
\begin{figure}[!h]
    \centering
    \begin{tikzpicture}
    \draw (0,0) -- (15,0) -- (15,1) -- (0,1);
    \draw (0,1.5) -- (15,1.5) -- (15,2.5) -- (0,2.5);
    
    % T Series
    \foreach \x / \y [count=\c] in {0/0.75, 1.5/2.5, 3.6/4.25, 5.32/6}
    {
    \draw (\x, 0) -- (\x, 1);
    \node at (\y, 0.5) {$t_{\c}$};
    }
    \draw (7, 0) -- (7, 1);
    \draw[blue, dashed] (9,0) -- (9,1);
    \draw[red, dashed] (11,0) -- (11,1);
    \draw[red, dashed] (13,0) -- (13,1);
    \foreach \x in {8, 10, 12, 14}
    {
    \node at (\x, 0.5) {$t$};
    }
    
    % S Series
    \foreach \x / \y [count=\c] in {0/0.5, 1/1.5, 2/2.5, 3/4.5}
    {
    \draw (\x, 1.5) -- (\x, 2.5);
    \node at (\y, 2) {$s_{\c}$};
    }
    \draw (6,1.5) -- (6,2.5);
    \draw[blue, dashed] (9,1.5) -- (9,2.5);
    \draw[red, dashed] (12,1.5) -- (12,2.5);
    \foreach \x in {7.5, 10.5, 13.5}
    {
    \node at (\x, 2) {$s$};
    }
   
    \end{tikzpicture}
    \caption{An example for when the algorithm fails. The values are $s_1 = s_2 = s_3 = \frac{1}{15}$, $s_4 = s = \frac{1}{5}$, $t_1 = \frac{1}{10}$, $t_2 = \frac{7}{50}$ $t_3 = \frac{\sqrt{3}}{15}$ $t_4 = \frac{3.4 - \sqrt{3}}{15}$ and $t=\frac{2}{15}$. The point of failure is when all possible blocks to be placed after $s_4$ and $t_4$ end at the blue dashed lines.}
    \label{fig:app2}
\end{figure}

\noindent Now consider the rightmost separator in either of the series whose distance from the right side of the box is an irrational multiple of $\lambda$. Such a separator exists by our initial assumption and we label it $S$. Moving the block immediately to the right of this $S$ to  the end of its series, and translating the remaining blocks towards left as shown in Figure \ref{fig:app3}, achieves the required placement of the blocks. This is because the distances of all the separators to the right of $S$ from the right end of the box are originally rational multiples of $\lambda$ and this movement moves one set of separators by an irrational multiple of $\lambda$.\\

\vspace{.1in} 

\begin{figure}[!h]
    \centering
    \begin{tikzpicture}
    \draw (0,0) -- (15,0) -- (15,1) -- (0,1);

    % T Series
    \foreach \x / \y [count=\c] in {0/0.75, 1.5/2.5, 3.6/4.4}
    {
    \draw (\x, 0) -- (\x, 1);
    \node at (\y, 0.5) {$t_{\c}$};
    }
    \foreach \x / \y in {5.32/6.32, 7.32/8.32, 9.32/10.32, 11.32/12.32}
    {
    \draw (\x, 0) -- (\x, 1);
    \node at (\y, 0.5) {$t$};
    }
    \draw (13.32, 0) -- (13.32, 1);
    \node at (14.2, 0.5) {$t_4$};
    \draw[red, ->] (5.32, -0.25) -- (5.32, 0);
    \node[below] at (5.32, -0.25) {$S$};
    
    % S Series
    %\begin{scope}[shift = {(0,1)}]
    \draw (0,1.5) -- (15,1.5) -- (15,2.5) -- (0,2.5);
    \foreach \x / \y [count=\c] in {0/0.5, 1/1.5, 2/2.5, 3/4.5, 6/7.5, 9/10.5, 12/ 13.5}
    {
    \draw (\x, 1.5) -- (\x, 2.5);
    \node at (\y, 2) {$s_{\c}$};
    }
    %\end{scope}
   
    \end{tikzpicture}
    \caption{The shift in the case of our example.}
    \label{fig:app3}
\end{figure}
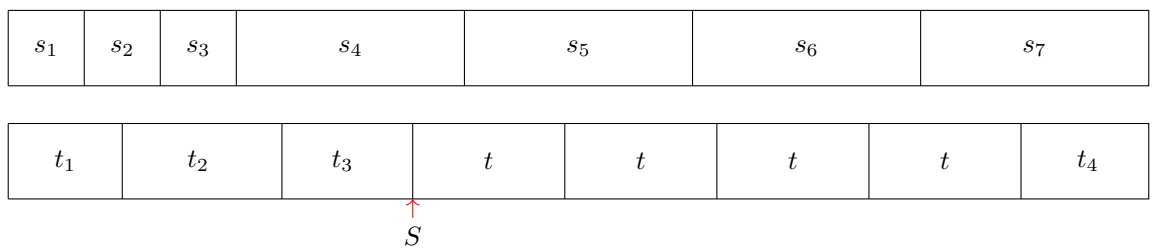

\bibliographystyle{amsalpha}
\bibliography{biblio}

\end{document}